\documentclass[11pt]{amsart}

\usepackage[normalem]{ulem} 

\usepackage{fullpage}
\usepackage{amssymb,amsmath,amsfonts,amsthm,mathrsfs}
\usepackage[all]{xy} \SelectTips{cm}{10}
\usepackage{color}
\usepackage{comment}
\definecolor{webcolor}{rgb}{0.8,0,0.2}
\definecolor{webbrown}{rgb}{.6,0,0}
\usepackage[
        colorlinks,
        linkcolor=webbrown,  filecolor=webcolor,  citecolor=webbrown, 
        backref,
        pdfauthor={Samuel Le Fourn},
        pdfauthor={Davide Lombardo},
        pdfauthor={David Zywina}, 
       pdftitle={Torsion bounds for a fixed abelian variety and varying number field},
]{hyperref}
\usepackage[alphabetic,backrefs,lite]{amsrefs} 

%


\numberwithin{equation}{section}

\renewcommand{\AA}{\mathbb A}

\newcommand{\CC}{\mathbb C}
\newcommand{\C}{\mathbb C}

\newcommand{\FF}{\mathbb F}
\newcommand{\GG}{\mathbb G}

\newcommand{\PP}{\mathbb P}
\newcommand{\QQ}{\mathbb Q}

\newcommand{\ZZ}{\mathbb Z}

\newcommand{\calG}{\mathcal G}  \newcommand{\calF}{\mathcal F}
\newcommand{\calH}{\mathcal H}

\newcommand{\calS}{\mathcal S}

\newcommand{\calC}{\mathcal C}

\newcommand{\calW}{\mathcal W}

\newcommand{\calV}{\mathcal V}

\newcommand{\gG}{\mathfrak g} 
\newcommand{\hH}{\mathfrak h}

\newcommand{\boldG}{\mathbf G}

\renewcommand{\P}{\mathbb{P}}

\def\Hom{{\operatorname{Hom}}}

\def\un{{\operatorname{un}}}

\def\Spec{\operatorname{Spec}} 
\def\Gal{\operatorname{Gal}}
 
\def \GL {\operatorname{GL}}  
\def \gl {\mathfrak{gl}}

\def \GSp {\operatorname{GSp}}  

\def \SL {\operatorname{SL}}

\def\Aut{\operatorname{Aut}} 
\def\End{\operatorname{End}}

\newcommand{\tors}{{\operatorname{tors}}}

\newcommand{\defi}[1]{\textsf{#1}} 

\newcommand\blank[1]{}

\def\bbar#1{\setbox0=\hbox{$#1$}\dimen0=.2\ht0 \kern\dimen0 
\overline{\kern-\dimen0 #1}}
\newcommand{\Qbar}{{\overline{\mathbb Q}}} 
\newcommand{\Kbar}{{\bbar{K}}}

\newcommand{\Fbar}{\bbar{F}} 
\newcommand{\FFbar}{\overline{\FF}} 

\newtheorem{thm}{Theorem}[section]
\newtheorem{lemma}[thm]{Lemma}
\newtheorem{cor}[thm]{Corollary}
\newtheorem{prop}[thm]{Proposition}
\newtheorem{conj}[thm]{Conjecture}

\theoremstyle{definition}
\newtheorem{definition}[thm]{Definition}

\theoremstyle{remark}
\newtheorem{remark}[thm]{Remark}

\usepackage[usenames,dvipsnames]{xcolor}
\usepackage{csquotes}
\usepackage{enumitem}

\newenvironment{alphenum}{\hfill \begin{enumerate}[label=\emph{(\alph*)}] 
}{\end{enumerate}}

\newenvironment{romanenum}{\hfill \begin{enumerate}[label=\emph{(\roman*)}] 
}{\end{enumerate}}

\newcommand{\Fcal}{{\mathcal F}}
\newcommand{\Gcal}{{\mathcal G}}
\newcommand{\Ocal}{{\mathcal O}}
\newcommand{\Ucal}{{\mathcal U}}
\newcommand{\Xcal}{{\mathcal X}}
\newcommand{\Wcal}{{\mathcal W}}

\newcommand{\A}{{\mathbb{A}}}
\newcommand{\F}{{\mathbb{F}}}
\newcommand{\N}{{\mathbb{N}}}
\newcommand{\Q}{{\mathbb{Q}}}
\newcommand{\Z}{{\mathbb{Z}}}

\newcommand{\dd}{\mathrm{d}}

\newcommand{\Ker}{\operatorname{ker}}

\newcommand{\Grass}{\operatorname{Grass}}
\newcommand{\Grassdn}{\operatorname{Grass}_{d, n}}

\newcommand{\mt}{\mapsto}	
\newcommand{\ra}{\rightarrow}

\theoremstyle{definition}

\newcommand{\Fix}{\operatorname{Fix}}
\newcommand{\fix}{\operatorname{fix}}


\begin{document}
\title[Torsion bounds for a fixed abelian variety and varying number field]{Torsion bounds for a fixed abelian variety and varying number field}

\author{Samuel Le Fourn}
\address{Institut Fourier, Université Grenoble Alpes, 38610 Gières, France}
\email{Samuel.Le-Fourn@univ-grenoble-alpes.fr}
\author{Davide Lombardo}
\address{Dipartimento di Matematica, Università di Pisa, Largo Bruno Pontecorvo 5, 56127 Pisa, Italy}
\email{davide.lombardo@unipi.it}
\author{David Zywina}
\address{Department of Mathematics, Cornell University, Ithaca, NY 14853, USA}
\email{zywina@math.cornell.edu}

\subjclass[2020]{Primary 11G10; Secondary 14K15}


\begin{abstract}
Let $A$ be an abelian variety defined over a number field $K$.  For a finite extension $L/K$, the cardinality of the group $A(L)_{\tors}$ of torsion points in $A(L)$ can be bounded in terms of the degree $[L:K]$.   We study the smallest real number $\beta_A$ such that for any finite extension $L/K$ and $\varepsilon>0$, we have $|A(L)_{\tors}| \leq C \cdot [L:K]^{\beta_A+\varepsilon}$, where the constant $C$ depends only on $A$ and $\varepsilon$ (and not $L$).  Assuming the Mumford--Tate conjecture for $A$, we will show that $\beta_A$ agrees with the conjectured value of Hindry and Ratazzi.  We also give a similar bound for the maximal order of a torsion point in $A(L)$.
\end{abstract}

\maketitle

\section{Introduction} \label{S:intro}

Let $A$ be a nonzero abelian variety defined over a number field $K$.  For every finite extension $L$ of $K$, the group $A(L)_{\tors}$ of torsion points in $A(L)$ is finite. We are interested in finding upper bounds for the cardinality of $A(L)_{\tors}$ that depend only on $A$ and the degree $[L:K]$.    A theorem of Masser \cite{masser-lettre} implies that for any real number $\beta>\dim A$ and any finite extension $L/K$, we have $|A(L)_{\tors}|\leq C\cdot [L:K]^\beta$, where $C$ is a constant depending only on $A$ and $\beta$.   We will see that Masser's bound  remains true if $\dim A$ is replaced by some smaller value whenever $A$ is not isogenous over $\Kbar$ to a power of a CM elliptic curve, cf.~\S\ref{S:Masser improvement}.  

Let $\beta_A$ be the infimum of the set of real numbers $\beta$ for which the inequality 
\[
|A(L)_{\tors}|\leq C\cdot [L:K]^\beta
\]
holds for all finite extensions $L/K$, where $C$ is a constant that depends only on $A$ and $\beta$ (and in particular not $L$).   From Masser, we have $\beta_A \leq \dim A$.  This is made totally explicit in a recent preprint of Gaudron and Rémond \cite{GaudronRemond}: namely, they prove that
\[
|A(L)_{\tors}| \leq (6g)^{8g} D^g \max(1,h_\Fcal(A),\log D)^g,
\]
where $g = \dim A$, $D = [L:\Q]$ and $h_\Fcal(A)$ is the stable Faltings height of $A$.

Hindry and Ratazzi have made a precise conjecture for the value of $\beta_A$ which we now recall.     Fix an embedding $K\subseteq \CC$.
The abelian variety $A_\CC$, obtained by base extending $A$ to $\CC$, is isogenous to a product $\prod_{i=1}^n A_i^{m_i}$, where the $A_i$ are abelian varieties over $\CC$ that are simple and pairwise nonisogenous.    For each subset $I \subseteq \{1,\ldots, n\}$, define the abelian variety $A_I:= \prod_{i\in I} A_i^{m_i}$ over $\CC$.    
Associated to each abelian variety $B$ defined over $K$ or $\CC$ is a \emph{Mumford--Tate group} $G_{B}$ whose definition we recall in \S\ref{SS:MT def}; it is a linear algebraic group defined over $\QQ$.   Define the real number
\[
\gamma_A := \max_{\emptyset \neq I \subseteq \{1,\ldots, n\}}  \frac{ 2 \dim A_I}{\dim G_{A_I} }.
\]

Hindry and Ratazzi have conjectured that $\beta_A=\gamma_A$, cf.~\cite[Conjecture~1.1]{MR2862374}; note that $A_I$ and $\prod_{i\in I} A_i$ have isomorphic Mumford--Tate groups.    They  proved the inequality $\beta_A\geq \gamma_A$ \cite[Proposition~1.5]{MR2643390}.

Hindry and Ratazzi have proved their conjecture in various situations where the Mumford--Tate conjecture is known and the Mumford-Tate group $G_A$ is of a very special form  \cites{MR2419854,MR2643390,MR2862374,MR3576113}. For example, if $G_A$ is isomorphic to $\GSp_{2g}$ (with $g=\dim A$) and the Mumford--Tate conjecture holds for $A$, then $\beta_A$ equals $\gamma_A = 2g/(2g^2+g+1)$, cf.~\cite{MR2862374}.    Cantoral Farf\'an has  proved several additional cases, see~\cite{VCF}.    A statement of the Mumford--Tate conjecture can be found in \S\ref{SS:MT conjecture}.  

The following is our main result; we prove the conjecture of Hindry and Ratazzi assuming the Mumford--Tate conjecture.
\begin{thm} \label{T:main}
Let $A$ be a nonzero abelian variety defined over a number field $K$ for which the Mumford--Tate conjecture holds.  Then $\beta_A=\gamma_A$.  Equivalently, $\gamma_A$ is the smallest real value such that  for any finite extension $L/K$ and real number $\varepsilon>0$, we have
\[
|A(L)_{\tors}| \leq C\cdot [L:K]^{\gamma_A + \varepsilon},
\]
where $C$ is a constant that depends only on $A$ and $\varepsilon$.  
\end{thm}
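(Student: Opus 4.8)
The lower bound $\beta_A\ge\gamma_A$ is due to Hindry and Ratazzi \cite[Proposition~1.5]{MR2643390}, so the content of the theorem is the upper bound: for each $\varepsilon>0$ one must exhibit a constant $C=C(A,\varepsilon)$ with $|A(L)_{\tors}|\le C\,[L:K]^{\gamma_A+\varepsilon}$ for every finite extension $L/K$. Since $\beta_A$, $\gamma_A$, and the validity of the Mumford--Tate conjecture for $A$ are all unchanged when $K$ is replaced by a finite extension and $A$ by a $K$-isogenous abelian variety, I would first make several such reductions: afterwards I may assume that $A=\prod_{i=1}^{n}A_i^{m_i}$, where the $A_i$ are geometrically simple, pairwise non-isogenous over $\Kbar$, and defined over $K$ together with all of their endomorphisms, and (using Serre's theorem that a finite base change makes every $\ell$-adic monodromy group connected) that the $\ell$-adic monodromy group of $A$ coincides with $\mathbf{G}_{\QQ_\ell}$ for every prime $\ell$, where $\mathbf{G}:=G_A$ is the Mumford--Tate group. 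Since $M:=A(L)_{\tors}$ is a finite subgroup of $A(\Kbar)_{\tors}$ with $K(M)\subseteq L$, the theorem then reduces to the division-field estimate: for every finite subgroup $M\subseteq A(\Kbar)_{\tors}$,
\[
|M|\ \le\ C(A,\varepsilon)\cdot [K(M):K]^{\gamma_A+\varepsilon}.
\]

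Next I would reduce to a single prime. Writing $M=\bigoplus_\ell M_\ell$ for the primary decomposition, one has $K(M)=\prod_\ell K(M_\ell)$, and Serre's theorem on the almost-independence of the $\ell$-adic representations of $A$ — which, after the base change already performed, states that $\Gal(\Kbar/K)$ surjects onto $\prod_\ell \rho_\ell(\Gal(\Kbar/K))$ — gives $[K(M):K]=\prod_\ell [K(M_\ell):K]$. Together with $|M|=\prod_\ell|M_\ell|$ and the identity $\bigl(\prod_\ell x_\ell\bigr)^{\gamma_A}=\prod_\ell x_\ell^{\gamma_A}$, it suffices to produce a constant $C(A,\varepsilon)$, \emph{independent of $\ell$}, with $|M|\le C(A,\varepsilon)\,[K(M):K]^{\gamma_A+\varepsilon}$ for every prime $\ell$ and every finite subgroup $M\subseteq A[\ell^\infty]$.

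I would then translate this into a statement about the algebraic group $\mathbf{G}$ acting on $V:=H_1(A,\QQ)$. With $G_\ell:=\rho_\ell(\Gal(\Kbar/K))\subseteq\GL(T_\ell A)$, the degree $[K(M):K]=[G_\ell:\operatorname{Stab}_{G_\ell}(M)]$ is the length of the $G_\ell$-orbit of a generating tuple of $M$. By Bogomolov's theorem $G_\ell$ is open in $\mathbf{G}(\QQ_\ell)$, and — since the Mumford--Tate conjecture holds for $A$ — its index in $\mathbf{G}(\ZZ_\ell)$ (for a fixed $\ZZ$-model of $\mathbf{G}$) is bounded independently of $\ell$ away from a finite set of primes, which therefore affect only the constant. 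So it remains to bound $[\mathbf{G}(\ZZ_\ell):\operatorname{Stab}(M)]$ from below by a power of $|M|$, uniformly in $\ell$. The predicted extremal subgroups are $M=A_I[\ell^t]$ for $\emptyset\neq I\subseteq\{1,\dots,n\}$, with $A_I=\prod_{i\in I}A_i^{m_i}\subseteq A$: here $|M|=\ell^{\,2t\dim A_I}$ while $[K(A_I[\ell^t]):K]\asymp \ell^{\,t\dim G_{A_I}}$, so the ratio of exponents equals $\tfrac{2\dim A_I}{\dim G_{A_I}}$, whose maximum over $I$ is $\gamma_A$.

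The crux, and the step I expect to be the main obstacle, is to show that no finite subgroup beats these asymptotically. Writing $V\otimes\QQ_\ell=\bigoplus_i (V_i)_{\QQ_\ell}^{\,m_i}$ with $V_i:=H_1(A_i,\QQ)$, one attaches to $M$ its elementary divisors $\ell^{a_1}\ge\cdots\ge\ell^{a_r}$ together with, for each level $j$, the dimension $d_j$ of the $\mathbf{G}$-orbit of the $j$-th layer of a generating tuple (the points of order about $\ell^{a_j}$ visible at that level). A congruence-filtration (equivalently, Lie-algebra) computation then identifies $\log_\ell[\mathbf{G}(\ZZ_\ell):\operatorname{Stab}(M)]$, up to an error $O(1)$ independent of $\ell$, with $\sum_j(a_j-a_{j+1})\,d_j$, while $\log_\ell|M|=\sum_j(a_j-a_{j+1})\,j$ (with the convention $a_{r+1}=0$); the required bound then follows from the purely group-theoretic inequality $j\le\gamma_A\,d_j$ for every level $j$. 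This reduces in turn to the following assertion about $\mathbf{G}$: for every $\QQ_\ell$-subspace $U$ of $V\otimes\QQ_\ell$, the dimension of the $\mathbf{G}$-orbit of a spanning tuple of $U$ — that is, $\dim\mathbf{G}-\dim\{g\in\mathbf{G}:g|_U=\operatorname{id}\}$ — is at least $(\dim U)/\gamma_A$, with equality for $U=(V_I)_{\QQ_\ell}$. Establishing this inequality uniformly is the heart of the matter: it genuinely uses the structure of $\mathbf{G}$ as a Mumford--Tate group — its reductivity, the fact that the Hodge cocharacter has weights in $\{0,1\}$ on $V$, and careful bookkeeping of how the simple factors $A_i$ and their multiplicities $m_i$ combine — and amounts to proving that $\gamma_A$, defined as a maximum over the subsets $I$, really does control every orbit-growth exponent. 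Once the cardinality bound is in hand, the analogous bound for the maximal order of a torsion point in $A(L)$ follows by repeating the argument with $M$ ranging only over cyclic subgroups.
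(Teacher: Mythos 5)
Your outline follows the same route as the paper: reduce to a single prime via Serre's independence theorem, translate degrees of division fields into indices of stabilizers inside $\mathbf{G}(\ZZ_\ell)$ using bounded index (the paper's Theorem~\ref{T:finite index}), run a congruence/Lie-algebra filtration to reduce to a codimension bound, and identify that bound as the heart of the matter. Two points where your account diverges from what actually happens, and where I think you would get stuck:

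First, you attribute the codimension bound $\dim\mathbf{G}-\dim\mathbf{G}_U\ge(\dim U)/\gamma_A$ to ``reductivity plus the fact that the Hodge cocharacter has weights in $\{0,1\}$ plus bookkeeping of the factors $A_i$.'' This is not how the paper proves it, and the Hodge cocharacter actually plays no role at this step. The paper's Proposition~\ref{P:codim bound} rests on a clean and elementary \emph{slope-theory} argument at the level of Lie algebras (Lemmas~\ref{L: lemma slope} and~\ref{L:key lemma new}): defining $\alpha(\gG,W)=(\dim\gG-\dim\gG_W)/\dim W$, the modular-type inequality $\alpha(\gG,W+W')\dim(W+W')+\alpha(\gG,W\cap W')\dim(W\cap W')\le\alpha(\gG,W)\dim W+\alpha(\gG,W')\dim W'$ forces the minimizing subspace to be a direct sum of isotypic components. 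One then only needs (a) reductivity and (b) the identification of the isotypic decomposition of $V$ under $G_A$ with the isogeny decomposition, which is Lemma~\ref{L:GA isotypical} and uses only $\End_\QQ(V)^{G_A}\cong\End(A)\otimes\QQ$. The minuscule weight property (Pink) does enter the paper, but only later, in Proposition~\ref{P:redux finiteness}, to get \emph{finiteness of the mod-$\ell$ models as $\ell$ varies} -- it is a uniformity-in-$\ell$ tool, not the mechanism for the codimension inequality.

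Second, you wave away the finitely many excluded primes with ``affect only the constant.'' This hides a genuine difficulty, and it is in fact one of the main technical contributions of the paper (\S\ref{S: small primes}). The congruence-filtration identification $\log_\ell[\mathbf{G}(\ZZ_\ell):\operatorname{Stab}(M)]\approx\sum_j(a_j-a_{j+1})d_j$ with a controlled error \emph{presupposes} that the integral fixed-point schemes $\Fix_{\calG}(\Wcal)\subseteq\calG_{A,\ell}$ behave like smooth group schemes, so that the graded pieces of the congruence filtration have size $\ell^{\dim}$. For small $\ell$ the group scheme $\calG_{A,\ell}$ itself may fail to be reductive or even smooth, and the stabilizer schemes $\Fix_{\calG}(\Wcal)$ typically fail to be flat over $\ZZ_\ell$. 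Even for a single small prime, obtaining a bound on $|\Fix_{\calG}(\Wcal)(\ZZ/\ell^m\ZZ)|$ \emph{uniformly over all saturated submodules $\Wcal$} of a fixed rank requires real work: the paper packages the stabilizers into a scheme over the Grassmannian and annihilates the obstruction to smoothness by a uniform power of $\ell$ via a coherent-sheaf compactness argument (Proposition~\ref{prop:er} and Corollary~\ref{cor:SheafUniformlyAnnihilated}), and then proves an ad hoc lifting lemma (Lemma~\ref{lem:MainCountingLemma}) to recover a point count $\ll_\ell\ell^{d_r(G)(m-m')}$. Your sketch gives no substitute for this, and a direct ``Lie-algebra'' count will simply be wrong when the relevant schemes are not flat.

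So: same overall architecture, but the two places you flag as straightforward -- the uniformity over $\Wcal$ at small primes and the mechanism behind the codimension bound -- are precisely where the content lives, and your description of both is off target.
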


This result is in fact implied by an unconditional one, namely Theorem \ref{T:main unconditional}, relating $\beta_A$ to invariants associated to all the $\ell$-adic monodromy groups of $A$.

\begin{remark}
In addition to recovering all previous results on the Hindry--Ratazzi conjecture, Theorem \ref{T:main} proves many new cases of it: for example, the Mumford-Tate conjecture is known to hold for all geometrically simple abelian varieties of prime dimension \cite{MR701568}, while the Hindry--Ratazzi conjecture had not been previously proven for such varieties.
\end{remark}

In the case where $A$ is geometrically simple, the following shows that the converse of Theorem~\ref{T:main} holds.  So the Mumford--Tate conjecture assumption in Theorem~\ref{T:main} is reasonable and the value $\beta_A$ is an interesting arithmetic invariant of $A$. 

\begin{thm}  \label{T:equivalent version}
Let $A$ be a geometrically simple abelian variety defined over a number field $K$.   Then the Mumford--Tate conjecture for $A$ holds if and only if $\beta_A=\gamma_A$.
\end{thm}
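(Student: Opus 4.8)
The forward implication, that the Mumford--Tate conjecture forces $\beta_A=\gamma_A$, is exactly Theorem~\ref{T:main}; the substance of the statement is therefore the converse, and the plan is to reduce it to a numerical comparison between $G_A$ and the $\ell$-adic monodromy groups of $A$. Write $G_{A,\ell}^{\circ}$ for the identity component of the Zariski closure of the image of the $\ell$-adic Galois representation attached to $A$. Because $A$ is geometrically simple, $A_\CC$ is itself simple, so in the notation of the introduction $n=1$ and the only nonempty subset $I\subseteq\{1,\dots,n\}$ is $\{1\}$, with $A_I\sim A_\CC$; since the Mumford--Tate group depends only on the underlying rational Hodge structure we have $\dim G_{A_I}=\dim G_A$, and hence $\gamma_A=2\dim A/\dim G_A$. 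First I would unwind what Theorem~\ref{T:main unconditional} says in this situation: it should reduce to the assertion that $\beta_A$ is controlled by the dimensions of the $G_{A,\ell}^{\circ}$, and in particular it should provide, for every prime $\ell$, the lower bound
\[
\beta_A \ \geq\ \frac{2\dim A}{\dim G_{A,\ell}^{\circ}},
\]
coming from a tower of number fields over which the torsion of $A$ grows as fast as the action of $G_{A,\ell}^{\circ}(\ZZ_\ell)$ on $\ell$-power torsion allows.

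Next I would invoke the theorem of Deligne (also due to Borovoi and Piatetski--Shapiro) that $G_{A,\ell}^{\circ}$ is a closed subgroup of $G_A$ base changed to $\QQ_\ell$; in particular $\dim G_{A,\ell}^{\circ}\leq\dim G_A$ for every $\ell$, which already recovers the Hindry--Ratazzi inequality $\beta_A\geq\gamma_A$. Now assume $\beta_A=\gamma_A$. Substituting $\gamma_A=2\dim A/\dim G_A$ into the displayed lower bound gives $\dim G_{A,\ell}^{\circ}\geq\dim G_A$ for every prime $\ell$, and combined with Deligne's inclusion this forces $\dim G_{A,\ell}^{\circ}=\dim G_A$ for all $\ell$.

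Finally I would promote this equality of dimensions to the Mumford--Tate conjecture itself: $G_A$ is a connected algebraic group, hence so is its base change to $\QQ_\ell$, and a closed connected subgroup of the same dimension must be the whole group, so $G_{A,\ell}^{\circ}$ equals $G_A\times_\QQ\QQ_\ell$ for every $\ell$, which is the Mumford--Tate conjecture for $A$ in the form recalled in \S\ref{SS:MT conjecture}.

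I expect the first step to be the real obstacle: one must check that the group-theoretic quantities appearing in Theorem~\ref{T:main unconditional} genuinely collapse, in the geometrically simple case, to just the numbers $\dim G_{A,\ell}^{\circ}$, so that the prime-by-prime lower bound above is actually available. The hypothesis that $A$ be geometrically simple is indispensable here: for general $A$ one can have $\beta_A=\gamma_A$ while the Mumford--Tate conjecture fails, because a single proper isogeny factor $A_I$ may realize the maximum defining both $\beta_A$ and $\gamma_A$ and thereby ``hide'' the failure of the conjecture for a different factor; geometric simplicity is precisely what removes the maximum over $I$ and makes the equivalence hold. (If one prefers a formula for $\beta_A$ with no maximum over $\ell$, one uses that $\dim G_{A,\ell}^{\circ}$ is independent of $\ell$, but this is not needed for the argument as set up above.)
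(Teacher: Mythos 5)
Your proof is correct and takes essentially the same route as the paper: deduce $\gamma_A = 2\dim A/\dim G_A$ from geometric simplicity, combine Theorem~\ref{T:main unconditional} with Lemma~\ref{L:gamma connection}(i) to get $\beta_A \geq 2\dim A/\dim G_{A,\ell}^{\circ}$ for each $\ell$, and play this off against the inclusion $G_{A,\ell}^{\circ} \subseteq (G_A)_{\QQ_\ell}$ of Proposition~\ref{P:partial MTC}(i). The only cosmetic difference is that you extract $\dim G_{A,\ell}^{\circ} = \dim G_A$ for every $\ell$ directly and conclude by irreducibility of $(G_A)_{\QQ_\ell}$, whereas the paper gets the equality for a single $\ell$ and then invokes Proposition~\ref{P:partial MTC}(ii) to propagate to all primes.
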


We also show that the Hindry-Ratazzi conjecture (for all abelian varieties) is in fact equivalent to the Mumford--Tate conjecture (for all abelian varieties).

\begin{thm} \label{T:equivalent version 2}
The following are equivalent:
\begin{alphenum}
\item \label{T:equivalent version 2 a}
the Mumford--Tate conjecture holds for all abelian varieties $A$ defined over a number field,
\item \label{T:equivalent version 2 b}
$\beta_A=\gamma_A$ for all nonzero abelian varieties $A$ defined over a number field,
\item \label{T:equivalent version 2 c}
$\beta_A=\gamma_A$ for all geometrically simple abelian varieties $A$ defined over a number field.
\end{alphenum}
\end{thm}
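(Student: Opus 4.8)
The plan is to prove the cycle of implications (a) $\Rightarrow$ (b) $\Rightarrow$ (c) $\Rightarrow$ (a). Two of these are essentially formal. The implication (a) $\Rightarrow$ (b) is immediate from Theorem~\ref{T:main}: if the Mumford--Tate conjecture holds for \emph{every} abelian variety over a number field, then in particular it holds for each fixed $A$, and Theorem~\ref{T:main} gives $\beta_A=\gamma_A$. The implication (b) $\Rightarrow$ (c) is trivial, since geometrically simple abelian varieties form a subclass of all nonzero abelian varieties. All the content is therefore in the implication (c) $\Rightarrow$ (a).

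So assume (c), and let $A$ be an arbitrary nonzero abelian variety over a number field $K$; we must deduce the Mumford--Tate conjecture for $A$. By Poincaré reducibility over $\Kbar$, together with the fact that every abelian variety over $\Qbar$ and every isogeny between such is defined over a number field, there exist a finite extension $K'/K$, geometrically simple abelian varieties $B_1,\dots,B_n$ over $K'$, and integers $m_j\geq 1$ such that $A_{K'}$ is isogenous over $K'$ to $\prod_{j=1}^n B_j^{m_j}$ (the $B_j$ are geometrically simple because, $K$ and $K'$ being number fields, $\overline{K'}=\Kbar=\Qbar$, so a nontrivial decomposition of $(B_j)_{\overline{K'}}$ would already be one of a $\Kbar$-simple abelian variety). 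The Mumford--Tate conjecture is insensitive to finite extension of the base field (the Mumford--Tate group is unchanged, and the identity component of the $\ell$-adic monodromy group is unchanged because passing to $K'$ replaces that group by the Zariski closure of a finite-index subgroup of it), to isogeny, and to replacing an abelian variety by a power of it. Hence the Mumford--Tate conjecture for $A$ over $K$ is equivalent to the Mumford--Tate conjecture for $\prod_{j=1}^n B_j$ over $K'$. By hypothesis (c) we have $\beta_{B_j}=\gamma_{B_j}$ for every $j$, so Theorem~\ref{T:equivalent version} yields the Mumford--Tate conjecture for each geometrically simple $B_j$.

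It remains to pass from the Mumford--Tate conjecture for the individual factors $B_j$ to the Mumford--Tate conjecture for their product, and I expect this to be the main obstacle. The difficulty is that, a priori, the $\ell$-adic monodromy group and the Mumford--Tate group of a product each embed into the product of the corresponding groups of the factors, surjecting onto each coordinate but possibly as a proper subgroup — for instance, two non-isogenous non-CM elliptic curves share $\mathrm{PGL}_2$ as an adjoint quotient — so one must verify that the ``defect'' measured by Goursat's lemma is the same on the Betti and the $\ell$-adic sides. For abelian varieties this is known: by a theorem of Commelin, the Mumford--Tate conjecture for abelian varieties $A_1$ and $A_2$ over a number field implies it for $A_1\times A_2$, and an obvious induction then handles $\prod_{j=1}^n B_j$. (Alternatively, one may route the argument through the unconditional Theorem~\ref{T:main unconditional}, which expresses $\beta_A$ in terms of all the $\ell$-adic monodromy groups of $A$, thereby reducing the product statement to one purely about those groups.) Combining this with the previous reductions closes the cycle, and with it the proof. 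By contrast, Poincaré reducibility, the descent to a number field, and the isogeny/base-change/power invariance of the Mumford--Tate conjecture are all routine.
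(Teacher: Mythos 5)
Your proof is correct and follows essentially the same route as the paper: reduce via Poincaré reducibility and the invariance of the Mumford--Tate conjecture under finite base extension and isogeny to a product of geometrically simple factors, apply Theorem~\ref{T:equivalent version} to each factor, and then invoke Commelin's theorem (the paper's reference \cite{JC}) to pass from the factors to the product. The main content you correctly identify as ``the main obstacle'' — product invariance of the Mumford--Tate conjecture — is handled in the paper exactly by the citation you name.
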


\begin{remark}
Take any $\varepsilon>0$ and any torsion point $P \in A(\Kbar)$ whose order we denote by $n$.  A direct consequence of Theorem~\ref{T:main} is that, assuming the Mumford--Tate conjecture for $A$, we have 
\[
[K(P): K] \geq C \cdot n^{1/\gamma_A - \varepsilon},
\]
where $C$ is a positive constant depending only on $A$ and $\varepsilon$. Here, the constant $1/\gamma_A$ is often not best possible.  The next theorem describes an optimal version assuming the Mumford--Tate conjecture for $A$; an unconditional version can be found in \S\ref{S:one torsion point}. 
\end{remark}

With a fixed embedding $K\subseteq \CC$, the Mumford--Tate group $G_A$ of $A$ comes with a faithful action on the $\QQ$-vector space $V_A:=H_1(A(\CC),\QQ)$, cf.~\S\ref{SS:MT def}.

\begin{thm} \label{T:main d1}
Let $A$ be a nonzero abelian variety defined over a number field $K$ for which the Mumford--Tate conjecture holds.  Let $d$ be the minimal dimension of the orbits of the action of $G_A(\CC)$ on the nonzero vectors in $V_A \otimes_\QQ \CC$.    Take any real number $\varepsilon>0$.  Then for any integer $n\geq 1$ and any point $P \in A(\Kbar)$ of order $n$, we have 
\[
[K(P):K] \geq C\cdot n^{d - \varepsilon},
\]
where $C$ is a positive constant depending only on $A$ and $\varepsilon$.   Moreover, $d$ is the minimal real number that has this property.
\end{thm}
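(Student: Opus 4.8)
The plan is to recast the statement in terms of Galois orbits. Fix the embedding $K\subseteq\CC$, let $T_\ell A$ be the Tate module and $V_\ell A=T_\ell A\otimes\QQ_\ell$, let $G_\ell$ be the image of $\Gal(\Kbar/K)$ in $\GL(T_\ell A)$ and $G_{A,\ell}$ its Zariski closure. For $P\in A(\Kbar)$ of order $n$ one has $[K(P):K]=\#\big(\Gal(\Kbar/K)\cdot P\big)$, and writing the $\ell$-primary part of $P$ (for $\ell\mid n$) as $P_\ell=\ell^{-e}w+T_\ell A$ with $w\in T_\ell A$ primitive and $\ell^e\,\|\,n$, one has $\#(G_\ell\cdot P_\ell)=\#\{gw\bmod \ell^{e}T_\ell A:g\in G_\ell\}$. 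Also $d=\dim G_A-\max_{0\neq v}\dim\operatorname{Stab}_{G_A}(v)$ is a geometric quantity (unchanged by field extension), and since $G_A$ contains the homotheties it has no nonzero fixed vector, so $d\geq1$. With this dictionary the theorem splits into: (i) $\#(\Gal(\Kbar/K)\cdot P)\geq C n^{d-\varepsilon}$ for every torsion point $P$ of order $n$; and (ii) for every $\varepsilon>0$ there are torsion points of arbitrarily large order $n$ with $\#(\Gal(\Kbar/K)\cdot P)\leq C' n^{d+\varepsilon}$, which forces $d$ to be the optimal exponent.

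For (i) I would reduce to one prime at a time. Bogomolov's theorem gives that $G_\ell$ is open in $G_{A,\ell}(\QQ_\ell)$, and the Mumford--Tate conjecture gives $G_{A,\ell}^{\circ}=G_A\times_\QQ\QQ_\ell$; hence for every $\ell$ the minimal dimension of a $G_\ell$-orbit on the nonzero vectors of $V_\ell A$ equals $d$. The almost-independence of the $\ell$-adic representations attached to $A$ (Serre), in the form underlying Theorem~\ref{T:main unconditional}, yields $\#(\Gal(\Kbar/K)\cdot P)\geq B^{-1}\prod_{\ell\mid n}\#(G_\ell\cdot P_\ell)$ for a constant $B$ independent of $P$. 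So it suffices to bound $\#(G_\ell\cdot P_\ell)$ from below for $P_\ell=\ell^{-e}w$: since $G_\ell$ has index bounded uniformly in $\ell$ in $G_{A,\ell}(\ZZ_\ell)$ (again part of the unconditional machinery of Theorem~\ref{T:main unconditional}), and since the algebraic $G_{A,\ell}$-orbit of the primitive vector $w$ has dimension at least $d$, the image of $G_\ell\cdot w$ in $T_\ell A/\ell^{e}T_\ell A$ has at least $c_0\,\ell^{ed}$ elements with $c_0$ independent of $\ell$ outside a finite set and of $e$. Multiplying over $\ell\mid n$ loses only a factor $c_0^{\,\omega(n)}\geq n^{-\varepsilon}$ (for $n$ large, by $\omega(n)=o(\log n)$), so $[K(P):K]\geq C n^{d-\varepsilon}$.

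For (ii), let $Z\subseteq V_A$ be the $G_A$-stable Zariski-closed locus of $v$ with $\dim(G_A\cdot v)\leq d$; it is defined over $\QQ$, nonempty, and contains (over $\Qbar$) a geometrically irreducible component $Z_0$ of dimension $\geq d\geq1$, defined over some number field $F$. For infinitely many primes $\ell$ (those large enough and split in $F$), the Lang--Weil estimates together with Hensel's lemma produce a smooth nonzero $v_\ell\in Z_0(\QQ_\ell)$, which after scaling we take primitive in $T_\ell A$. For each $e\geq1$ the point $P^{(e)}:=\ell^{-e}v_\ell+T_\ell A$ has order $\ell^{e}$, and since $v_\ell$ lies on an algebraic $G_{A,\ell}$-orbit of dimension at most $d$, the image of $G_{A,\ell}(\ZZ_\ell)\cdot v_\ell$ in $T_\ell A/\ell^{e}T_\ell A$ has at most $C_\ell\,\ell^{ed}$ elements; hence $[K(P^{(e)}):K]=\#(G_\ell\cdot P^{(e)})\leq C_\ell\,\ell^{ed}$ with $C_\ell$ independent of $e$. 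As $\ell^{e}\to\infty$ this contradicts any bound of the form $n^{d'-\varepsilon}$ with $d'>d$, proving optimality.

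The main obstacle is the uniformity over all primes needed in (i): one must know both that $[G_{A,\ell}(\ZZ_\ell):G_\ell]$ stays bounded as $\ell$ varies and that Serre's almost-independence holds with a total index independent of $n$, and it is exactly the mild failure of these at finitely many $\ell$ — absorbed via $\omega(n)=o(\log n)$ — that makes the $\varepsilon$ unavoidable. A secondary, purely technical, point is the counting input that the reduction modulo $\ell^{e}$ of a $d$-dimensional $\ell$-adic orbit has order $\asymp\ell^{ed}$ uniformly in $e$ (and, for $\ell$ large, uniformly in $\ell$); this follows from the orbit map being a submersion onto a $d$-dimensional $\ell$-adic submanifold, but should be stated carefully.
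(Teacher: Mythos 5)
Your overall strategy matches the paper's: identify $d$ with $\dim G_A - d_1((G_A)_{\overline{\Q}})$, reduce to prime powers via Serre's almost-independence, prove a lower bound at each prime $\ell$ with a constant independent of $\ell$ outside a finite set, and prove optimality by producing a $\Q_\ell$-point in the small-orbit locus for some prime $\ell$ split in a suitable number field. (The paper does exactly this last step directly by Chebotarev, without Lang--Weil, but that is a cosmetic difference.)

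However, there is a genuine gap in what you call "a secondary, purely technical point," namely the uniform lower bound $\#\{gw \bmod \ell^e : g \in G_\ell\} \geq c_0 \ell^{ed}$ where $c_0$ is independent of $\ell$, of $e$, \emph{and of $w$}. Uniformity in $w$ is essential (different torsion points of the same $\ell$-power order give different primitive vectors $w$), and it does \emph{not} follow from the orbit map being a submersion. The orbit map $G_{A,\ell} \to G_{A,\ell}\cdot w$ is indeed smooth over $\Q_\ell$, since stabilizers in characteristic $0$ are automatically smooth by Cartier's theorem. But what one needs to count is reductions modulo $\ell^e$, i.e.\ one needs control over $\Z_\ell$, and the pointwise stabilizer group scheme $\Fix_{\Gcal_{A,\ell}}(\calW)$ over $\Z_\ell$ (with $\calW = \Z_\ell w$) can fail to be flat, hence a fortiori to be smooth. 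When it is not flat, the special fiber can have larger dimension than the generic fiber, and the submersion picture breaks modulo $\ell$. Invoking Oesterl\'e's theorem on $\ell$-adic point counts does give a lower bound for the orbit of a \emph{fixed} $w$, but the constant there depends on the analytic variety $G_\ell\cdot w$ and therefore on $w$; as $w$ approaches the locus where the stabilizer dimension jumps, there is no uniformity. The paper's proof overcomes this via the machinery of \S\ref{S: small primes}, in particular Proposition~\ref{propkey}, which in turn rests on Proposition~\ref{prop:er} (a uniform bound on the K\"ahler differentials of $\Fix_\Gcal(\calW)$ obtained by working with a coherent sheaf over the whole Grassmannian) and the counting Lemmas~\ref{lem:BijectionOnl^mPoints}--\ref{lem:MainCountingLemma}. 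This is the real analytic core of the theorem, not a routine verification.

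One minor imprecision: your claim that for every $\ell$ the minimal dimension of a $G_\ell$-orbit on nonzero vectors of $V_\ell(A)$ \emph{equals} $d$ is not correct in general; by Lemma~\ref{L:new d1 field case}(\ref{L:new d1 field case i}) one only gets $\geq d$ for arbitrary $\ell$, with equality for those $\ell$ admitting a $\Q_\ell$-point in the bad locus (which is precisely what Remark~\ref{rmk: computing d over Q and over C} is about). Since your part~(i) only uses the inequality $\geq d$, this does not break the argument, but the optimality part~(ii) implicitly relies on finding $\ell$ where equality holds, which is what the splitting condition achieves.
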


\begin{remark}\label{rmk: computing d over Q and over C}
It is easy to construct examples for which the minimal dimension $d_{\Q}$ of the orbits of the action of $G_A$ on the nonzero vectors in $V_A$ is strictly greater than the quantity $d$ appearing in the previous theorem. The simplest such case arises when $A$ is an elliptic curve with complex multiplication: $G_A$ is then a nonsplit torus of dimension 2, and one sees easily that $d_\Q=2$ while $d=1$.
\end{remark}

\subsection{Notation} \label{SS:notation}

\begin{itemize}
\item 
$\ell$ will always denote a rational prime.

 \item 
For a scheme $X$ over a commutative ring $R$ and a (commutative) $R$-algebra $S$, we will denote by $X_S$ the $S$-scheme $X \times_{\Spec R} \Spec S$.

\item 
Fix a commutative ring $R$ and a free $R$-module $M$ of finite rank.  We define $\GL_M$ to be the group scheme over $R$ such that for each (commutative) $R$-algebra $B$, we have $\GL_M(B)=\Aut_{B}(B\otimes_R M)$ with the obvious functoriality.   A choice of basis of the $R$-module $M$ induces an isomorphism $\GL_M \cong \GL_{d, R}$, where $d$ is the rank of $M$.  

\item
Let $G$ be an algebraic subgroup of $\GL_V$, where $V$ is a nonzero vector space over a field $k$.   For a subspace $W$ of $V$, we let $G_W$ be the algebraic subgroup of $G$ that fixes $W$; more precisely, we have $G_W(B)=\{g\in G(B): g w = w \text{ for all } w\in B\otimes_k W\}$ for all $k$-algebras $B$.   

\item  Let $R$ be a commutative ring, $\calG$ be a group subscheme of $\GL_{n, R}$, and $\Wcal$ be a submodule of $R^n$ such that $R^n/\Wcal$ is locally free of rank $n-d$. We define a group subscheme $\Fix_\Gcal(\Wcal)$ of $\calG$ by the functorial property
\[
 \Fix_\Gcal(\Wcal)(S) = \{ \varphi \in \Gcal(S) \, | \, \varphi_{|\Wcal \otimes_R S}  \textrm{ is the identity on }\Wcal \otimes_R S\}
\]
for every $R$-algebra $S$. When $R=k$ is a field and $\Wcal=W$ is a vector subspace of $k^n$ we have $\Fix_{\Gcal}(\Wcal) = \Gcal_W$.

\item  We define the Mumford--Tate group $G_A$ of an abelian variety $A$ in \S\ref{SS:MT def}.

\item We denote by $T_\ell(A)$ the $\ell$-adic Tate module of $A$ and set $V_\ell(A) = T_\ell(A) \otimes_{\Z_\ell} \Q_\ell$. The precise definitions of these objects are recalled in \S\ref{SS:monodromy groups}. We define the $\ell$-adic monodromy groups $G_{A,\ell} \subseteq \GL_{V_\ell(A)}$ over $\Q_\ell$ and   $\Gcal_{A,\ell} \subseteq \GL_{T_\ell(A)}$ over $\Z_\ell$ in \S\ref{SS:monodromy groups}.

\item 
For a field $k$, a finite-dimensional $k$-vector space $V$ and a linear algebraic subgroup $G$ of $\GL_{V}$, define the \defi{slope}
\[
\gamma_G := \max_{\substack{0 \neq W \subseteq V}} \frac{\dim W}{\dim G - \dim G_W},
\]
where we vary over the nonzero subspaces $W$ of $V$.   We interpret this as $\gamma_G=+\infty$, when for some nonzero subspace $W$, the stabilizer $G_W$ has finite index in $G$. If $\gamma_G$ is finite, we say that $G$ has \defi{finite slope}. Note that if $\gamma_G$ is finite, then $\gamma_G \leq \dim V$.

\item 
For two positive real numbers $a$ and $b$, by $a\ll b$ (or $b\gg a$), we mean that $a \leq C b$ for a positive constant $C$; the dependencies of the constant $C$ will always be indicated by subscripts.  For example, Masser's result mentioned above says that for any finite extension $L/K$ and number $\beta>\dim A$, we have $|A(L)_{\tors}| \ll_{A,\beta} [L:K]^{\beta}$.   We will write $a \asymp b$ to denote that  $a\ll b$ and $a \gg b$ both hold, where the dependencies in the implicit constants will be indicated by subscripts. 
\end{itemize}

\subsection{Overview}

In \S\ref{S:background}, we give some background on the $\ell$-adic representations associated to an abelian variety, review some of their uniformity properties, and recall the Mumford--Tate conjecture.

The group $G:=(G_A)_\CC$ acts on the complex vector space $V_\CC:=H_1(A(\CC),\QQ)\otimes_\QQ \CC$.    For each subspace $W$ of $V_\CC$, we have an algebraic subgroup $G_W$ of $G$ as defined in \S\ref{SS:notation}.  In \S\ref{S:codimension bounds new}, we prove that the inequality
\begin{align} \label{E:inequality intro}
\gamma_A\cdot (\dim G - \dim G_W) \geq \dim W
\end{align}
always holds and that $\gamma_A$ is the smallest real number with this property. 

Let $\ell$ be any prime. In \S\ref{S:prime power version} and \S\ref{S: small primes}, we prove a version of Theorem~\ref{T:main} for the subgroup $A(L)[\ell^\infty]$ consisting of the points of $A(L)$ whose order is a power of $\ell$.   More precisely, we prove that if the Mumford--Tate conjecture for $A$ holds, then for any finite extension $L/K$, we have $|A(L)[\ell^\infty]| \ll_{A, \ell}  [L:K]^{\gamma_A}$ (this follows from Theorem~\ref{T: small primes} and Lemma~\ref{L:gamma connection}(\ref{L:gamma connection ii})). Theorem~\ref{T:ell-adic version, large ell}, proved in \S\ref{S:prime power version}, further shows that the implicit constant can be taken to be independent of $\ell$. 

In \S\ref{S:beta bounds}, we obtain upper and lower bounds for $\beta_A$, which agree under the assumption of the Mumford--Tate conjecture. This establishes our main theorems. In \S\ref{S:some remarks}, we make some remarks on a conjectural expression for $\beta_A$ not involving Mumford--Tate groups.  In \S\ref{sec: Masser bound} we show that $\beta_A \leq \dim A$, with equality only for powers of CM elliptic curves. This shows that our results improve on Masser's bound except for those cases. Finally, in \S\ref{S:one torsion point}, we prove Theorem~\ref{T:main d1} and also give an unconditional version.

\subsection{Acknowledgments} The first author is supported by the IRS grant QUAD (Labex Persyval), a PEPS JCJC grant 2022 and the ANR project JINVARIANT. The second author gratefully acknowledges funding from MIUR (Italy, grant PRIN 2017 ``Geometric, algebraic and analytic
methods in arithmetic") and from the University of Pisa (grant PRA 2018-19 ``Spazi di moduli, rappresentazioni e strutture
combinatorie"). The first two authors thank Ga\"el Rémond for many inspiring discussions and for his insightful comments and ideas, which greatly helped them with writing parts of this paper. We are grateful to Marc Hindry and Nicolas Ratazzi for their interest in this work and for asking the questions that led to the results in \S\ref{S:Masser improvement}.

\section{Abelian varieties background}  \label{S:background}

In this section, except for \S\ref{SS:MT def}, we fix an abelian variety $A$ of dimension $g\geq 1$ defined over a number field $K$.   We review some of the theory of the $\ell$-adic representations associated to $A$ and the Mumford--Tate conjecture.

\subsection{Mumford--Tate groups} \label{SS:MT def}

Let $A$ be a nonzero abelian variety defined over $\CC$.  We now recall the definition of the Mumford--Tate group $G_A$ of $A$.    If instead $A$ is defined over a number field $K$, then with a fixed embedding $K\subseteq \CC$, we define $G_A$ to be the Mumford--Tate group of $A_\CC$. The choice of embedding $K$ into $\C$ does not affect any of the following constructions by \cite[Theorem 2.11]{MR654325}.

We view $A(\CC)$ as a topological space with its usual complex topology.  The first homology group $V:=H_1(A(\CC),\QQ)$ is a vector space of dimension $2\dim A$ over $\QQ$.   It is endowed with a $\QQ$-Hodge structure of type $\{(-1,0),(0,-1)\}$ from the Hodge decomposition, so
\[
V\otimes_\QQ\CC = H_1(A(\CC),\CC)=V^{-1,0} \oplus V^{0,-1}
\]
with $V^{0,-1}=\bbar{V^{-1,0}}$.  Let $\mu\colon \GG_{m,\CC} \to \GL_{V\otimes_\QQ\CC}$ be the cocharacter such that $\mu(z)$ is the automorphism of $V\otimes_\QQ\CC$ which is multiplication by $z$ on $V^{-1,0}$ and the identity on $V^{0,-1}$ for each $z\in\CC^\times=\GG_{m}(\CC)$.   The \defi{Mumford--Tate group} of $A$ is the smallest algebraic subgroup $G_A$ of $\GL_V$, defined over $\QQ$, which contains $\mu(\GG_{m,\CC})$.  

The ring of endomorphisms $\End(A)$ of the abelian variety $A/\CC$ acts on $V$, which induces an injective homomorphism $\End(A)\otimes_\ZZ \QQ \hookrightarrow \End_{\QQ}(V)$.  Denote by $\End_{\QQ}(V)^{G_A}$ the subring of $\End_{\QQ}(V)$ consisting of those elements that commute with $G_A$.
  
\begin{lemma} \label{L:MT basics}
\begin{romanenum}
\item  \label{L:MT basics i}
The group $G_A$ is connected and reductive.
\item  \label{L:MT basics ii}
The image of $\End(A)\otimes_\ZZ \QQ \hookrightarrow \End_{\QQ}(V)$ is $\End_{\QQ}(V)^{G_A}$.
\end{romanenum}
\end{lemma}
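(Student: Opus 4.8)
The plan is to derive both statements from the defining minimality property of $G_A$ — it is the smallest $\QQ$-subgroup of $\GL_V$ whose base change to $\CC$ contains $\mu(\GG_{m,\CC})$ — together with the existence of a polarization on $A$. For connectedness in \emph{(i)}: since $\QQ$ is perfect, the identity component $G_A^{\circ}$ is again defined over $\QQ$; as $\GG_{m,\CC}$ is connected, $\mu(\GG_{m,\CC})$ lies in the identity component of $(G_A)_\CC$, that is in $(G_A^{\circ})_\CC$, and minimality forces $G_A=G_A^{\circ}$.

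For reductivity I would fix a polarization of $A$ and let $\psi\colon V\times V\to\QQ$ be the resulting nondegenerate alternating form, which satisfies the Hodge--Riemann bilinear relations. The first of these gives $\psi(V^{-1,0},V^{-1,0})=\psi(V^{0,-1},V^{0,-1})=0$, from which one checks directly that $\psi(\mu(z)x,\mu(z)y)=z\,\psi(x,y)$ for all $z$; hence $\mu(\GG_{m,\CC})$ lies in the $\QQ$-group $\GSp(V,\psi)$, and minimality gives $G_A\subseteq\GSp(V,\psi)$. Consequently $G_A$ scales $\psi$ by a character, so the $\psi$-orthogonal complement of any $G_A$-submodule is again a $G_A$-submodule. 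On the other hand, any $G_A$-submodule $W\subseteq V$ is a sub-Hodge-structure, because $W\otimes_\QQ\CC$ is stable under $\mu$ and hence is the sum of its intersections with $V^{-1,0}$ and $V^{0,-1}$; therefore $\psi|_W$ is a polarization of $W$, in particular nondegenerate, so $V=W\oplus W^{\perp}$ as $G_A$-modules. Inducting on $\dim V$, the representation $V$ of $G_A$ is semisimple, and a linear algebraic group over a field of characteristic $0$ carrying a faithful semisimple representation is reductive (its unipotent radical is normal and unipotent, so it acts trivially on every irreducible constituent, hence on all of $V$).

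For \emph{(ii)} the plan is to identify all the relevant rings with $\End_{\mathrm{HS}}(V)$, the ring of morphisms of $\QQ$-Hodge structures $V\to V$. First, by the classical correspondence between complex abelian varieties up to isogeny and polarizable $\QQ$-Hodge structures of type $\{(-1,0),(0,-1)\}$, the functor $B\mapsto H_1(B(\CC),\QQ)$ is fully faithful, so the image of $\End(A)\otimes_\ZZ\QQ\hookrightarrow\End_\QQ(V)$ is exactly $\End_{\mathrm{HS}}(V)$, that is, the set of $\phi\in\End_\QQ(V)$ whose complexification maps $V^{-1,0}$ into itself (stability of $V^{0,-1}$ being automatic since $\phi$ is $\QQ$-rational). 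Next, for $z\neq 0,1$ the eigenspaces of $\mu(z)$ are precisely $V^{-1,0}$ and $V^{0,-1}$, so $\phi\in\End_\QQ(V)$ commutes with $\mu(\GG_{m,\CC})$ if and only if $\phi\in\End_{\mathrm{HS}}(V)$. Finally, commuting with $\mu(\GG_{m,\CC})$ is equivalent to commuting with $G_A$: one direction is immediate, and for the other, if $\phi$ commutes with $\mu(\GG_{m,\CC})$ then the centralizer $Z_{\GL_V}(\phi)$ is a $\QQ$-subgroup of $\GL_V$ containing $\mu(\GG_{m,\CC})$, hence contains $G_A$ by minimality. Chaining these identifications yields $\End_\QQ(V)^{G_A}=\End_{\mathrm{HS}}(V)$, the image of $\End(A)\otimes_\ZZ\QQ$.

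I expect part \emph{(i)} — specifically the passage from ``$G_A$ preserves the polarization up to scalars'' to the semisimplicity of $V$ — to be the crux; part \emph{(ii)} is then essentially formal bookkeeping with the minimality property, once the fully faithfulness statement is in hand. (Both parts are classical; cf.\ the work of Mumford, Deligne and others on Hodge structures of abelian varieties, which could alternatively just be cited.)
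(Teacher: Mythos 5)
Your argument is correct and self-contained; the paper, by contrast, gives no argument of its own and simply cites Propositions 17.3.4 and 17.3.6 of Birkenhake--Lange together with the observation that $G_A$ is generated by the Hodge group $\operatorname{Hg}(A)$ and $\GG_m$. What you have written out is essentially the proof that underlies that citation: the minimality argument for connectedness, the polarization argument for reductivity (the key point being that $G_A$ scales $\psi$ by a character, so orthogonal complements of sub-Hodge-structures give $G_A$-stable complements and hence semisimplicity of $V$, and a group with a faithful semisimple representation in characteristic $0$ is reductive), and for \emph{(ii)} the chain $\End(A)\otimes\QQ \cong \End_{\mathrm{HS}}(V) = \End_\QQ(V)^{\mu} = \End_\QQ(V)^{G_A}$, where the middle equality comes from $\mu$ having distinct eigenvalues $z$ and $1$ on $V^{-1,0}$ and $V^{0,-1}$, and the last from minimality (the centralizer of a $\QQ$-rational $\phi$ is a $\QQ$-group containing $\mu(\GG_{m,\CC})$). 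Your version has the pedagogical advantage of making the role of the polarization and of the minimality property explicit, at the cost of length; the paper's citation is terser but relies on the reader having the Birkenhake--Lange result at hand. One small redundancy: once you observe that a $G_A$-stable $W$ is a sub-Hodge-structure and that $\psi$ restricted to it is a polarization (hence nondegenerate), you do not separately need the character-scaling remark to conclude $W^\perp$ is $G_A$-stable --- although keeping both makes the structure of the argument clearer.
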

\begin{proof}
This follows from Propositions~17.3.4 and 17.3.6 of \cite{MR2062673};  note that the Mumford--Tate group $G_A$ is generated by the \emph{Hodge group} $\operatorname{Hg}(A)$ and the group $\GG_m$ of homotheties.
\end{proof}

The abelian variety $A$ is isogenous to a product $\prod_{i=1}^n A_i^{m_i}$, where the $A_i$ are simple abelian varieties over $\CC$ that are pairwise nonisogenous and the $m_i$ are positive integers.   A fixed isogeny induces an isomorphism
\begin{align} \label{E:V decomposition}
V=\bigoplus_{i=1}^n V_i
\end{align}
of $\QQ$-vector spaces, where each $V_i:= H_1(A_i^{m_i}(\CC),\QQ)$ is a representation of $G_A$.

For each subset $I \subseteq \{1,\ldots, n\}$, define the subspace $V_I:=\bigoplus_{i\in I}V_i$ of $V$ and the abelian variety $A_I:=\prod_{i\in I}A_i^{m_i}$.  We can identify $H_1(A_I(\CC),\QQ)$ with $V_I$.  The group $G_A$ acts on $V_I$ since it acts on each $V_i$.   The representation $G_A \to \GL_{V_I}$ gives a dominant homomorphism $G_A\to G_{A_I}$ of linear algebraic groups.  The kernel of $G_A\to G_{A_I}$ is $(G_A)_{V_I}$ and hence
\begin{align} \label{E:codim key}
\dim G_{A_I}= \dim G_A - \dim (G_A)_{V_I}.
\end{align}

\begin{lemma} \label{L:GA isotypical}
The direct sum (\ref{E:V decomposition}) is the decomposition of the representation $V$ of $G_A$ into isotypical components.
\end{lemma}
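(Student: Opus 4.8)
The plan is to identify the isotypic decomposition of $V$ as a representation of $G_A$ with the decomposition of the semisimple $\QQ$-algebra $\End_{G_A}(V)$ into its simple two-sided factors, and then to recognize the latter, via Lemma~\ref{L:MT basics}(\ref{L:MT basics ii}), as $\End(A)\otimes_\ZZ\QQ$, whose block structure is dictated by the simple isogeny factors $A_i$.

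First I would record the general fact. Since $G_A$ is reductive (Lemma~\ref{L:MT basics}(\ref{L:MT basics i})) and $\QQ$ has characteristic $0$, the representation $V$ is semisimple, so it decomposes uniquely as $V=\bigoplus_\lambda V^{(\lambda)}$ into isotypic components, where $V^{(\lambda)}$ is the sum of all irreducible $G_A$-submodules of $V$ lying in the isomorphism class $\lambda$. Writing $E:=\End_{G_A}(V)=\End_\QQ(V)^{G_A}$, we have $E=\prod_\lambda \End_{G_A}(V^{(\lambda)})$, and by Schur's lemma each $\End_{G_A}(V^{(\lambda)})$ is a matrix algebra over the division ring $\End_{G_A}(S_\lambda)$ (with $S_\lambda$ irreducible of class $\lambda$), hence simple. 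Therefore $E$ is semisimple, its primitive central idempotents are exactly the projections $e_\lambda\colon V\to V^{(\lambda)}$, and the isotypic components of $V$ are precisely the subspaces $eV$ as $e$ ranges over the primitive central idempotents of $E$.

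Next I would compute $E$. By Lemma~\ref{L:MT basics}(\ref{L:MT basics ii}), the image of $\End(A)\otimes_\ZZ\QQ\hookrightarrow\End_\QQ(V)$ is exactly $E$. Since $A$ is isogenous to $\prod_{i=1}^n A_i^{m_i}$ with the $A_i$ simple and pairwise nonisogenous, the standard structure theory of endomorphism algebras of abelian varieties (Poincaré reducibility together with $\Hom(A_i,A_j)=0$ for $i\neq j$) gives a ring isomorphism $\End(A)\otimes_\ZZ\QQ\cong\prod_{i=1}^n M_{m_i}(D_i)$, where $D_i:=\End(A_i)\otimes_\ZZ\QQ$ is a division algebra. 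Hence the primitive central idempotents of $E$ are the elements $\varepsilon_1,\dots,\varepsilon_n$, where $\varepsilon_i$ is the identity of the factor $M_{m_i}(D_i)$ and annihilates the remaining factors. Both this isomorphism and the decomposition (\ref{E:V decomposition}) are induced by the one fixed isogeny $A\to\prod_i A_i^{m_i}$, so $\varepsilon_i$ is represented (up to an integer denominator) by a self-isogeny of $A$ obtained by composing that isogeny, the projection onto the $i$-th factor, and an isogeny back to $A$; such a map induces on $H_1(-,\QQ)$ precisely the projector onto $V_i$ with kernel $\bigoplus_{j\neq i}V_j$. Combining the two paragraphs, the isotypic components of the $G_A$-module $V$ are exactly $V_1,\dots,V_n$, as claimed.

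The one point requiring genuine care — and the step I expect to be the main obstacle — is this last identification: one must ensure that the abstract block idempotents $\varepsilon_i$ of $\End(A)\otimes_\ZZ\QQ$ match the geometric summands $V_i$ of (\ref{E:V decomposition}) on the nose, rather than only up to a relabeling that could in principle scramble the $V_i$. This is exactly why I would insist that both decompositions come from the same fixed isogeny, so that their compatibility is built in; granting that, the remainder is routine semisimple-algebra bookkeeping.
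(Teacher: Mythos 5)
Your proof is correct and takes essentially the same approach as the paper: both rely on Lemma~\ref{L:MT basics}(\ref{L:MT basics i}) for semisimplicity, Lemma~\ref{L:MT basics}(\ref{L:MT basics ii}) together with $\End(A)\otimes_\ZZ\QQ\cong\prod_i M_{m_i}(D_i)$ to compute $\End_\QQ(V)^{G_A}$, and the matching of simple factors with the summands $V_i$. The ``relabeling'' concern you flag is handled in the paper by directly establishing $\End_\QQ(V)^{G_A}=\prod_i\End_\QQ(V_i)^{G_A}$ (applying Lemma~\ref{L:MT basics}(\ref{L:MT basics ii}) to each $A_i^{m_i}$), which pins the factors to the $V_i$ on the nose; your fixed-isogeny argument achieves the same thing.
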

\begin{proof}
Take any $i\in \{1,\ldots, n\}$ and set $I:=\{i\}$.   The subspace $V_I=V_i$ of $V$ is a representation of $G_{A}$ via the homomorphism $G_A \to G_{A_I}$.   We thus have
\[
\prod_{i=1}^n \End(A_i^{m_i}) \otimes_\ZZ\QQ = \prod_{i=1}^n \End_\QQ(V_i)^{G_A} \subseteq \End_\QQ(V)^{G_A} = \End(A) \otimes_\ZZ \QQ = \prod_{i=1}^n \End(A_i^{m_i}) \otimes_\ZZ \QQ,
\]
where the first two equalities follow from Lemma~\ref{L:MT basics}(\ref{L:MT basics ii}) and the last equality uses that the simple abelian varieties $A_i$ are pairwise nonisogenous.   Therefore, we have  $\End_\QQ(V)^{G_A}= \prod_{i=1}^n \End_\QQ(V_i)^{G_A}$ and each $\End_\QQ(V_i)^{G_A}$ is isomorphic to $M_{e_i}(D_i)$ for some integer $e_i\geq 1$ and division algebra $D_i$ (the ring $\End(A_i^{m_i}) \otimes_\ZZ \QQ$ is of this form since $A_i$ is simple).   The lemma is now a consequence of $V$ being a semisimple representation of $G_A$; this is true since $G_A$ is reductive by Lemma~\ref{L:MT basics}(\ref{L:MT basics i}).  
\end{proof}

\subsection{$\ell$-adic monodromy groups} \label{SS:monodromy groups}
Take any prime $\ell$.   For each integer $i \geq 1$, let $A[\ell^i]$ be the $\ell^i$-torsion subgroup of $A(\Kbar)$, where $\Kbar$ is a fixed algebraic closure of $K$.   The group $A[\ell^i]$ is a free $\ZZ/\ell^i\ZZ$-module of rank $2g$.  The \defi{$\ell$-adic Tate module} is 
\[
T_\ell(A):=\varprojlim_{i} A[\ell^i],
\] 
where the inverse limit is with respect to the multiplication by $\ell$ maps $A[\ell^{i+1}] \to A[\ell^i]$.   The Tate module $T_\ell(A)$ is a free $\ZZ_\ell$-module of rank $2g$.  Define $V_\ell(A):=T_\ell(A)\otimes_{\ZZ_\ell} \QQ_\ell$; it is a $\QQ_\ell$-vector space of dimension $2g$. 
We can identify $\GL_{V_\ell(A)}$ with the generic fiber of $\GL_{T_\ell(A)}$.

The Galois group $\Gal_K:=\Gal(\Kbar/K)$ acts on each $A[\ell^i]$ and respects the group structure.  This induces an action of $\Gal_K$ on $T_\ell(A)$ and $V_\ell(A)$.  The action of $\Gal_K$ on $V_\ell(A)$ respects the vector space structure and can thus be expressed in terms of a representation
\[
\rho_{A,\ell^\infty} \colon \Gal_K \to \Aut_{\Z_\ell}(T_\ell(A)) = \GL_{T_\ell(A)}(\Z_\ell) \subseteq \Aut_{\QQ_\ell}(V_\ell(A)) = \GL_{V_\ell(A)}(\QQ_\ell).
\]
The \defi{$\ell$-adic monodromy group} of $A$, which we denote by $G_{A,\ell}$, is the algebraic subgroup of $\GL_{V_\ell(A)}$ obtained by taking the Zariski closure of $\rho_{A,\ell^\infty}(\Gal_K)$.   The group $\rho_{A,\ell^\infty}(\Gal_K)$ is open in $G_{A,\ell}(\QQ_\ell)$, cf.~\cite{MR574307}.   Therefore, $G_{A,\ell}$ determines the group $\rho_{A,\ell^\infty}(\Gal_K)$ up to commensurability. 

We define $\calG_{A,\ell}$ to be the group subscheme of $\GL_{T_\ell(A)}$ obtained by taking the Zariski closure of $\rho_{A,\ell^\infty}(\Gal_K) \subseteq \GL_{T_\ell(A)}(\ZZ_\ell)$.     We can also describe $\calG_{A,\ell}$ as the Zariski closure of $G_{A,\ell}$ in $\GL_{T_\ell(A)}$.  The monodromy group $G_{A,\ell}$ is the generic fiber of the $\ZZ_\ell$-group scheme $\calG_{A,\ell}$.  

Denote by $G_{A,\ell}^\circ$ the \defi{neutral component} of $G_{A,\ell}$, i.e., the connected component of $G_{A,\ell}$ containing the identity element; it is an algebraic subgroup of $G_{A,\ell}$.  Denote by $K_{A,\ell}$ the finite extension of $K$ such that the kernel of the homomorphism 
\[
\Gal_K \xrightarrow{\rho_{A,\ell^\infty}} G_{A,\ell}(\QQ_\ell)\to G_{A,\ell}(\QQ_\ell)/G_{A,\ell}^\circ(\QQ_\ell)
\] 
is $\Gal(\Kbar/K_{A,\ell})$, where the second homomorphism is the obvious quotient map.   The following proposition was proved by Serre \cite{MR1730973}*{133}; see also \cite{MR1441234}. 

\begin{prop} \label{P:connected}
The extension $K_{A,\ell}/K$ is independent of $\ell$.   In particular, there is a finite extension $K'/K$ such that $G_{A_{K'},\ell}$ is connected for all $\ell$.
\end{prop}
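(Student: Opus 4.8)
The plan is to reduce the $\ell$-independence of $K_{A,\ell}$ to the Chebotarev density theorem together with an $\ell$-independent description of which Frobenius elements land in the identity component; this is essentially Serre's original argument. First, since $\rho_{A,\ell^\infty}(\Gal_K)$ is Zariski dense in $G_{A,\ell}$, it meets every connected component, so the composite $\Gal_K \to G_{A,\ell}(\QQ_\ell) \to G_{A,\ell}(\QQ_\ell)/G_{A,\ell}^\circ(\QQ_\ell)$ is surjective; thus $\Gal(K_{A,\ell}/K) \cong G_{A,\ell}(\QQ_\ell)/G_{A,\ell}^\circ(\QQ_\ell)$ is finite, and for a prime $\mathfrak p$ of $K$ of good reduction for $A$ with residue characteristic different from $\ell$ — which is then unramified in $K_{A,\ell}$ by Néron--Ogg--Shafarevich — the prime $\mathfrak p$ splits completely in $K_{A,\ell}$ if and only if $\rho_{A,\ell^\infty}(\Frob_{\mathfrak p}) \in G_{A,\ell}^\circ(\QQ_\ell)$. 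Since any finite Galois extension of $K$ is determined, up to a set of primes of density zero, by the set of primes that split completely in it, it suffices to prove: for every prime $\ell$, the set of primes $\mathfrak p$ of good reduction that split completely in $K_{A,\ell}$ agrees, outside a density-zero set, with a set of primes that does not depend on $\ell$.

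Next I would fix a prime $\mathfrak p$ of good reduction and a prime $\ell$ not below $\mathfrak p$, and put $x := \rho_{A,\ell^\infty}(\Frob_{\mathfrak p}) \in G_{A,\ell}(\QQ_\ell)$. By a theorem of Weil (the Riemann hypothesis for abelian varieties over finite fields), $x$ is semisimple, and its characteristic polynomial $P_{\mathfrak p}(T) \in \ZZ[T]$ on $V_\ell(A)$ has integer coefficients independent of $\ell$; write $\gamma_1,\dots,\gamma_{2g} \in \Qbar$ for its roots. Let $H_{\mathfrak p} \subseteq G_{A,\ell}$ be the Zariski closure of the cyclic group generated by $x$. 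Since $x$ is semisimple, $H_{\mathfrak p}$ is diagonalizable, its identity component $H_{\mathfrak p}^\circ$ is a torus, which, being connected, lies in $G_{A,\ell}^\circ$; and a short computation with the character group of $H_{\mathfrak p}$ shows that $H_{\mathfrak p}$ is connected if and only if there is no $(a_1,\dots,a_{2g}) \in \ZZ^{2g}$ for which $\prod_i \gamma_i^{a_i}$ is a root of unity different from $1$. Call this condition $(\star)$; it is a condition on $P_{\mathfrak p}(T)$ alone, hence independent of $\ell$.

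It then remains to compare the conditions ``$x \in G_{A,\ell}^\circ(\QQ_\ell)$'' and ``$H_{\mathfrak p}$ is connected''. One direction is immediate: if $H_{\mathfrak p}$ is connected then $H_{\mathfrak p} = H_{\mathfrak p}^\circ \subseteq G_{A,\ell}^\circ$, so $x \in G_{A,\ell}^\circ(\QQ_\ell)$. For the converse I would invoke Serre's theorem that Frobenius tori are generically maximal: there is a density-one set of primes $\mathfrak p$ of good reduction for which $H_{\mathfrak p}^\circ$ is a maximal torus of $G_{A,\ell}^\circ$. For such $\mathfrak p$, if $x \in G_{A,\ell}^\circ(\QQ_\ell)$ then $H_{\mathfrak p} \subseteq G_{A,\ell}^\circ$; as $H_{\mathfrak p}$ is commutative, $x$ centralizes the maximal torus $H_{\mathfrak p}^\circ$ of the connected reductive group $G_{A,\ell}^\circ$ (reductivity being Faltings' semisimplicity theorem), and the centralizer of a maximal torus in a connected reductive group is that torus; hence $x \in H_{\mathfrak p}^\circ(\QQ_\ell)$ and $H_{\mathfrak p}$ is connected. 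So, for every $\ell$, outside a density-zero set the prime $\mathfrak p$ splits completely in $K_{A,\ell}$ precisely when $(\star)$ holds, a criterion not involving $\ell$; by the first paragraph, $K_{A,\ell}$ is independent of $\ell$. For the last assertion, set $K' := K_{A,\ell}$ (for any, hence every, $\ell$): then $\rho_{A,\ell^\infty}(\Gal_{K'})$ has finite index in the open subgroup $\rho_{A,\ell^\infty}(\Gal_K)$ of $G_{A,\ell}(\QQ_\ell)$ and lies in $G_{A,\ell}^\circ(\QQ_\ell)$, hence is open, hence Zariski dense, in the connected group $G_{A,\ell}^\circ$, so its Zariski closure $G_{A_{K'},\ell}$ equals $G_{A,\ell}^\circ$, which is connected.

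I expect the main obstacle to be exactly the converse implication in the third paragraph: a priori $\rho_{A,\ell^\infty}(\Frob_{\mathfrak p})$ might lie in $G_{A,\ell}^\circ$ while generating a \emph{disconnected} subgroup, and only Serre's ``Frobenius tori'' result — itself a Chebotarev-type argument inside the reductive group $G_{A,\ell}$, exploiting that a regular semisimple element lies in a unique maximal torus — rules this out for almost all $\mathfrak p$. Everything else (Zariski density and openness of the image, Néron--Ogg--Shafarevich, integrality and $\ell$-independence of $P_{\mathfrak p}$, semisimplicity of Frobenius, reductivity of $G_{A,\ell}^\circ$, and the Chebotarev density theorem) is standard.
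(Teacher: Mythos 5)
Your argument is correct and is essentially a faithful reconstruction of Serre's original proof (Coll\`ege de France 1984--85 course, item 133 of the Collected Papers IV), which is exactly what the paper cites for this proposition; the paper itself gives no independent argument. The key ingredients you identify---semisimplicity and $\ell$-independence of $P_{\mathfrak p}(T)$, the characterization of connectedness of $H_{\mathfrak p}$ via torsion-freeness of its character group (``neat'' primes), the Frobenius-tori theorem to get the converse up to density zero, reductivity from Faltings, and Chebotarev to pass from split sets to fields---are precisely the ones in Serre's argument, and the deduction of the ``in particular'' clause from $\ell$-independence is standard.
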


We will make extensive use of the following fundamental results of Faltings.

\begin{prop}[Faltings] \label{P:Faltings}
\begin{romanenum}
\item \label{P:Faltings i}
The representation $V_\ell(A)$ of $\Gal_K$ is semisimple.
\item \label{P:Faltings ii}
The map $\End(A)\otimes_\ZZ \QQ_\ell \to \End_{\QQ_\ell[\Gal_K]}(V_\ell(A))$ is an isomorphism of $\QQ_\ell$-algebras.
\item \label{P:Faltings iii}
For any abelian variety $B$ over $K$, the map $\Hom(A,B)\otimes_\ZZ \QQ_\ell \to \Hom_{\QQ_\ell[\Gal_K]}(V_\ell(A),V_\ell(B))$ is an isomorphism of $\QQ_\ell$-vector spaces.   
\end{romanenum}
\end{prop}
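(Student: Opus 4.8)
The plan is to reduce all three assertions to Faltings' finiteness theorem and then run Tate's argument, which is largely formal once that theorem is available; I only sketch the strategy, as the core is deep. The key input is the \emph{finiteness of the isogeny class}: up to $K$-isomorphism, the $K$-isogeny class of $A$ contains only finitely many abelian varieties (call this (F)). Granting (F), parts (i)--(iii) follow by Tate's method carried out over the number field $K$, and since (iii) with $B=A$ specializes to (ii) it is enough to prove (i) and (iii).

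\emph{Proving (F).} This is where all the depth lies. By the criterion of N\'eron--Ogg--Shafarevich, every abelian variety $K$-isogenous to $A$ has good reduction outside the finite set $S$ of primes of bad reduction of $A$, so it suffices to bound the number of such varieties. One combines two pillars. First, \textbf{(F1)}: the stable Faltings height $h_{\Fcal}$ takes only finitely many values on the $K$-isogeny class of $A$. After the semistable reduction theorem lets one work with semiabelian models, the variation of $h_{\Fcal}$ along an isogeny $\phi\colon A\to A'$ is governed by the structure of the associated $p$-divisible groups and of $\ker\phi$ as a finite flat group scheme over the rings $\mathcal{O}_K\otimes\ZZ_p$; Tate's classification of $p$-divisible groups over $p$-adic integer rings together with Raynaud's estimates for finite flat group schemes of small ramification bound $|h_{\Fcal}(A)-h_{\Fcal}(A')|$ independently of $\phi$, which suffices since $\ell$-power isogenies generate. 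Second, \textbf{(F2)}: a Northcott-type finiteness on moduli --- there are only finitely many $K$-isomorphism classes of principally polarized abelian varieties of dimension $g$ with good reduction outside $S$ and bounded Faltings height. Here one compares $h_{\Fcal}$ with the Weil height of an ample line bundle on a toroidal compactification of $\calA_g$; the two differ by $O(\log(\cdots))$, the subtle point being the behaviour near the boundary, after which Northcott's theorem applies. (Passing from polarized to unpolarized abelian varieties is handled by Zarhin's trick and by the finiteness of polarizations of bounded degree.) Together (F1) and (F2) give (F).

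\emph{Deducing (i)--(iii) from (F).} For (i): given a $\Gal_K$-stable subspace $W\subseteq V_\ell(A)$, set $\Lambda_n:=(W\cap T_\ell(A))+\ell^n T_\ell(A)$ for $n\geq 0$, a $\Gal_K$-stable lattice with $\ell^n T_\ell(A)\subseteq\Lambda_n\subseteq T_\ell(A)$; then $T_\ell(A)/\Lambda_n$ is a $\Gal_K$-stable finite subgroup scheme of $A[\ell^n]$, giving an isogeny $\phi_n\colon A\to A_n:=A/(T_\ell(A)/\Lambda_n)$. By (F) infinitely many of the $A_n$ are mutually $K$-isomorphic; composing $\phi_m$ (for $m$ in that infinite set) with an isomorphism $A_m\xrightarrow{\sim}A_n$ and with the isogeny complementary to $\phi_n$ yields, after normalization, a bounded sequence in $\End(A)\otimes\ZZ_\ell$, any limit point of which is a $\Gal_K$-equivariant idempotent of $V_\ell(A)$ with image $W$; hence $V_\ell(A)=W\oplus\ker$ is the desired splitting. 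For (iii): a $\Gal_K$-equivariant homomorphism $f\colon V_\ell(A)\to V_\ell(B)$ has graph a $\Gal_K$-stable subspace of $V_\ell(A\times B)$, and applying the same lattice-plus-finiteness argument to $A\times B$ shows this graph is, up to $\QQ_\ell$-isogeny, cut out by an abelian subvariety of $A\times B$; equivalently $f$ lies in the image of $\Hom(A,B)\otimes\QQ_\ell\to\Hom_{\QQ_\ell[\Gal_K]}(V_\ell(A),V_\ell(B))$, while injectivity of this map is classical since a nonzero isogeny induces an isomorphism on Tate modules. (That $\End(A)\otimes\QQ$ is semisimple, by Poincar\'e reducibility, is used to organize the bicommutant bookkeeping, and yields (ii) as the case $B=A$.)

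The main obstacle is entirely contained in step (F): the $p$-divisible group and finite-flat-group-scheme estimates behind the isogeny-boundedness of the Faltings height (in particular the contribution of the different and Raynaud's small-ramification bounds) in (F1), and the control of the Faltings height against a Weil height on $\calA_g$ near the boundary in (F2). Everything past (F), including all of (i)--(iii), is formal by comparison.
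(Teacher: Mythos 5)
The paper does not give a proof of this statement at all: it simply cites \cite{MR861971}, \S5 (Faltings). Your sketch is a correct high-level outline of what that reference actually establishes --- finiteness of the $K$-isogeny class of $A$ via bounded variation of the Faltings height under isogeny together with a Northcott-type finiteness on $\calA_g$, followed by Tate's lattice argument --- so the two agree in substance, with yours merely expanding the one-line citation.
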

\begin{proof}
These are proven in \S5 of \cite{MR861971}.
\end{proof}

\begin{prop} \label{P:reductive}
Assume that the groups $G_{A,\ell}$ are connected for all $\ell$.
\begin{romanenum}
\item \label{P:reductive i}
The algebraic group $G_{A,\ell}$ is reductive for all $\ell$.
\item \label{P:reductive ii}
 For $\ell\gg_A 1$, the $\ZZ_\ell$-group scheme $\calG_{A,\ell}$ is reductive.
\end{romanenum}
\end{prop}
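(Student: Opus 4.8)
The plan is to prove part~(i) by a soft argument resting on Faltings and the structure theory of algebraic groups in characteristic zero, and to prove part~(ii) by combining part~(i) with finer integral information about $\rho_{A,\ell^\infty}$ at the prime $\ell$ coming from $\ell$-adic Hodge theory. For part~(i): since $\rho_{A,\ell^\infty}(\Gal_K)$ is Zariski dense in $G_{A,\ell}$, a $\QQ_\ell$-subspace of $V_\ell(A)$ is $G_{A,\ell}$-stable if and only if it is $\Gal_K$-stable, so Proposition~\ref{P:Faltings}(\ref{P:Faltings i}) shows that $V_\ell(A)$ is a semisimple representation of $G_{A,\ell}$. Let $U$ be the unipotent radical of $G_{A,\ell}$; it is a smooth connected normal subgroup, $\QQ_\ell$ being of characteristic zero. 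For any nonzero subrepresentation $W\subseteq V_\ell(A)$ the subspace $W^{U}$ of $U$-invariants is nonzero, because a nontrivial unipotent group over a field has a nonzero fixed vector on any nonzero representation, and it is $G_{A,\ell}$-stable because $U$ is normal; iterating this observation and using semisimplicity, $U$ acts trivially on $V_\ell(A)$. As $G_{A,\ell}\hookrightarrow \GL_{V_\ell(A)}$ is faithful, $U$ is trivial, and $G_{A,\ell}$---which is connected by hypothesis---is reductive.

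For part~(ii): the $\ZZ_\ell$-scheme $\calG_{A,\ell}$ is flat (it is of finite type over the discrete valuation ring $\ZZ_\ell$ and is $\ell$-torsion free, being a closed subscheme of the $\ZZ_\ell$-flat scheme $\GL_{T_\ell(A)}$) and irreducible (it is the Zariski closure of $G_{A,\ell}$), with generic fibre $G_{A,\ell}$; the latter is connected and reductive by part~(i) and the hypothesis, and the special fibre, being a Cartier divisor on $\calG_{A,\ell}$, again has dimension $\dim G_{A,\ell}$. It therefore remains to prove that for $\ell\gg_A 1$ the special fibre $\calG_{A,\ell}\otimes_{\ZZ_\ell}\FF_\ell$ is smooth (equivalently, that $\calG_{A,\ell}$ is smooth over $\ZZ_\ell$), connected, and reductive---these three properties together being exactly the assertion that $\calG_{A,\ell}$ is a reductive $\ZZ_\ell$-group scheme. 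I would extract them from the behaviour of $\rho_{A,\ell^\infty}$ at $\ell$: for $\ell$ larger than a bound depending only on $g=\dim A$, $\ell$ is unramified in $K$ and every prime of $K$ above $\ell$ is one of good reduction for $A$, so the restriction of $\rho_{A,\ell^\infty}$ to the decomposition groups above $\ell$ is crystalline with Hodge--Tate weights in $\{0,1\}$ and $A[\ell]$ is a Fontaine--Laffaille module. Fontaine--Laffaille theory then yields, for $\ell\gg_A 1$, a mod-$\ell$ counterpart of Faltings' semisimplicity and endomorphism theorems, which forces the reduction of $\rho_{A,\ell^\infty}(\Gal_K)$ modulo $\ell$ to be large enough to pin down $\calG_{A,\ell}$ as a smooth $\ZZ_\ell$-group scheme; Proposition~\ref{P:connected}, applied compatibly at the mod-$\ell$ and $\ell$-adic levels, gives connectedness of the special fibre; and the same control on the mod-$\ell$ image rules out a unipotent radical there. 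Alternatively, one may simply invoke the known structure results on the integral $\ell$-adic monodromy of abelian varieties, which identify $\calG_{A,\ell}$ for $\ell\gg_A 1$ with the reductive $\ZZ_\ell$-model of $G_{A,\ell}$.

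The main obstacle is this last point---reductivity of the special fibre. The argument used for part~(i) does not carry over to characteristic $\ell$, since reductive groups there admit non-semisimple representations, so semisimplicity of $A[\ell]$ by itself does not suffice; one genuinely needs the stronger fact that, once $\ell$ is large, the mod-$\ell$ image is as large as the $\ell$-adic Mumford--Tate picture permits. This is precisely where the hypothesis $\ell\gg_A 1$ and the input from $\ell$-adic Hodge theory are essential, and organizing this step cleanly---rather than the flatness and dimension bookkeeping, which is routine---is where the real work lies.
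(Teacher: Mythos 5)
Your argument for part~(\ref{P:reductive i}) is correct and standard: Zariski density transfers Faltings semisimplicity from $\Gal_K$ to $G_{A,\ell}$, and then the fixed-vector theorem for unipotent groups applied to the unipotent radical $U$, together with normality of $U$ and faithfulness of $G_{A,\ell}\hookrightarrow\GL_{V_\ell(A)}$, forces $U=1$. The paper simply asserts that (\ref{P:reductive i}) is ``a direct consequence'' of Proposition~\ref{P:Faltings}(\ref{P:Faltings i}); you have supplied the expected details, and they match.

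For part~(\ref{P:reductive ii}), the paper does not give a proof either: it cites Larsen--Pink \cite{MR1339927} (with Wintenberger \cite{MR1944805} as an alternative reference). You correctly isolate the genuine difficulty --- flatness and dimension of the $\ZZ_\ell$-closure are routine, and the real issue is smoothness, connectedness, and reductivity of the special fibre $\calG_{A,\ell}\otimes\FF_\ell$ for $\ell\gg_A 1$, where the characteristic-zero semisimplicity argument does not transfer. However, your proposal does not actually discharge that difficulty. The step ``Fontaine--Laffaille theory then yields $\dots$ forces the reduction $\dots$ to be large enough to pin down $\calG_{A,\ell}$ as a smooth $\ZZ_\ell$-group scheme'' is a restatement of the conclusion, not a derivation of it; this is precisely the nontrivial content of the cited theorems. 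In addition, your appeal to Proposition~\ref{P:connected} for connectedness of the special fibre is misplaced: that proposition concerns $\ell$-independence of the field cutting out the neutral component of the \emph{generic} fibre $G_{A,\ell}$ over $\QQ_\ell$, and says nothing directly about the connectedness of $\calG_{A,\ell}\otimes\FF_\ell$, which is a separate and delicate statement about the integral model. Finally, the clause ``$\ell$ larger than a bound depending only on $g=\dim A$'' is imprecise for unramifiedness of $\ell$ in $K$ and good reduction at primes above $\ell$; these bounds depend on $A$ and $K$, not merely on $g$, which is why the statement is phrased with $\ell\gg_A 1$. So: part~(\ref{P:reductive i}) is a correct fill-in of what the paper leaves implicit; for part~(\ref{P:reductive ii}) your write-up identifies the right obstacle and the right literature, but as an independent argument it has a gap exactly where you flag the ``real work'' to be, and the paper itself resolves it by citation rather than proof.
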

\begin{proof}
Part (\ref{P:reductive i}) is a direct consequence of Proposition~\ref{P:Faltings}(\ref{P:Faltings i}).  Part (\ref{P:reductive ii}) is proved in \cite{MR1339927} though also see \cite{MR1944805}*{\S1.3}.
\end{proof}

\subsection{The Mumford--Tate conjecture} \label{SS:MT conjecture}

After fixing an embedding $\Kbar\subseteq \CC$, the \defi{comparison isomorphism} $V_\ell(A)\cong V \otimes_\QQ \QQ_\ell$ induces an isomorphism $\GL_{V_\ell(A)} \cong \GL_{V,\,\QQ_\ell}$.   The following conjecture says that $G_{A,\ell}^\circ$ and $(G_A)_{\QQ_\ell}$ are the same algebraic group when we use the comparison isomorphism as an identification, cf.~\cite[{\S3}]{MR0476753}.

\begin{conj}[Mumford--Tate conjecture for $A$]   \label{C:MT}
For each prime $\ell$, we have $G_{A,\ell}^\circ= (G_A)_{\QQ_\ell}$.
\end{conj}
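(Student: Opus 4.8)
The statement above is the Mumford--Tate conjecture, which is open in general; accordingly I describe the strategy a proof would follow and isolate where it stalls.

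First I would record that one of the two inclusions is already a theorem. By Deligne's results on absolute Hodge cycles on abelian varieties, together with the comparison isomorphism $V_\ell(A)\cong V\otimes_\QQ\QQ_\ell$, one has $G_{A,\ell}^\circ\subseteq (G_A)_{\QQ_\ell}$ for every $\ell$: the $\ell$-adic monodromy group fixes every absolute Hodge class, while $G_A$ is cut out inside $\GL_V$ by the Hodge tensors. Using Proposition~\ref{P:connected} I may replace $K$ by a finite extension and assume $G_{A,\ell}$ is connected for all $\ell$, so it remains to prove $\dim G_{A,\ell}\geq\dim G_A$, i.e. the reverse inclusion $(G_A)_{\QQ_\ell}\subseteq G_{A,\ell}$.

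Next I would reformulate this in terms of invariant tensors. By Proposition~\ref{P:Faltings} the $\Gal_K$-module $V_\ell(A)$ is semisimple with endomorphism algebra $\End(A)\otimes\QQ_\ell$; applying this to powers and duals of $A$ and invoking the theorem that a reductive group is determined inside $\GL_V$ by its fixed tensors, the inclusion $(G_A)_{\QQ_\ell}\subseteq G_{A,\ell}$ would follow once one knows that every $\Gal_K$-invariant tensor in each space $V_\ell(A)^{\otimes a}\otimes V_\ell(A)^{\vee\,\otimes b}$ is a $\QQ_\ell$-linear combination of Hodge tensors, i.e. of tensors fixed by $G_A$. Thus the plan reduces to a Tate-conjecture-type statement: Galois-invariant $\ell$-adic tensors on $A$ come from Hodge classes. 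To make partial progress one passes to geometrically simple $A$ via the isotypical decomposition of $V$ (Lemma~\ref{L:GA isotypical}) and $G_{A,\ell}\hookrightarrow\prod_i G_{A_i,\ell}$, and then tries to pin down $G_{A,\ell}$ inside $(G_A)_{\QQ_\ell}$ up to conjugacy using two classical inputs: Serre's Frobenius-torus argument, which exploits that Frobenius eigenvalues are Weil numbers and the $\ell$-independence of Frobenius conjugacy classes to give $\rank G_{A,\ell}=\rank G_A$; and the Zarhin--Pink classification of the reductive subgroups of $\GL_{V_\ell(A)}$ compatible with the endomorphism algebra and with the Hodge cocharacter $\mu$. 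Together with the Deligne inclusion these already settle many cases, e.g. abelian varieties of prime dimension.

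The main obstacle is precisely the gap these inputs cannot close: nothing forces the connected reductive group $G_{A,\ell}^\circ$, a priori a proper full-rank subgroup of $(G_A)_{\QQ_\ell}$, to exhaust the Mumford--Tate group, because establishing the required statement about invariant tensors is essentially as hard as the Tate conjecture for all powers of $A$. Absent such an input the conjecture remains open; what the present paper does instead is show (Theorems~\ref{T:main} and~\ref{T:main unconditional}) that the torsion invariant $\beta_A$ is computable from the $\ell$-adic monodromy groups unconditionally, and coincides with $\gamma_A$ exactly when the conjecture above is available.
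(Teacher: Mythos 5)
The statement you were asked to prove is labeled \emph{Conjecture} in the paper (Conjecture~\ref{C:MT}), and the paper does not prove it: in \S\ref{SS:MT conjecture} the authors explicitly say the Mumford--Tate conjecture is still open, record the two known partial results in Proposition~\ref{P:partial MTC} (the inclusion $G_{A,\ell}^\circ\subseteq (G_A)_{\QQ_\ell}$ from Deligne's theory of absolute Hodge cycles, and the rank criterion of Larsen--Pink), and thereafter either assume the conjecture as a hypothesis (Theorems~\ref{T:main}, \ref{T:equivalent version}, \ref{T:main d1}) or work unconditionally in terms of the $\ell$-adic monodromy groups (Theorem~\ref{T:main unconditional}). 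There is therefore no proof in the paper against which to compare your argument.

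Your proposal correctly recognizes all of this. You identify the Deligne inclusion as established, you observe that the remaining inclusion $(G_A)_{\QQ_\ell}\subseteq G_{A,\ell}^\circ$ would follow from a Tate-conjecture-type statement that Galois-invariant tensors on powers of $A$ are Hodge, and you accurately diagnose the gap: a connected full-rank reductive subgroup of $(G_A)_{\QQ_\ell}$ satisfying all the constraints (Frobenius tori, minuscule cocharacter, correct commutant) need not be all of $(G_A)_{\QQ_\ell}$, and closing this gap is essentially as hard as the Tate conjecture. This matches the paper's own framing, where the conjecture is a standing hypothesis rather than a result, and where the unconditional substitute is the identity $\beta_A=\max_\ell\gamma_{A,\ell}$. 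Your account of what is known and what is missing is accurate and appropriately cautious.
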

 
The Mumford--Tate conjecture is still open, however significant progress has been made in showing that several general classes of abelian varieties satisfy the conjecture; we simply refer the reader to \cite[{\S1.4}]{MR2400251} for a partial list of references.  

\begin{prop} \label{P:partial MTC}
\begin{romanenum}
\item \label{P:partial MTC i}
For each prime $\ell$, we have $G_{A,\ell}^\circ\subseteq (G_A)_{\QQ_\ell}$.
\item \label{P:partial MTC ii}

The Mumford--Tate conjecture for $A$ holds if and only if the common rank of the groups $G_{A,\ell}^\circ$ equals the rank of $G_{A}$; in particular, the conjecture holds for one prime $\ell$ if and only if it holds for all $\ell$.  
\end{romanenum}
\end{prop}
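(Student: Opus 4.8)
The plan is to prove the two parts separately: part \ref{P:partial MTC i} via Deligne's theorem that Hodge cycles on abelian varieties are absolutely Hodge, and part \ref{P:partial MTC ii} by combining the $\ell$-independence of $\rank G_{A,\ell}^\circ$ (Serre) with the fact that $G_{A,\ell}^\circ$ contains a Hodge cocharacter.

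For part \ref{P:partial MTC i}, I would first reduce to the connected case. By Proposition~\ref{P:connected} there is a finite extension $K'/K$, which we may also enlarge so that all endomorphisms of $A_{\Kbar}$ are defined over $K'$, such that $G_{A_{K'},\ell} = G_{A,\ell}^\circ$ for every $\ell$ (the Zariski closure of $\rho_{A,\ell^\infty}(\Gal_{K'})$ is connected once $\rho_{A,\ell^\infty}(\Gal_{K'})$ is open in $G_{A,\ell}^\circ(\QQ_\ell)$); since the Mumford--Tate group is unchanged by field extension, it suffices to show $G_{A_{K'},\ell}\subseteq (G_{A_{K'}})_{\QQ_\ell}$. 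Now I would invoke Deligne's theorem: under the comparison isomorphism $(G_A)_{\QQ_\ell}$ is the pointwise stabilizer in $\GL_{V_\ell(A)}$ of the $\ell$-adic realizations of all Hodge tensors lying in the spaces $V_\ell(A)^{\otimes a}\otimes (V_\ell(A)^\vee)^{\otimes b}$, and Deligne's theorem says precisely that these realizations are $\Gal_{K'}$-invariant; hence the Zariski closure $G_{A_{K'},\ell}$ of $\rho_{A,\ell^\infty}(\Gal_{K'})$ lies in that stabilizer, as wanted.

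For part \ref{P:partial MTC ii}, the implication ``the conjecture holds at some $\ell$ $\Rightarrow$ the rank equality holds'' is trivial, so the content is the converse plus the independence of $\ell$. The first ingredient is Serre's theorem that $\rank G_{A,\ell}^\circ$ is independent of $\ell$ and is at most $\rank G_A$: after passing to $K'$ so that the monodromy is connected, for a density-one set of primes $\p$ the element $\rho_{A,\ell^\infty}(\Frob_\p)$ is regular semisimple and the connected component of the Zariski closure of the group it generates is a maximal torus of $G_{A,\ell}^\circ$; its rank depends only on the multiplicative relations among the eigenvalues of $\Frob_\p$ on $V_\ell(A)$, which are $\ell$-independent by the Weil conjectures, and (up to conjugacy) such a Frobenius torus embeds into $(G_A)_{\QQ}$. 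Thus the rank equality holds for one $\ell$ iff for all $\ell$, and it remains to prove that rank equality forces $G_{A,\ell}^\circ = (G_A)_{\QQ_\ell}$. Write $H:=G_{A,\ell}^\circ\subseteq G:=(G_A)_{\QQ_\ell}$ (inclusion by part \ref{P:partial MTC i}); both are connected reductive by Lemma~\ref{L:MT basics}(\ref{L:MT basics i}) and Proposition~\ref{P:reductive}(\ref{P:reductive i}), and by Proposition~\ref{P:Faltings}(\ref{P:Faltings ii}) (applied over a large enough number field) together with Lemma~\ref{L:MT basics}(\ref{L:MT basics ii}) they have the same commutant in $\End(V\otimes\bar\QQ_\ell)$, where $V:=V_\ell(A)$, hence the same isotypic decomposition of $V$. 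The key further input is that $H_{\bar\QQ_\ell}$ contains a cocharacter $\GL_V(\bar\QQ_\ell)$-conjugate to the Hodge cocharacter $\mu$: restricting $\rho_{A,\ell^\infty}$ to a decomposition group above $\ell$ gives a Hodge--Tate representation whose Hodge--Tate cocharacter has the same weights as $\mu$ (namely $0$ and $1$, each with multiplicity $g$), and by Sen's theory it factors through $G_{A,\ell}$, hence through $H$. After conjugating $H$ by an element of $G(\bar\QQ_\ell)$ (harmless for comparing dimensions), we may assume $\mu(\GG_m)\subseteq H$. Pick a maximal torus $T$ of $H$ through $\mu(\GG_m)$, which is maximal in $G$ by the rank hypothesis; since $V$ has Hodge type $\{(-1,0),(0,-1)\}$ and $\mu(\GG_m)\subseteq T$, every root of $(G,T)$ pairs with $\mu$ to a value in $\{-1,0,1\}$, i.e.\ $\mu$ is minuscule for the root datum of $G$. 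Using this minusculity, the inclusion $H\subseteq G$, the equality of commutants, and $\mu(\GG_m)\subseteq H$, one argues that $H$ contains every root subgroup of $G$ relative to $T$, whence $\Lie H=\Lie G$ and $H=G$. Assembling the pieces, the Mumford--Tate conjecture for $A$ holds at some $\ell$ iff the rank equality holds there, iff (by Serre) it holds at every $\ell$, iff the conjecture holds at every $\ell$.

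I expect the last step — passing from ``$H$ contains a Hodge cocharacter, has full rank, and has the same commutant as $G$'' to ``$H$ and $G$ have the same roots relative to $T$'' — to be the main obstacle, since minusculity of $\mu$ together with $\mu(\GG_m)\subseteq H$ is not by itself enough to recover all root subgroups of $G$. The cleanest route I see is to reduce, via the common isotypic decomposition, to the case where $V$ is irreducible as a module over both $G$ and $H$, and then to invoke the classification of minuscule (more generally, Hodge-type) representations of semisimple groups — the same classification that constrains which groups occur as Mumford--Tate groups of abelian varieties — to force a connected reductive subgroup of the correct rank carrying a Hodge cocharacter to be the whole group. Alternatively, one may simply cite this implication, which is well known from the work of Deligne and Serre.
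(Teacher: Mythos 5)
The paper's own ``proof'' of this proposition consists entirely of two citations: part (\ref{P:partial MTC i}) is \cite[I, Prop.~6.2]{MR654325} and part (\ref{P:partial MTC ii}) is \cite[Theorem~4.3]{MR1339927}. Your write-up is therefore not an alternative proof of the Proposition so much as an exposition of how those two cited theorems are themselves proved. For part (\ref{P:partial MTC i}) your sketch is correct and is in fact the content of the Deligne--Milne--Ogus--Shih result: Deligne's theorem that Hodge classes on abelian varieties are absolutely Hodge makes the $\ell$-adic realizations of Hodge tensors $\Gal_{K'}$-invariant, and $(G_A)_{\QQ_\ell}$ is exactly the pointwise stabilizer of those tensors, so the Zariski closure of the image lands inside it. One small economy: the inequality $\rank G_{A,\ell}^\circ \leq \rank G_A$ that you derive from the Frobenius-torus argument already follows for free from part (\ref{P:partial MTC i}), since $G_{A,\ell}^\circ \subseteq (G_A)_{\QQ_\ell}$; the Frobenius-torus machinery is only needed for the $\ell$-independence of the rank.

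The gap you flag in part (\ref{P:partial MTC ii}) is genuine, and it is precisely the hard core of Larsen--Pink's Theorem~4.3. Knowing that $H := G_{A,\ell}^\circ \subseteq G := (G_A)_{\QQ_\ell}$ are connected reductive of the same rank, with the same commutant (hence the same isotypic decomposition of $V_\ell(A)$), and that $H$ contains a $\GL_V$-conjugate of the Hodge cocharacter $\mu$, is not by itself enough to recover every root subgroup of $G$: for instance, matching commutants pins down only the enveloping algebra, not the group, and minusculity of $\mu$ constrains but does not determine the root datum. Larsen and Pink close this by a careful comparison of the formal characters of $V$ under $H$ and $G$ together with the classification of weak Mumford--Tate pairs (i.e.\ of irreducible minuscule representations of reductive groups that can carry a Hodge cocharacter), and this classification is genuinely needed. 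So your second suggested resolution (``reduce to the irreducible case and invoke the classification'') is the right idea in spirit, but carrying it out is a substantial piece of work; your first suggested resolution --- simply citing the implication --- is exactly what the paper does, and is the appropriate choice unless one is prepared to reproduce the Larsen--Pink argument in full.
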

\begin{proof}
For a proof of (\ref{P:partial MTC i}) see \cite[{I, Prop.~6.2}]{MR654325}.  Part (\ref{P:partial MTC ii}) follows from  \cite[{Theorem~4.3}]{MR1339927}.
\end{proof}

\subsection{Bounded index and independence}
We can identify $\calG_{A,\ell}(\ZZ_\ell)$ with a (compact) subgroup of $G_{A,\ell}(\QQ_\ell)$.  We thus have a Galois representation
\[
\rho_{A,\ell^\infty} \colon \Gal_K \to \calG_{A,\ell}(\ZZ_\ell) \subseteq G_{A,\ell}(\QQ_\ell).
\]
As noted in \S\ref{SS:monodromy groups}, the group $\rho_{A,\ell^\infty}(\Gal_K)$ is open in $G_{A,\ell}(\QQ_\ell)$.     So $\rho_{A,\ell^\infty}(\Gal_K)$ is open, and hence of finite index, in $\calG_{A,\ell}(\ZZ_\ell)$.  The following theorem says that this index can in fact be bounded independent of $\ell$.

\begin{thm} \label{T:finite index}
There is a constant $C$, depending only on $A$, such that $[\calG_{A,\ell}(\ZZ_\ell): \rho_{A,\ell^\infty}(\Gal_K)] \leq C$ for all primes $\ell$.  
\end{thm}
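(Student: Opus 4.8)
The plan is to treat separately the finitely many ``bad'' primes and the generic ones: for a generic prime $\ell$ I would combine Serre's theorem that the mod-$\ell$ Galois image is as large as possible (up to bounded index) with a standard argument lifting largeness from level $\ell$ to level $\ell^\infty$. First, using Proposition~\ref{P:connected}, fix a finite extension $K'/K$, with $[K':K]$ independent of $\ell$, such that $G_{A_{K'},\ell}$ is connected for all $\ell$. Since $G_{A_{K'},\ell}=G_{A,\ell}^{\circ}$, the $\ZZ_\ell$-scheme $\calG_{A_{K'},\ell}$ is an open subgroup scheme of $\calG_{A,\ell}$ of index at most the number of connected components of $G_{A,\ell}$, i.e. $[K':K]$, uniformly in $\ell$; and $\rho_{A,\ell^\infty}(\Gal_{K'})$ has index dividing $[K':K]$ in $\rho_{A,\ell^\infty}(\Gal_K)$. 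Hence $[\calG_{A,\ell}(\ZZ_\ell):\rho_{A,\ell^\infty}(\Gal_K)]\le [K':K]\cdot[\calG_{A_{K'},\ell}(\ZZ_\ell):\rho_{A,\ell^\infty}(\Gal_{K'})]$, so I may assume from now on that $G_{A,\ell}$ is connected for all $\ell$.

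Next, by Proposition~\ref{P:reductive}(ii) there is a finite set $S$ of primes, depending only on $A$, outside of which $\calG_{A,\ell}$ is a connected reductive group scheme over $\ZZ_\ell$. Since the derived groups $\calG_{A,\ell}^{\mathrm{der}}$ live inside $\SL_{2g}$ over $\ZZ_\ell$, their root data take only finitely many isomorphism types as $\ell$ varies; I enlarge $S$ (by a set depending only on $g$) so that for $\ell\notin S$ the prime $\ell$ is large relative to all these root data, i.e. large enough for the lifting lemma below. For each of the finitely many $\ell\in S$ the index $[\calG_{A,\ell}(\ZZ_\ell):\rho_{A,\ell^\infty}(\Gal_K)]$ is finite (as already noted above, the image is open), and the maximum of these is a constant depending only on $A$; so it remains to bound the index uniformly for $\ell\notin S$.

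Now fix $\ell\notin S$, write $\calG=\calG_{A,\ell}$, and let $\calG^{\mathrm{der}}$ be its derived subgroup, $\calG^{\mathrm{sc}}\to\calG^{\mathrm{der}}$ the simply connected central cover, and $\calT=\calG/\calG^{\mathrm{der}}$ the maximal torus quotient. The crucial input is Serre's theorem that, for $\ell$ outside a finite set depending only on $A$, the reduction modulo $\ell$ of $\rho_{A,\ell^\infty}(\Gal_K)$ contains the image of $\calG^{\mathrm{sc}}(\FF_\ell)$ in $\calG(\FF_\ell)$ and has index at most a constant (independent of $\ell$) in $\calG(\FF_\ell)$. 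Granting this, the standard lifting lemma for simply connected semisimple group schemes over $\ZZ_\ell$ --- a closed subgroup of $\calG^{\mathrm{sc}}(\ZZ_\ell)$ surjecting onto $\calG^{\mathrm{sc}}(\FF_\ell)$ is all of $\calG^{\mathrm{sc}}(\ZZ_\ell)$, valid once $\ell$ exceeds a bound depending only on the root datum --- forces $\rho_{A,\ell^\infty}(\Gal_K)$ to contain the image of $\calG^{\mathrm{sc}}(\ZZ_\ell)$ in $\calG(\ZZ_\ell)$. For the torus direction, the image of $\Gal_K$ in $\calT(\ZZ_\ell)$ has index bounded independently of $\ell$; this is a statement about abelian $\ell$-adic representations, obtained from the Weil pairing together with the Main Theorem of complex multiplication (and class field theory, bounding the abelian extensions of $K$ that can appear). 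Finally $\calG^{\mathrm{sc}}\times\calT\to\calG$ is a central isogeny whose degree is coprime to $\ell$ (for $\ell\notin S$) and bounded, so the images of $\calG^{\mathrm{sc}}(\ZZ_\ell)$ and $\calT(\ZZ_\ell)$ together generate $\calG(\ZZ_\ell)$ up to a subgroup of bounded order; combining the mod-$\ell$ index, the triviality in the $\calG^{\mathrm{sc}}$-direction, and the bounded index in the $\calT$-direction gives $[\calG(\ZZ_\ell):\rho_{A,\ell^\infty}(\Gal_K)]\le C_1$ with $C_1$ depending only on $A$. The desired $C$ is then the maximum of $C_1$ and the finitely many indices for $\ell\in S$, multiplied by the factor $[K':K]$ from the first reduction.

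The main obstacle is the largeness of the mod-$\ell$ image for almost all $\ell$: this is the deep ingredient (Serre's study of the mod-$\ell$ representations attached to abelian varieties), while the rest is organizational. The two points needing care there are to make every ``bounded'' quantity genuinely uniform in $\ell$ --- which works because the root data of $\calG_{A,\ell}^{\mathrm{der}}$, the isogeny degrees, and the relevant abelian extensions of $K$ all range over finite sets as $\ell$ varies --- and to handle the torus factor $\calT$, which may be anisotropic (for instance when $A$ has complex multiplication) and therefore cannot be controlled by a bare determinant computation.
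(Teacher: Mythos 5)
The paper itself does not prove this theorem: its ``proof'' is a citation to Hindry--Ratazzi (who prove it conditionally on the Mumford--Tate conjecture for $A$) and to Zywina's paper \emph{An effective open image theorem for abelian varieties} (which proves it unconditionally). So the comparison here is really between your sketch and the arguments in those references.

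Your outline has the right shape --- reduce to $G_{A,\ell}$ connected, discard finitely many bad primes, split $\calG_{A,\ell}$ into a semisimple and a toral direction, get the semisimple direction from the mod-$\ell$ image via a Frattini/lifting argument, and get the toral direction from the theory of abelian $\ell$-adic representations --- and this is indeed the strategy in the cited works. Two remarks, one minor and one substantial.

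The minor one: there is no natural central isogeny $\calG^{\mathrm{sc}}\times\calT\to\calG$ with $\calT = \calG/\calG^{\mathrm{der}}$ the torus \emph{quotient}. What you want is $\calG^{\mathrm{sc}}\times\calC\to\calG$, where $\calC$ is the central torus of $\calG$; it is the $\calC$-direction, not the $\calT$-direction, that you then control. Since $\calC\to\calT$ is an isogeny of degree prime to $\ell$ for $\ell\gg 1$, this is fixable, but as written the multiplication map into $\calG$ does not exist.

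The substantial one: your two ``deep ingredients'' are each carrying essentially the entire weight of the theorem, and neither is available as a pre-existing black box at the uniformity you need. The statement that $\rho_{A,\ell}(\Gal_K)$ contains the image of $\calG_{A,\ell}^{\mathrm{sc}}(\FF_\ell)$ for $\ell\gg 1$ requires first identifying Nori--Serre's exponentially generated mod-$\ell$ group with the special fiber of $\calG_{A,\ell}^{\mathrm{der}}$ (this is Wintenberger's contribution, and already nontrivial), and then showing containment with uniformly bounded index. Similarly, the assertion that the image in $\calC(\ZZ_\ell)$ has index bounded uniformly in $\ell$ goes well beyond ``Weil pairing plus Main Theorem of CM'': it is Serre's theory of abelian $\ell$-adic representations (local algebraicity, $S_m$-tori, relating the abelianized image to an idele class group for a CM field that must itself be controlled uniformly), and $A$ itself need not have CM. Hindry--Ratazzi assembled these pieces only under the Mumford--Tate conjecture precisely because, without it, relating the Zariski closures $\calG_{A,\ell}$ to a single $\ell$-independent group (so that all your ``finitely many root data / bounded isogeny degrees / bounded Coxeter numbers'' claims hold) is exactly the obstruction; the unconditional version is the content of Zywina's paper. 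So your plan is not wrong, but the hard 90\% is inside the two steps you outsource, and you should be explicit that you are citing them rather than supplying them.
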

\begin{proof}
This is proven assuming the Mumford-Tate conjecture for $A$ in \cite[\S10]{MR3576113} and unconditionally in \cite[Theorem 1.2$(b)$ and \S6]{Zywinaopenimage}. 
\end{proof}

\begin{prop} \label{P:independence}
There is a finite extension $K'/K$ such that the representations $\{\rho_{A,\ell^\infty}|_{\Gal_{K'}}\}_\ell$ are \defi{independent}, i.e., we have
\[
\bigg(\prod_\ell \rho_{A,\ell^\infty}\bigg)(\Gal_{K'}) = \prod_\ell \rho_{A,\ell^\infty}(\Gal_{K'})
\]
in $\prod_\ell G_{A,\ell}(\QQ_\ell)$, where the products are over all primes $\ell$.
\end{prop}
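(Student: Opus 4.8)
The plan is to combine three classical inputs: the abelian-variety analogue of an independence criterion (due to Serre, Ribet, and others), the connectedness statement from Proposition~\ref{P:connected}, and a group-theoretic argument that controls the intersection of the images with their Frattini-type subgroups. First I would reduce to the connected case: by Proposition~\ref{P:connected} there is a finite extension $K_1/K$ such that $G_{A_{K_1},\ell}$ is connected for every $\ell$; replacing $K$ by $K_1$ changes each $\rho_{A,\ell^\infty}(\Gal_K)$ only by passage to an open (finite-index) subgroup, so it suffices to prove independence after such an extension, and any further finite extension $K'$ is still allowed in the statement. Thus I may assume all $G_{A,\ell}$ are connected and, by Proposition~\ref{P:reductive}, reductive (and the $\ZZ_\ell$-group schemes $\calG_{A,\ell}$ are reductive for $\ell\gg_A 1$).

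Next I would split the infinitely many primes into two ranges. For the finitely many small primes $\ell < \ell_0$, independence of finitely many representations is automatic after a finite base change: the compositum of the fixed fields of the kernels is a finite extension $K_2/K$, and the images over $\Gal_{K_2}$ generate a group whose image in the product of finite quotients can be forced to be a direct product by enlarging $K_2$ finitely (this is the classical Goursat-type observation, using that after the base change each $\rho_{A,\ell^\infty}(\Gal_{K_2})$ is a pro-$\ell$-by-(semisimple) group; more simply, one extends $K$ so that the mod-$N$ representation for $N=\prod_{\ell<\ell_0}\ell$ has image equal to the product of its $\ell$-components, then uses that a surjection of a pro-$\ell$ group onto a pro-$\ell'$ group with $\ell\neq\ell'$ is trivial to propagate the splitting up the tower). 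For the cofinitely many $\ell\geq\ell_0$, the key point is that for such $\ell$ the image $\rho_{A,\ell^\infty}(\Gal_{K'})$ surjects onto $\calG_{A,\ell}(\FF_\ell)$ (using Theorem~\ref{T:finite index} together with a standard argument that an open subgroup of $\calG_{A,\ell}(\ZZ_\ell)$ of index bounded independently of $\ell$ must be everything for $\ell$ large, because $\calG_{A,\ell}(\ZZ_\ell)$ has no proper subgroups of small index for $\ell$ large when $\calG_{A,\ell}$ is reductive with connected fibers), and that the groups $\calG_{A,\ell}(\FF_\ell)$ for distinct large $\ell$ have no common nonabelian composition factors and no nontrivial abelian quotients in common — so a Goursat/Hall-type lemma forces the product of the reductions, over all $\ell\geq\ell_0$, to be a direct product; then one lifts this splitting to the product of the $\ZZ_\ell$-points using that each $\calG_{A,\ell}(\ZZ_\ell)\to\calG_{A,\ell}(\FF_\ell)$ has pro-$\ell$ kernel, and pro-$\ell$ groups for distinct $\ell$ commute in any profinite quotient in a way that is automatically a direct product.

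I expect the main obstacle to be the uniformity over all large $\ell$ simultaneously: one needs not just that any two of the representations are independent, but that the whole infinite family is, which requires the bounded-index input of Theorem~\ref{T:finite index} (to know the images are all of $\calG_{A,\ell}(\ZZ_\ell)$ for $\ell$ large) and a uniform statement that the finite simple groups appearing as composition factors of $\calG_{A,\ell}(\FF_\ell)$ are "of $\ell$-type" and hence pairwise disjoint across primes — this last point rests on the classification-of-finite-simple-groups-adjacent fact (or rather its elementary consequences for groups of Lie type, e.g. that $\SL_n(\FF_\ell)$ for large $\ell$ has order divisible by $\ell$ with the Lie-theoretic structure pinning down $\ell$) and on knowing $\calG_{A,\ell}$ is reductive with connected fibers for $\ell\gg_A 1$. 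Since this proposition is classical (it is essentially \cite{MR1441234} combined with Theorem~\ref{T:finite index}), in the write-up I would cite the relevant literature for the core independence statement and only spell out the reduction to the connected case and the absorption of the small primes by a finite base change.
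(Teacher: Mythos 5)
The paper's entire proof is a citation: ``This was proved by Serre \cite{MR3093502}; see also \cite{MR1730973}*{138}.'' Your proposal ultimately also reduces to a citation plus a small-prime/base-change reduction, which is reasonable, but there are two substantive problems with the sketch and one with the bibliography.

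First, the claim that ``an open subgroup of $\calG_{A,\ell}(\ZZ_\ell)$ of index bounded independently of $\ell$ must be everything for $\ell$ large'' is false. For every $\ell$, the similitude character (or determinant) maps $\calG_{A,\ell}(\ZZ_\ell)$ onto an open subgroup of $\ZZ_\ell^\times$, which has proper open subgroups of index $2$; hence $\calG_{A,\ell}(\ZZ_\ell)$ has proper subgroups of index $2$ for all odd $\ell$, and bounded index does not force surjectivity. Theorem~\ref{T:finite index} gives you commensurability, not equality, and in any case it is much deeper than Serre's independence result and is not needed for it.

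Second, the assertion that the groups $\calG_{A,\ell}(\FF_\ell)$ for distinct large $\ell$ have ``no nontrivial abelian quotients in common'' is not correct as stated. Each $\calG_{A,\ell}(\FF_\ell)$ surjects onto a quotient of $\FF_\ell^\times$ via the similitude character, and $\FF_\ell^\times$ for distinct odd $\ell$ all have $\ZZ/2\ZZ$ as a quotient; so there \emph{are} common abelian quotients a priori, and one must argue (using the arithmetic of the cyclotomic characters, after a finite base change) that the corresponding index-$2$ extensions are linearly disjoint. Handling precisely these common abelian quotients from the similitude/cyclotomic characters is the main point of Serre's criterion in \cite{MR3093502}, and your Goursat/Hall reduction as written glosses over it.

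Finally, the reference you name, \cite{MR1441234}, is Larsen--Pink's connectedness criterion, which is the source for Proposition~\ref{P:connected}, not for independence. The correct reference for the independence statement is Serre \cite{MR3093502} (see also \cite{MR1730973}*{138}). Given that the paper simply cites Serre, the clean fix for your write-up is to do likewise, optionally retaining your (correct) observation that passing to the finite extension of Proposition~\ref{P:connected} and then to a further finite extension is harmless.
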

\begin{proof}
This was proved by Serre \cite{MR3093502}; see also \cite{MR1730973}*{138}.
\end{proof}

\subsection{Finiteness after base extension}

Throughout this section, we assume that all the $\ell$-adic monodromy groups $G_{A,\ell}$ are connected.  The goal of this section is to prove the following finiteness result which can often serve as a substitute to assuming the Mumford-Tate conjecture for $A$.

For a prime $\ell$, let $\QQ_\ell^\un$ be the maximal unramified extension of $\QQ_\ell$ in some fixed algebraic closure $\Qbar_\ell$.   Denote by $\ZZ_\ell^\un$ the integral closure of $\ZZ_\ell$ in $\QQ_\ell^\un$; it is a DVR with uniformizer $\ell$.   The residue field of $\ZZ_\ell^\un$ is an algebraic closure $\FFbar_\ell$ of $\FF_\ell$.

\begin{prop} \label{P:redux finiteness}
There is a finite collection $\{\calG_i \}_{i\in I}$ of split reductive subgroups of $\GL_{2g,\ZZ}$ such that the following hold:
\begin{alphenum}
\item \label{P:redux finiteness a}
For $\ell\gg_A 1$, there is an $i\in I$ such that 
\[
(\calG_{A,\ell})_{\ZZ_\ell^\un}=(\calG_i)_{\ZZ_\ell^\un}
\]
with respect to an appropriate choice of basis of the free $\ZZ_\ell^\un$-module $T_\ell(A) \otimes_{\ZZ_\ell} \ZZ_\ell^\un$.
\item \label{P:redux finiteness b}
For each $i\in I$, there is a direct sum $\ZZ^{2g} = \oplus_{j=1}^s W_j$ of representations of $\calG_i$ so that if $F=\QQ$ or if $F=\FF_\ell$ with $\ell\gg_A 1$, then $F^{2g}=\oplus_{j=1}^s (W_j \otimes_{\ZZ} F)$ is the decomposition of the tautological representations of $(\calG_i)_F$ into isotypical components.
\end{alphenum}
\end{prop}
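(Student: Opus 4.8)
The plan is to split each $\calG_{A,\ell}$ over the maximal unramified extension, reduce the statement to a finite classification of representation ``types'', and then keep track of how these types and their lattices behave upon reduction modulo $\ell$. \emph{Splitting.} For $\ell\gg_A 1$ the $\ZZ_\ell$-group scheme $\calG_{A,\ell}$ is reductive by Proposition~\ref{P:reductive}(\ref{P:reductive ii}), hence has connected fibres, while its generic fibre $G_{A,\ell}$ is connected by the running hypothesis. Base changing to $\ZZ_\ell^\un$, a strictly Henselian discrete valuation ring with algebraically closed residue field $\FFbar_\ell$, the reductive group scheme $(\calG_{A,\ell})_{\ZZ_\ell^\un}$ admits a split maximal torus and is therefore \emph{split}: it is the base change to $\ZZ_\ell^\un$ of a split reductive group scheme over $\ZZ$ (a Chevalley group scheme), equipped with the tautological faithful representation on $T_\ell(A)\otimes_{\ZZ_\ell}\ZZ_\ell^\un\cong(\ZZ_\ell^\un)^{2g}$.

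\emph{Finiteness of types and construction of the $\calG_i$ (case $F=\QQ$ of (b)).} A connected reductive group $H$ over $\QQ$ carrying a faithful $2g$-dimensional representation $\rho$ has rank $\le 2g$ and dimension $\le (2g)^2$; there are thus finitely many possibilities for its root datum, and for each one only finitely many isomorphism classes of faithful $2g$-dimensional representations, since the highest weight of an irreducible constituent is confined to a bounded region (its Weyl module has dimension $\le 2g$). For each isomorphism class of pairs $(H,\rho)$, decompose $\QQ^{2g}=\bigoplus_{j=1}^{s}V_j$ into $H$-isotypical components, choose an $H$-stable $\ZZ$-lattice $W_j\subseteq V_j$ in each (say, generated by highest-weight vectors over the Chevalley $\ZZ$-form), and let $\calG_i\subseteq\GL_{W_1\oplus\cdots\oplus W_s}=\GL_{2g,\ZZ}$ be the resulting split reductive subgroup scheme. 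Then $\ZZ^{2g}=\bigoplus_j W_j$ and $W_j\otimes\QQ=V_j$ is, by construction, the isotypical decomposition of the $(\calG_i)_\QQ$-representation, which is the case $F=\QQ$ of (b).

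\emph{Case $F=\FF_\ell$ of (b).} Fix $i\in I$ and write $V_j\cong M_j^{e_j}$ with $M_j$ irreducible and the $M_j$ pairwise non-isomorphic. Since the collection $\{\calG_i\}$ is finite, there is a single bound beyond which, for all $i$ and $j$: the reduction $W_j\otimes\FF_\ell$ is isomorphic as a $(\calG_i)_{\FF_\ell}$-module to $\overline{M_j}^{\,e_j}$ with $\overline{M_j}$ an \emph{irreducible} representation of $(\calG_i)_{\FF_\ell}$ — reduction modulo $\ell$ of an irreducible representation of a split reductive group is irreducible once $\ell$ exceeds a bound depending only on the highest weight — and the $\overline{M_j}$ are pairwise non-isomorphic, having the same (distinct) highest weights as the $M_j$. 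Hence $\FF_\ell^{2g}=\bigoplus_j\big(W_j\otimes\FF_\ell\big)$ is a decomposition into isotypical pieces of pairwise distinct types, i.e.\ the isotypical decomposition of the tautological representation of $(\calG_i)_{\FF_\ell}$. This proves (b).

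\emph{Conclusion of (a), and the main obstacle.} Let $\ell\gg_A 1$. By the first step $(\calG_{A,\ell})_{\ZZ_\ell^\un}$ is split reductive with a faithful $2g$-dimensional representation, and over $\Qbar_\ell$ its root datum together with that representation is one of the finitely many types above, say that of $\calG_i$. Since split reductive group schemes over a fixed base are classified by their root data, there is an isomorphism of group schemes $(\calG_{A,\ell})_{\ZZ_\ell^\un}\cong(\calG_i)_{\ZZ_\ell^\un}$; transporting the tautological representation along it yields a faithful $(\calG_i)_{\ZZ_\ell^\un}$-representation isomorphic, after $\otimes\,\Qbar_\ell$, to the tautological one (same character). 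For $\ell\gg_A 1$ any two $(\calG_i)_{\ZZ_\ell^\un}$-stable lattices in this representation are isomorphic (rigidity of stable lattices in large characteristic, uniform since $\{\calG_i\}$ is finite), so the two representations are already isomorphic over $\ZZ_\ell^\un$; the linear part of this isomorphism is the required element of $\GL_{2g}(\ZZ_\ell^\un)$ conjugating $(\calG_{A,\ell})_{\ZZ_\ell^\un}$ onto $(\calG_i)_{\ZZ_\ell^\un}$, giving (a). I expect the delicate point to be precisely the uniformity in $\ell$ of these characteristic-$\ell$ facts — irreducibility of reductions, separation of highest weights, rigidity of lattices; each is classical for one representation once $\ell$ passes a weight-dependent bound, and the finiteness of the list of types in the second step is exactly what makes these bounds uniform and lets them be absorbed into ``$\ell\gg_A 1$''.
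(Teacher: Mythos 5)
Your overall strategy — split over $\ZZ_\ell^\un$, reduce to a finite list of representation types, and control the lattices — is the same shape as the paper's, and you correctly flag that uniformity in $\ell$ is where the delicacy lies. But one step is genuinely broken, and another is where essentially all the work hides.

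The broken step is the claimed finiteness of pairs $(H,\rho)$. Bounding the dimension of a Weyl module by $2g$ does give finitely many possibilities for the \emph{semisimple} part of $(H,\rho)$, but it gives nothing for the central torus: a rank-$r$ torus admits infinitely many pairwise non-isomorphic faithful $2g$-dimensional representations (already $\GG_m$ acting on $\QQ^2$ with weights $\{1,n\}$ for $n\ge 1$). So ``there are only finitely many possibilities for its root datum, and for each one only finitely many faithful $2g$-dimensional representations'' is false, and without it your step (a) — matching $(\calG_{A,\ell})_{\ZZ_\ell^\un}$ to one of finitely many $\calG_i$'s as $\ell$ varies — has no reason to hold. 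The paper obtains the required finiteness not from general representation theory but from the abelian variety: by Proposition~\ref{P:partial MTC}(\ref{P:partial MTC i}) and \cite[Corollary~2.11]{UllmoYafaev}, the central torus of $G_{A,\ell}$ equals the central torus $C$ of the Mumford--Tate group $G_A$, a single torus over $\QQ$, acting on $V_A$ through a single finite set of weights $\Omega\subseteq X(C_L)$ — all independent of $\ell$. Some input of this kind is indispensable.

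The second issue is your appeal to ``rigidity of stable lattices in large characteristic.'' This is exactly where the paper's Lemma~\ref{L:integrally semisimple} does its work, and it is not a citable off-the-shelf fact in the generality you need. What you need is: every $(\calG_i)_{\ZZ_\ell^\un}$-stable lattice in a representation whose $\Qbar_\ell$-irreducible constituents are pairwise non-isomorphic with irreducible mod-$\ell$ reductions is isomorphic to a fixed direct sum of Weyl lattices. This can be proved for $\ell\gg_A 1$ (once finiteness of types is secured), but it takes a real argument: first split off the isotypical pieces by comparing composition factors of the cokernel of $\bigoplus_i(L\cap V_i^{m_i})\hookrightarrow L$; then, within each isotypical piece, argue by induction using $\operatorname{Ext}^1_{\calG_i}(L_1,L_1)=0$ over $\ZZ_\ell^\un$ (which follows from Schur plus the mod-$\ell$ irreducibility via the long exact sequence for $\cdot\ell$). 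The paper sidesteps this entirely: Lemma~\ref{L:integrally semisimple} decomposes $T_\ell(A)\otimes\ZZ_\ell^\un$ by showing (via Faltings and the last remark of \cite{MR861971}) that the commutant $R_\ell$ is a maximal order for $\ell\gg 1$, so that torsion-free $R_\ell$-modules are projective; then Pink's minuscule-weight theorem both bounds the list of highest weights and guarantees that the Weyl modules reduce irreducibly, making the identification of each summand with a Weyl lattice immediate. If you repair the torus finiteness and spell out the lattice rigidity, you would indeed have a different proof — one that replaces Faltings-plus-maximal-orders by pure integral representation theory and that substitutes the Weyl dimension bound for Pink's minuscule theorem — but as written both substitutions are asserted rather than proved, and the first one is simply missing a necessary hypothesis.
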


Before starting the proof of the proposition, we need a better understanding of $T_\ell(A)\otimes_{\ZZ_\ell} \ZZ_\ell^\un$ as a representation of $(\calG_{A,\ell})_{\ZZ_\ell^\un}$.

\begin{lemma} \label{L:integrally semisimple}
For $\ell\gg_A 1$, there is a direct sum $T_\ell(A)\otimes_{\ZZ_\ell} \ZZ_\ell^\un = \bigoplus_{j=1}^m M_j$ of representations of $(\calG_{A,\ell})_{\ZZ_\ell^\un}$ over $\ZZ_\ell^\un$ such that each $M_j \otimes_{\ZZ_\ell^\un} \Qbar_\ell$ is an irreducible representation of $(\calG_{A,\ell})_{\Qbar_\ell}$.
\end{lemma}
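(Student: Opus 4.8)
The plan is to reduce the lemma to the structure of the ring of $\calG_{A,\ell}$-equivariant endomorphisms of $T_\ell(A)$ --- which Faltings identifies with an order in $\End(A)\otimes\QQ_\ell$ --- and then to read off the decomposition by elementary module theory together with a dimension count. Write $\calG:=\calG_{A,\ell}$, $G:=G_{A,\ell}$, $R:=\ZZ_\ell^\un$, $T:=T_\ell(A)$, $V:=V_\ell(A)$ and $N:=T\otimes_{\ZZ_\ell}R$. First I would study $\mathcal E:=\End_{\ZZ_\ell[\Gal_K]}(T)$; since $\rho_{A,\ell^\infty}(\Gal_K)$ is Zariski dense in $\calG$, this coincides with the ring of $\calG$-equivariant $\ZZ_\ell$-endomorphisms of $T$, and by Proposition~\ref{P:Faltings}(\ref{P:Faltings ii}) it is a $\ZZ_\ell$-order in $\mathcal E\otimes_{\ZZ_\ell}\QQ_\ell=\End(A)\otimes_\ZZ\QQ_\ell$ containing $\End(A)\otimes\ZZ_\ell$. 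For $\ell\gg_A 1$ the order $\End(A)\otimes\ZZ_\ell$ is maximal, so $\mathcal E=\End(A)\otimes\ZZ_\ell$; and for $\ell\gg_A 1$ the centres of the simple factors of $\End(A)\otimes\QQ$ are unramified at $\ell$ and the relevant division algebras split above $\ell$, so that, using that $\mathcal O\otimes_{\ZZ_\ell}R$ is a finite product of copies of $R$ for $\mathcal O$ the ring of integers of an unramified extension of $\QQ_\ell$, one obtains an isomorphism of $R$-algebras $\mathcal A:=\mathcal E\otimes_{\ZZ_\ell}R\cong\prod_{\lambda=1}^{r}M_{p_\lambda}(R)$ for suitable integers $p_\lambda\geq 1$. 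Since $R$ is flat over $\ZZ_\ell$, taking $\calG$-invariants commutes with $-\otimes_{\ZZ_\ell}R$, so $\mathcal A=\End_{\calG_R}(N)$; thus $N$ is a faithful $\mathcal A$-module, free of finite rank over $R$, and the action of $\calG_R$ on $N$ commutes with $\mathcal A$.

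Next I would run the standard Morita argument. Choose matrix units $E^{(\lambda)}_{st}\in\mathcal A$ ($1\leq s,t\leq p_\lambda$) inside the $\lambda$-th factor $M_{p_\lambda}(R)$ with $\sum_{\lambda}\sum_{s}E^{(\lambda)}_{ss}=1$. Because $\calG_R$ commutes with each idempotent $E^{(\lambda)}_{ss}$, the $R$-submodules $M_{\lambda,s}:=E^{(\lambda)}_{ss}N$ are $\calG_R$-stable and give a direct sum decomposition $N=\bigoplus_{\lambda,s}M_{\lambda,s}$ into representations of $\calG_R$ that are free over $R$; for fixed $\lambda$, left multiplication by $E^{(\lambda)}_{s1}$ yields $\calG_R$-equivariant isomorphisms $M_{\lambda,1}\xrightarrow{\ \sim\ }M_{\lambda,s}$. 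Set $W_\lambda:=M_{\lambda,1}$, so $N\cong\bigoplus_{\lambda}W_\lambda^{\oplus p_\lambda}$ as representations of $\calG_R$, and relabel the summands $M_{\lambda,s}$ as $M_1,\dots,M_m$ (with $m=\sum_\lambda p_\lambda$), each isomorphic to some $W_\lambda$.

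It then remains to show each $W_\lambda\otimes_R\Qbar_\ell$ is an irreducible representation of $\calG_{\Qbar_\ell}=G_{\Qbar_\ell}$. Base changing, $V\otimes_{\QQ_\ell}\Qbar_\ell\cong\bigoplus_\lambda(W_\lambda\otimes_R\Qbar_\ell)^{\oplus p_\lambda}$ is a representation of the connected reductive $\Qbar_\ell$-group $G_{\Qbar_\ell}$ (Proposition~\ref{P:reductive}(\ref{P:reductive i})), hence semisimple as the characteristic is $0$; so each $U_\lambda:=W_\lambda\otimes_R\Qbar_\ell$ is semisimple, and nonzero because $\End(A)\otimes\QQ_\ell$ acts faithfully on $V$. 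Now
\[
\dim_{\Qbar_\ell}\End_{G_{\Qbar_\ell}}\!\big(V\otimes_{\QQ_\ell}\Qbar_\ell\big)=\sum_{\lambda,\mu}p_\lambda p_\mu\dim_{\Qbar_\ell}\Hom_{G_{\Qbar_\ell}}(U_\lambda,U_\mu)\geq\sum_\lambda p_\lambda^2\dim_{\Qbar_\ell}\End_{G_{\Qbar_\ell}}(U_\lambda)\geq\sum_\lambda p_\lambda^2,
\]
with equality throughout if and only if every $U_\lambda$ is absolutely irreducible; but the left-hand side equals $\dim_{\Qbar_\ell}\big((\End(A)\otimes_\ZZ\QQ_\ell)\otimes_{\QQ_\ell}\Qbar_\ell\big)=\sum_\lambda p_\lambda^2$, so the $U_\lambda$ are irreducible and the lemma follows.

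The main obstacle is the first step, i.e.\ controlling $\mathcal E$ and the structure of $\mathcal A=\mathcal E\otimes_{\ZZ_\ell}R$: one must know that $\End(A)\otimes\ZZ_\ell$ is a maximal order for all but finitely many $\ell$, and carry out the (routine but slightly delicate) local bookkeeping showing that, away from the primes dividing the relevant discriminants and Brauer invariants, $\End(A)\otimes\QQ_\ell$ is a product of matrix algebras over unramified extensions of $\QQ_\ell$, so that its maximal orders base change to a product of matrix algebras over $R=\ZZ_\ell^\un$. The rest --- flat base change of invariants, Morita theory, and the dimension count --- is formal. (If one prefers, Proposition~\ref{P:reductive}(\ref{P:reductive ii}) may be invoked to assume in addition that $\calG$ is reductive over $\ZZ_\ell$, though only the flatness of $\ZZ_\ell\to R$ is actually used above.)
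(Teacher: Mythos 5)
Your proof is correct, but it is not the route the paper takes, so a comparison is in order. You work on the commutant side: you show that for $\ell\gg_A 1$ the $\ZZ_\ell$-algebra $\mathcal E=\End_{\ZZ_\ell[\Gal_K]}(T_\ell(A))$ coincides with $\End(A)\otimes\ZZ_\ell$ (via Faltings over $\QQ_\ell$ plus maximality of the order $\End(A)\otimes\ZZ_\ell$ at almost all primes), that after base change to $\ZZ_\ell^\un$ it becomes a product of matrix algebras $\prod_\lambda M_{p_\lambda}(\ZZ_\ell^\un)$, and then you split $T_\ell(A)\otimes\ZZ_\ell^\un$ by hand using matrix-unit idempotents; irreducibility of the pieces over $\Qbar_\ell$ then falls out of an endomorphism-ring dimension count. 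The paper instead works on the bicommutant side: it first reduces to $A$ geometrically simple, forms the $\ZZ_\ell^\un$-algebra $R_\ell$ generated by the image of Galois, identifies $R_\ell$ for $\ell\gg 1$ with (the base change of) the centralizer of $\End(A)$ using the \emph{last remark} of Faltings \cite{MR861971}, argues that this is a maximal order because $\End(A)\otimes\QQ$ is a division algebra, and then invokes the theorem (Curtis--Reiner, Thm.~26.12) that finitely generated torsion-free modules over a maximal order over a complete DVR are projective to split off irreducible pieces inductively. Finally the paper deduces irreducibility over $\Qbar_\ell$ from irreducibility over $\QQ_\ell^\un$ by comparing $D\otimes\QQ_\ell^\un$ with $D\otimes\Qbar_\ell$. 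Your version avoids both the reduction to the simple case and the appeal to the projectivity theorem for maximal orders, replacing the latter by an explicit Morita/matrix-unit splitting, and it also avoids the integral bicommutant fact from Faltings' paper; in exchange it needs the slightly delicate semilocal bookkeeping establishing $\mathcal E\otimes\ZZ_\ell^\un\cong\prod M_{p_\lambda}(\ZZ_\ell^\un)$, which you rightly flag as the main obstacle. One small presentational point: the assertion ``taking $\calG$-invariants commutes with $-\otimes_{\ZZ_\ell}\ZZ_\ell^\un$'' is true here but should be justified by observing that $\mathcal E$ is the intersection of kernels of the conjugation maps for a \emph{finite} set of topological generators of $\rho_{A,\ell^\infty}(\Gal_K)$, so that the identification commutes with the flat base change $\ZZ_\ell\to\ZZ_\ell^\un$, and the resulting commutant agrees with the $\calG_{\ZZ_\ell^\un}$-commutant by Zariski density.
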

\begin{proof}
To ease notation, we set $\calG:=(\calG_{A,\ell})_{\ZZ_\ell^\un}
$.   The abelian variety $A$ is isogenous to a product $\prod_{i=1}^s A_i$ of simple abelian varieties over $K$.   A fixed isogeny $A\to \prod_{i=1}^s A_i$ defines an isomorphism $T_\ell(A)\cong  \oplus_{i=1}^s T_\ell(A_i)$ of $\ZZ_\ell[\Gal_K]$-modules for all sufficiently large $\ell$.  Moreover, this is an isomorphism of representations of $\calG_{A,\ell}$ where $\calG_{A,\ell}$ acts on $T_\ell(A_i)$ through a dominant morphism $\calG_{A,\ell}\to \calG_{A_i,\ell}$.   So by taking $\ell \gg_A 1$, we may assume without loss of generality that $A$ is simple.  Define the ring $E:=\End(A)$.   Since $A$ is simple, the ring $D:=E\otimes_\ZZ \QQ$ is a division algebra.  

Let $R_\ell$ be the $\ZZ_\ell^\un$-submodule of $\End_{\ZZ_\ell^\un}(T_\ell(A) \otimes_{\ZZ_\ell} \ZZ_\ell^\un)$ generated by $\rho_{A,\ell^\infty}(\Gal_K)$; it is a $\ZZ_\ell^\un$-algebra.   We claim that by taking $\ell\gg_A 1$, every finitely generated and torsion-free $R_\ell$-module is  projective.    Fix an embedding $\Kbar\subseteq \CC$.     The ring $E$ of endomorphisms acts faithfully on $\Lambda:=H_1(A(\CC),\ZZ)$ so we may view $E$ as a subring of $\End_\ZZ(\Lambda) \cong M_{2g}(\ZZ)$.    Let $R$ be the centralizer of $E$ in $\End_\ZZ(\Lambda)$.    Using the comparison isomorphism $T_\ell(A)=\Lambda\otimes_{\ZZ} \ZZ_\ell$, we may view $R\otimes_\ZZ \ZZ_\ell$ as a subalgebra of $\End_{\ZZ_\ell}(T_\ell(A))$.   Since the actions of $\Gal_K$ and $E$ on $T_\ell(A)$ commute, $\rho_{A,\ell^\infty}(\Gal_K)$ is a subset of $R\otimes_\ZZ \ZZ_\ell$.   After taking $\ell$ sufficiently large, $R\otimes_\ZZ \ZZ_\ell$ will be the $\ZZ_\ell$-submodule of $\End_{\ZZ_\ell}(T_\ell(A))$ generated by $\rho_{A,\ell^\infty}(\Gal_K)$, see the last remark in \cite{MR861971}.  So in $\End_{\ZZ_\ell^\un}(T_\ell(A)\otimes_{\ZZ_\ell}\ZZ_\ell^\un)$, we have $R_\ell=R\otimes_{\ZZ} \ZZ_\ell^\un$.       Since $D=E\otimes_\ZZ \QQ$ is a division algebra, $R\otimes_\ZZ \QQ$ is a simple algebra \cite[\href{https://stacks.math.columbia.edu/tag/074T}{Theorem 074T}]{stacks-project}.   Since $R$ is a $\ZZ$-order in $R \otimes_\ZZ \QQ$, we find that $R_\ell=R\otimes_\ZZ \ZZ_\ell^\un$ is a maximal $\ZZ_\ell^\un$-order in $R \otimes_\ZZ \QQ_\ell^\un$ after taking $\ell$ sufficiently large, cf.~\cite[\S23]{MR632548} for background on orders.  The claim is then a consequence of  Theorem~26.12 of \cite{MR632548}.  

Let $M$ be any $\ZZ_\ell^\un$-submodule of $T_\ell(A)\otimes_{\ZZ_\ell} \ZZ_\ell^\un$ that is a representation of $\calG$.  Since $\ZZ_\ell^\un$ is a DVR, $M$ is a free $\ZZ_\ell^\un$-module of finite rank.  We will now show that $M=\oplus_{j=1}^m M_j$ where the $M_j$ are representations of $\calG$ over $\ZZ_\ell^\un$ for which the $M_j \otimes_{\ZZ_\ell^\un} \QQ_\ell^\un$ are irreducible representations of $\calG_{\QQ_\ell^\un}$.  We may assume that $M$ is nonzero since the result is trivial otherwise.   Since $\calG_{\QQ_\ell^\un}$ is reductive, there is a direct sum $M \otimes_{\ZZ_\ell^\un} \QQ_\ell^\un= \oplus_{i=1}^t V_i$ of irreducible representations of $\calG_{\QQ_\ell^\un}$.   Define $M_1:= M \cap V_1$; it is a representation of $\calG$ over $\ZZ_\ell^\un$ and $M_1 \otimes_{\ZZ_\ell^\un} \QQ_\ell^\un = V_1$.    Note that the $\ZZ_\ell^\un$-module $M/M_1$ is torsion free.  Consider the short exact sequence 
\[
0 \to M_1 \to M \to M/M_1 \to 0
\]
of representations of $\calG$.   This can also be viewed as a short exact sequence of $R_\ell$-modules with $M/M_1$ being finitely generated and torsion-free.   The claim we proved above says that $M/M_1$ is a projective $R_\ell$-module.  So our short exact sequence splits and we obtain a direct sum $M=M_1 \oplus M'$ of $R_\ell$-modules and hence also of representations of $\calG=(\calG_{A,\ell})_{\ZZ_\ell^\un}$ over $\ZZ_\ell^\un$.  Now proceeding by induction on the rank over $\ZZ_\ell^\un$, we deduce the existence of the desired direct sum $M=\oplus_{j=1}^m M_j$.   

By considering the special case $M=T_\ell(A)\otimes_{\ZZ_\ell} \ZZ_\ell^\un$,  we obtain a direct sum $T_\ell(A)=\oplus_{j=1}^m M_j$, where  $M_j$ is a representation of $\calG$ over $\ZZ_\ell^\un$ such that $M_j \otimes_{\ZZ_\ell^\un} \QQ_\ell^\un$ is an irreducible representation of $\calG_{\QQ_\ell^\un}$.

For a fixed $1\leq j \leq m$, it remains to show that $M_j\otimes_{\ZZ_\ell^\un} \Qbar_\ell$ is an irreducible representation of  $\calG_{\Qbar_\ell}$.  The ring $D=E\otimes_\ZZ \QQ$ is a division algebra whose center $L$ is a number field.  Define the integers $d=[L:\QQ]$ and $n=[D:L]^{1/2}$.   Choose a finite extension $L_1$ of $L$ that splits $D$, i.e., $D \otimes_L L_1 \cong M_n(L_1)$. By choosing $\ell$ large enough, we may assume that $\ell$ is unramified in the number field $L_1$.   Since $\ell$ is unramified in $L$, there are exactly $d$ embeddings $L\hookrightarrow \QQ_\ell^\un$.   Therefore, we have isomorphisms
\[
D \otimes_\QQ \QQ_\ell^\un = D\otimes_L (L \otimes_\QQ \QQ_\ell^\un) = D \otimes_L (\prod_\sigma \QQ_\ell^\un) = \prod_\sigma (D \otimes_{L,\sigma} \QQ_\ell^\un),
\]
where the $\sigma$ vary over the embeddings $L\hookrightarrow \QQ_\ell^\un$.  Since $\ell$ is unramified in $L_1$, any embedding $\sigma\colon L\hookrightarrow \QQ_\ell^\un$ extends to an embedding of $L_1$. So we isomorphisms 
\[
D \otimes_{L,\sigma} \QQ_\ell^\un= (D \otimes_L L_1)\otimes_{L_1,\sigma} \QQ_\ell^\un\cong M_n(L_1)\otimes_{L_1,\sigma} \QQ_\ell^\un\cong M_n(\QQ_\ell^\un)
\] 
of $\QQ_\ell^\un$-algebras.   Therefore, $D \otimes_\QQ \QQ_\ell^\un\cong M_n(\QQ_\ell^\un)^d$.

From Proposition~\ref{P:Faltings}(\ref{P:Faltings ii}), the commutant of $\rho_{A,\ell^\infty}(\Gal_K)$ in $\End_{\QQ_\ell}(V_\ell(A))$ is isomorphic to $E\otimes_\ZZ \QQ_\ell = D \otimes_\QQ \QQ_\ell$.   Therefore, the commutant of $\calG_{\QQ_\ell^\un}$ in $\End_{\QQ_\ell^\un}(V_\ell(A)\otimes_{\QQ_\ell} \QQ_\ell^\un)$ is isomorphic to $D\otimes_\QQ \QQ_\ell^\un \cong M_n(\QQ_\ell^\un)^d$.   Since $\calG_{\QQ_\ell^\un}$ is reductive, we can read from this that $T_\ell(A)\otimes_{\ZZ_\ell} \QQ_\ell^\un = V_\ell(A)\otimes_{\QQ_\ell} \QQ_\ell^\un$ has $d$ isotypical components with each irreducible representation occuring with multiplicity $n$.  The same holds true for the representation $T_\ell(A)\otimes_{\ZZ_\ell} \Qbar_\ell$ since $D \otimes_\QQ \Qbar_\ell \cong M_n(\Qbar_\ell)^d$.  So for any irreducible representation $V\subseteq T_\ell(A)\otimes_{\ZZ_\ell} \QQ_\ell^\un$ of $\calG_{\QQ_\ell^\un}$, $V\otimes_{\QQ_\ell^\un} \Qbar_\ell$ is also an irreducible representation of $(G_{A,\ell})_{\Qbar_\ell}$.  The lemma is now immediate.
\end{proof}

\begin{proof}[Proof of Proposition~\ref{P:redux finiteness}]
By taking $\ell\gg_A 1$, we may assume by Proposition~\ref{P:reductive}(\ref{P:reductive ii}) that the $\ZZ_\ell$-group scheme $\calG_{A,\ell} \subseteq \GL_{T_\ell(A)}$ is reductive.     By Lemma~\ref{L:integrally semisimple} and taking $\ell \gg_A 1$, we may assume that there is a direct sum
\[
T_\ell(A) \otimes_{\ZZ_\ell} \ZZ_\ell^\un = \oplus_{j=1}^m M_j 
\]
 of representations of $(\calG_{A,\ell})_{\ZZ_\ell^\un}$ over $\ZZ_\ell^\un$ so that each representation $M_j$ is geometrically irreducible.  

We claim that the reductive group $(G_{A,\ell})_{\QQ_\ell^\un}$ is split for all $\ell \gg_A 1$.    There is a torus $T$ defined over $\QQ$ such that $T_{\QQ_\ell}$ is isomorphic to a maximal torus of $G_{A,\ell}$ for all $\ell$, cf.~\cite[\S3b]{MR1156568}.   The torus $T_{\QQ_\ell^\un}$ is split for all large enough $\ell$ (for example, it holds for all $\ell$ that are unramified in some number field over which $T$ splits).  This proves the claim and we may now assume that $(G_{A,\ell})_{\QQ_\ell^\un}$ is split.

Let $\calC_\ell$ and $\calS_\ell$ be the central torus and the derived subgroup, respectively, of the reductive $\ZZ_\ell$-group scheme $\calG_{A,\ell}$.   The groups $(\calC_\ell)_{\ZZ_\ell^\un}$ and $(\calS_\ell)_{\ZZ_\ell^\un}$ are both split since $(\calG_{A,\ell})_{\ZZ_\ell^\un}$ is split.  There is a split semisimple group scheme $\calS$ over $\ZZ$ for which there is an isomorphism 
\[
f_\ell\colon\calS_{\ZZ_\ell^\un} \xrightarrow{\sim} (\calS_\ell)_{\ZZ_\ell^\un}, 
\]
cf.~\cite[Exposé XXV, Théorème 1.1 for the existence of $\mathcal{S}$ and Exposé XXIII, Corollaire 5.1 for the isomorphism]{SGA3-III}. We can construct $\calS$ directly from the root datum of $(\calS_\ell)_{\ZZ_\ell^\un}$.  There are only finitely many possibilities for this root datum, up to isomorphism, since our semisimple groups have dimension bounded independent of $\ell$.   So we may assume that only finitely many $\ZZ$-group schemes $\calS$ arise as we vary $\ell$.

Let $C$ be the central torus of the Mumford--Tate group $G_A \subseteq \GL_{V_A}$, where $V_A=H_1(A(\CC),\QQ)$.  Fix a minimal field extension $L$ of $\QQ$ for which the torus $C_L$ is split and let $X(C_L)$ be the group of characters $C_L \to \GG_{m,L}$.  Let $\calC$ be the split torus over $\ZZ$ isomorphic to $C_L$ over $L$ for which we can identify its character group $X(\calC)$ with $X(C_L)$.   Let $\Omega \subseteq X(C_L)=X(\calC)$ be the (finite) set of weights of the action of the torus $C_L$ on $V_A \otimes_\QQ L$. 

We have an inclusion of groups $G_{A,\ell} \subseteq (G_A)_{\QQ_\ell}$ by Proposition~\ref{P:partial MTC}(\ref{P:partial MTC i}).  By \cite[Corollary 2.11]{UllmoYafaev}, their central tori agree, i.e., $(\calC_{\ell})_{\QQ_\ell}=C_{\QQ_\ell}$.   By taking $\ell\gg_A 1$, we may assume that $\ell$ is unramified in $L$ and hence there is an embedding $\sigma\colon L \hookrightarrow \QQ_\ell^\un$.   Using the embedding $\sigma$, we obtain an isomorphism between $(\calC_{\ell})_{\QQ_\ell^\un}=C_{\QQ_\ell^\un} = (C_L)_{\QQ_\ell^\un}
$ and $(\calC_L)_{\QQ_\ell^\un}=\calC_{\QQ_\ell^\un}$.  Since the tori $\calC_{\ell}$ and $\calC$ are split over $\ZZ_\ell^\un$, this gives rise to an isomorphism
\[
f_\ell'\colon \calC_{\ZZ_\ell^\un} \xrightarrow{\sim} (\calC_\ell)_{\ZZ_\ell^\un}.
\]

Using our isomorphisms $f_\ell$ and $f_\ell'$, we may view $T_\ell(A)\otimes_{\ZZ_\ell} \ZZ_\ell^\un$ and each $M_j$ as $\ZZ_\ell^\un$-representations of $\calS_{\ZZ_\ell^\un}$ and $\calC_{\ZZ_\ell^\un}$.  Note that these actions of $\calS_{\ZZ_\ell^\un}$ and $\calC_{\ZZ_\ell^\un}$ commute with each other.  \\

Now take any $1\leq j \leq m$.   Since $M_j \otimes_{\ZZ_\ell^\un} \Qbar_\ell$ is an irreducible representation of $(\calG_{A,\ell})_{\Qbar_\ell}$, it must also be an irreducible representation of $(\calS_{\ell})_{\Qbar_\ell}$ with $(\calC_{\ell})_{\Qbar_\ell}$ acting via a single character $\alpha_j \in X(\calC)$.  The character $\alpha_j$ lies in $\Omega$ since with respect to the comparison isomorphism $T_\ell(A)\otimes_{\ZZ_\ell} \Qbar_\ell  = V_A \otimes_\QQ \Qbar_\ell$ it corresponds to a weight of $C_{\Qbar_\ell}$.

Let $\mu_j$ be the dominant weight of $\calS$ that arises as the highest weight of the irreducible representation $M_j \otimes_{\ZZ_\ell^\un} \QQ_\ell$ of $\calS_{\QQ_\ell}$ (this lies in the character group of a fixed split maximal torus of $\calS$ and is with respect to a choice of a positive system of roots).   The minuscule weight conjecture, proved by Pink \cite[Corollary~5.11]{PinkMumfordTate}, says that all the weights of $T_\ell(A)\otimes_{\ZZ_\ell} \Qbar_\ell$ as a representation of $(S_\ell)_{\Qbar_\ell}$ are minuscule.   Since a semisimple group has only finitely many minuscule weights, we find that for our given $\calS$, there are only finitely many possibilities for $\mu_j$.

Following Jantzen \cite{Jantzen}*{II.8.3}, there is a $\ZZ$-representation $V(\mu_j)$ of $\calS$ such that for any field $F$ of characteristic $0$, $V(\mu_j)\otimes_\ZZ F$ is the unique irreducible representation of $\calS_F$, up to isomorphism, with highest weight $\mu_j$.   In particular, $M_j \otimes_{\ZZ_\ell^\un} \QQ_\ell^\un$ and $V(\mu_j)\otimes_{\ZZ} \QQ_\ell^\un$ are isomorphic representations of $\calS_{\QQ_\ell^\un}$.

Since $\mu_j$ is a minuscule weight, all the weights of the action of $\mathcal{S}_{\FFbar_\ell}$ on $V(\mu_j) \otimes_{\mathbb{Z}} \FFbar_\ell$ lie in a single orbit for the Weyl group, and therefore $V(\mu_j) \otimes_{\mathbb{Z}} \FFbar_\ell$ is an irreducible representation of $\mathcal{S}_{\FFbar_\ell}$, see also \cite{GaribaldiGuralnickNakano} (and in particular \cite[p.~73, Generalization of Theorem 1.1 to split reductive groups]{GaribaldiGuralnickNakano}).

We now claim that $M_j$ and $V(\mu_j)\otimes_{\ZZ} \ZZ_\ell^\un$ are isomorphic $\ZZ_\ell^\un$-representations of $\calS_{\ZZ_\ell^\un}$.   We know there is an isomorphism $M_j \otimes_{\ZZ_\ell^\un} \QQ_\ell^\un=V(\mu_j)\otimes_{\ZZ} \QQ_\ell^\un$ of $\calS_{\QQ_\ell^\un}$ representations that we will view as an identification; this allows us to compare $M_j$ and $\calV:=V(\mu_j)\otimes_{\ZZ}\ZZ_\ell^\un$ which are $\ZZ_\ell^\un$-modules of the same rank.  Since $\ZZ_\ell^\un$ is a DVR with uniformizer $\ell$, we can scale our isomorphism by an appropriate power of $\ell$ so that $M_j \subseteq \calV$ and $M_j \not\subseteq \ell \calV$.  To prove that $\calV= M_j$, it suffices to show that the quotient map $\varphi\colon M_j \to \calV/\ell \calV$ is surjective.   Define $W:=\varphi(M_j)$.  Since $\FFbar_\ell$ is the residue field of $\ZZ_\ell^\un$, $\calV/\ell \calV$ is an $\FFbar_\ell$-vector space.  Moreover, $\calV/\ell \calV$ is a representation of $\calS_{\FFbar_\ell}$ that is isomorphic to $\calV \otimes_{\ZZ_\ell^\un} \FFbar_\ell$.    By our previous claim, $\calV/\ell \calV$ is an irreducible representation of $\calS_{\FFbar_\ell}$.  Since $W \subseteq \calV/\ell \calV$ is also a representation of $\calS_{\FFbar_\ell}$, we have $W=0$ or $W=\calV/\ell \calV$.   We have $W\neq 0$ since $M_j \not\subseteq \ell \calV$ and thus $\varphi$ is surjective. 

Define $N_j:=V(\mu_j)$; it is a free $\ZZ$-module of finite rank that has an action of $\calS$.   We let $\calC$ act on $N_j$ via the character $\alpha_j$.     Now define the $\ZZ$-module $N:=\oplus_{j=1}^m N_j$; it is a representation over $\ZZ$ of $\calS$ and $\calC$.       We have shown above that there is an isomorphism $N_j \otimes_\ZZ\ZZ_\ell^\un  = V(\mu_j)\otimes_\ZZ \ZZ_\ell^\un \xrightarrow{\sim} M_j$ of $\ZZ_\ell^\un$-modules that respects the action of $\calS_{\ZZ_\ell^\un}$; it also respects the $\calC_{\ZZ_\ell^\un}$ -action since the torus acts on both by the character $\alpha_j$.  Combining these together, we obtain an isomorphism 
\[
\psi_\ell\colon N\otimes_\ZZ \ZZ_\ell^\un  \xrightarrow{\sim} \oplus_{j=1}^m M_j = T_\ell(A) \otimes_{\ZZ_\ell} \ZZ_\ell^\un
\]
of $\ZZ_\ell^\un$-modules that respects the actions of $\calS_{\ZZ_\ell^\un}$ and $\calC_{\ZZ_\ell^\un}$.  Using that $\calG_{A,\ell}$ acts faithfully on $T_\ell(A)$, we find that $\calS$ and $\calC$ act faithfully on $N$ and hence we may view them as subgroups of $\GL_N$.   Since $\calS$ and $\calC$ commute, the subgroup of $\GL_N$ generated by $\calS$ and $\calC$ is a reductive $\ZZ$-group scheme $\calG$.    Taking $f_\ell$ and $f_\ell'$ into account, the isomorphism
\begin{align*}
\GL_{N\otimes_\ZZ \ZZ_\ell^\un} \xrightarrow{\sim} \GL_{T_\ell(A)\otimes_{\ZZ_\ell} \ZZ_\ell^\un}
\end{align*}
of $\ZZ_\ell^\un$-groups coming from $\psi_\ell$ gives isomorphisms $\calS_{\ZZ_\ell^\un}\xrightarrow{\sim} (\calS_\ell)_{\ZZ_\ell^\un}$ and $\calC_{\ZZ_\ell^\un}\xrightarrow{\sim} (\calC_\ell)_{\ZZ_\ell^\un}$, and hence also an isomorphism $\calG_{\ZZ_\ell^\un}\xrightarrow{\sim} (\calG_{A,\ell})_{\ZZ_\ell^\un}$.   After choosing a basis of $N$ over $\ZZ$, we can view $\calG$ as a subgroup of $\GL_{2g,\ZZ}$.  To complete our proof of (\ref{P:redux finiteness a}), we need to explain why only finitely many such groups $\calG$ need arise as we vary $\ell$.  In our construction, $\calC$ is independent of $\ell$ and only finitely many $\calS$, up to isomorphism, will occur.  There are only finitely many representations $N_j$ of $\calS$ and $\calG$ that will occur since only finitely many dominant weights $\mu_j$ show up and there are only finitely many $\alpha_j \in \Omega$.   This implies that only a finite number of representations $N$ of $\calS$ and $\calC$ will occur since the rank of $N$ must be $2g$.   Thus our construction gives rise to only finitely many possible groups $\calG \subseteq \GL_N$ for $\ell\gg_A 1$.  

We now prove (\ref{P:redux finiteness b}).  We keep notation as in the above construction of $\calG \subseteq \GL_N$.    Each representation $N_j$ of $\calG$ was constructed using the dominant weight $\mu_j$ of $\calS$  and the character $\alpha_j$ of $\calC$.  Let $F$ be the field $\QQ$ or $\FF_\ell$.     From the irreducible representation $N_j\otimes_\ZZ F$ of $\calG_F$, we can recover $\mu_j$ and $\alpha_j$  since we know the central torus acts via a character and the action of the derived subgroup is given by its highest weight (for $F=\FF_\ell$, the irreducibility was shown above for $\ell\gg_A 1$ when we proved that $V(\mu_j) \otimes_\ZZ \FFbar_\ell$ is an irreducible representation of $\calS_{\FFbar_\ell}$).   Therefore, the isomorphism class of the representation $N_j\otimes_\ZZ F$ of $\calG_F$ is determined by $(\mu_j,\alpha_j)$.  So let $\mathcal{I}$ be the set of pairs $(\mu_j,\alpha_j)$ corresponding to the $(\calS \times \calC)$-modules $N_j$, with $1\leq j \leq m$. For each $(\mu,\alpha)\in \mathcal{I}$, let $W_{(\mu,\alpha)}$ be the direct sum of the $N_j$ with $1\leq j \leq m$ for which $(\mu_j,\alpha_j)=(\mu,\alpha)$.   Therefore, $N=\oplus_{(\mu,\alpha) \in \mathcal{I}} W_{(\mu,\alpha)}$ and $N\otimes_\ZZ F = \oplus_{(\mu,\alpha) \in \mathcal{I}} (W_{(\mu,\alpha)}\otimes_\ZZ F)$ is the decomposition into isotypical components as a representation of $\calG_F$. 
\end{proof}

\subsection{Points modulo $\ell$}
Throughout this section, we will assume that all the $\ell$-adic monodromy groups $G_{A,\ell}$ are connected.  Choosing a $\ZZ_\ell$-basis for $T_\ell(A)$, we can identify $\calG_{A,\ell}$ with an algebraic subgroup of $\GL_{2g,\ZZ_\ell}$ and hence identify the special fiber $G:=(\calG_{A,\ell})_{\FF_\ell}$ with an algebraic subgroup of $\GL_{2g,\FF_\ell}$.  For each subspace $W \subseteq \FF_\ell^{2g}$, let $G_W$ be the algebraic subgroup of $G$ as defined in \S\ref{SS:notation}.  The goal of this section is to give bounds on the cardinality of $G_W(\FF_\ell)$ that do not depend on $W$.

For $W\subseteq \FF_\ell^{2g}$, let $m_W$ be the number of irreducible components of $(G_W)_{\FFbar_\ell}$.  Since $(G_W)_{\FFbar_\ell}$ is an algebraic group, all its irreducible components are disjoint and have the same dimension.  Our next lemma bounds $m_W$ uniformly in $\ell$ and $W$; see also \cite[Lemma 6]{MR3649016} for a closely related result. 

\begin{lemma} \label{L:mW bound}
We have $m_W \ll_A 1$ for all primes $\ell$ and subspaces $W\subseteq \FF_\ell^{2g}$.
\end{lemma}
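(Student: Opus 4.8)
The plan is to deduce the uniform bound on the number of geometric components of $G_W = (\calG_{A,\ell})_{\FF_\ell,W}$ from the structural description of $(\calG_{A,\ell})_{\ZZ_\ell^\un}$ provided by Proposition~\ref{P:redux finiteness}. First I would reduce to the case $\ell \gg_A 1$: there are only finitely many small primes to worry about, and for each fixed $\ell$ the number of components of $(G_W)_{\FFbar_\ell}$ as $W$ ranges over subspaces of $\FF_\ell^{2g}$ is bounded (for instance because $G_W$ is cut out inside the fixed algebraic group $G$ by linearly many equations, or simply by Noetherianity applied to the finite list of possible $W$ up to $\GL_{2g}(\FF_\ell)$-conjugacy). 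So it suffices to treat $\ell \gg_A 1$, where I may invoke all the conclusions of Proposition~\ref{P:redux finiteness} and work over $\ZZ_\ell^\un$ (passing from $\FF_\ell$ to $\FFbar_\ell$ does not change the number of geometric components, and a subspace $W \subseteq \FF_\ell^{2g}$ becomes a subspace $\bbar W \subseteq \FFbar_\ell^{2g}$).

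The key step is the following. By Proposition~\ref{P:redux finiteness}(a), for $\ell \gg_A 1$ there is a base change of bases identifying $(\calG_{A,\ell})_{\ZZ_\ell^\un}$ with $(\calG_i)_{\ZZ_\ell^\un}$ for one of finitely many split reductive $\ZZ$-group schemes $\calG_i \subseteq \GL_{2g,\ZZ}$. Under this identification the special fiber $(G_W)_{\FFbar_\ell}$ becomes $\Fix_{(\calG_i)_{\FFbar_\ell}}(\bbar W)$ for a subspace $\bbar W \subseteq \FFbar_\ell^{2g}$. Thus it is enough to prove: for a split reductive group scheme $\calG$ over $\ZZ$, the number of irreducible components of $\Fix_{\calG_{\kbar}}(W)$ is bounded independently of the algebraically closed field $\kbar$ and of the subspace $W \subseteq \kbar^{2g}$. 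I would prove this by a constructibility/spreading-out argument. Consider the incidence scheme over the Grassmannian $\Grass$ of all subspaces of $\ZZ^{2g}$ (or rather over $\Spec \ZZ \times \Grass$), namely the subscheme $\calZ \subseteq \calG \times \Grass$ whose fiber over a point $(\calG, W)$ is $\Fix_{\calG}(W)$. This $\calZ \to \Spec\ZZ \times \Grass$ is a morphism of finite type between Noetherian schemes; the function sending a geometric point $x$ of the base to the number of irreducible components of the fiber $\calZ_x$ is constructible (this is a standard consequence of generic flatness / Chevalley, e.g. EGA~IV), and a constructible function on a Noetherian scheme is bounded. Taking the maximum over the finitely many $\calG_i$ gives the desired uniform bound, which depends only on $A$ (through the finite list $\{\calG_i\}$), as required.

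The main obstacle, and the point to be careful about, is ensuring that the geometric-fiber count is genuinely bounded uniformly in the \emph{characteristic} — i.e., that the bad primes that might destroy reductivity or split-ness of $\calG_{A,\ell}$ have already been excluded by the $\ell \gg_A 1$ hypothesis, and that what remains is a clean statement about a fixed finite list of $\ZZ$-group schemes $\calG_i$. Once one is in that situation, the constructibility argument over $\Spec \ZZ$ handles all residue characteristics at once. An alternative, more hands-on route avoiding the incidence scheme: for $\calG$ reductive over an algebraically closed field, $\Fix_{\calG}(W)$ is a (possibly disconnected) subgroup scheme, its identity component has index equal to the order of the component group, and one can bound this component group by comparing with characteristic $0$ — but I expect the Grassmannian/constructibility argument to be the cleanest to write, and I would favor it. Either way the statement to keep in mind is that a constructible $\ZZ$-valued function on a scheme of finite type over $\ZZ$ takes finitely many values.
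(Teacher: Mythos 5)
Your proof is correct, and it takes a genuinely different route than the paper in its final step. Both arguments begin by reducing to $\ell \gg_A 1$ (for each fixed $\ell$ there are only finitely many subspaces $W \subseteq \FF_\ell^{2g}$, hence finitely many finite numbers $m_W$), and both then invoke Proposition~\ref{P:redux finiteness}(\ref{P:redux finiteness a}) to identify $(\calG_{A,\ell})_{\FFbar_\ell}$, up to conjugacy in $\GL_{2g,\FFbar_\ell}$, with $(\calG_i)_{\FFbar_\ell}$ for one of finitely many split reductive $\ZZ$-groups $\calG_i \subseteq \GL_{2g,\ZZ}$ determined by $A$. The divergence is in how the component count is then bounded. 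The paper embeds $G_W$ into projective space via $\GL_{2g} \hookrightarrow \PP^n$, slices by an auxiliary linear subspace of complementary dimension meeting every component, and applies the refined B\'ezout theorem twice: once to get $m_W \leq \deg(X_{\FFbar_\ell})$ where $X$ is the closure of $\iota(G)$, and again to bound $\deg(X_{\FFbar_\ell})$ using the bounded complexity of the defining equations of the $\calG_i$. You instead view $\Fix_{\calG_i} \to \Grass_{\ZZ}$ as a finite-type morphism of Noetherian schemes and quote that the fibrewise count of geometric irreducible components is a constructible, hence bounded, function on the base (EGA IV, \S9.7); evaluating at the $\FFbar_\ell$-point of the Grassmannian given by $W$ finishes the argument. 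Your route is arguably cleaner: it avoids the projective embedding, the auxiliary linear slice and the slightly delicate second application of B\'ezout, it reuses the $\Fix$-over-the-Grassmannian machinery that the paper develops anyway in \S\ref{S: small primes}, and it handles all residue characteristics at once by working over $\Spec \ZZ$, which is exactly the uniformity issue you correctly flag. The paper's B\'ezout argument is more elementary (no EGA constructibility) and more explicit in principle. One minor wrinkle in your write-up: in the reduction to $\ell \gg_A 1$ the relevant observation is simply that $\FF_\ell^{2g}$ is a finite set and that each $G_W$ is of finite type over a field, so each $m_W$ is finite; the appeals to ``Noetherianity'' and to ``$\GL_{2g}(\FF_\ell)$-conjugacy'' are unnecessary, and the latter is not quite correct as stated, since conjugating $W$ by an arbitrary element of $\GL_{2g}(\FF_\ell)$ does not conjugate $G_W$ unless that element normalizes $G$.
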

\begin{proof}
 Fix a prime $\ell$ and a subspace $W\subseteq \FF_\ell^{2g}$.   In our proof, we can always take $\ell$ to be sufficiently large since this only excludes a finite number of groups $G_W$ from consideration.

Set $n:=4g^2+1$. 	We can view $\GL_{2g,\FF_\ell}$ as a closed subvariety of $\AA_{\FF_\ell}^{n}$ by identifying a matrix with its $(2g)^2$ entries and its determinant.  Let $\iota$ be the embedding given by the inclusions
\[
\GL_{2g,\FF_\ell} \subset \A^{n}_{\FF_\ell} \subset  \P^{n}_{\FF_\ell}.
\]	
By definition, $G_W = G \cap (\GL_{2g,\FF_\ell})_W$ with $(\GL_{2g,\FF_\ell})_W$ determined by homogenenous linear equations in the entries of the matrices in $\GL_{2g,\FF_\ell}$.  So there is a linear subspace $L$ of $\PP_{\FF_\ell}^n$ such that $\iota(G) \cap L = \iota(G_W)$.  We can choose a linear subspace $L'$ in $\PP^n_{\FFbar_\ell}$ of codimension $\dim G_W$ that intersects every irreducible component of  $\iota(G_W)_{\FFbar_\ell}$ and $\iota(G_W)_{\FFbar_\ell} \cap L'$ is finite.   Let $X$ be the Zariski closure of $\iota(G)$ in $\PP^n_{\F_\ell}$.    We find that $X_{\FFbar_\ell} \cap L_{\FFbar_\ell} \cap L'$ is zero dimensional and that the number of its $\FFbar_\ell$-points gives an upper bound  for $m_W$.   By a suitable version of  B\'ezout's theorem (for example, \cite[Example 8.4.6]{Fulton}), we deduce that
\[
m_W \leq \deg(X_{\FFbar_\ell}) \cdot  \deg(L_{\FFbar_\ell}) \cdot \deg(L').
\]
Therefore, $m_W \leq \deg(X_{\FFbar_\ell})$ since linear spaces have degree $1$. See \cite[\S8.4]{Fulton} for background on B\'ezout's theorem and the degree of a closed subvariety of projective space.  

 Proposition~\ref{P:redux finiteness} implies that there is a finite collection $\{\calG_i\}_{i\in I}$ of reductive subgroups of $\GL_{2g,\ZZ}$ such that for all sufficiently large $\ell$, $G_{\FFbar_\ell}$ and $(\calG_i)_{\FFbar_\ell}$ are conjugate in $\GL_{2g,\FFbar_\ell}$ for some $i\in I$.  Using that $I$ is finite, this implies that the subvariety $X_{\FFbar_\ell}$  of $\PP^n_{\FFbar_\ell}$ can be defined by a finite set $S$ of homogenous polynomials with $|S|$ and the degree of the $f\in S$ bounded in terms of a constant depending only on $A$.   By B\'ezout's theorem (for example, \cite[Example 8.4.6]{Fulton}), we deduce that $\deg(X_{\FFbar_\ell}) \ll_A 1$.  Therefore, $m_W \ll_A 1$.  
\end{proof}

\begin{prop} \label{P:HW count}
Define $G:=(\calG_{A,\ell})_{\FF_\ell}$.  We have $|G_W(\FF_\ell)| \asymp_A \ell^{\dim G_W}$ for all subspaces $W\subseteq \FF_\ell^{2g}$.
\end{prop}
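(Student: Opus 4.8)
The plan is to combine the Lang–Weil estimate with the uniform bound on the number of geometric components from Lemma~\ref{L:mW bound}. The key point is that $G_W := G \cap (\GL_{2g,\FF_\ell})_W$ is the special fiber of the $\ZZ_\ell$-group scheme $\Fix_{\calG_{A,\ell}}(\Wcal)$ for any $\ZZ_\ell$-lattice $\Wcal \subseteq T_\ell(A)$ reducing to $W$, but what we really need is just that $G_W$ is an algebraic subgroup of $\GL_{2g,\FF_\ell}$ that is cut out inside $G$ by linear equations of bounded degree, and that by Proposition~\ref{P:redux finiteness} the group $G_{\FFbar_\ell}$ is, up to conjugacy, one of finitely many reductive groups $(\calG_i)_{\FFbar_\ell}$ for $\ell \gg_A 1$.

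First I would establish the upper bound $|G_W(\FF_\ell)| \ll_A \ell^{\dim G_W}$. Write $d := \dim G_W$ and let $m_W$ be the number of irreducible components of $(G_W)_{\FFbar_\ell}$, all of which have dimension $d$ since $(G_W)_{\FFbar_\ell}$ is an algebraic group. By Lemma~\ref{L:mW bound} we have $m_W \ll_A 1$. Using the embedding $\iota$ and the Zariski closure $X$ of $\iota(G)$ in $\PP^n_{\FF_\ell}$ as in the proof of Lemma~\ref{L:mW bound}, the variety $\iota(G_W)_{\FFbar_\ell}$ is cut out of $X_{\FFbar_\ell}$ by a linear subspace, hence is defined by polynomials of degree bounded solely in terms of $A$ (since only finitely many groups $\calG_i$ occur and only finitely many degrees for their defining equations). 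A standard point-counting bound for varieties in $\PP^n_{\FF_\ell}$ of bounded degree and dimension $d$ — for instance via the Lang–Weil inequality, or more elementarily by intersecting with generic hyperplanes over $\FF_\ell$ — then gives $|\iota(G_W)(\FF_\ell)| \leq |X(\FF_\ell) \cap L(\FF_\ell)| \ll_A \ell^{d}$, which is the desired upper bound. One must be slightly careful that $G_W$ need not be defined over $\FF_\ell$ a priori — but it is, since $W \subseteq \FF_\ell^{2g}$ is an $\FF_\ell$-subspace and $G$ is an $\FF_\ell$-group, so the linear equations cutting out $(\GL_{2g,\FF_\ell})_W$ have coefficients in $\FF_\ell$.

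Next I would establish the lower bound $|G_W(\FF_\ell)| \gg_A \ell^{\dim G_W}$. Here I would apply the Lang–Weil estimate to the $\FF_\ell$-variety $G_W$: for a geometrically irreducible $\FF_\ell$-variety $Y$ of dimension $d$, one has $|Y(\FF_\ell)| \geq \ell^d - C \ell^{d - 1/2}$ where $C$ depends only on $d$ and the degree of a defining system for $Y$. The group $G_W$ need not be geometrically irreducible, but its identity component $G_W^\circ$ is, and is defined over $\FF_\ell$; moreover the degree of a defining system for $(G_W^\circ)_{\FFbar_\ell}$ is bounded in terms of $A$ by the same finiteness input from Proposition~\ref{P:redux finiteness} (together with the bound $m_W \ll_A 1$, which controls how $G_W^\circ$ sits inside $G_W$). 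Applying Lang–Weil to $G_W^\circ$ gives $|G_W(\FF_\ell)| \geq |G_W^\circ(\FF_\ell)| \geq \ell^{\dim G_W} - C_A \ell^{\dim G_W - 1/2}$, and for $\ell \gg_A 1$ this is $\geq \tfrac12 \ell^{\dim G_W} \gg_A \ell^{\dim G_W}$. The finitely many remaining primes $\ell$ contribute a bounded correction to the implicit constant, since for each fixed $\ell$ there are only finitely many subspaces $W$ and $G_W(\FF_\ell)$ is nonempty (it contains the identity), so $|G_W(\FF_\ell)| \geq 1 \geq \ell^{\dim G_W} / \ell^{2g}$.

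The main obstacle is the uniformity of the Lang–Weil constant: one needs that the degree of a defining system for $(G_W)_{\FFbar_\ell}$ (equivalently, that it lies in a bounded-complexity family of varieties) is bounded independently of $\ell$ and $W$. This is exactly what Proposition~\ref{P:redux finiteness} and Lemma~\ref{L:mW bound} are designed to provide: up to conjugacy $G$ is one of finitely many reductive groups, so $X = \overline{\iota(G)}$ has bounded degree, and intersecting with a linear space (the locus fixing $W$) keeps the degree bounded by Bézout, while the component count stays bounded by Lemma~\ref{L:mW bound}. Once these uniformities are in hand, both inequalities follow from the standard estimates, and one assembles them into the claimed $|G_W(\FF_\ell)| \asymp_A \ell^{\dim G_W}$.
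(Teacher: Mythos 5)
Your argument is correct in spirit, but it replaces the paper's central tool with a more laborious one. The paper, after establishing $|G_W^\circ(\FF_\ell)| \leq |G_W(\FF_\ell)| \leq m_W\,|G_W^\circ(\FF_\ell)|$ and $m_W \ll_A 1$ (Lemma~\ref{L:mW bound}), finishes in one line by quoting Nori's Lemma~3.5 in \cite{MR880952}: for \emph{any} connected algebraic subgroup $H$ of $\GL_n$ over $\FF_\ell$ of dimension $d$, one has $(\ell-1)^d \leq |H(\FF_\ell)| \leq (\ell+1)^d$, with no degree hypothesis whatsoever. This is a purely structural fact about connected linear algebraic groups over finite fields, and it makes the whole uniform-degree bookkeeping you carry out superfluous for this step. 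Your route — Lang--Weil applied to $(G_W^\circ)_{\mathrm{red}}$ for the lower bound, and a bounded-degree, dimension-$d$ point count for the upper bound — does work, and you correctly identify that Proposition~\ref{P:redux finiteness} and the Bézout argument inside Lemma~\ref{L:mW bound} supply the needed uniformity of degrees. The trade-off is that your approach needs an auxiliary large-$\ell$ hypothesis (to absorb the Lang--Weil error constant) and a separate treatment of the finitely many small primes, whereas Nori's lemma gives the estimate for every $\ell$ at once.

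One small imprecision worth flagging: in the chain $|\iota(G_W)(\FF_\ell)| \leq |X(\FF_\ell) \cap L(\FF_\ell)| \ll_A \ell^{d}$, the second inequality is not automatic. The projective intersection $X \cap L$ equals $\iota(G_W)$ only on the affine chart; its part at infinity, which lies inside $X \cap H_\infty$ of dimension $\dim G - 1$, may have components of dimension exceeding $d = \dim G_W$, in which case $|X(\FF_\ell) \cap L(\FF_\ell)|$ could be of order $\ell^{d'}$ for some $d' > d$. The fix is immediate — bound $|\iota(G_W)(\FF_\ell)|$ directly using that $\iota(G_W)$ is an affine variety of dimension $d$ and degree $\ll_A 1$ (or work with the projective closure $\overline{\iota(G_W)}$, which has dimension $d$), rather than with the full intersection $X \cap L$ — but the passage as written skips that step.
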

\begin{proof}
We have inequalities $|G_W^\circ(\FF_\ell)| \leq |G_W(\FF_\ell)| \leq m_W \cdot |G_W^\circ(\FF_\ell)|$.  Since we have $m_W \ll_A 1$ by Lemma~\ref{L:mW bound}, it suffices to show that $|G_W^\circ(\FF_\ell)| \asymp_A \ell^{\dim G_W}$.   So the proposition is a consequence of \cite[Lemma 3.5]{MR880952} which shows that $(\ell-1)^{\dim G_W} \leq |G_W^\circ (\F_\ell)| \leq  (\ell+1)^{\dim G_W}$.
\end{proof}

\section{Codimension bounds} \label{S:codimension bounds new}

\subsection{Lie algebras} \label{SS:codimension bounds lie algebras}
Let $V$ be a nonzero finite-dimensional vector space over a field $F$.    Let $\gl(V)$ be the Lie algebra consisting of the $F$-linear endomorphisms of $V$ with the commutator serving as the Lie bracket.

\begin{definition}\label{D:alpha gW}
    Fix a Lie subalgebra $\gG$ of $\gl(V)$.   For each subspace $W$ of $V$, let $\gG_W$ be the subspace of $\gG$ consisting of $B\in \gG$ such that $Bw=0$ for all $w\in W$.  Observe that $\gG_W$ is a Lie subalgebra of $\gG$.    For each nonzero subspace $W$ of $V$, we define the nonnegative rational number 
\[
\alpha(\gG,W) := \frac{\dim \gG - \dim \gG_W}{\dim W}.
\]
There are only finitely many possibilities for the numerator and denominator of $\alpha(\gG,W)$, so we can define 
\[
\alpha(\gG):= \min_{W\neq 0} \alpha(\gG,W),
\] 
where the minimum is over all nonzero subspaces $W$ of $V$.   Note that the notation $\alpha(\gG)$ suppresses the dependence on the ambient algebra $\gl(V)$.  We define $\alpha(\gG,0)=0$. 
\end{definition}

The values of $\alpha(\gG,W)$ satisfy the main axiom of \enquote{slope theory}, which is given in the following lemma.

\begin{lemma}
\label{L: lemma slope}
 For any nonzero subspaces $W$ and $W'$ of $V$, 
 \[
  \alpha(\gG,W+W') \dim(W+W') + \alpha(\gG,W \cap W') \dim(W \cap W') \leq \alpha(\gG, W) \dim W + \alpha(\gG, W') \dim W'.
 \]
\end{lemma}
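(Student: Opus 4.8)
The plan is to clear denominators and reduce the claimed submodularity to a purely linear-algebraic statement about the subalgebras $\gG_W$. First note that the defining formula gives the identity $\alpha(\gG,W)\dim W = \dim\gG - \dim\gG_W$ for every nonzero subspace $W$, and this identity persists for $W = 0$ as well, since $\gG_0 = \gG$ and $\alpha(\gG,0) = 0$ by convention. Setting $c(W) := \dim\gG - \dim\gG_W$, the inequality to be proved is $c(W+W') + c(W\cap W') \le c(W) + c(W')$; since each of the four terms contributes exactly one copy of $\dim\gG$ to its side of the inequality, this is equivalent, after cancellation, to
\[
\dim\gG_W + \dim\gG_{W'} \le \dim\gG_{W+W'} + \dim\gG_{W\cap W'}.
\]

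To prove this reduced inequality I would use two elementary observations about the order-reversing assignment $W \mapsto \gG_W$. First, an endomorphism annihilates $W + W'$ precisely when it annihilates both $W$ and $W'$, so $\gG_{W+W'} = \gG_W \cap \gG_{W'}$. Second, any element of $\gG$ that annihilates $W$, or that annihilates $W'$, certainly annihilates $W \cap W'$, so $\gG_W + \gG_{W'} \subseteq \gG_{W\cap W'}$ and hence $\dim\gG_{W\cap W'} \ge \dim(\gG_W + \gG_{W'})$. Combining these with the Grassmann formula $\dim(\gG_W + \gG_{W'}) + \dim(\gG_W \cap \gG_{W'}) = \dim\gG_W + \dim\gG_{W'}$, applied inside the finite-dimensional space $\gG$, yields
\[
\dim\gG_{W+W'} + \dim\gG_{W\cap W'} \ge \dim(\gG_W \cap \gG_{W'}) + \dim(\gG_W + \gG_{W'}) = \dim\gG_W + \dim\gG_{W'},
\]
which is exactly what is needed.

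There is no serious obstacle here: the argument is a two-line computation once the reformulation is in place, and in fact it uses only that $\gG$ is a linear subspace of $\gl(V)$, not the Lie bracket (the bracket being relevant only to the remark that $\gG_W$ is itself a subalgebra). The only points requiring a moment's care are the degenerate cases: when $W \cap W' = 0$ one must invoke the conventions $\gG_0 = \gG$ and $\alpha(\gG,0) = 0$ to keep the identity $c(W) = \alpha(\gG,W)\dim W$ valid, and all dimension counts are to be understood as taking place within the finite-dimensional space $\gG$. Both are routine.
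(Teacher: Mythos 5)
Your proposal is correct and takes essentially the same approach as the paper. You both reduce the claim to the linear-algebraic inequality $\dim\gG_W + \dim\gG_{W'} \leq \dim\gG_{W+W'} + \dim\gG_{W\cap W'}$ and both rely on the two key facts $\gG_{W+W'} = \gG_W \cap \gG_{W'}$ and $\gG_W + \gG_{W'} \subseteq \gG_{W\cap W'}$; the paper packages this via the injection $\gG_W/\gG_{W+W'} \hookrightarrow \gG_{W\cap W'}/\gG_{W'}$ (rank--nullity for the map $\gG_W \to \gG_{W\cap W'}/\gG_{W'}$), while you invoke the Grassmann dimension formula, but these are interchangeable formulations of the same counting argument.
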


\begin{proof}
 The linear map $\gG_W \to \gG_{W\cap W'}/\gG_{W'}$ has kernel $\gG_W\cap \gG_{W'} = \gG_{W+W'}$, so
\[
 \dim (\gG_W/\gG_{W+W'}) \leq \dim (\gG_{W\cap W'} /\gG_{W'}).
\]
Equivalently, 
\[
\alpha(\gG,W+W')\, \dim(W+W') - \alpha(\gG,W)\,  \dim W \leq \alpha(\gG,W')\cdot \dim W' - \alpha(\gG,W\cap W')\, \dim (W\cap W'),
\]  
which gives the desired inequality.
\end{proof}

\begin{lemma} \label{L:key lemma new}
\begin{romanenum}
\item \label{L:key lemma new i}
There exists a unique maximal subspace $U$ of $V$ satisfying $\alpha(\gG,U)= \alpha(\gG)$.
\item \label{L:key lemma new ii}
If $\alpha(\gG,W)=\alpha(\gG)$ and $\alpha(\gG,W')=\alpha(\gG)$ for nonzero subspaces $W$ and $W'$ of $V$, then $\alpha(\gG,W+W')=\alpha(\gG)$.

\end{romanenum}
\end{lemma}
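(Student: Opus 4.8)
The plan is to derive both parts purely formally from the submodularity established in Lemma~\ref{L: lemma slope}, together with the dimension identity $\dim(W+W')+\dim(W\cap W')=\dim W+\dim W'$; no new geometric input is needed.

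\emph{Part (ii).} Set $\alpha:=\alpha(\gG)$ and suppose $\alpha(\gG,W)=\alpha(\gG,W')=\alpha$ for nonzero subspaces $W,W'$ of $V$. First I would record two lower bounds: $\alpha(\gG,W+W')\dim(W+W')\geq \alpha\dim(W+W')$, since $W+W'\neq 0$ and $\alpha$ is attained as a minimum over nonzero subspaces; and $\alpha(\gG,W\cap W')\dim(W\cap W')\geq \alpha\dim(W\cap W')$, which holds by the same reasoning when $W\cap W'\neq 0$ and holds trivially (both sides vanishing) when $W\cap W'=0$, using the convention $\alpha(\gG,0)=0$ and $\alpha\geq 0$. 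Adding these and invoking the dimension identity yields
\[
\alpha\bigl(\dim W+\dim W'\bigr)\leq \alpha(\gG,W+W')\dim(W+W')+\alpha(\gG,W\cap W')\dim(W\cap W').
\]
Lemma~\ref{L: lemma slope} bounds the right-hand side from above by $\alpha(\gG,W)\dim W+\alpha(\gG,W')\dim W'=\alpha(\dim W+\dim W')$, so every inequality above is an equality. In particular $\alpha(\gG,W+W')\dim(W+W')=\alpha\dim(W+W')$, and since $\dim(W+W')>0$ we get $\alpha(\gG,W+W')=\alpha=\alpha(\gG)$.

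\emph{Part (i).} Let $\mathcal{S}$ be the set of nonzero subspaces $W$ with $\alpha(\gG,W)=\alpha(\gG)$; it is nonempty because $\alpha(\gG)$ is attained. By part (ii) and induction, $\mathcal{S}$ is closed under finite sums. Since $\dim V<\infty$, the subspace $U:=\sum_{W\in\mathcal{S}}W$ is realized as a finite sub-sum of members of $\mathcal{S}$, hence $U\in\mathcal{S}$; it contains every member of $\mathcal{S}$ by construction, and any other maximal element $U'$ would satisfy $U+U'\in\mathcal{S}$, forcing $U=U+U'=U'$. So $U$ is the unique maximal element.

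I expect no real obstacle: the arithmetic content is entirely absorbed into Lemma~\ref{L: lemma slope}, and the rest is the standard "maximal destabilizing subobject" argument of slope theory. The only point that deserves care in the write-up is the degenerate case $W\cap W'=0$ in part (ii): there one cannot use the minimality bound $\alpha(\gG,\cdot)\geq\alpha(\gG)$ (asserted only for \emph{nonzero} subspaces) and must instead invoke the convention $\alpha(\gG,0)=0$ and nonnegativity of $\alpha(\gG)$.
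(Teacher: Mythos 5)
Your proof is correct and follows essentially the same route as the paper: part (ii) is derived from the submodularity in Lemma~\ref{L: lemma slope} together with the minimality of $\alpha(\gG)$ (handling the degenerate case $W\cap W'=0$ via the convention $\alpha(\gG,0)=0$), and part (i) follows by taking the sum of all minimizing subspaces. The only superficial difference is that you squeeze the chain of inequalities to force equality all at once, whereas the paper first rearranges to get $\alpha(\gG,W+W')\leq\alpha(\gG)$ and then invokes minimality — the underlying argument is the same.
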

\begin{proof}
We first prove (\ref{L:key lemma new ii}).   By Lemma~\ref{L: lemma slope} and our assumptions, we have 
 \[
  \alpha(\gG,W+W') \dim(W+W') + \alpha(\gG,W \cap W') \dim(W \cap W') \leq \alpha(\gG) (\dim W + \dim W').
 \]
Since $\alpha(\gG) \leq \alpha(\gG,W\cap W')$ when $W\cap W'\neq 0$,  we have
\[
\alpha(\gG,W+W') \dim (W+W') \leq \alpha(\gG) (\dim W + \dim W' - \dim W\cap W' ) = \alpha(\gG) \dim (W+W')
\]
and hence $\alpha(\gG,W+W') \leq \alpha(\gG)$. Note that the same inequality also holds if $W \cap W' = 0$, since in this case the term corresponding to $W \cap W'$ in Lemma \ref{L: lemma slope} vanishes. By the minimality of $\alpha(\gG)$, this implies that $\alpha(\gG,W+W')=\alpha(\gG)$.  This completes the proof of (\ref{L:key lemma new ii}).

We have $\alpha(\gG,W)=\alpha(\gG)$ for some nonzero subspace $W$ of $V$ by the definition of $\alpha(\gG)$.   By taking $U$ to be the subspace generated by all subspaces $W$ of $V$ with $\alpha(\gG,W)=\alpha(\gG)$, we have $\alpha(\gG,U)=\alpha(\gG)$ by part (\ref{L:key lemma new ii}).   Part (\ref{L:key lemma new i}) is now clear.
\end{proof}

Fix a finite Galois extension $L$ of $F$.  Then $\gG\otimes_F L$ is a Lie subalgebra of $\gl(V\otimes_F L)$ and we can define $\alpha(\gG\otimes_F L)$ as before.  By Lemma~\ref{L:key lemma new}(\ref{L:key lemma new i}), there are  unique subspaces $U\subseteq V$ and $U' \subseteq V\otimes_F L$ that are maximal amongst those subspaces that satisfy $\alpha(\gG,U)=\alpha(\gG)$ and $\alpha(\gG\otimes_F L,U')=\alpha(\gG\otimes_F L)$, respectively.

\begin{lemma}\label{L:U functoriality}
With notation as above, the inclusion $U \subseteq V$ induces an isomorphism $U\otimes_F L \xrightarrow{\sim} U'$.  In particular, we have $\alpha(\gG\otimes_F L )=\alpha(\gG)$.
\end{lemma}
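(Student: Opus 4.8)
The plan is to combine two ingredients: the compatibility of the construction $W \mapsto \gG_W$ with flat base change, and Galois descent applied to the uniqueness statement in Lemma~\ref{L:key lemma new}(\ref{L:key lemma new i}).

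First I would record the base-change formula. For a subspace $W \subseteq V$, the subspace $(\gG \otimes_F L)_{W \otimes_F L}$ of $\gG \otimes_F L$ is the kernel of the restriction map $\gG \otimes_F L \to \Hom_L(W \otimes_F L, V \otimes_F L)$, which is the base change along $F \to L$ of the $F$-linear restriction map $\gG \to \Hom_F(W,V)$ whose kernel is $\gG_W$. Since $L$ is flat over $F$, forming kernels commutes with this base change, so $(\gG \otimes_F L)_{W \otimes_F L} = \gG_W \otimes_F L$. Comparing $L$-dimensions with $F$-dimensions then gives $\alpha(\gG \otimes_F L, W \otimes_F L) = \alpha(\gG, W)$ for every nonzero $W \subseteq V$; in particular, taking $W = U$, we already obtain $\alpha(\gG \otimes_F L) \leq \alpha(\gG)$.

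For the reverse inequality I would use that $\gG$, being defined over $F$, is stable under the $\sigma$-semilinear action of $\Gamma := \Gal(L/F)$ on $\gl(V \otimes_F L) = \gl(V) \otimes_F L$; likewise $\Gamma$ acts $\sigma$-semilinearly on $V \otimes_F L$ with fixed space $V$. A direct check that these two actions are compatible with evaluation, i.e.\ $\sigma(Bw) = (\sigma B)(\sigma w)$, shows $\sigma\big((\gG \otimes_F L)_{W'}\big) = (\gG \otimes_F L)_{\sigma W'}$ for every subspace $W' \subseteq V \otimes_F L$ and every $\sigma \in \Gamma$. As $\sigma$ is a semilinear bijection it preserves $L$-dimensions, so $\alpha(\gG \otimes_F L, \sigma W') = \alpha(\gG \otimes_F L, W')$. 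Hence $\sigma U'$ again attains the minimum $\alpha(\gG \otimes_F L)$, and by the uniqueness of the maximal minimizer (Lemma~\ref{L:key lemma new}(\ref{L:key lemma new i})) together with $\dim_L \sigma U' = \dim_L U'$ we get $\sigma U' = U'$ for all $\sigma$, i.e.\ $U'$ is $\Gamma$-stable.

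Galois descent for vector spaces then produces an $F$-subspace $U_0 := (U')^{\Gamma} \subseteq V$ with $U_0 \otimes_F L = U'$. By the base-change formula, $\alpha(\gG, U_0) = \alpha(\gG \otimes_F L, U') = \alpha(\gG \otimes_F L) \leq \alpha(\gG)$, while $\alpha(\gG, U_0) \geq \alpha(\gG)$ by the definition of $\alpha(\gG)$; so $\alpha(\gG \otimes_F L) = \alpha(\gG)$ and $U_0$ attains the minimum for $\gG$, whence $U_0 \subseteq U$ by maximality of $U$. Conversely $U \otimes_F L$ attains the minimum for $\gG \otimes_F L$, so $U \otimes_F L \subseteq U'$ by maximality of $U'$, and taking $\Gamma$-invariants yields $U \subseteq U_0$. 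Therefore $U = U_0$ and $U \otimes_F L = U'$, as claimed. I expect the only delicate point to be the verification that the semilinear $\Gamma$-actions on $\gl(V \otimes_F L)$ and on $V \otimes_F L$ are compatible with evaluation — this is exactly what makes the stabilizer subspaces $\Gamma$-equivariant — while everything else is bookkeeping with dimensions.
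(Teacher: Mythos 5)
Your proof is correct and follows essentially the same route as the paper's: one inequality via flat base change, Galois-equivariance of $W' \mapsto (\gG\otimes_F L)_{W'}$ plus the uniqueness of the maximal minimizer to show $U'$ is $\Gamma$-stable, Galois descent to produce $U_0$, and then two containment arguments to conclude $U_0 = U$. The only cosmetic difference is the final step: you get $U \subseteq U_0$ by taking $\Gamma$-invariants of $U\otimes_F L \subseteq U'$, while the paper instead notes that $U_0 \otimes_F L \subsetneq U \otimes_F L$ would contradict maximality of $U'$; both are routine.
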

\begin{proof}
We have $\alpha(\gG)=\alpha(\gG, U) = \alpha(\gG\otimes_F L, U\otimes_F L)$ and hence $\alpha(\gG \otimes_F L)\leq \alpha(\gG)$.

Take any $\sigma\in\Gal(L/F)$.  We have $\sigma( (\gG\otimes_F L)_{U'} ) = (\gG\otimes_F L)_{\sigma(U')}$.   Therefore, $(\gG\otimes_F L)_{U'}$ and $(\gG\otimes_F L)_{\sigma(U')}$ have the same dimension over $F$ and hence also $L$.  Therefore, $\alpha(\gG\otimes_F L, \sigma(U'))= \alpha(\gG\otimes_F L,U')=\alpha(\gG\otimes_F L)$.   By the maximality of $U'$, we have $\sigma(U')=U'$ for all $\sigma\in \Gal(L/K)$.  By Galois descent for vectors spaces, there is a subspace $U_0$ of $V$ such that the inclusion $U_0 \subseteq V$ induces an isomorphism $U_0 \otimes_F L \xrightarrow{\sim} U'$.   Therefore, $\alpha(\gG,U_0) = \alpha(\gG\otimes_F L, U')=\alpha(\gG\otimes_F L)$.  Since  $\alpha(\gG \otimes_F L)\leq \alpha(\gG) $, this proves that $\alpha(\gG,U_0)=\alpha(\gG)$ and hence $U_0 \subseteq U$ by the maximality of $U$.  In particular, $\alpha(\gG)=\alpha(\gG\otimes_F L)$.

We have $U_0 \otimes_F L = U \otimes_F L$ since otherwise $U\otimes_F L$ would be a subspace of $V\otimes_F L$ strictly larger than $U'$ satisfying $\alpha(\gG\otimes_F L, U\otimes_F L)=\alpha(\gG\otimes_F L)$ which would contradict the maximality of $U'$.   Therefore, $U_0=U$ and the lemma follows.
\end{proof}

\subsection{Reductive groups} \label{SS:codimension bounds reductive}

Let $V$ be a nonzero finite dimensional vector space over a perfect field $F$.  Consider a reductive group $G\subseteq \GL_V$.   Assume that there exists a decomposition 
\[
V=\bigoplus_{i=1}^n V_i
\]
of the representation $V$ of $G$ into isotypic components, i.e., $V_i\neq 0$ is a $G$-invariant subspace of $V$ that is isomorphic to $M_i^{n_i}$ for an irreducible representation $M_i$ of $G$ and the representations $M_i$ are pairwise nonisomorphic.  Such a decomposition will always exist when $F$ has  characteristic $0$.   

For each subspace $W$ of $V$, let $G_W$ be the algebraic subgroup of $G$ that fixes $W$ pointwise. Define 
\[
\alpha(G):= \min_{W\neq 0} \frac{\dim G - \dim G_W}{\dim W},
\] 
where the minimum is over all nonzero subspaces $W\subseteq V$.   We obviously have $\gamma_G = 1/\alpha(G)$ (the former defined in \S\ref{SS:notation}).

Let $\gG \subseteq \gl(V)$ be the Lie algebra of $G$.   For each subspace $W\subseteq V$, the Lie algebra of $G_W$ agrees with the Lie algebra $\gG_W$ (see Definition \ref{D:alpha gW}); for a proof of the analogous statement in the case of stabilizers, see \cite{Milne}*{Proposition 10.31}.

   The following shows that under certain conditions, $\alpha(G)$ agrees with $\alpha(\gG)$ and can be computed using only a finite number of special subspaces $W$.   For each subset $I \subseteq \{1,\ldots, n\}$, define $V_I:= \oplus_{i\in I} V_i$.

\begin{prop}
\label{P:newer approach}
Assume that the algebraic group $G_{V_I}$ is smooth for all nonempty subsets $I \subseteq \{1,\ldots, n\}$; this always holds when $F$ has characteristic $0$.
\begin{romanenum}
\item \label{P:newer approach i}
We have $\alpha(G)=\alpha(\gG)$.
\item \label{P:newer approach ii}
We have  
\[
\alpha(G)= \min_{\emptyset \neq I \subseteq \{1,\ldots,n\}} \frac{\dim G - \dim G_{V_I}}{\dim V_I}.
\]
\item  \label{P:newer approach iii}
We have $\alpha(G_L)=\alpha(G)$ for all field extensions $L/F$, where we are using the embedding $G_{L} \subseteq \GL_{V\otimes_F L}$. 

\end{romanenum}
\end{prop}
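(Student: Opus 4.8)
The plan is to transport the problem to the Lie algebra $\gG=\Lie(G)$, where the slope formalism of \S\ref{SS:codimension bounds lie algebras} applies, to show that the subspace which is optimal for $\gG$ is automatically $G$-stable --- and hence, since $V=\bigoplus_{i=1}^n V_i$ is the decomposition into isotypic components, equal to one of the $V_I$ --- and finally to use the hypothesis that $G_{V_I}$ is smooth to transfer the conclusion back to $G$. First I would record the cheap inequality: since $G$ is reductive and thus smooth, $\dim G=\dim\gG$; for every subspace $W\subseteq V$ the group $G_W$ has Lie algebra $\gG_W$ (cf.\ \cite{Milne}*{Proposition~10.31} and the discussion preceding Proposition~\ref{P:newer approach}), so $\dim G_W\le\dim\gG_W$, with equality exactly when $G_W$ is smooth. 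Hence $\alpha(G,W)\ge\alpha(\gG,W)$ for all nonzero $W$, so $\alpha(G)\ge\alpha(\gG)$, and $\alpha(G,W)=\alpha(\gG,W)$ as soon as $G_W$ is smooth; it thus remains to exhibit a subspace realizing $\alpha(\gG)$ at which $G_W$ is smooth.

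\emph{The optimal subspace is $G$-stable and is some $V_I$.} Let $U\subseteq V$ be the unique maximal subspace with $\alpha(\gG,U)=\alpha(\gG)$ furnished by Lemma~\ref{L:key lemma new}(\ref{L:key lemma new i}); it is nonzero. For $g\in G(\Fbar)$ one has $\operatorname{Ad}(g)\,\gG_W=\gG_{gW}$ for every $W\subseteq V_{\Fbar}$, so $\alpha(\gG_{\Fbar},gW)=\alpha(\gG_{\Fbar},W)$. By Lemma~\ref{L:U functoriality} applied to the finite Galois subextensions of $\Fbar/F$ (which exhaust $\Fbar$ because $F$ is perfect) and passed to the limit, $\alpha(\gG_{\Fbar})=\alpha(\gG)$ and $U\otimes_F\Fbar$ is the unique maximal minimizer over $\Fbar$; being the unique such, it is sent to itself by every $g\in G(\Fbar)$, and since $G$ is reduced, $G(\Fbar)$ is Zariski dense, so $U$ is a $G$-subrepresentation of $V$. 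Write $U=\bigoplus_{i\in I}U_i$ with $U_i=U\cap V_i$ and $I=\{\,i:U_i\neq 0\,\}$; each $U_i$ is a nonzero $G$-subrepresentation of $V_i\cong M_i^{\oplus n_i}$, and (as $M_i^{\oplus n_i}$ is a semisimple $G$-module) $U_i\cong M_i^{\oplus k_i}$ with $k_i\geq 1$. The linear-algebra heart of the proof is that any $B\in\gG$ vanishing on a nonzero $G$-subrepresentation of $V_i$ vanishes on all of $V_i$: on $V_i\cong M_i^{\oplus n_i}$ the element $B$ acts as $\beta^{\oplus n_i}$, where $\beta\in\gl(M_i)$ is the action on $M_i$; a nonzero $G$-subrepresentation contains a $G$-submodule $M\cong M_i$, whose embedding into $M_i^{\oplus n_i}$ has the form $m\mapsto(\psi_1 m,\dots,\psi_{n_i}m)$ with $\psi_j\in\End_G(M_i)$ not all zero; requiring $\beta^{\oplus n_i}$ to kill $M$ forces $\beta\psi_j=0$ for all $j$, and picking $j$ with $\psi_j\neq 0$, which is then invertible in the division ring $\End_G(M_i)$ by Schur's lemma, gives $\beta=0$. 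Therefore $\gG_{U_i}=\gG_{V_i}$, hence $\gG_U=\bigcap_{i\in I}\gG_{U_i}=\bigcap_{i\in I}\gG_{V_i}=\gG_{V_I}$. Since $\dim V_I\geq\dim U$, this yields $\alpha(\gG,V_I)\leq\alpha(\gG,U)=\alpha(\gG)$, so $\alpha(\gG,V_I)=\alpha(\gG)$ by minimality, and comparing the (equal) numerators forces $\dim V_I=\dim U$, i.e.\ $U=V_I$.

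\emph{Conclusion.} For part (\ref{P:newer approach i}): by hypothesis $G_{V_I}$ is smooth, so $\alpha(G,V_I)=\alpha(\gG,V_I)=\alpha(\gG)$, whence $\alpha(G)\leq\alpha(\gG)$ and therefore $\alpha(G)=\alpha(\gG)$ by the first paragraph. Since this minimum is attained at $V_I$, and the $V_J$ with $\emptyset\neq J\subseteq\{1,\dots,n\}$ form a subfamily of all nonzero subspaces, part (\ref{P:newer approach ii}) is immediate. For part (\ref{P:newer approach iii}) and a field extension $L/F$: base change preserves $\dim G$, $\dim V_J$, $\dim G_{V_J}$ and commutes with $W\mapsto G_W$, so $\alpha(G_L,(V_I)_L)=\alpha(G,V_I)=\alpha(G)$ and hence $\alpha(G_L)\leq\alpha(G)$. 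Conversely, applying the first paragraph to the reductive group $G_L$ gives $\alpha(G_L)\geq\alpha(\gG_L)$, and I would show $\alpha(\gG_L)=\alpha(\gG)$ for every extension $L/F$: for algebraic $L$ this follows from Lemma~\ref{L:U functoriality} by passing to Galois closures and taking direct limits, and for general $L$ by spreading a minimizing subspace of $V_L$ over a finitely generated subextension and specializing it at an $\Fbar$-point of the corresponding variety, which can only enlarge $\gG_W$ and hence cannot lower the slope. Combined with $\alpha(\gG)=\alpha(G)$ from part (\ref{P:newer approach i}), this gives $\alpha(G_L)\geq\alpha(G)$, and hence equality.

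The main obstacle is the middle step, i.e.\ proving that the subspace $U$ optimal for $\gG$ is $G$-stable and therefore one of the $V_I$: the $G$-stability combines the uniqueness of the maximal minimizer with the equivariance $\operatorname{Ad}(g)\gG_W=\gG_{gW}$ and Galois descent, while the identification $U=V_I$ rests on the isotypic decomposition and on Schur's lemma applied over the endomorphism division rings $\End_G(M_i)$, which need not split over $F$. Once this is in place, the passage back to $G$ via the smoothness of $G_{V_I}$, the base-change invariance of dimensions, and the transcendental case of part (\ref{P:newer approach iii}) are all routine.
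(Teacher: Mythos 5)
Your proof of parts (\ref{P:newer approach i}) and (\ref{P:newer approach ii}) follows the paper's argument almost step for step: the cheap inequality $\alpha(G)\geq\alpha(\gG)$ from $\dim G_W\leq\dim\gG_W$ and smoothness of $G$, the $G$-stability of the maximal minimizer $U$ obtained from Lemma~\ref{L:key lemma new}(\ref{L:key lemma new i}) via Galois descent and conjugation-invariance of $W\mapsto\gG_W$, the identification $U=V_I$ using the isotypic decomposition, and the final transfer back to $G$ via the smoothness hypothesis on $G_{V_I}$. You add one genuine detail the paper asserts without proof: the equality $\gG_{U\cap V_i}=\gG_{V_i}$, which you justify correctly by writing the action of $B\in\gG$ on $V_i\cong M_i^{\oplus n_i}$ as $\beta^{\oplus n_i}$ and invoking Schur's lemma for $\End_G(M_i)$. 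This is exactly the content of the paper's one-line remark ``$U\cap V_i$ is a sum of copies of the irreducible representation of which $V_i$ is a multiple,'' so it fills a small gap rather than changing the strategy.

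For part (\ref{P:newer approach iii}) your route is genuinely different from, and more careful than, the paper's. The paper re-applies part (\ref{P:newer approach ii}) to $G_L$, asserting that the $V_i\otimes_F L$ remain the isotypic components of $V\otimes_F L$; this is not automatic, since the irreducibles $M_i$ need not stay isotypic after extension (compare an anisotropic torus acting irreducibly over $\RR$ but with two distinct characters over $\CC$), so the asserted min over coarse unions $V_I'$ need not \emph{a priori} be the full min. You avoid this entirely: you get $\alpha(G_L)\leq\alpha(G)$ by evaluating at $V_I\otimes_F L$, and for the reverse you use only the Lie-algebra side, namely $\alpha(G_L)\geq\alpha(\gG\otimes_F L)$ together with $\alpha(\gG\otimes_F L)=\alpha(\gG)$ — Galois descent via Lemma~\ref{L:U functoriality} for algebraic $L$, and a spreading-out-and-specialize argument using upper semicontinuity of $W\mapsto\dim\gG_W$ for transcendental $L$ — and then (\ref{P:newer approach i}). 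As a bonus, this derivation of (\ref{P:newer approach iii}) does not even require the smoothness hypothesis over $L$, only over $F$. The argument is sound; the one place where you should be explicit in a write-up is the semicontinuity claim justifying ``specializing can only enlarge $\gG_W$'' (this is the Chevalley-type statement the paper also uses in Lemma~\ref{L:new d1}), and the fact that the optimal $W\subseteq V\otimes_F L$ is defined over a finitely generated subextension of $L/F$.
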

\begin{proof}
First note that the groups $G_{V_I}$ are always smooth when $F$ has characteristic $0$ since every affine algebraic group over $F$ is smooth by Cartier's theorem \cite[Theorem 3.23]{MilneAlgebraicGroups}.

For any subspace $W\subseteq V$, we have $\dim G_W \leq \dim \gG_W$  since the tangent space of a variety at a point has dimension at least the dimension of the variety.  We have $\dim G= \dim \gG$ since $G$ is reductive and hence smooth.   From the definitions of $\alpha(G)$ and $\alpha(\gG)$, we deduce that $\alpha(\gG) \leq \alpha(G)$.   We need to prove the other inequality.

Let $U$ be the maximal subspace of $V$ such that $\alpha(\gG, U) = \alpha(\gG)$, see Lemma~\ref{L:key lemma new}(\ref{L:key lemma new i}).   We claim that $U$ is a representation of $G$.   Since $F$ is perfect, to prove the claim it suffices to show that $U\otimes_F L$ is stable under the action of $G(L)$ on $V\otimes_F L$ for all finite Galois extensions $L/F$.    Using that $\gG\otimes_F L$ is the Lie algebra of $G_L \subseteq \GL_{V\otimes_FL}$ and Lemma~\ref{L:U functoriality}, we need only consider the case $L=F$, i.e., we need only show that $U$ is stable under the action of $G(F)$.  Take any $h\in G(F)$.   Using that $h\gG h^{-1}=\gG$, we find that $\gG_{h(U)} = h  \gG_U h^{-1}$ and hence $\alpha(\gG,h(U))=\alpha(\gG,U)=\alpha(\gG)$.  By the maximality of $U$, we have $h(U)=U$ and the claim follows.
 
 Let $I$ be the subset of $i\in \{1,\ldots, n\}$ for which $U\cap V_i \neq 0$.  The set $I$ is nonempty and we have $U=\bigoplus_{i\in I} U\cap V_i$. Note that $U\cap V_i$ is a sum of copies of the irreducible representation of which $V_i$ is a multiple.
 We have 
 \[
 \gG_U=  \bigcap_{i \in I} \gG_{U \cap V_i} = \bigcap_{i \in I} \gG_{V_i} =\gG_{V_I}
 \]
 and hence $\alpha(\gG,V_I) = (\dim U)/(\dim V_I) \alpha(\gG,U) \leq \alpha(\gG)$.  Therefore, $\alpha(\gG,V_I)=\alpha(\gG)$ and $V_I=U$ by the maximality of $U$.   Since $G_{V_I}$ is smooth by assumption, we have $\dim G_{V_I} = \dim \gG_{V_I}$ and hence
 \begin{align} \label{E:alpha comp}
 \alpha(\gG)= \alpha(\gG,V_I) = \frac{\dim G - \dim G_{V_I}}{\dim V_I}.
 \end{align}
 Therefore, $\alpha(\gG) \geq \alpha(G)$.   We have already proved the other inequality so $\alpha(\gG)=\alpha(G)$.   We have thus proved (\ref{P:newer approach i}), and (\ref{P:newer approach ii}) then follows from (\ref{E:alpha comp}).

It remains to prove (\ref{P:newer approach iii}).  Take any field extension $L/F$.  For each $1\leq i \leq n$, define the $L$-vector space $V_i':=V_i \otimes_{F} L$.   Then  $\bigoplus_{i=1}^n V_i'$ is the decomposition
of the representation $V\otimes_F L$ of $G_{L}$ into isotypic components.  Take any nonempty subset $I \subseteq \{1,\ldots, n\}$.  Define $V_I':=\oplus_{i\in I} V_i'$.    We have 
\[
(G_{L})_{V_I'} = (G_{V_I})_{L}.
\]
The group $(G_{L})_{V_I'}$ is smooth since $G_{V_I}$ is smooth by assumption.  In particular, the assumptions of the proposition hold for $G_{L} \subseteq \GL_{V\otimes_F L}$.  We have $\dim (G_{L})_{V_I'} = \dim G_{V_I}$, so 
\[
\frac{\dim G_{L} - \dim (G_L)_{V_I'}}{\dim V_I'} = \frac{\dim G - \dim G_{V_I}}{\dim V_I}.
\]
Since $I$ was an arbitrary nonempty subset of $\{1,\ldots, n\}$, we deduce that $\alpha(G_L)=\alpha(G)$ by (\ref{P:newer approach ii}).
\end{proof}

\begin{remark}
We now give an example where we do not have an equality $\alpha(\gG)=\alpha(G)$ over a field of positive characteristic $p$. Let $G$ be the algebraic subgroup of $\GL_{2,\FF_p}$ given by matrices of the form $\left\{ \left(\begin{smallmatrix} x^p & 0 \\ 0 & x \end{smallmatrix} \right) \right\}$. This group $G$ is isomorphic to $\GG_{m,\FF_p}$ and in particular is smooth.  The Lie algebra of $G$ is $\gG=\left\{ \left(\begin{smallmatrix} 0 & 0 \\ 0 & \ast \end{smallmatrix} \right)\right\} \subseteq \gl(\FF_p^2)$.  However, for the subspace $W := \FF_p \cdot \left(\begin{smallmatrix}
    1 \\ 0
\end{smallmatrix}\right)$ the stabilizer $G_W = \mu_p$ is of dimension 0, whereas $\gG_W = \gG $ is of dimension 1. So in this case we have $\alpha(G) = 1$ while $\alpha(\gG)=0$.
\end{remark}

\subsection{The values $\gamma_A$ and $\gamma_{A,\ell}$} \label{SS:gamma info}

Fix a nonzero abelian variety $A$ over a number field $K$ and choose an embedding $\Kbar\subseteq \CC$.  Choose a finite extension $K'$ of $K$ in $\Kbar$ so that the abelian variety $A_{K'}$ is isogenous to a product $\prod_{i=1}^n A_i^{m_i}$, where the $A_i$ are abelian varieties over $K'$ that are simple and pairwise nonisogenous over $\CC$.    For each subset $I\subseteq \{1,\ldots, n\}$, define $A_I:=\prod_{i\in I} A_i^{m_i}$. \\

Define $V=H_1(A(\CC),\QQ)$ and $V_i=H_1(A_i^{m_i}(\CC),\QQ)$.   An isogeny between $A_{K'}$ and $\prod_{i=1}^n A_i^{m_i}$ induces an isomorphism 
\begin{align} \label{E:V decomp new}
V=\bigoplus_{i=1}^n V_i.
\end{align}
By Lemma~\ref{L:GA isotypical}, the direct sum (\ref{E:V decomp  new}) is the decomposition of the representation $V$ of $G_A$ into isotypic components.    So Proposition~\ref{P:newer approach}(\ref{P:newer approach ii}), applied to $G_A \subseteq \GL_{V}$, implies that
\[
\alpha(G_A) = \min_{\emptyset\neq I \subseteq \{1,\ldots. n\}} \frac{\dim G_A - \dim (G_A)_{V_I}}{\dim V_I},
\]
where $V_I := \bigoplus_{i\in I} V_i$.     By (\ref{E:codim key}), we have $\dim G_A - \dim (G_A)_{V_I} =\dim G_{A_I}$.  The vector space $V_I$ is isomorphic to $H_1(A_I(\CC),\QQ)$ and hence has dimension $2\dim A_I$.    Therefore,
\begin{align}\label{E:alpha eq gamma inv}
\alpha(G_A) = \min_{\emptyset\neq I \subseteq \{1,\ldots, n\}} \frac{\dim G_{A_I}}{2\dim A_I} = \frac{1}{\gamma_{A}}.
\end{align}

In particular, we have $\gamma_{G_A} = \gamma_A$.

\begin{prop} \label{P:codim bound}
Take any field extension $F/\QQ$ and set $G:=(G_A)_F$.  For any subspace $W$ of the $F$-vector space $V\otimes_\QQ F= H_1(A(\CC),F)$, we have
\begin{align} \label{E:codim bound}
\gamma_A\cdot (\dim G - \dim G_W) \geq \dim W.
\end{align}
Moreover, $\gamma_A$ is the smallest number for which this holds.  
\end{prop}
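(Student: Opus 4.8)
The plan is to reduce the statement to the identity $\alpha(G_A)=1/\gamma_A$ of (\ref{E:alpha eq gamma inv}), together with the base-change invariance proved in Proposition~\ref{P:newer approach}(\ref{P:newer approach iii}). First I would check that the hypotheses of Proposition~\ref{P:newer approach} apply to the reductive group $G_A\subseteq\GL_V$ over $\QQ$ (it is reductive by Lemma~\ref{L:MT basics}(\ref{L:MT basics i})): the field $\QQ$ has characteristic $0$, so the smoothness requirement on the groups $(G_A)_{V_I}$ is automatic, and by Lemma~\ref{L:GA isotypical} the decomposition (\ref{E:V decomp new}) is the isotypic decomposition of $V$ as a representation of $G_A$. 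Applying Proposition~\ref{P:newer approach}(\ref{P:newer approach iii}) with base field $\QQ$ and the given extension $F/\QQ$ then yields
\[
\alpha(G)=\alpha((G_A)_F)=\alpha(G_A)=\frac{1}{\gamma_A},
\]
where $\alpha(G)=\min_{0\neq W\subseteq V\otimes_\QQ F}(\dim G-\dim G_W)/\dim W$ by definition and $\gamma_A$ is a positive real number (a maximum of finitely many positive rationals).

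With this in hand, (\ref{E:codim bound}) is essentially formal. If $W=0$ then $G_W=G$ and both sides vanish. If $W\neq 0$, then the definition of $\alpha(G)$ gives $(\dim G-\dim G_W)/\dim W\geq\alpha(G)=1/\gamma_A$, and multiplying through by $\gamma_A\dim W>0$ yields $\gamma_A(\dim G-\dim G_W)\geq\dim W$.

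For the optimality assertion I would exhibit a single instance in which (\ref{E:codim bound}) is an equality. Since $\alpha(G_A)$ is attained as a minimum over a finite set of values, there is a nonzero subspace $W_0\subseteq V$ with $(\dim G_A-\dim (G_A)_{W_0})/\dim W_0=\alpha(G_A)=1/\gamma_A$; in fact, by the proof of Proposition~\ref{P:newer approach} one may take $W_0=V_I$ for a suitable nonempty subset $I$, in which case $\dim G_A-\dim (G_A)_{V_I}=\dim G_{A_I}$ by (\ref{E:codim key}). Taking $F=\QQ$, $G=G_A$ and $W=W_0$ gives $\gamma_A(\dim G-\dim G_{W_0})=\dim W_0>0$, so in particular $\dim G-\dim G_{W_0}>0$; hence any real number $c<\gamma_A$ satisfies $c(\dim G-\dim G_{W_0})<\dim W_0$, which shows that no constant smaller than $\gamma_A$ can make (\ref{E:codim bound}) hold for every field extension $F/\QQ$ and every subspace $W$. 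Thus $\gamma_A$ is the smallest such constant.

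I do not anticipate a genuine obstacle here: all the substantive work has already been carried out in \S\ref{SS:codimension bounds reductive} and in the computation (\ref{E:alpha eq gamma inv}), and what remains is bookkeeping. The only points needing a moment's care are the (automatic, in characteristic $0$) smoothness hypothesis of Proposition~\ref{P:newer approach} and recording that $\gamma_A$ is a finite positive real, so that the inequality manipulations are legitimate.
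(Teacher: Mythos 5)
Your argument is correct and follows essentially the same route as the paper: reduce everything to the identity $\alpha(G_A)=\gamma_A^{-1}$ from~(\ref{E:alpha eq gamma inv}) and transport it to $(G_A)_F$ via Proposition~\ref{P:newer approach}. The paper's one-line proof cites part~(\ref{P:newer approach ii}) rather than part~(\ref{P:newer approach iii}), but as the proof of~(\ref{P:newer approach iii}) is itself just part~(\ref{P:newer approach ii}) combined with dimension invariance under base change, this is a cosmetic difference, and your write-up (including the explicit optimality check with $W_0=V_I$) faithfully fills in the details the paper leaves implicit.
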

\begin{proof}
This is just a reformulation of $\alpha((G_A)_F)=\gamma_A^{-1}$ which follows from (\ref{E:alpha eq gamma inv}) and Proposition~\ref{P:newer approach}(\ref{P:newer approach ii}).
\end{proof}

\begin{remark}
We proved Proposition~\ref{P:codim bound} without computing the integers $\dim G_W$.  In the work of Hindry and Ratazzi, for example see \cite{MR2862374,MR3576113}, they explicitly compute $\dim G_W$ to give a direct proof of Proposition~\ref{P:codim bound} in various special cases.
\end{remark}

Now take any prime $\ell$.  The $\ell$-adic monodromy group $G_{A,\ell}^\circ \subseteq \GL_{V_\ell(A)}$ is reductive, so we can define $\alpha(G_{A,\ell}^\circ)$.  We claim that $\alpha(G_{A,\ell}^\circ)$ is nonzero.  If $\alpha(G_{A,\ell}^\circ)=0$, then there would be a finite extension $K'/K$ such that $V_\ell(A)$ has a nonzero subspace fixed pointwise by the action of $\Gal_{K'}$.  However, this is impossible since it would imply that $A(K')[\ell^\infty]$ is infinite, contradicting the Mordell--Weil theorem.   Therefore, $\alpha(G_{A,\ell}^\circ)$ is nonzero as claimed.    We can now define 
\begin{equation}
\label{eq:defgammaAell}
\gamma_{A,\ell}:=1/\alpha(G_{A,\ell}^\circ).
\end{equation}
Note that, by definition, for every prime $\ell$ we have $\gamma_{G^{\circ}_{A,\ell}} = \gamma_{A,\ell}$.

\begin{lemma}  \label{L:gamma connection}
Take any prime $\ell$.
\begin{romanenum}
\item \label{L:gamma connection i}
We have inequalities
\[
\gamma_A\leq \max_{\emptyset \neq I \subseteq \{1,\ldots,n\}} \frac{2\dim A_I}{\dim G_{A_I,\ell}^\circ } \leq \gamma_{A,\ell}.
\]
\item \label{L:gamma connection ii}
If the Mumford--Tate conjecture for $A$ holds, then $\gamma_{A}=\gamma_{A,\ell}$.
\end{romanenum}
\end{lemma}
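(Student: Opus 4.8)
The plan is to establish part~(i) by elementary dimension counts relating the monodromy group of $A$ to those of its isogeny factors, and then to deduce part~(ii) from the invariance of the slope under field extension proved in Proposition~\ref{P:newer approach}(\ref{P:newer approach iii}).

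For the first inequality in part~(i) I would argue directly: applying Proposition~\ref{P:partial MTC}(\ref{P:partial MTC i}) to each abelian variety $A_I$ over the number field $K'$ gives $G_{A_I,\ell}^\circ \subseteq (G_{A_I})_{\QQ_\ell}$, hence $\dim G_{A_I,\ell}^\circ \le \dim G_{A_I}$ and $\frac{2\dim A_I}{\dim G_{A_I,\ell}^\circ} \ge \frac{2\dim A_I}{\dim G_{A_I}}$; taking the maximum over nonempty $I$ and recalling that $\gamma_A = \max_I \frac{2\dim A_I}{\dim G_{A_I}}$ yields $\gamma_A \le \max_I \frac{2\dim A_I}{\dim G_{A_I,\ell}^\circ}$.

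For the second inequality I would first reduce to the situation where every $G_{A,\ell}$ is connected and $A$ is isogenous to $\prod_i A_i^{m_i}$ over $K$ itself. This is harmless: $\gamma_A$ is a geometric invariant, $\gamma_{A,\ell}$ depends only on $G_{A,\ell}^\circ$ by definition, and each $\dim G_{A_I,\ell}^\circ$ is unchanged if $K$ is replaced by a finite extension (using $\dim H = \dim H^\circ$ and $\dim H_W = \dim(H^\circ)_W$ for any algebraic subgroup of a $\GL$), so by Proposition~\ref{P:connected} we may enlarge $K'$ and rename it $K$. After this reduction $V_\ell(A_I)$ is a $G_{A,\ell}$-subrepresentation of $V_\ell(A)$, and the restriction homomorphism $G_{A,\ell}\to\GL_{V_\ell(A_I)}$ has image $G_{A_I,\ell}$ (connected, being the image of the connected group $G_{A,\ell}$, hence equal to $G_{A_I,\ell}^\circ$) and kernel $(G_{A,\ell})_{V_\ell(A_I)}$, so $\dim G_{A_I,\ell} = \dim G_{A,\ell} - \dim(G_{A,\ell})_{V_\ell(A_I)}$. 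Since $\dim V_\ell(A_I) = 2\dim A_I$, the quotient $\frac{2\dim A_I}{\dim G_{A_I,\ell}}$ is the reciprocal of $\frac{\dim G_{A,\ell} - \dim(G_{A,\ell})_{V_\ell(A_I)}}{\dim V_\ell(A_I)}$, which is one of the ratios minimized over in the definition of $\alpha(G_{A,\ell})$ and is therefore $\ge \alpha(G_{A,\ell})$; hence $\frac{2\dim A_I}{\dim G_{A_I,\ell}} \le 1/\alpha(G_{A,\ell}) = \gamma_{A,\ell}$, and the maximum over $I$ gives the claim. I would remark that equality need not hold here, since $\bigoplus_i V_\ell(A_i^{m_i})$ need not be the isotypic decomposition of $V_\ell(A)$ over $\QQ_\ell$ — the pertinent endomorphism division algebras may split further at $\ell$ — so the subspaces $V_\ell(A_I)$ need not realize the minimum in Proposition~\ref{P:newer approach}(\ref{P:newer approach ii}).

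For part~(ii), assume the Mumford--Tate conjecture, so that $G_{A,\ell}^\circ = (G_A)_{\QQ_\ell}$ inside $\GL_{V\otimes_\QQ\QQ_\ell}$ via the comparison isomorphism. Then $\gamma_{A,\ell} = 1/\alpha(G_{A,\ell}^\circ) = 1/\alpha((G_A)_{\QQ_\ell})$, and Proposition~\ref{P:newer approach}(\ref{P:newer approach iii}) — applied to $G_A\subseteq\GL_V$ over $\QQ$ with the extension $\QQ_\ell/\QQ$, its hypotheses being automatic in characteristic zero and the required isotypic decomposition $V=\bigoplus_i V_i$ being supplied by Lemma~\ref{L:GA isotypical} — gives $\alpha((G_A)_{\QQ_\ell}) = \alpha(G_A) = 1/\gamma_A$ by~(\ref{E:alpha eq gamma inv}). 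Hence $\gamma_{A,\ell} = \gamma_A$. The only point requiring real care is the reduction step in part~(i): one must check that passing to a finite extension of $K$, simultaneously splitting $A$ and making all monodromy groups connected, leaves each of $\gamma_A$, $\gamma_{A,\ell}$, and the $\dim G_{A_I,\ell}^\circ$ unchanged; granting that, everything else is a sequence of routine dimension counts.
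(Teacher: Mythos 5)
Your proposal is correct and follows essentially the same argument as the paper: for part (i) the second inequality comes from recognizing each ratio $(\dim G_{A,\ell}^\circ - \dim(G_{A,\ell}^\circ)_{V_\ell(A_I)})/\dim V_\ell(A_I)$ as one of the quantities minimized in the definition of $\alpha(G_{A,\ell}^\circ) = \gamma_{A,\ell}^{-1}$ together with the identification of $(G_{A,\ell}^\circ)_{V_\ell(A_I)}$ as the kernel of the surjection onto $G_{A_I,\ell}^\circ$, the first inequality follows from $G_{A_I,\ell}^\circ \subseteq (G_{A_I})_{\QQ_\ell}$ (Proposition~\ref{P:partial MTC}(\ref{P:partial MTC i})), and part (ii) follows from the base-change invariance of $\alpha$ combined with (\ref{E:alpha eq gamma inv}). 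You also correctly observe, as the paper does, that the second inequality can be strict because the $V_\ell(A_i^{m_i})$ need not be the isotypic pieces of $V_\ell(A)$ over $\QQ_\ell$; your citation of Proposition~\ref{P:newer approach}(\ref{P:newer approach iii}) for the step $\alpha((G_A)_{\QQ_\ell}) = \alpha(G_A)$ is the precise one.
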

\begin{proof}
Recall that there is a finite extension $K'/K$ in $\Kbar$ for which there is an isogeny  $A_{K'}\to \prod_{i=1}^n A_i^{m_i}$, where the $A_i$ are abelian varieties over $K'$ that are simple and pairwise nonisogenous over $\CC$.    We can assume that $K'$ is chosen large enough so that $G_{A_{K'},\ell}$ is connected.

 The isogeny induces an isomorphism
\[
V_\ell(A) = \bigoplus_{i=1}^n V_\ell(A_i^{m_i})
\]
of $\QQ_\ell[\Gal_{K'}]$-modules.  In particular, $V_\ell(A)$ is a representation of $G_{A,\ell}^\circ$ and the subspaces $V_\ell(A_i^{m_i})$ are invariant under the action of $G_{A,\ell}^\circ$.  Take any distinct $1\leq i, j \leq n$.  We have
\begin{align} \label{E:Homs}
\Hom_{\QQ_\ell[\Gal_{K'}]}(V_\ell(A_i^{m_i}),V_\ell(A_j^{m_j})) \cong \Hom(A_i^{m_i},A_j^{m_j}) \otimes_\ZZ \QQ_\ell
\end{align}
by Proposition~\ref{P:Faltings}(\ref{P:Faltings iii}).
Since $A_i$ and $A_j$ are simple and nonisogenous, we deduce that (\ref{E:Homs}) is $0$ and hence $V_\ell(A_i^{m_i})$ and $V_\ell(A_j^{m_j})$ contain no isomorphic irreducible representations of $G_{A,\ell}^\circ$.   By Proposition~\ref{P:newer approach}(\ref{P:newer approach ii}),
we have  
\[
\alpha(G_{A,\ell}^\circ)\leq \min_{\emptyset \neq I \subseteq \{1,\ldots,n\}} \frac{\dim G_{A,\ell}^\circ - \dim (G_{A,\ell}^\circ)_{\calV_I}}{\dim \calV_I}, 
\]
where $\calV_I:= \bigoplus_{i\in I} V_\ell(A_i^{m_i})$.   Note that we only have an inequality here since $V_\ell(A_i^{m_i})$ need not be isotypic for $G_{A,\ell}^\circ$ (though it is a direct sum of isotypic components).    Since $(G_{A,\ell})_{\calV_I}$ is the kernel of the projection homomorphism $G_{A,\ell}^\circ \to G_{A_I,\ell}$ and $\calV_I$ has dimension $2\dim A_I$, we have
\[
\alpha(G_{A,\ell}^\circ)\leq \min_{\emptyset \neq I \subseteq \{1,\ldots,n\}} \frac{\dim G_{A_I,\ell}^\circ }{2\dim A_I}, 
\]
This proves that $\gamma_{A,\ell} \geq \max_{\emptyset \neq I \subseteq \{1,\ldots,n\}} {2\dim A_I}/(\dim G_{A_I,\ell}^\circ )$.    By Proposition~\ref{P:partial MTC}(\ref{P:partial MTC i}), we deduce that
\[
\gamma_{A,\ell} \geq \max_{\emptyset \neq I \subseteq \{1,\ldots,n\}} \frac{2\dim A_I}{\dim G_{A_I}} = \gamma_A.
\]
This completes the proof of (\ref{L:gamma connection i}).  We now prove (\ref{L:gamma connection ii}).   Assuming the Mumford--Tate conjecture for $A$, we have $\gamma_{A,\ell}^{-1}= \alpha(G_{A,\ell}^\circ)=\alpha((G_A)_{\QQ_\ell})$.   Therefore, $\gamma_{A,\ell}^{-1}= \alpha((G_A)_{\QQ_\ell}) = \alpha(G_A)=\gamma_A^{-1}$ by Proposition~\ref{P:newer approach}(\ref{P:newer approach ii}).
\end{proof}

\begin{prop}\label{L:uniformness of alpha}
Assume that all the $\ell$-adic monodromy groups $G_{A,\ell}$ are connected.  For $\ell \gg_A 1$ we have $\alpha(G_{A,\ell}) = \alpha((\Gcal_{A,\ell})_{\F_\ell}) = \alpha(\gG_{A,\ell})$ where $\gG_{A,\ell}$ is the Lie algebra of $(\Gcal_{A,\ell})_{\F_\ell}$.
\end{prop}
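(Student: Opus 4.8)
The plan is to deduce the statement from Proposition~\ref{P:redux finiteness}, which for $\ell\gg_A 1$ realizes the $\ell$-adic datum as (the base change of) one of finitely many $\ZZ$-models, together with Proposition~\ref{P:newer approach}, which computes $\alpha$ of a reductive group and of its Lie algebra in terms of the finitely many sums of isotypic components and shows these quantities are unchanged under field extension. Fix $\ell\gg_A 1$ so that $\calG_{A,\ell}$ is a reductive $\ZZ_\ell$-group scheme (Proposition~\ref{P:reductive}(\ref{P:reductive ii})) and so that Proposition~\ref{P:redux finiteness} provides a split reductive $\calG\subseteq\GL_{2g,\ZZ}$ from a fixed finite list, a basis of $T_\ell(A)\otimes_{\ZZ_\ell}\ZZ_\ell^\un$ identifying $(\calG_{A,\ell})_{\ZZ_\ell^\un}$ with $\calG_{\ZZ_\ell^\un}$, and a decomposition $\ZZ^{2g}=\bigoplus_{j=1}^s W_j$ of $\calG$-representations whose reductions $F^{2g}=\bigoplus_j(W_j)_F$ are the isotypic decompositions of $\calG_F$ for $F\in\{\QQ,\FF_\ell\}$. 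For nonempty $J\subseteq\{1,\dots,s\}$ set $W_J=\bigoplus_{j\in J}W_j$; note that $\alpha$ is invariant under simultaneous isomorphism of a group and its representation, which I will use freely.

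First I would establish, for $\ell\gg_A 1$ and all (finitely many) pairs $(\calG,W_J)$: (1) the $\FF_\ell$-fibre of the $\ZZ$-group scheme $\Fix_\calG(W_J)$ is smooth; and (2) $\dim\Fix_\calG(W_J)_{\FF_\ell}=\dim\Fix_\calG(W_J)_\QQ$. For (1): $\Fix_\calG(W_J)\to\Spec\ZZ$ has smooth generic fibre (characteristic $0$), so its non-smooth locus is closed and avoids the generic point, hence maps into a finite set of primes; excluding these for all $\calG$ and $J$ gives (1), since smoothness over a field is detected after, and preserved by, base change. For (2): by (1), $\dim\Fix_\calG(W_J)_{\FF_\ell}$ equals $\dim\ker\bigl(\Lie(\calG)\otimes\FF_\ell\to\Hom_{\FF_\ell}((W_J)_{\FF_\ell},\FF_\ell^{2g})\bigr)$, and the rank of the reduction mod $\ell$ of the fixed integral map $\Lie(\calG)\to\Hom_\ZZ(W_J,\ZZ^{2g})$, $B\mapsto B|_{W_J}$, equals its rank over $\QQ$ for all but finitely many $\ell$, while the $\QQ$-kernel is $\Lie(\Fix_\calG(W_J)_\QQ)$; also $\dim\calG_{\FF_\ell}=\dim\calG_\QQ$ as $\calG$ is split reductive over $\ZZ$. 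Finally I would note that, since the modules $V(\mu_j)$ built in the proof of Proposition~\ref{P:redux finiteness} are geometrically irreducible, $\FFbar_\ell^{2g}=\bigoplus_j(W_j)_{\FFbar_\ell}$ is still the isotypic decomposition of $\calG_{\FFbar_\ell}$, so $(\calG_{\FFbar_\ell})_{W_J\otimes\FFbar_\ell}=\Fix_\calG(W_J)_{\FFbar_\ell}$ is smooth by (1).

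Then I would chain the equalities. Since $G_{A,\ell}$ is reductive over $\QQ_\ell$, Proposition~\ref{P:newer approach}(\ref{P:newer approach iii}) (its hypotheses automatic in characteristic $0$) together with the $\ZZ_\ell^\un$-identification gives $\alpha(G_{A,\ell})=\alpha((G_{A,\ell})_{\QQ_\ell^\un})=\alpha(\calG_{\QQ_\ell^\un})=\alpha(\calG_\QQ)$, and Proposition~\ref{P:newer approach}(\ref{P:newer approach ii}) over $\QQ$ expresses $\alpha(\calG_\QQ)$ as $\min_{\emptyset\ne J}\bigl(\dim\calG_\QQ-\dim\Fix_\calG(W_J)_\QQ\bigr)/\dim W_J$. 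By (1) and Proposition~\ref{P:redux finiteness}(\ref{P:redux finiteness b}), the hypotheses of Proposition~\ref{P:newer approach} also hold for $\calG_{\FF_\ell}\subseteq\GL_{\FF_\ell^{2g}}$, so parts (\ref{P:newer approach i}),(\ref{P:newer approach ii}) give $\alpha(\calG_{\FF_\ell})=\alpha(\Lie(\calG_{\FF_\ell}))=\min_{\emptyset\ne J}\bigl(\dim\calG_{\FF_\ell}-\dim\Fix_\calG(W_J)_{\FF_\ell}\bigr)/\dim W_J$; by (2) this equals $\alpha(\calG_\QQ)$, hence $\alpha(\calG_{\FF_\ell})=\alpha(G_{A,\ell})$. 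Now, fixing a $\ZZ_\ell$-basis of $T_\ell(A)$, reduction mod $\ell$ of the $\ZZ_\ell^\un$-basis exhibits $(\calG_{A,\ell})_{\FFbar_\ell}$ as a $\GL_{2g}(\FFbar_\ell)$-conjugate of $\calG_{\FFbar_\ell}$; so the isotypic components of $(\calG_{A,\ell})_{\FF_\ell}$ on $\FF_\ell^{2g}$, base-changed to $\FFbar_\ell$, become conjugates of sums of the $(W_j)_{\FFbar_\ell}$, and their fixators — and those of their sums — are smooth by the last observation. Thus Proposition~\ref{P:newer approach}(\ref{P:newer approach i}) applies to $(\calG_{A,\ell})_{\FF_\ell}$, giving $\alpha((\calG_{A,\ell})_{\FF_\ell})=\alpha(\gG_{A,\ell})$, and Proposition~\ref{P:newer approach}(\ref{P:newer approach iii}) gives $\alpha((\calG_{A,\ell})_{\FF_\ell})=\alpha((\calG_{A,\ell})_{\FFbar_\ell})=\alpha(\calG_{\FFbar_\ell})=\alpha(\calG_{\FF_\ell})$. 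Combining everything, $\alpha(G_{A,\ell})=\alpha((\calG_{A,\ell})_{\FF_\ell})=\alpha(\gG_{A,\ell})$.

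I expect the main obstacle to be the characteristic-$\ell$ bookkeeping: Proposition~\ref{P:newer approach} genuinely requires the relevant fixator subgroups to be smooth — the remark after it shows that $\alpha(G)\ne\alpha(\gG)$ can occur in positive characteristic — so the crux is the spreading-out argument~(1) together with the fact, extracted from the proof of Proposition~\ref{P:redux finiteness}, that over $\FFbar_\ell$ for $\ell\gg_A 1$ one is looking at the base change of one of finitely many $\ZZ$-models whose given decomposition is already the geometric isotypic one. The remaining work is only the routine matching of $\alpha$ across the extensions $\QQ\subseteq\QQ_\ell^\un$ and $\FF_\ell\subseteq\FFbar_\ell$ via Proposition~\ref{P:newer approach}(\ref{P:newer approach iii}).
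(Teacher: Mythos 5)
Your proposal is correct and follows essentially the same route as the paper's own proof: both reduce to the finite list of split reductive $\ZZ$-models $\calG_i$ from Proposition~\ref{P:redux finiteness}, spread out the automatic characteristic-zero smoothness of the group subschemes $\Fix_{\calG_i}(W_J)$, use Proposition~\ref{P:newer approach}(\ref{P:newer approach ii}) over $\QQ$ and over $\FF_\ell$ to express both slopes as the same minimum, and then transport back along the $\ZZ_\ell^\un$-identification and Proposition~\ref{P:newer approach}(\ref{P:newer approach iii}). Two minor differences. First, for what you call step~(2) (equality of fixator dimensions at the two fibers), the paper simply observes that spreading-out actually yields smoothness of the whole $\ZZ_\ell$-group scheme $\Fix_{\calG_i}(W_J)_{\ZZ_\ell}$, and smoothness over a DVR immediately gives constancy of fiber dimensions; your detour through $\dim_{\FF_\ell}\Lie$ and the rank-drop of the integral restriction map $\Lie(\calG)\to\Hom_\ZZ(W_J,\ZZ^{2g})$ is correct but redundant, since your own argument for~(1) already produces smoothness of the $\ZZ_\ell$-model and hence the dimension equality for free. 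Second, the final verification that the hypothesis of Proposition~\ref{P:newer approach} holds for $(\calG_{A,\ell})_{\FF_\ell}$ — namely that base-changing the $\FF_\ell$-isotypic decomposition to $\FFbar_\ell$ and transporting by the $\GL_{2g}(\FFbar_\ell)$-conjugation lands you among the $(W_j)_{\FFbar_\ell}$'s — is stated rather compressedly; it rests on the fact that the conjugation carries the canonical $\FFbar_\ell$-isotypic decomposition of $(\calG_{A,\ell})_{\FFbar_\ell}$ to that of $\calG_{\FFbar_\ell}$, and that the base change of an $\FF_\ell$-isotypic component is a sum of $\FFbar_\ell$-isotypic components. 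The paper states this step at roughly the same level of brevity, so this is not a defect, merely a point worth being conscious of if you expand the argument.
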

\begin{proof} 
With notation as in Proposition~\ref{P:redux finiteness}, we fix an $i\in I$ and set $\calG:=\calG_i$.    Let $\ZZ^{2g}=\oplus_{j=1}^{s} W_{j}$ be the direct sum of representations of $\calG$ as in Proposition~\ref{P:redux finiteness}(\ref{P:redux finiteness b}).    Since $W_{j} \otimes_\ZZ \QQ$, with $1\leq j \leq s$, are the isotypical components of the action of $\calG_\QQ$ on $\QQ^{2g}$,
Proposition~\ref{P:newer approach}(\ref{P:newer approach ii}) implies that
\[
\alpha(\calG_\QQ)  = \min_{\emptyset \neq J \subseteq \{1,\ldots,s\}} \frac{\dim \calG_\QQ - \dim (\calG_\QQ)_{V_J}}{\dim V_J},
\]
where $V_J:=\oplus_{j\in J} (W_{j}\otimes_\ZZ \QQ)$.

For each subset $J\subseteq \{1,\ldots, s\}$, define the $\ZZ$-submodule $\calV_J = \oplus_{j\in J} W_{j}$ of $\ZZ^{2g}$.  Recall that $\Fix_{\calG}(\calV_{J})$  is a subgroup scheme of $\calG$, and hence its generic fibre (which is an algebraic group over a field of characteristic zero) is automatically smooth.  So there is a positive constant $c$, such that the $\ZZ_\ell$-group scheme $(\Fix_{\calG}(\calV_{J}))_{\ZZ_\ell}$ is smooth for all primes $\ell \geq c$ and all $J\subseteq \{1,\ldots, s\}$.   

Take any prime $\ell \geq c$ and any nonempy set  $J\subseteq \{1,\ldots, s\}$.  
Since $(\Fix_{\calG}(\calV_{J}))_{\ZZ_\ell}$ is smooth, its generic fiber $(\calG_{\QQ})_{\calV_{J}\otimes_\ZZ \QQ}=(\calG_{\QQ})_{V_J}$ and special fiber $(\calG_{\FF_\ell})_{\calV_{J}\otimes_\ZZ \FF_\ell}$ are both smooth of the same dimension.   Since $\calG_{\ZZ_\ell}$ is smooth as well, we have
\[
\frac{\dim \calG_\QQ - \dim (\calG_{\QQ})_{V_{J}}}{\dim \calV_J}= \frac{\dim \calG_{\FF_\ell} - \dim (\calG_{\FF_\ell})_{\calV_{J}\otimes_\ZZ \FF_\ell}}{\dim (\calV_J\otimes_\ZZ \FF_\ell)}
\]
and hence
\begin{align} \label{E:alpha transition}
\alpha(\calG_\QQ)=\min_{\emptyset \neq J \subseteq \{1,\ldots,s\}}  \frac{\dim \calG_{\FF_\ell} - \dim (\calG_{\FF_\ell})_{\calV_{J}\otimes_\ZZ \FF_\ell}}{\dim (\calV_J\otimes_\ZZ \FF_\ell)}. 
\end{align}
After increasing our constant $c$ appropriately first, Proposition~\ref{P:redux finiteness}(\ref{P:redux finiteness b}) tells us that the $\FF_\ell$-vector spaces $W_{j}\otimes_\ZZ \FF_\ell$, with $1\leq j \leq s$, are the isotypical components of $\calG_{\FF_\ell} \subseteq \GL_{2g,\FF_\ell}$.   Since $\calV_J\otimes_\ZZ \FF_\ell = \oplus_{j\in J} W_{j}\otimes_\ZZ \FF_\ell$, we deduce from (\ref{E:alpha transition}) and Proposition~\ref{P:newer approach}(\ref{P:newer approach ii}) that $\alpha(\calG_{\QQ})=\alpha(\calG_{\FF_\ell})$;  this uses that the groups $(\calG_{\FF_\ell})_{\calV_{J}\otimes_\ZZ \FF_\ell}$ are smooth.   By Proposition~\ref{P:newer approach}(\ref{P:newer approach iii}), we have $\alpha(\calG_{\Qbar_\ell})=\alpha(\calG_{\FFbar_\ell})$.  

Therefore, for primes $\ell\gg_A 1$ and $i\in I$, we have $\alpha((\calG_i)_{\Qbar_\ell})=\alpha((\calG_i)_{\FFbar_\ell})$; 
to get that the bound on $\ell$ depends only on $A$, we use that $I$ is finite and that the chosen groups $\calG_i$ and the corresponding representations $W_j$ depend only on $A$.   By taking $\ell\gg_A 1$, Proposition~\ref{P:redux finiteness}(\ref{P:redux finiteness a}) implies that there is an $i\in I$ such that $\alpha((\calG_{A,\ell})_{\Qbar_\ell})=\alpha((\calG_i)_{\Qbar_\ell})$ and $\alpha((\calG_{A,\ell})_{\FFbar_\ell})=\alpha((\calG_i)_{\FFbar_\ell})$.  Therefore, $\alpha((G_{A,\ell})_{\Qbar_\ell})=\alpha((\calG_{A,\ell})_{\Qbar_\ell})$ is equal to $\alpha((\calG_{A,\ell})_{\FFbar_\ell})$ for $\ell\gg_A 1$.  
By Proposition~\ref{P:newer approach}(\ref{P:newer approach iii}) we have $\alpha((G_{A,\ell})_{\Qbar_\ell}) = \alpha(G_{A,\ell})$ and $\alpha((\calG_{A,\ell})_{\FFbar_\ell}) = \alpha((\calG_{A,\ell})_{\FF_\ell})$, hence we obtain $\alpha(G_{A,\ell})=\alpha((\calG_{A,\ell})_{\FF_\ell})$.

Note that for $\ell\gg_A 1$, the group $(\calG_{A,\ell})_{\FF_\ell}$ satisfies the assumption of Proposition~\ref{P:newer approach}; it suffices to show this after base extending to $\FFbar_\ell$ and we already proved it holds for the groups $(\calG_i)_{\FFbar_\ell}$ with $i\in I$.   We thus have $\alpha((\calG_{A,\ell})_{\FFbar_\ell})= \alpha(\gG_{A,\ell})$ for $\ell\gg_A 1$ by Proposition~\ref{P:newer approach}(\ref{P:newer approach i}).
\end{proof}

\section{Prime power version: large primes} \label{S:prime power version}
Fix a nonzero abelian variety $A$ of dimension $g$ defined over a number field $K$.   
Take any prime $\ell$.  In \S\ref{SS:gamma info}, we defined a positive rational number $\gamma_{A,\ell}$; it is the smallest number for which
\[
\gamma_{A,\ell}\cdot (\dim G_{A,\ell}^\circ - \dim (G_{A,\ell}^\circ)_W) \geq \dim W
\]
holds for all nonzero subspaces $W\subseteq V_\ell(A)$.   In this section, we prove the following bound for the $\ell$-power torsion of $A(L)$ for a finite extension $L/K$.

\begin{thm} \label{T:ell-adic version, large ell}
For primes $\ell\gg_A 1$, we have
\[
|A(L)[\ell^\infty]| \ll_A  [L:K]^{\gamma_{A,\ell}}
\]
for all finite extensions $L/K$.
\end{thm}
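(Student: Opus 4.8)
The plan is to deduce the bound from a lower estimate, uniform in $\ell$, for the index in $\Gcal_{A,\ell}(\ZZ_\ell)$ of the pointwise stabilizer of $A(L)[\ell^\infty]$. First I would reduce to the case where $G_{A,\ell}$ is connected for every $\ell$: by Proposition~\ref{P:connected} there is a finite extension $K'/K$, with $[K':K]$ bounded in terms of $A$ alone, such that $G_{A_{K'},\ell}$ is connected for all $\ell$; replacing $L$ by $LK'$ only enlarges $A(L)[\ell^\infty]$, multiplies $[L:K]$ by at most $[K':K]$, and leaves $\gamma_{A,\ell}$ unchanged (it depends only on $G_{A,\ell}^\circ$). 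So assume $G_{A,\ell}$ is connected. I then take $\ell\gg_A 1$ so that $\Gcal_{A,\ell}$ is a smooth reductive $\ZZ_\ell$-group scheme (Proposition~\ref{P:reductive}(\ref{P:reductive ii})), Proposition~\ref{P:HW count} applies to $G:=(\Gcal_{A,\ell})_{\FF_\ell}$, and Proposition~\ref{L:uniformness of alpha} gives $\alpha((\Gcal_{A,\ell})_{\FF_\ell})=\alpha(\gG_{A,\ell})=\alpha(G_{A,\ell})=1/\gamma_{A,\ell}$, where $\gG_{A,\ell}=\Lie(G)$ has dimension $D:=\dim G_{A,\ell}$. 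Set $M:=A(L)[\ell^\infty]$, let $\ell^n$ be its exponent and $t:=\log_\ell|M|$, choose a $\ZZ_\ell$-basis of $T_\ell(A)$ and view $\Gcal_{A,\ell}\subseteq\GL_{2g,\ZZ_\ell}$, and let $H:=\rho_{A,\ell^\infty}(\Gal_L)\subseteq\Gcal_{A,\ell}(\ZZ_\ell)$ while $H_0\supseteq H$ denotes the (clopen) pointwise stabilizer of $M$ inside $\Gcal_{A,\ell}(\ZZ_\ell)$. By Theorem~\ref{T:finite index}, $[\Gcal_{A,\ell}(\ZZ_\ell):H_0]\leq[\Gcal_{A,\ell}(\ZZ_\ell):H]\ll_A[L:K]$, so it suffices to prove $[\Gcal_{A,\ell}(\ZZ_\ell):H_0]\gg_A \ell^{t/\gamma_{A,\ell}}$.

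To estimate this index I would filter $\Gcal_{A,\ell}(\ZZ_\ell)$ by the congruence subgroups $\Gcal^{(k)}:=\ker(\Gcal_{A,\ell}(\ZZ_\ell)\to\Gcal_{A,\ell}(\ZZ/\ell^k))$, for which smoothness of $\Gcal_{A,\ell}$ gives $\Gcal_{A,\ell}(\ZZ_\ell)/\Gcal^{(1)}\cong G(\FF_\ell)$ and $\Gcal^{(k)}/\Gcal^{(k+1)}\cong(\gG_{A,\ell},+)$ for $k\geq 1$. Since $\Gcal^{(n)}\subseteq H_0$, the telescoping identity $[\Gcal_{A,\ell}(\ZZ_\ell):H_0]=\prod_{k\geq 0}[H_0\Gcal^{(k)}:H_0\Gcal^{(k+1)}]$ holds with only finitely many nontrivial factors. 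After choosing the $\ZZ_\ell$-basis of $T_\ell(A)$ adapted to the preimage of $M$ (so that $M\cong\bigoplus_i\ZZ/\ell^{m_i}$ with $m_1\geq\cdots\geq m_{2g}\geq 0$ and $\sum_i m_i=t$), I would introduce the decreasing flag $W_j:=(\ell^{j-1}M)[\ell]\subseteq A[\ell]=\FF_\ell^{2g}$, whose $j$-th term has dimension $s_j=\#\{i:m_i\geq j\}$, so that $\sum_{j\geq 1}s_j=t$.

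The core of the argument is then a level-by-level estimate of the factors. For $k\geq 1$, writing $g=1+\ell^kX+O(\ell^{k+1})$ with $\bar X:=X\bmod\ell\in\gG_{A,\ell}$ and expanding the condition that $g$ fix each element of $M$ of order $\ell^a>\ell^k$ (lifted to $\ell^{n-a}u$ with $u$ a primitive vector) forces $\bar X\bar u=0$ in $\FF_\ell^{2g}$; the resulting vectors $\bar u$ span $W_{k+1}$. Hence the image of $\Gcal^{(k)}\cap H_0$ in $\Gcal^{(k)}/\Gcal^{(k+1)}\cong\gG_{A,\ell}$ lies in $(\gG_{A,\ell})_{W_{k+1}}$, so it has dimension $e_k\leq\dim(\gG_{A,\ell})_{W_{k+1}}\leq D-s_{k+1}/\gamma_{A,\ell}$ by the equality $\alpha(\gG_{A,\ell})=1/\gamma_{A,\ell}$; therefore $[H_0\Gcal^{(k)}:H_0\Gcal^{(k+1)}]=\ell^{D-e_k}\geq\ell^{s_{k+1}/\gamma_{A,\ell}}$. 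For $k=0$, the reduction $\bar H_0$ of $H_0$ modulo $\ell$ fixes $W_1=M[\ell]$ pointwise, hence lies in $G_{W_1}(\FF_\ell)$, and Proposition~\ref{P:HW count} together with $\dim G-\dim G_{W_1}\geq s_1/\gamma_{A,\ell}$ yields $[\Gcal_{A,\ell}(\ZZ_\ell):H_0\Gcal^{(1)}]=[G(\FF_\ell):\bar H_0]\gg_A\ell^{s_1/\gamma_{A,\ell}}$.

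Multiplying these estimates over $k$ gives $[\Gcal_{A,\ell}(\ZZ_\ell):H_0]\gg_A\ell^{(\sum_{j\geq 1}s_j)/\gamma_{A,\ell}}=\ell^{t/\gamma_{A,\ell}}$, which combined with $[\Gcal_{A,\ell}(\ZZ_\ell):H_0]\ll_A[L:K]$ yields $|A(L)[\ell^\infty]|=\ell^t\ll_A[L:K]^{\gamma_{A,\ell}}$, as required. I expect the main obstacle to be exactly the local computation of the third paragraph: correctly propagating the $\ell$-adic ``depth'' of the fixed module $M$ through the congruence filtration of $\Gcal_{A,\ell}$ and matching it, one congruence level at a time, with the successive steps $W_j$ of the flag and with the slope inequality $\dim\gG_{A,\ell}-\dim(\gG_{A,\ell})_W\geq\dim W/\gamma_{A,\ell}$. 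The remaining inputs — surjectivity of $\Gcal_{A,\ell}(\ZZ_\ell)\to G(\FF_\ell)$, the structure of congruence quotients of a smooth group scheme, and the count $|G_W(\FF_\ell)|\asymp_A\ell^{\dim G_W}$ — are standard or already supplied by Propositions~\ref{P:reductive}, \ref{P:HW count} and~\ref{L:uniformness of alpha} and Theorem~\ref{T:finite index}.
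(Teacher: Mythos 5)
Your proposal is correct and follows essentially the same route as the paper's proof in \S\ref{SS:proof of large ell}: a congruence-filtration argument whose layers are controlled by the slope inequality $\dim\gG-\dim\gG_W\geq(\dim W)/\gamma_{A,\ell}$, together with Proposition~\ref{P:HW count} for the mod-$\ell$ layer, Proposition~\ref{L:uniformness of alpha} to identify $\alpha(\gG_{A,\ell})$ with $1/\gamma_{A,\ell}$, and Theorem~\ref{T:finite index} to transfer the index bound to $[L:K]$. The only differences are cosmetic: you bound the index of the pointwise stabilizer $H_0\supseteq\rho_{A,\ell^\infty}(\Gal_L)$ in $\calG_{A,\ell}(\ZZ_\ell)$ rather than comparing $\rho_{A,\ell^i}(\Gal_L)$ with $\rho_{A,\ell^i}(\Gal_K)$ directly (which brings your phrasing closer to Proposition~\ref{propmain} in \S\ref{S: small primes}), and you pick a Smith-normal-form basis adapted to $M$ so that the flag $W_j=(\ell^{j-1}M)[\ell]$ is explicit, whereas the paper avoids the adapted basis by defining $W_j$ via $U[\ell^{j+1}]/U[\ell^j]$; these are the same subspaces up to a shift in indexing.
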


The above theorem actually holds for all primes $\ell$ and we will give a different argument for the finitely many excluded primes in \S\ref{S: small primes}. The proofs (both for a single $\ell$ and the uniform argument) are ineffective, and it would be an interesting problem to obtain explicit estimates in terms of the Faltings height of $A$ and the degree of the field extensions.

\subsection{Lie algebras and filtrations}   \label{SS:lie algebras and filtrations}
For a fixed prime $\ell$, set $\calG:=\calG_{A,\ell}$.  We shall assume that $G_{A,\ell}$ is connected and that $\calG$ is a reductive group scheme over $\ZZ_\ell$.  

By choosing a $\ZZ_\ell$-basis of $T_\ell(A)$, we may assume that $\calG \subseteq \GL_{2g,\ZZ_\ell}$.  Take any commutative $\ZZ_\ell$-algebra $R$.   Define the ring $R[\varepsilon]:=R[x]/(x^2)$, where $\varepsilon$ is the image of $x$ and hence satisfies $\varepsilon^2=0$.   The $R$-algebra homomorphism $R[\varepsilon]\to R$ mapping $\varepsilon$ to $0$ induces a homomorphism 
\begin{align}\label{E:epsilon to 0}
\calG(R[\varepsilon]) \to \calG(R).
\end{align}
Let $L(R)$ be the set of $B\in M_{2g}(R)$ for which $I+\varepsilon B$ lies in the kernel of (\ref{E:epsilon to 0}).   Observe that $L(R)$ is a \emph{Lie algebra} over $R$; it is an $R$-submodule of $M_{2g}(R)$ that is closed under the pairing $[B_1,B_2]=B_1 B_2 - B_2 B_1$.   

The Lie algebra of $G_{A,\ell}$ is $L(\QQ_\ell)$; its dimension as a $\QQ_\ell$-vector space is $\dim G_{A,\ell}$.  Since $\calG$ is the Zariski closure of $G_{A,\ell}$ in $\GL_{2g,\ZZ_\ell}$, we find that $L(\ZZ_\ell) = L(\QQ_\ell) \cap M_{2g}(\ZZ_\ell)$; it is a free $\ZZ_\ell$-module of rank $\dim G_{A,\ell}$. 

 Let $\gG_\ell$ be the image of the reduction modulo $\ell$ homomorphism $L(\ZZ_\ell)\to L(\FF_\ell)$; it is a Lie algebra over $\FF_\ell$ of dimension $\dim G_{A,\ell}$ (this uses that $\Gcal$ is smooth over $\Z_\ell$). 

\begin{lemma} \label{L:Lie algebra gG}
The Lie algebra of $\calG_{\FF_\ell}$ is $\gG_\ell$.
\end{lemma}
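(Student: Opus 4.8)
The plan is to identify the Lie algebra of the special fibre $\calG_{\FF_\ell}$ with the mod-$\ell$ reduction $\gG_\ell$ of the $\ZZ_\ell$-Lie algebra $L(\ZZ_\ell)$, the essential input being that $\calG$ is a smooth group scheme over $\ZZ_\ell$. First, note that $L(\FF_\ell)$ is by construction the Lie algebra of $\calG_{\FF_\ell}$: since $\FF_\ell[\varepsilon]$ is an $\FF_\ell$-algebra and $\calG_{\FF_\ell} = \calG\times_{\ZZ_\ell}\FF_\ell$, we have $\calG(\FF_\ell[\varepsilon]) = \calG_{\FF_\ell}(\FF_\ell[\varepsilon])$ compatibly with the maps to $\calG(\FF_\ell) = \calG_{\FF_\ell}(\FF_\ell)$, so $L(\FF_\ell) = \Lie(\calG_{\FF_\ell})$ as $\FF_\ell$-subspaces of $M_{2g}(\FF_\ell)$. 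Next, functoriality of the dual numbers construction along the reduction homomorphisms $\ZZ_\ell[\varepsilon]\to\FF_\ell[\varepsilon]$ and $\ZZ_\ell\to\FF_\ell$ shows that if $I + \varepsilon B$ lies in the kernel of (\ref{E:epsilon to 0}) over $\ZZ_\ell$, then $I + \varepsilon\bar{B}$ lies in the corresponding kernel over $\FF_\ell$; hence reduction modulo $\ell$ carries $L(\ZZ_\ell)$ into $L(\FF_\ell)$, which gives the inclusion $\gG_\ell\subseteq L(\FF_\ell) = \Lie(\calG_{\FF_\ell})$.

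It then remains to upgrade this inclusion to an equality, and this is where smoothness is used. The direct argument is that, because $\calG$ is smooth over $\ZZ_\ell$, the formation of the Lie algebra commutes with base change, so the natural map $L(\ZZ_\ell)\otimes_{\ZZ_\ell}\FF_\ell\to L(\FF_\ell)$ is an isomorphism; since $\gG_\ell$ is by definition the image of $L(\ZZ_\ell)\to L(\FF_\ell)$ and this map factors through the isomorphism $L(\ZZ_\ell)\otimes_{\ZZ_\ell}\FF_\ell\xrightarrow{\sim} L(\FF_\ell)$, we conclude $\gG_\ell = L(\FF_\ell) = \Lie(\calG_{\FF_\ell})$. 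Alternatively one may argue by a dimension count: $\dim_{\FF_\ell}\gG_\ell = \dim G_{A,\ell}$ as already observed (using the smoothness of $\calG$), while $\dim_{\FF_\ell} L(\FF_\ell) = \dim\calG_{\FF_\ell}$ because $\calG_{\FF_\ell}$ is a smooth $\FF_\ell$-group, and $\dim\calG_{\FF_\ell} = \dim\calG_{\QQ_\ell} = \dim G_{A,\ell}$ because $\calG$ is flat over $\ZZ_\ell$ with equidimensional fibres; the inclusion of $\FF_\ell$-subspaces of equal dimension is then forced to be an equality.

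I do not anticipate a genuine obstacle here. The only subtlety worth flagging is that the smoothness of $\calG$ over $\ZZ_\ell$ is truly necessary: without it, the reduction map $L(\ZZ_\ell)\to L(\FF_\ell)$ need not be surjective — for instance, for $\mu_\ell\subseteq\GG_m$ over $\ZZ_\ell$ the source of this map is $0$ while $L(\FF_\ell)$ is one-dimensional — but smoothness is precisely one of the standing hypotheses of this subsection (recall we assumed $\calG$ is a reductive group scheme over $\ZZ_\ell$, hence smooth).
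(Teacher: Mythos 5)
Your proof is correct. Your alternative dimension-count argument is exactly the paper's proof: use smoothness of $\calG$ over $\ZZ_\ell$ to get $\dim_{\FF_\ell} L(\FF_\ell) = \dim \calG_{\FF_\ell} = \dim \calG_{\QQ_\ell} = \dim G_{A,\ell}$, and compare with $\dim_{\FF_\ell}\gG_\ell = \dim G_{A,\ell}$ to upgrade the inclusion $\gG_\ell \subseteq L(\FF_\ell)$ to an equality. Your primary argument — that for a smooth group scheme the Lie algebra commutes with base change, so $L(\ZZ_\ell)\otimes_{\ZZ_\ell}\FF_\ell \to L(\FF_\ell)$ is an isomorphism and hence the image $\gG_\ell$ of the reduction map is all of $L(\FF_\ell)$ — is a slightly more structural packaging of the same smoothness input; it avoids the explicit dimension bookkeeping but ultimately rests on the same fact (local freeness of $\varepsilon^*\Omega^1_{\calG/\ZZ_\ell}$). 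Either version is fine, and your remark that smoothness is genuinely needed (with the $\mu_\ell$ counterexample) is a useful sanity check.
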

\begin{proof}
Since $\calG$ is smooth over $\ZZ_\ell$, the Lie algebra of $\calG_{\FF_\ell}$ is $L(\FF_\ell)$ and has dimension equal to $\dim \calG_{\FF_\ell} = \dim \calG_{\QQ_\ell} = \dim G_{A,\ell}$.  We thus have $L(\FF_\ell)=\gG_\ell$ since we have an inclusion $\gG_\ell\subseteq L(\FF_\ell)$ of $\FF_\ell$-vector spaces of the same dimension.
\end{proof}

Now consider a closed subgroup $H$ of $\calG(\ZZ_\ell)$.  For each integer $i\geq 1$, let $H(\ell^i)$ and $H_i$ be the image and kernel, respectively, of the reduction modulo $\ell^i$ homomorphism $H\to \calG(\ZZ/\ell^i\ZZ)$. The map
\[
\varphi_i\colon H_i \to  M_{2g}(\FF_\ell),\quad I+\ell^i B \mapsto B \bmod{\ell}
\]
is a group homomorphism with kernel $H_{i+1}$ whose image we will denote by $\hH_i$. 

 The group $\hH_i$ is an $\FF_\ell$-subspace of $M_{2g}(\FF_\ell)$ and we have
\begin{align} \label{E:G filtration}
|H(\ell^i)| = [H:H_i] = [H:H_1]\cdot \prod_{1\leq j<i}[H_{j}: H_{j+1}] = |H(\ell)| \prod_{1\leq j<i} |\hH_j| = |H(\ell)| \cdot \ell^{\sum_{j=1}^{i-1} \dim \hH_j}
\end{align}

\begin{lemma} \label{L:new h=g}
Let $H$ be a closed subgroup of $\calG(\ZZ_\ell)$.  Take any $i\geq 1$ with $i\geq 2$ if $\ell=2$.
\begin{romanenum}
\item \label{L:new h=g i}
We have $\hH_i \subseteq \gG_\ell$.
\item  \label{L:new h=g ii}
If $H=\calG(\ZZ_\ell)$, then $\hH_i=\gG_\ell$.
\end{romanenum}
\end{lemma}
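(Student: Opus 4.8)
The plan is to work directly with the polynomial equations defining $\calG$ inside $\GL_{2g,\ZZ_\ell}$, using only a first-order Taylor expansion and the identification $\gG_\ell=L(\FF_\ell)$ furnished by Lemma~\ref{L:Lie algebra gG}. Recall that (after clearing denominators in $\det^{-1}$) there is a finite set $\mathcal{F}$ of polynomials in the matrix entries with coefficients in $\ZZ_\ell$ such that $\calG(R)=\{g\in\GL_{2g}(R):f(g)=0\text{ for all }f\in\mathcal{F}\}$ for every $\ZZ_\ell$-algebra $R$. Since the identity $I$ lies in $\calG(\ZZ_\ell)$ we have $f(I)=0$, and since $I+\varepsilon B$ is invertible over $R[\varepsilon]$ the Lie algebra is $L(R)=\{B\in M_{2g}(R):(df)_I(B)=0\text{ for all }f\in\mathcal{F}\}$, where $(df)_I$ is the differential of $f$ at $I$ — a linear form in the entries of $B$ with coefficients in $\ZZ_\ell$. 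By Lemma~\ref{L:Lie algebra gG}, $\gG_\ell=L(\FF_\ell)$ is then precisely the set of $\bar B\in M_{2g}(\FF_\ell)$ with $(df)_I(\bar B)\equiv 0\pmod{\ell}$ for all $f\in\mathcal{F}$. The one computation I would record is that for every $f\in\mathcal{F}$, every $B\in M_{2g}(\ZZ_\ell)$ and every $i\geq 1$,
\[
f(I+\ell^i B)\equiv f(I)+\ell^i(df)_I(B)\equiv \ell^i(df)_I(B)\pmod{\ell^{2i}},
\]
which holds because the Taylor coefficients of $f$ about $I$ lie in $\ZZ_\ell$ and every Taylor term of order $\geq 2$ in $\ell^i B$ is divisible by $(\ell^i)^2=\ell^{2i}$; note $2i\geq i+1$ for all $i\geq 1$.

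For part~\ref{L:new h=g i}, I would take $h\in H_i\subseteq H\subseteq\calG(\ZZ_\ell)$ and write $h=I+\ell^i B$ with $B=(h-I)/\ell^i\in M_{2g}(\ZZ_\ell)$. For each $f\in\mathcal{F}$, the displayed congruence together with $f(h)=0$ gives $\ell^i(df)_I(B)\equiv 0\pmod{\ell^{2i}}$, hence $(df)_I(B)\equiv 0\pmod{\ell^i}$ and a fortiori $(df)_I(B)\equiv 0\pmod{\ell}$. Therefore $\varphi_i(h)=B\bmod\ell$ lies in $L(\FF_\ell)=\gG_\ell$, and so $\hH_i=\varphi_i(H_i)\subseteq\gG_\ell$.

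For part~\ref{L:new h=g ii}, with $H=\calG(\ZZ_\ell)$, the inclusion $\hH_i\subseteq\gG_\ell$ is part~\ref{L:new h=g i}, so I only need to hit every $\bar B\in\gG_\ell$. Fix a lift $B_0\in M_{2g}(\ZZ_\ell)$ of $\bar B$. Since $\bar B\in L(\FF_\ell)$ we have $(df)_I(B_0)\equiv 0\pmod{\ell}$ for all $f\in\mathcal{F}$, so the displayed congruence (which holds a fortiori modulo $\ell^{i+1}$ as $2i\geq i+1$) gives $f(I+\ell^i B_0)\equiv \ell^i(df)_I(B_0)\equiv 0\pmod{\ell^{i+1}}$; as $I+\ell^i B_0$ is invertible modulo $\ell^{i+1}$, it defines a point of $\calG(\ZZ/\ell^{i+1}\ZZ)$ reducing to the identity in $\calG(\ZZ/\ell^i\ZZ)$. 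Because $\calG$ is smooth over $\ZZ_\ell$ (it is a reductive group scheme) and each $\ZZ/\ell^{n+1}\ZZ\to\ZZ/\ell^n\ZZ$ is a square-zero extension, the infinitesimal lifting criterion makes the reduction maps $\calG(\ZZ/\ell^{n+1}\ZZ)\to\calG(\ZZ/\ell^n\ZZ)$ surjective for all $n\geq 1$, and passing to the limit the reduction $\calG(\ZZ_\ell)\to\calG(\ZZ/\ell^{i+1}\ZZ)$ is surjective. Choosing $h\in\calG(\ZZ_\ell)=H$ with $h\equiv I+\ell^i B_0\pmod{\ell^{i+1}}$, we obtain $h\in H_i$ and $\varphi_i(h)=B_0\bmod\ell=\bar B$, so $\gG_\ell\subseteq\hH_i$ and equality holds.

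I do not expect a genuine obstacle: the proof is just bookkeeping of $\ell$-adic valuations in one Taylor expansion, the only nontrivial external ingredient being the surjectivity of $\calG(\ZZ_\ell)\to\calG(\ZZ/\ell^{i+1}\ZZ)$, which is the lifting property of the smooth affine $\ZZ_\ell$-scheme $\calG$. In fact the argument above works verbatim for every prime $\ell$ and every $i\geq 1$, so the hypothesis $i\geq 2$ in case $\ell=2$ is more than sufficient; the extra room is presumably dictated by later arguments — such as identifying the commutator $[H_i,H_j]$ with a Lie bracket through $\exp$ and $\log$, where for $p=2$ one must remain inside $\ell^2 M_{2g}(\ZZ_2)$ — and the Taylor-expansion route sidesteps this.
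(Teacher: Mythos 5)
Your proof is correct, and it diverges from the paper's argument in an interesting way, primarily on the inclusion $\hH_i\subseteq\gG_\ell$.

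The paper reverses your logical order: it observes that (i) follows from (ii) by monotonicity of $H\mapsto\hH_i$, and then proves (ii) in two halves. The containment $\gG_\ell\subseteq\hH_i$ is proved exactly as you do — though phrased via the ring map $\ZZ_\ell[\varepsilon]\to\ZZ/\ell^{i+1}\ZZ$, $\varepsilon\mapsto\ell^i$, rather than your explicit Taylor congruence; these are the same computation. The opposite containment $\hH_i\subseteq\gG_\ell$ is where the two approaches genuinely part ways. The paper argues by contradiction: if $\hH_j\supsetneq\gG_\ell$, then raising $I+\ell^jB$ to the $\ell$-th power shows $\hH_{j+1}\supsetneq\gG_\ell$ as well (this is the step that needs $\binom{\ell}{k}\ell^{jk}\equiv 0\pmod{\ell^{j+2}}$, which fails for $\ell=2$, $j=1$, hence the hypothesis), so the defect propagates to all higher layers; feeding this into the filtration identity and Serre's growth estimate $|H(\ell^i)|\ll\ell^{i\dim G_{A,\ell}}$ (Th\'eor\`eme 8 of the cited Serre paper) yields a contradiction. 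Your approach dispenses with all of this: working directly with the defining ideal of $\calG\subseteq\GL_{2g,\ZZ_\ell}$, the single congruence $f(I+\ell^iB)\equiv\ell^i(df)_I(B)\pmod{\ell^{2i}}$ gives $(df)_I(B)\equiv0\pmod{\ell}$ for free, and $\gG_\ell=L(\FF_\ell)$ is precisely the locus defined by those linear forms mod $\ell$. This is more elementary (no $p$-adic growth estimate), more self-contained, and — as you correctly observe — works uniformly for all $i\geq1$, all $\ell$, including $\ell=2$, $i=1$. Your guess about why the paper nonetheless records the restriction $(i,\ell)\neq(1,2)$ is plausible in spirit, though in fact the restriction is simply what the $\ell$-th-power binomial argument requires; the lemma is only applied downstream with $\ell$ odd (the large-prime theorem explicitly takes $\ell$ odd), so the weaker statement suffices for the paper's purposes.

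One small point worth keeping in mind: your argument does use that the ideal of $\calG$ over $\ZZ_\ell$ reduces to the ideal of $\calG_{\FF_\ell}$ over $\FF_\ell$ (so that $L(\FF_\ell)$ is cut out by the reductions $\bar f$); this is automatic for any closed subscheme by base change of the structure sheaf, so there is no gap, but it is the implicit input behind ``by Lemma~\ref{L:Lie algebra gG}, $\gG_\ell=L(\FF_\ell)$ is precisely the set of $\bar B$ with $(df)_I(\bar B)\equiv0\pmod\ell$.''
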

\begin{proof}
Part (\ref{L:new h=g i}) follows from (\ref{L:new h=g ii}), so we may assume that $H=\calG(\ZZ_\ell)$.  Fix an $i\geq 1$ and take any $B\in L(\ZZ_\ell)$.   We have a homomorphism $\calG(\ZZ_\ell[\varepsilon])\to \calG(\ZZ/\ell^{i+1}\ZZ)$ arising from the ring homomorphism $\ZZ_\ell[\varepsilon]\to \ZZ/\ell^{i+1} \ZZ$ that reduces modulo $\ell^{i+1}$ and sends $\varepsilon$ to $\ell^i$.  In particular, $I+\ell^i B$ modulo $\ell^{i+1}$ lies in $\calG(\ZZ/\ell^{i+1}\ZZ)$.   The reduction map $\calG(\ZZ_\ell)\to \calG(\ZZ/\ell^{i+1}\ZZ)$ is surjective since $\calG$ is smooth.  So there is an element $I+\ell^i B' \in \calG(\ZZ_\ell)$ such that $B\equiv B' \pmod{\ell}$.  Therefore, $B$ modulo $\ell$ lies in $\hH_i$.   Since $B$ was an arbitrary element of $L(\ZZ_\ell)$, we deduce that $\gG_\ell \subseteq \hH_i$ for all $i\geq 1$.

Now suppose that $\gG_\ell \subsetneq \hH_j$ for some $j \geq 1$ with $j\geq 2$ if $\ell=2$.   There is an element $I+\ell^i B \in H_j$ such that $B$ modulo $\ell$ does not lie in $\gG_\ell$.   Raising $I+\ell^j B$ to the $\ell$-th power gives
\[
(I+\ell^j B)^\ell = I +\ell^{j+1} B + \sum_{k=2}^\ell \binom{\ell}{k} \ell^{jk} B^k.
\]
Observe that $\binom{\ell}{k} \ell^{jk} \equiv 0 \pmod{\ell^{j+2}}$ for all $2\leq k \leq \ell$; this uses that $\binom{\ell}{k} \equiv 0 \pmod{\ell}$ when $2\leq k <\ell$ (we have also used $j\geq 2$ when $\ell=2$).  Since $(I+\ell^j B)^\ell \in H_{j+1}$, this proves that $B$ modulo $\ell$ lies in $\hH_{j+1}$ and hence $\gG_\ell \subsetneq \hH_{j+1}$.     Therefore, $\gG_\ell \subsetneq \hH_{i}$ for all sufficiently large $i$.  By (\ref{E:G filtration}), this implies that $|H(\ell^i)| \gg_{A,\ell}  \ell^{i(\dim \gG_\ell +1)} = \ell^{i(\dim G_{A,\ell} +1)}$ for all $i\geq 1$.  However, we have $|H(\ell^i)| \ll_{A,\ell} \ell^{i\dim G_{A,\ell}}$, see Th\'eor\`eme~8 of \cite{MR644559}.   These inequalities contradict for $i$ large enough, so we conclude that $\gG_\ell = \hH_j$ for $j\geq 1$ with $(j,\ell)\neq (1,2)$.
\end{proof}

\begin{lemma} \label{L:elli image size}
We have $|\rho_{A,\ell^i}(\Gal_K)| \asymp_A \ell^{i \dim \gG_\ell }= \ell^{i \dim G_{A,\ell} }$.
\end{lemma}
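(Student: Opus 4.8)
The plan is to reduce the statement to a point count on the group scheme $\calG:=\calG_{A,\ell}$ modulo powers of $\ell$, using the bounded‑index input provided by Theorem~\ref{T:finite index}. Write $H:=\rho_{A,\ell^\infty}(\Gal_K)$, a closed subgroup of $\calG(\ZZ_\ell)$. First I would note that for each $i\geq 1$ the group $\rho_{A,\ell^i}(\Gal_K)$ is exactly the image $H(\ell^i)$ of $H$ under the reduction $\calG(\ZZ_\ell)\to\calG(\ZZ/\ell^i\ZZ)$; since $H(\ell^i)$ is a quotient of a subgroup of index at most some $C=C(A)$ (Theorem~\ref{T:finite index}), we get $[\calG(\ZZ/\ell^i\ZZ):H(\ell^i)]\leq C$, and hence $|\rho_{A,\ell^i}(\Gal_K)|\asymp_A|\calG(\ZZ/\ell^i\ZZ)|$ with implied constants independent of both $\ell$ and $i$. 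This reduces the lemma to the purely group‑scheme‑theoretic estimate $|\calG(\ZZ/\ell^i\ZZ)|\asymp_A\ell^{i\dim\gG_\ell}$.

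For that, I would apply the filtration formula (\ref{E:G filtration}) to the full group $H=\calG(\ZZ_\ell)$, together with Lemma~\ref{L:new h=g}(\ref{L:new h=g ii}), which identifies each graded piece $\hH_j$ with $\gG_\ell$ (the exceptional case $\ell=2$ is irrelevant since we are in the regime $\ell\gg_A 1$); this gives $|\calG(\ZZ/\ell^i\ZZ)|=|\calG(\FF_\ell)|\cdot\ell^{(i-1)\dim\gG_\ell}$, which is just the usual smoothness count for $\calG$ over $\ZZ_\ell$. It then remains to bound $|\calG(\FF_\ell)|$, and here I would invoke Proposition~\ref{P:HW count} applied with $W=0$ (equivalently \cite[Lemma~3.5]{MR880952} together with the bound $m_0\ll_A 1$ from Lemma~\ref{L:mW bound}), which yields $|\calG(\FF_\ell)|\asymp_A\ell^{\dim(\calG_{A,\ell})_{\FF_\ell}}=\ell^{\dim\gG_\ell}$. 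Combining, $|\calG(\ZZ/\ell^i\ZZ)|\asymp_A\ell^{i\dim\gG_\ell}$, whence $|\rho_{A,\ell^i}(\Gal_K)|\asymp_A\ell^{i\dim\gG_\ell}=\ell^{i\dim G_{A,\ell}}$.

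I do not expect a real obstacle here. The only genuine subtlety is that every implied constant has to be uniform in $\ell$, but the two deep facts that make this possible — Theorem~\ref{T:finite index} and the boundedness (in terms of $A$ alone) of the number of geometric components of $(\calG_{A,\ell})_{\FF_\ell}$, which comes from Proposition~\ref{P:redux finiteness} — are already established, and everything else is routine bookkeeping with congruence filtrations. The one identity to be careful with is $\dim\gG_\ell=\dim G_{A,\ell}=\dim(\calG_{A,\ell})_{\FF_\ell}$, used to rewrite the exponent; it relies on the smoothness of $\calG$ over $\ZZ_\ell$, which is among the standing hypotheses of this section.
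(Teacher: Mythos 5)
Your proof is correct and follows essentially the same route as the paper's: the paper also works with $H=\calG(\ZZ_\ell)$, applies (\ref{E:G filtration}) together with Lemma~\ref{L:new h=g}(\ref{L:new h=g ii}) to reduce to $|\calG(\FF_\ell)|$, invokes Proposition~\ref{P:HW count} with $W=0$ and Lemma~\ref{L:Lie algebra gG}, and then brings in Theorem~\ref{T:finite index} to pass from $\calG(\ZZ/\ell^i\ZZ)$ to $\rho_{A,\ell^i}(\Gal_K)$. The only difference is cosmetic — you perform the bounded-index reduction first rather than last — and your passing remark that $[\calG(\ZZ/\ell^i\ZZ):H(\ell^i)]\leq C$ should more precisely be justified by surjectivity of the reduction map $\calG(\ZZ_\ell)\to\calG(\ZZ/\ell^i\ZZ)$ (Hensel, from smoothness), since "image of a subgroup of index $\leq C$ under a surjection has index $\leq C$" is what's actually used rather than the "quotient of a subgroup" phrasing.
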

\begin{proof}
Define the group $H=\calG(\ZZ_\ell)$.  
By Lemma~\ref{L:new h=g}(\ref{L:new h=g ii}) and (\ref{E:G filtration}), we have $|H(\ell^i)| \asymp_A |H(\ell)| \cdot \ell^{(i-1) \dim \gG_\ell}$.    Since $\calG$ is a smooth group scheme, we have $H(\ell)=\calG(\FF_\ell)$ by Hensel's lemma.    By Proposition~\ref{P:HW count} (with $W=0$), we have $|H(\ell)|=|\calG(\FF_\ell)|\asymp_A \ell^{\dim \calG_{\FF_\ell}}$.  We have equalities $\dim \gG_\ell = \dim \calG_{\FF_\ell} = \dim \calG_{\QQ_\ell} = \dim G_{A,\ell}$, where we have used the smoothness of $\calG$ along with Lemma~\ref{L:Lie algebra gG}.   Combining everything together, we find that $|H(\ell^i)| \asymp_A \ell^{i \dim \gG_\ell} =\ell^{i \dim G_{A,\ell}}$.

The image of $\rho_{A,\ell^\infty}(\Gal_K)$ modulo $\ell^i$ is the group $\rho_{A,\ell^i}(\Gal_K)$.   Therefore, \[
[H(\ell^i): \rho_{A,\ell^i}(\Gal_K)] \leq [H:\rho_{A,\ell^\infty}(\Gal_K)] \ll_A 1,
\]
where the last inequality uses Theorem~\ref{T:finite index}.   So $|\rho_{A,\ell^i}(\Gal_K)| \asymp_A |H(\ell^i)|$ and the lemma follows from the estimates of $|H(\ell^i)|$ we have  computed above.
\end{proof}

\subsection{Proof of Theorem~\ref{T:ell-adic version, large ell}} \label{SS:proof of large ell}

There is no harm in replacing $K$ by a finite extension and $A$ with its base extension by this field.  Indeed, suppose that $K'/K$ is a finite extension.   For a finite extension $L/K$, set $L'=L\cdot K'$.  We have $|A(L)_{\tors}|\leq |A(L')_{\tors}|$ and 
\[
[L':K']^{\gamma_{A,\ell}} \leq [K':K]^{\gamma_{A,\ell}} [L:K]^{\gamma_{A,\ell}}\leq [K':K]^{2\dim A} [L:K]^{\gamma_{A,\ell}}\ll_{A,K'} [L:K]^{\gamma_{A,\ell}}.
\]  
Also $\gamma_{A,\ell}=\gamma_{A_{K'},\ell}$.  So Theorem~\ref{T:ell-adic version, large ell} for $A_{K'}/K'$ implies the theorem for $A/K$.  So after first replacing $K$ by a finite extension, we may assume by Proposition~\ref{P:connected} that the algebraic groups $G_{A,\ell}$ are connected for the rest of the section.    By taking $\ell\gg_A 1$, we may assume by Proposition~\ref{P:reductive} that the $\ZZ_\ell$-group scheme $\calG_{A,\ell}\subseteq \GL_{T_\ell(A)}$ is reductive and that $\ell$ is odd.   

We now make some identifications that will hold for the rest of the proof.  By choosing a basis for $T_\ell(A)$ as a $\ZZ_\ell$-module, we will identify $\calG_{A,\ell}$ with an algebraic subgroup of $\GL_{2g,\ZZ_\ell}$.   In particular, we have $(\calG_{A,\ell})_{\FF_\ell} \subseteq \GL_{2g,\FF_\ell}$.     Define $\gG:=\gG_\ell \subseteq M_{2g}(\FF_\ell)$ as in \S\ref{SS:lie algebras and filtrations}.\\

Take any finite extension $L/K$ in $\Kbar$.   Define the group $U:=A(L)[\ell^\infty]$, i.e., the group of torsion points in $A(L)$ whose order is a power of $\ell$.  The group $U$ is finite by the Mordell--Weil theorem.    For each $i\geq 0$, let $U[\ell^i]$ be the group of $P\in U$ for which $\ell^i P =0$.   For each $i\geq 0$, let
\[
\psi_i \colon A[\ell^{i+1}] \xrightarrow{\sim} (\ZZ/\ell^{i+1}\ZZ)^{2g} 
\]
be the isomorphism obtained by our choice of $\ZZ_\ell$-basis for $T_\ell(A)$.  Composing $\psi_i$ with the reduction modulo $\ell$ map induces an isomorphism $\bbar{\psi}_i \colon A[\ell^{i+1}]/A[\ell^i] \xrightarrow{\sim} \FF_\ell^{2g}$.   We can identify $U[\ell^{i+1}]/U[\ell^i]$ with a subgroup of $A[\ell^{i+1}]/A[\ell^i]$, so we can define
\[
W_i := \bbar{\psi}_i(U[\ell^{i+1}]/U[\ell^i]);
\]
it is a subspace of $\FF_\ell^{2g}$. 

\begin{lemma} \label{L:lie sum}
For each $i\geq 1$, we have $|\rho_{A,\ell^i}(\Gal_L)| \ll_A \ell^{\sum_{j=0}^{i-1} \dim \gG_{W_j}}$.
\end{lemma}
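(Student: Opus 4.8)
The plan is to apply the index formula \eqref{E:G filtration} to the subgroup $H:=\rho_{A,\ell^\infty}(\Gal_L)$, which is compact and hence closed in $\calG(\ZZ_\ell)$, where $\calG:=\calG_{A,\ell}$. Since $H$ reduces modulo $\ell^i$ to $\rho_{A,\ell^i}(\Gal_L)$, formula \eqref{E:G filtration} reads
\[
|\rho_{A,\ell^i}(\Gal_L)|=|H(\ell)|\cdot \ell^{\sum_{j=1}^{i-1}\dim \hH_j},
\]
so it is enough to establish the two estimates $|H(\ell)|\ll_A \ell^{\dim \gG_{W_0}}$ and $\dim \hH_j\le \dim\gG_{W_j}$ for $1\le j\le i-1$, and then multiply them together.

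For the bound on $|H(\ell)|$ I would use that every $\sigma\in\Gal_L$ fixes $U[\ell]=A(L)[\ell]$ pointwise and that $\bbar{\psi}_0$ identifies $U[\ell]=U[\ell]/U[\ell^0]$ with $W_0\subseteq\FF_\ell^{2g}$; hence $\rho_{A,\ell}(\Gal_L)\subseteq G_{W_0}(\FF_\ell)$, where $G:=(\calG_{A,\ell})_{\FF_\ell}$. By Proposition~\ref{P:HW count} we have $|G_{W_0}(\FF_\ell)|\ll_A\ell^{\dim G_{W_0}}$, and since the Lie algebra of $G_{W_0}$ equals $\gG_{W_0}$ (an element $I+\varepsilon B$ with $B\in\gG$ fixes $W_0\otimes_{\FF_\ell}\FF_\ell[\varepsilon]$ pointwise if and only if $Bw=0$ for all $w\in W_0$), the scheme $G_{W_0}$ has dimension at most $\dim\gG_{W_0}$; combining, $|H(\ell)|\ll_A\ell^{\dim\gG_{W_0}}$.

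For the bound on $\dim\hH_j$, fix $j$ with $1\le j\le i-1$ and take an arbitrary element of $\hH_j$: it has the form $\varphi_j(h)=B\bmod\ell$ for some $h=I+\ell^j B\in H_j$ with $B\in M_{2g}(\ZZ_\ell)$, and $h=\rho_{A,\ell^\infty}(\sigma)$ for some $\sigma\in\Gal_L$. Writing $\psi_j(P)=v\bmod\ell^{j+1}$ for a point $P\in U[\ell^{j+1}]$, the fact that $\sigma$ fixes $P$ translates into $hv\equiv v\pmod{\ell^{j+1}}$, i.e.\ $\ell^j Bv\equiv 0\pmod{\ell^{j+1}}$, i.e.\ $(B\bmod\ell)(v\bmod\ell)=0$ in $\FF_\ell^{2g}$; since $\bbar{\psi}_j$ sends the class of $P$ to $v\bmod\ell$ and $W_j$ is precisely the set of such vectors, $\varphi_j(h)=B\bmod\ell$ kills $W_j$. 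As $\varphi_j(h)$ also lies in $\gG$ by Lemma~\ref{L:new h=g}(\ref{L:new h=g i})---applicable because $\ell$ is odd, so the restriction $i\ge 2$ when $\ell=2$ is vacuous---it lies in $\gG_{W_j}$. Hence $\hH_j\subseteq\gG_{W_j}$ and $\dim\hH_j\le\dim\gG_{W_j}$, which together with the bound on $|H(\ell)|$ and the displayed identity gives the lemma. The one point requiring care is this last translation step: one must track the identifications $\psi_j$ and $\bbar{\psi}_j$ and the quotients $A[\ell^{j+1}]/A[\ell^j]$ accurately so that ``elements of $\hH_j$ annihilate $W_j$'' comes out correctly, and one must remember that the $j=0$ term $|H(\ell)|$ is not produced by the filtration \eqref{E:G filtration} but rather serves as its base case, so it must be handled separately via Proposition~\ref{P:HW count}.
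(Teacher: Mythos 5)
Your proposal is correct and follows essentially the same route as the paper's proof: set $H:=\rho_{A,\ell^\infty}(\Gal_L)$, invoke \eqref{E:G filtration}, bound $|H(\ell)|$ by $|G_{W_0}(\FF_\ell)|$ via Proposition~\ref{P:HW count} together with $\dim G_{W_0}\le\dim\gG_{W_0}$, and bound $\dim\hH_j$ for $j\ge 1$ by establishing $\hH_j\subseteq\gG_{W_j}$ using Lemma~\ref{L:new h=g}(\ref{L:new h=g i}) and the fact that $\Gal_L$ fixes $U[\ell^{j+1}]$. The bookkeeping with $\psi_j$, $\bbar{\psi}_j$, and the quotients $A[\ell^{j+1}]/A[\ell^j]$ is handled accurately, and you correctly note both that the $j=0$ term must be treated separately and that the parity restriction in Lemma~\ref{L:new h=g} is vacuous because $\ell$ has already been taken odd in this section.
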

\begin{proof}
Define the group $H:=\rho_{A,\ell^\infty}(\Gal_L)$.   For each $i\geq 1$, define $H(\ell^i)$, $H_i$ and $\hH_i$ as in \S\ref{SS:lie algebras and filtrations}.   

We claim that $\hH_i \subseteq\gG_{W_i}$ for all $i\geq 1$.     Take any $I+\ell^i B \in H_i$.  Since $\hH_i \subseteq \gG$ by Lemma~\ref{L:new h=g}(\ref{L:new h=g i}), to prove the claim, we need only show that $Bw=0$ for all $w\in W_i$.  
Choose an element $\sigma\in \Gal_L$ for which $\rho_{A,\ell^\infty}(\sigma)=I+\ell^i B$.     We have $\sigma(P)=P$ for all $P\in U[\ell^{i+1}]$ since $U\subseteq A(L)$.   Therefore, $I+\ell^i B$ fixes each element of $\psi_i(U[\ell^{i+1}])$.   So for each $w\in \psi_i(U[\ell^{i+1}])$, we have $(I+\ell^i B) w = w$ and hence $\ell^i B w = 0$.  Therefore, $Bw\equiv 0 \pmod{\ell}$ for all $w\in \psi_i(U[\ell^{i+1}])$.   The claim is now immediate since $W_i$ is the image of $\psi_i(U[\ell^{i+1}])$ modulo $\ell$.  

Take any $i\geq 1$.  By (\ref{E:G filtration}) and the above claim, we have 
\begin{align*}
|\rho_{A,\ell^i}(\Gal_L)|=|H(\ell^i)| \ll_A  |H(\ell)| \cdot \ell^{\sum_{j=1}^{i-1} \dim \gG_{W_j}}.
\end{align*}
The group $\Gal_L$ fixes $U[\ell] \subseteq A(L)$, so $H(\ell)$ fixes each element of $W_0$.  Therefore, $H(\ell)\subseteq G_{W_0}(\FF_\ell)$, where $G:=\calG_{\FF_\ell}$.  By Proposition~\ref{P:HW count}, we have $|H(\ell)|\leq |G_{W_0}(\FF_\ell)| \ll_A \ell^{\dim G_{W_0}}$.  Therefore,
\[
|\rho_{A,\ell^i}(\Gal_L)| \ll_A  \ell^{\dim G_{W_0}} \cdot \ell^{\sum_{j=1}^{i-1} \dim \gG_{W_j}}.
\]
Since $G$ has Lie algebra $\gG$ by Lemma~\ref{L:Lie algebra gG}, $G_{W_0}$ will have Lie algebra $\gG_{W_0}$ and hence $\dim G_{W_0}\leq  \dim \gG_{W_0}$. 
Therefore, $|\rho_{A,\ell^i}(\Gal_L)| \ll_A   \ell^{\sum_{j=0}^{i-1} \dim \gG_{W_j}}$.
\end{proof}

Take any $i\geq 1$ large enough so that $W_j=0$ for all $j\geq i$.  By Lemmas~\ref{L:elli image size} and \ref{L:lie sum}, we have
\begin{align*}
[L:K] \geq [\rho_{A,\ell^i}(\Gal_K):\rho_{A,\ell^i}(\Gal_L)] \gg_A \ell^{\sum_{j=0}^{i-1}(\dim \gG - \dim \gG_{W_j})}.
\end{align*}
With notation as in \S\ref{SS:codimension bounds lie algebras}, we have 
\[
[L:K] \gg_A \ell^{\sum_{j=0}^{i-1} \alpha(\gG) \dim W_j} = \left( {\prod}_{j=0}^{i-1} |W_j| \right)^{\alpha(\gG)} = |U|^{\alpha(\gG)}.
\]
We have $\alpha(\gG)=\alpha((\calG_{A,\ell})_{\FF_\ell}) = \alpha(G_{A,\ell}) = \gamma_{A,\ell}^{-1}$ for $\ell$ large enough by Proposition~\ref{L:uniformness of alpha}.  The numerator and denominator of $\gamma_{A,\ell}$ can be bounded in terms of the dimension of $A$, so we have $|A(L)[\ell^\infty]|= |U| \ll_A [L:K]^{\gamma_{A,\ell}}$ for $\ell \gg_A 1$.

\section{Prime power version: small primes}
\label{S: small primes}

In this section, we prove the following version of Theorem \ref{T:ell-adic version, large ell}, valid for all primes $\ell$.
\begin{thm}\label{T: small primes}
For every nonzero abelian variety $A$ of dimension $g$ over a number field $K$ and every prime $\ell$, we have \[ |A(L)[\ell^\infty]| \ll_{A,\ell}   [L : K]^{\gamma_{A,\ell}} \] for all finite extensions $L/K$.
\end{thm}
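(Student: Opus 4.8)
The plan is to run an $\ell$-adic Lie-theoretic version of the filtration argument of \S\ref{S:prime power version}, but carried out with the \emph{characteristic-zero} Lie algebra $\mathfrak{g}:=\Lie G_{A,\ell}\subseteq\gl(V_\ell(A))$ rather than with its reduction mod $\ell$ (whose slope can be strictly smaller than $\alpha(\mathfrak g)$ when $\ell$ is small, which is exactly why the argument of \S\ref{S:prime power version} required $\ell\gg_A 1$). First I would observe, just as in the opening of \S\ref{SS:proof of large ell}, that replacing $K$ by a finite extension (and each $L$ by its compositum with it) changes $[L:K]$ only by a bounded factor and leaves $\gamma_{A,\ell}$ unchanged; so by Proposition~\ref{P:connected} I may assume $G_{A,\ell}$ is connected, and then by standard $\ell$-adic Lie theory I may further assume $\Gamma:=\rho_{A,\ell^\infty}(\Gal_K)$ is a uniform pro-$\ell$ group of the form $\Gamma=\exp(\ell^c\mathcal L)$ for some $c\ge 2$, where $T:=T_\ell(A)$ is identified with $\ZZ_\ell^{2g}$ and $\mathcal L:=\mathfrak{g}\cap M_{2g}(\ZZ_\ell)$ is a $\ZZ_\ell$-Lie lattice with $\mathcal L\otimes_{\ZZ_\ell}\QQ_\ell=\mathfrak{g}$. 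Since $G_{A,\ell}$ is connected reductive and $V_\ell(A)$ is a semisimple representation of it, Proposition~\ref{P:newer approach}(\ref{P:newer approach i}) gives $\alpha(\mathfrak g)=\alpha(G_{A,\ell})=1/\gamma_{A,\ell}$; in particular the slope inequality
\[
\dim\mathfrak{g}-\dim\mathfrak{g}_W\ \ge\ \alpha(\mathfrak g)\,\dim W
\]
holds for every subspace $W\subseteq V_\ell(A)$. It then suffices to prove $[L:K]\gg_{A,\ell}|U|^{\alpha(\mathfrak g)}$, where $U:=A(L)[\ell^\infty]$ is finite by the Mordell--Weil theorem.

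\textbf{Linearising the condition \enquote{$\Gamma_L$ fixes $U$}.} Let $a$ be the exponent of $U$ and let $\tilde M\subseteq T$ be the preimage of $U$ under $T\twoheadrightarrow A[\ell^a]$, so that $\ell^aT\subseteq\tilde M$ and $\tilde M/\ell^aT=U$. Every $g\in\Gamma_L:=\rho_{A,\ell^\infty}(\Gal_L)$ acts trivially on $U$, i.e. $g-I$ lies in the set
\[
\mathcal N:=\{\,X\in M_{2g}(\ZZ_\ell):X\tilde M\subseteq\ell^aT\,\},
\]
which is a \emph{left ideal} of $M_{2g}(\ZZ_\ell)$. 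Hence $(g-I)^k\in\mathcal N$ for all $k\ge1$, and (using $c\ge2$) the series $\log g=\sum_{k\ge1}(-1)^{k-1}(g-I)^k/k$ converges in the closed set $\mathcal N$, so $\log(\Gamma_L)\subseteq\ell^c\mathcal L\cap\mathcal N$. Now $\exp(\ell^c\mathcal L\cap\mathcal N)$ is an open subgroup of $\Gamma$ (it is a subgroup because $\ell^c\mathcal L\cap\mathcal N$ is a Lie subalgebra lying deep enough in $M_{2g}(\ZZ_\ell)$ for the Baker--Campbell--Hausdorff series to converge in it), it contains $\Gamma_L$, and a short computation with covolumes of lattices gives $[\Gamma:\exp(\ell^c\mathcal L\cap\mathcal N)]=[\ell^c\mathcal L:\ell^c\mathcal L\cap\mathcal N]$. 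Therefore
\[
[L:K]\ \ge\ [\Gamma:\Gamma_L]\ \ge\ [\ell^c\mathcal L:\ell^c\mathcal L\cap\mathcal N].
\]

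\textbf{The index estimate — the main obstacle.} What remains, and where the real work lies, is to prove $[\ell^c\mathcal L:\ell^c\mathcal L\cap\mathcal N]\gg_{A,\ell}|U|^{\alpha(\mathfrak g)}$. Choosing a $\ZZ_\ell$-basis of $T$ adapted to $\tilde M$ and letting $a_1,\dots,a_{2g}$ be the corresponding elementary divisors of $U$ (so $\sum_j a_j=\log_\ell|U|$), one rewrites $\mathcal N=\{X: X(W^{(t)}\cap T)\subseteq\ell^tT\text{ for all }t\ge1\}$ for a \emph{decreasing flag} $W^{(1)}\supseteq W^{(2)}\supseteq\cdots$ of subspaces of $V_\ell(A)$, where $W^{(t)}$ is the span of the basis vectors with $a_j\ge t$; note $\sum_{t\ge1}\dim W^{(t)}=\log_\ell|U|$. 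I would filter $\ell^c\mathcal L\cap\mathcal N$ by imposing these conditions, grouped into the at most $2g$ blocks of consecutive levels on which the flag is constant; the telescoping product of the successive indices is $[\ell^c\mathcal L:\ell^c\mathcal L\cap\mathcal N]$. Over a block of length $m$ on which the flag equals $W$, an elementary-divisor computation (using that $\ell^c\mathcal L$ contains large scalar multiples of all of $\mathcal L$) shows the index picked up is at least $\ell^{(m-C_0)(\dim\mathfrak g-\dim\mathfrak g_W)}$, where $C_0=C_0(A,\ell)$ bounds the \enquote{density defect} of the lattice $\{Y|_{W\cap T}:Y\in\mathcal L\}$ inside its saturation. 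Two points make this go through: (i) this defect is bounded uniformly in $W$, because $W$ runs over the compact Grassmannians of $V_\ell(A)$ and $\mathcal L$ contains the homotheties; and, crucially, (ii) the flag has at most $2g$ distinct terms, so the defect $C_0$ is paid at most $2g$ times — not once per level, which would be fatal since $a$ is unbounded. Combining the blockwise bounds with the slope inequality $\dim\mathfrak g-\dim\mathfrak g_{W^{(t)}}\ge\alpha(\mathfrak g)\dim W^{(t)}$ and $\sum_t\dim W^{(t)}=\log_\ell|U|$ then yields $[\ell^c\mathcal L:\ell^c\mathcal L\cap\mathcal N]\ge\ell^{-C}|U|^{\alpha(\mathfrak g)}$ for some $C=C(A,\ell)$, completing the proof.

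I expect step (ii) — arranging that the uniform defect $C_0$ enters only boundedly often — to be the only genuinely delicate point; everything else is bookkeeping with $\ell$-adic Lie groups and lattices. By contrast, for $\ell\gg_A1$ none of this is needed, since there $\calG_{A,\ell}$ is reductive over $\ZZ_\ell$ and its mod-$\ell$ Lie algebra already has slope $\alpha(\mathfrak g)$; this is why the sharper, uniform-in-$\ell$ statement of Theorem~\ref{T:ell-adic version, large ell} can be proved by the simpler filtration of \S\ref{S:prime power version}. Together, Theorems~\ref{T:ell-adic version, large ell} and~\ref{T: small primes} give $|A(L)[\ell^\infty]|\ll_{A,\ell}[L:K]^{\gamma_{A,\ell}}$ for every prime $\ell$.
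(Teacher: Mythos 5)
Your overall strategy — linearise via $\log$ to replace $\Gamma_L \subseteq \Gamma$ by a lattice $\ell^c\mathcal L\cap\mathcal N\subseteq\ell^c\mathcal L$ and then estimate the index blockwise — has the right flavor, and the reduction to uniform pro-$\ell$ groups and the ideal-theoretic argument for $\log(\Gamma_L)\subseteq\ell^c\mathcal L\cap\mathcal N$ are sound. But your claim (i), that the density defect of $\{Y|_{W\cap T}:Y\in\mathcal L\}$ inside its saturation is bounded by a constant $C_0(A,\ell)$ uniformly in $W$, is false, and this is where the proposal breaks. Take $E$ a CM elliptic curve and $\ell$ split in the CM field; after a finite base change and a suitable choice of $\Z_\ell$-basis of $T_\ell(E)$, $G^\circ_{E,\ell}$ is the split diagonal torus in $\GL_2$ and $\mathcal L=\{\mathrm{diag}(x,y):x,y\in\Z_\ell\}$. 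For the primitive vector $w=(1,\ell^n)$ and $W=\QQ_\ell w$, one has $\mathfrak g_W=0$ (so $\dim\mathfrak g-\dim\mathfrak g_W=2$), yet the image of $\mathcal L$ in $T_\ell(E)$ under $Y\mapsto Yw$ is $\Z_\ell\times\ell^n\Z_\ell$, of index $\ell^n$ in its saturation $T_\ell(E)$. Thus the defect equals $n$, which is unbounded in $W$; the homotheties and the compactness of the Grassmannian do not help. Concretely, for a cyclic $U$ of order $\ell^a$ generated by $w\bmod\ell^a$ with $n=a-1$, your blockwise estimate would give $[\Gamma:\Gamma_L]\gg\ell^{(a-(a-1))\cdot 2}=\ell^2$, while the correct order of magnitude is $\ell^{a}$ (which is exactly what the theorem needs, since $\gamma_{E,\ell}=1$ here). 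So the blockwise bound with the \emph{actual} codimension $\dim\mathfrak g-\dim\mathfrak g_W$ and a uniformly bounded defect cannot both hold: your inequality is too optimistic precisely when $W$ is $\ell$-adically close to a subspace with larger stabilizer.

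The paper's proof circumvents this by giving up on the sharp exponent $\dim\mathfrak g-\dim\mathfrak g_W$ and instead using the worst case $d_r(G):=\max_{\dim W=r}\dim G_W$ (Proposition~\ref{propkey}), which is all that the slope inequality \eqref{eqpartslope} requires; in the example above this replaces the exponent $2$ per step by $\dim G-d_1(G)=1$, and then the per-step bound really does hold up to a uniform constant. The genuine content is the uniformity in $\Wcal$ of that constant, which the paper obtains not by compactness of the Grassmannian alone, but by packaging the fibers $\varepsilon^*\Omega^1_{\Fix_\Gcal(\Wcal)/\Z_\ell}$ into a single coherent sheaf over the Grassmannian (Proposition~\ref{prop:er}) and proving a uniform-annihilation statement (Lemma~\ref{lemma:SheafOnAffineSpaceZell} and Corollary~\ref{cor:SheafUniformlyAnnihilated}); this is the step your proposal gestures at but does not supply, and, as the example shows, it is not merely bookkeeping. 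Your step (ii) — that the number of blocks is at most $2g$ — is fine, but it is (i) that is the real obstacle.
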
 

Since Theorem \ref{T:ell-adic version, large ell} has an implicit constant depending only on $A$ for all sufficiently large primes $\ell$, the constant can actually be taken to be independent of $\ell$ by Theorem~\ref{T: small primes}. In fact, the two results are complementary: the proof of Theorem \ref{T:ell-adic version, large ell} relies on remarking that, for $\ell$ sufficiently large, we only need to consider a finite family of pointwise stabilizers, which are all smooth (again, when $\ell$ is large), as we see in the proof of Lemma \ref{L:uniformness of alpha}. On the other hand, for small $\ell$ the group schemes $\Gcal_{A,\ell}$ can lack several desirable properties (including smoothness or reductivity), which explains why we need to adopt a more general point of view. In fact, as can be seen for example from Proposition \ref{propmain}, the proof of Theorem \ref{T: small primes} has little to do with $\ell$-adic monodromy groups and more with general linear group schemes over $\Z_\ell$. We remark that the question of smoothness (or flatness) for stabilizers in reductive group schemes is a current topic of research: see for example \cite{Cotner22}, where -- even under strong assumptions on the fibrewise dimensions of centralizers -- the proofs are quite delicate. 

Finally, we note that the proof of Theorem \ref{T: small primes} that we give below can be made uniform in $\ell$ (at the cost of more technical statements): this can be achieved easily by assuming the Mumford--Tate conjecture, and also unconditionally, with some more work, by relying on the finiteness statement given by Proposition \ref{P:redux finiteness}.

 \subsection{Grassmannians and stabilizers}\label{sect:NotationSchemesStabilisers}
 
In order to prove Theorem \ref{T: small primes} we need to control the behaviour of the subgroups of $\Gcal_{A, \ell}$ that arise as pointwise stabilizers of certain (saturated) submodules $\Wcal$ of $T_\ell A$. As it turns out, the questions we are interested in are more easily studied in families, by letting $\Wcal$ vary among all saturated submodules of a given rank. This can be achieved by considering a suitable universal stabilizer group scheme over the Grassmannian. We now introduce the necessary definitions, starting with the Grassmannian itself; for its basic properties, we refer the reader to \cite[Section 8.4]{GortzWedhorn}.

 \begin{definition}[Grassmannian]
Let $n$ be a positive integer and fix $d \in \{1, \ldots, n\}$. The \defi{Grassmannian of $d$-dimensional submodules in $n$-dimensional space}, denoted by $\Grassdn$, is the scheme which represents the contravariant functor in schemes
\[
 S \mt \{ \Ocal_S\text{-submodule } \Ucal \subseteq \Ocal_S^n \, \bigm\vert \, \Ocal_S^n / \Ucal \textrm{ is a locally free $\Ocal_S$-module of rank } n-d\}.
\]
 \end{definition}

\begin{remark}\label{rmk:OpenCoverGrassmannian}
The scheme $\Grassdn$ has a finite open covering by schemes isomorphic to $\mathbb{A}^{d(n-d)}$, see \cite{GortzWedhorn}*{Corollary 8.15}. Moreover, for every PID $R$, $\Grassdn(R)$ is the set of saturated free submodules of $R^n$ of rank $d$.
\end{remark}

We can now define the desired stabilizer scheme over the Grassmannian:
 \begin{prop}\label{P: Fix exists}
The functor on algebras 
 \[
  R \mt \{ (\varphi,\Wcal) \in \GL_n(R) \times \Grassdn(R) \, \bigm\vert \, \varphi_{|\Wcal} \textrm{ is the identity on }\Wcal \}
 \]
 is represented by a subscheme of $\GL_n \times \Grassdn$.
 \end{prop}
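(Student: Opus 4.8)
The plan is to realize the functor as a \emph{closed} subscheme of $X:=\GL_n\times\Grassdn$, cut out by the vanishing of an explicit morphism of vector bundles. Let $p\colon X\to\Grassdn$ and $q\colon X\to\GL_n$ be the two projections. Pulling back the tautological rank-$d$ subbundle $\mathcal{S}\subseteq\Ocal_{\Grassdn}^{\,n}$ along $p$ gives an inclusion $\iota\colon p^{*}\mathcal{S}\hookrightarrow\Ocal_X^{n}$ of finite locally free $\Ocal_X$-modules, and pulling back the tautological automorphism of $\Ocal_{\GL_n}^{n}$ (the matrix of coordinate functions) along $q$ gives an automorphism $\Phi$ of $\Ocal_X^{n}$. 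I would then form the morphism
\[
\psi:=(\Phi-\mathrm{id})\circ\iota\ \colon\ p^{*}\mathcal{S}\longrightarrow\Ocal_X^{n}
\]
and let $Z\subseteq X$ be its zero scheme: since $p^{*}\mathcal{S}$ and $\Ocal_X^{n}$ are finite locally free, $\psi$ is a global section of the locally free sheaf $\mathcal{H}om(p^{*}\mathcal{S},\Ocal_X^{n})$, and $Z$ is the well-defined closed subscheme on which this section vanishes. The point is that for a point $(\varphi,\Wcal)$ of $X$ with values in a ring $R$, the pullback of $\psi$ is the $R$-linear map $\Wcal\to R^{n}$, $w\mapsto\varphi w-w$ (using that the pullback of $\mathcal{S}$ along the classifying map of $\Wcal$ is $\Wcal$ itself, and that the pullback of $\Phi$ along $\varphi$ is multiplication by $\varphi$), which is identically zero exactly when $\varphi_{|\Wcal}$ is the identity on $\Wcal$. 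So $Z$ should represent the functor.

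To pin this down concretely I would work on the standard affine charts of the Grassmannian from Remark~\ref{rmk:OpenCoverGrassmannian}. For each $d$-element subset $S\subseteq\{1,\dots,n\}$, on the chart $U_S\cong\mathbb{A}^{d(n-d)}$ the universal submodule is, after permuting the $n$ basis vectors so that $S$ comes first, the column span of $\binom{I_d}{M_S}$, where $M_S$ is the $(n-d)\times d$ matrix of coordinate functions on $U_S$. Over $\GL_n\times U_S$, writing $\varphi$ for the generic invertible matrix, the condition $\varphi_{|\Wcal}=\mathrm{id}$ becomes the matrix equation $\varphi\binom{I_d}{M_S}=\binom{I_d}{M_S}$, that is, the simultaneous vanishing of the $nd$ entries of $(\varphi-I)\binom{I_d}{M_S}$, all of which are regular functions; this describes a closed subscheme $Z_S\subseteq\GL_n\times U_S$, which is precisely $Z\cap p^{-1}(U_S)$ in the explicit trivialization of $p^{*}\mathcal{S}$ over $U_S$. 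Because the condition ``$\varphi$ restricts to the identity on the submodule $\Wcal$'' refers only to $\varphi$ and $\Wcal$ and not to any choice of generators of $\Wcal$, the subschemes $Z_S$ and $Z_{S'}$ restrict to the same subscheme of $\GL_n\times(U_S\cap U_{S'})$, so they glue to the closed subscheme $Z$ of $X$.

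Finally I would verify that $Z$ represents the functor. An $R$-valued point of $X$ is a pair $(\varphi,\Wcal)$ with $\varphi\in\GL_n(R)$ and $\Wcal\subseteq R^{n}$ a submodule such that $R^{n}/\Wcal$ is locally free of rank $n-d$; such a $\Wcal$ is then a direct summand of $R^{n}$, hence finite locally free of rank $d$, so the map $\Wcal\to R^{n}$, $w\mapsto\varphi w-w$ makes sense and its vanishing may be checked locally on $\Spec R$. The opens $p^{-1}(U_S)$ cover $X$, so their preimages cover $\Spec R$, and over the piece landing in $U_S$ the point factors through $Z$ iff it factors through $Z_S$ iff $\varphi_{|\Wcal}=\mathrm{id}$ there; hence $(\varphi,\Wcal)$ factors through $Z$ iff $\varphi_{|\Wcal}=\mathrm{id}$, which is exactly the condition defining the functor. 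This proves the proposition, in fact with ``subscheme'' improved to ``closed subscheme''. I do not expect a genuine obstacle here; the only point requiring a little care is the passage from the chartwise matrix descriptions to a statement valid over an arbitrary base ring $R$, i.e.\ checking that the local closed conditions are intrinsic enough to glue and to be tested after localization (equivalently, that the zero scheme of the bundle map $\psi$ has the stated universal property).
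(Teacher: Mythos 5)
Your proof is correct and takes essentially the same route as the paper: both work with the standard affine cover of the Grassmannian from \cite[Corollary 8.15]{GortzWedhorn} and write the condition $\varphi|_{\Wcal}=\mathrm{id}$ as the vanishing of the entries of $(\varphi-I)\binom{I_d}{M_S}$ on each chart. Your preliminary reformulation as the zero scheme of the bundle map $\psi=(\Phi-\mathrm{id})\circ\iota$ is a nice touch that makes the gluing of the chartwise subschemes automatic (the paper simply asserts that the equations glue) and records that the representing object is in fact a \emph{closed} subscheme, but it is a refinement of the same argument rather than a genuinely different proof.
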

 \begin{proof}
Consider the canonical open covering of $\Grassdn$ by copies of $\mathbb{A}^{d(n-d)}$ given in \cite{GortzWedhorn}*{Corollary 8.15}. On each such affine piece, the condition that $\varphi$ be the identity on $\Wcal$ amounts to a finite number of equations involving the generators of $\Wcal$, which in turn may be expressed in terms of the coordinates of $\mathbb{A}^{d(n-d)}$. These equations glue to give the desired subscheme.
 \end{proof}
 
\begin{definition}
We denote by $\Fix$ the scheme representing the functor of Proposition \ref{P: Fix exists}. Explicitly, $\Fix$ is a subgroup scheme of $\GL_{n,\Grassdn}$ with the following property: for every ring $R$ and every $\Wcal \in \Grassdn(R)$, the pullback group scheme of $\Fix$ by $\Wcal : \Spec R \ra \Grassdn$, denoted by $\Fix(\Wcal)$, satisfies
\[
 \Fix(\Wcal)(R) = \{ \varphi \in \GL_n(R) \, \bigm\vert \, \varphi_{|\Wcal} \textrm{ is the identity on }\Wcal\}.
\]
\end{definition}

Finally, we introduce the following definition for arbitrary linear algebraic groups:

\begin{definition}
Let $R_0$ be a ring and let $\Gcal$ be a linear algebraic subgroup of $\GL_{n, R_0}$. We denote by $\Fix_\Gcal$ the subgroup scheme of $\Gcal_{\Grass_{d,n,R_0}}$ given by the (scheme-theoretic) intersection of $\Gcal_{\Grass_{d,n,R_0}}$ with $\Fix_{R_0}$ inside $\GL_{n, \Grass_{d, n, R_0}}$.   
\end{definition}

By definition, for every $R_0$-algebra $R$ and every submodule $\Wcal$ of $R^n$ such that $R^n/\Wcal$ is locally free of rank $n-d$ we have 
\[
 \Fix_\Gcal(\Wcal)(R) = \{ \varphi \in \Gcal(R) \, | \, \varphi_{|\Wcal}  \textrm{ is the identity on }\Wcal\}.
\]
We thus recover the definition given in \S\ref{SS:notation}.
As already pointed out in that section, when $R_0=k$ is a field, $G$ is an algebraic subgroup of $\GL_{n, k}$ and $W$ is a subspace of $k^n$, the group $\Fix_G(W)$ is simply the group $G_W$.
When $R$ is a PID, the condition on $\Wcal$ amounts to saying that $\Wcal$ is a saturated submodule of $R^n$.

Our main objective in this section is to understand the lack of smoothness of groups of the form $\Fix_\Gcal(\Wcal)$. Notice that, even when $\Gcal$ is smooth, $\Fix_\Gcal(\Wcal)$ can easily fail to be smooth (the problem usually being its lack of flatness).

\subsection{Reduction to point-counting on group schemes}\label{ss: reduction to point counting}

We will deduce Theorem \ref{T: small primes} from the group-theoretic statement below, whose proof will occupy the rest of the section. 

In order to ease the notation, for any scheme $X$ over $\Z_\ell$ we write $X(\ell^i) := X(\Z/\ell^i \Z)$. We also write $\pi_i : X(\Z_\ell) \rightarrow X(\ell^i)$ and $\pi_{j,i} : X(\ell^j) \ra X(\ell^i)$ for the natural reduction maps modulo $\ell^i$, for any $0 \leq i \leq j$. Notice that $X(\ell^i)$ does not have the same meaning here as in \S \ref{S:prime power version}.

\begin{prop}
	\label{propmain}
Let $\Gcal \subseteq \GL_{n,\Z_\ell}$ be a linear group scheme such that $G = \Gcal_{\mathbb{Q}_\ell}$ has finite slope. Then, there is a positive constant $C(\Gcal)$ such that for every $m \geq 1$ and every subgroup $H \subseteq (\Z/\ell^m \Z)^n = \A^n(\ell^m)$ we have
 \[
  [\Gcal(\Z_\ell) : \fix_{\Gcal(\Z_\ell)}(H)] \geq \frac{1}{C(\Gcal)} |H|^{1 / \gamma_G},
 \]
where
\[
 \fix_{\Gcal(\Z_\ell)}(H) := \{ M \in \Gcal(\Z_\ell) \, | \, \pi_m(M) \textrm{ is the identity on }H\}.
\]
\end{prop}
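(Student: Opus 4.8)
The plan is to reduce the estimate to a single congruence level and then iterate. First I would replace $H$ by the larger group $\widetilde H$ obtained by pulling back $H$ along $\pi_{m,m-1}$ and back again, so that it suffices to treat the case where $H$ is obtained by successive extensions by subspaces of $\FF_\ell^n$; concretely, write $\pi_{i,i-1}(H) =: H_{i-1}$ and $W_i := \ker(H_i \to H_{i-1})$, viewed as an $\FF_\ell$-subspace of $\FF_\ell^n$ (via the identification $\ell^{i-1}\FF_\ell^n \cong \FF_\ell^n$), for $i = 1, \ldots, m$. The point is the telescoping identity $|H| = \prod_{i=1}^m |W_i| = \ell^{\sum_i \dim W_i}$, so it is enough to bound the index $[\Gcal(\ZZ_\ell) : \fix_{\Gcal(\ZZ_\ell)}(H)]$ from below by $\ell^{(1/\gamma_G)\sum_i \dim W_i}$, up to a constant depending only on $\Gcal$.

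Next I would set up a filtration argument on the group side, parallel to \S\ref{SS:lie algebras and filtrations}. Let $\Gamma := \Gcal(\ZZ_\ell)$ and, for $i \geq 1$, let $\Gamma_i := \ker(\Gamma \to \Gcal(\ell^i))$; the map $I + \ell^i B \mapsto B \bmod \ell$ identifies $\Gamma_i/\Gamma_{i+1}$ with an $\FF_\ell$-subspace $\gG_i$ of $M_n(\FF_\ell)$ (the reduction mod $\ell$ of the ``integral Lie algebra'' $L(\ZZ_\ell) = \Lie(\Gcal_{\QQ_\ell}) \cap M_n(\ZZ_\ell)$, intersected appropriately — here I should be careful because $\Gcal$ need not be smooth, so I only get $\gG_i \subseteq L(\FF_\ell)$ rather than equality, but a lower bound on the index is all that is needed and going the easy direction I can instead work with the Zariski closure or directly with $L(\ZZ_\ell)/\ell$). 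The key observation is that if $M = I + \ell^{i-1}B \in \fix_{\Gamma}(H)$ then $M$ fixes $W_i$, i.e. $B W_i = 0$ in $\FF_\ell^n$, so the image of $\fix_\Gamma(H) \cap \Gamma_{i-1}$ in $\Gamma_{i-1}/\Gamma_i$ lands inside $(\gG_{i-1})_{W_i}$, the subspace of elements killing $W_i$. Multiplying through the filtration (as in \eqref{E:G filtration}) gives
\[
[\Gamma : \fix_\Gamma(H)] \gg_\Gcal \ell^{\sum_{i=1}^{m}\bigl(\dim \gG_{i-1} - \dim (\gG_{i-1})_{W_i}\bigr)}.
\]
Here I need the codimensions $\dim \gG - \dim \gG_W$ to be uniformly comparable across the (finitely many, since $\Gcal$ is fixed) relevant Lie algebras, and the base case $i=1$ needs the analogous statement for the finite group $\Gcal(\FF_\ell)$ or $\Gamma/\Gamma_1$ acting with a pointwise-fixed subspace $W_1$, which is where the hypothesis that $G$ has finite slope enters: finite slope means precisely that $\dim G - \dim G_W \geq \dim W / \gamma_G$ for every nonzero $W$, and this descends to the integral Lie algebra $L(\ZZ_\ell)/\ell$ and to $\Gcal(\FF_\ell)$ up to bounded error via dimension-counting (Proposition~\ref{P:HW count}-type estimates, or elementary bounds on $|\Gcal(\FF_\ell)|$).

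Combining, each summand is $\geq \alpha \cdot \dim W_i$ where $\alpha = 1/\gamma_G$ is the common slope (after passing to $\FF_\ell$ or a finite extension, using Proposition~\ref{P:newer approach}\eqref{P:newer approach iii} to see that $\alpha$ is unchanged by base change), so
\[
[\Gamma : \fix_\Gamma(H)] \gg_\Gcal \ell^{\alpha \sum_i \dim W_i} = |H|^{1/\gamma_G},
\]
which is the claim. The main obstacle I anticipate is the lack of smoothness of $\Gcal$ and of the fixers $\Fix_\Gcal(\Wcal)$ at $\ell$: I cannot simply quote $\dim \gG_W = \dim G_W$ or that reduction mod $\ell$ of $L(\ZZ_\ell)$ equals $\Lie(\Gcal_{\FF_\ell})$. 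The way around this is that the inequality I want only needs a \emph{lower} bound on the index, i.e. an \emph{upper} bound on $|\fix_\Gamma(H)|$, so it suffices to bound $\dim (\gG_{i-1})_{W_i}$ from \emph{above} by something like $\dim L(\FF_\ell) - \dim W_i/\gamma_G$; and $\dim L(\FF_\ell)$ equals $\dim G$ exactly (the generic and special fibers of the scheme-theoretic closure have the same dimension), with the slope inequality for $L(\FF_\ell)$ following from the one for $L(\QQ_\ell)$ by a specialization/semicontinuity argument — concretely, $\dim L(\FF_\ell)_W \le \dim L(\QQ_\ell)_{W'}$ for a suitable lift $W'$, or one works over the Grassmannian as the surrounding subsection sets up, using that $\Fix_\Gcal$ is a scheme of finite type and bounding the dimensions of its fibers uniformly. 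I would carry this out by combining the slope bound on $G$ with the fact that there are only finitely many orbit types of subspaces to consider once $\Gcal$ is fixed, absorbing everything into the constant $C(\Gcal)$.
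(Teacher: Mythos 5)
Your overall strategy is reasonable and genuinely different from the paper's: you try to run the filtration argument of \S\ref{SS:proof of large ell} directly, using the jumps $W_i := \ker(H_i \to H_{i-1})$, whereas the paper first puts $H$ into Smith normal form $H = \bigoplus_{k=1}^r \langle \ell^{m-m_k}e_k\rangle$ and then does a dévissage through the nested saturated sublattices $\Wcal_j = \bigoplus_{k \le j}\Z_\ell e_k$, working over the Grassmannian. However, your key observation is false: ``if $M = I + \ell^{i-1}B \in \fix_\Gamma(H)$ then $BW_i = 0$.'' Take $\Gcal = \GL_{2,\Z_\ell}$, $m=2$, $H = \langle (1,0),(0,\ell)\rangle \subseteq (\Z/\ell^2\Z)^2$. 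Then $W_2 = \ker(H_2 \to H_1)$ is all of $\FF_\ell^2$, so $(\gG_1)_{W_2} = 0$. But $M = I + \ell\left(\begin{smallmatrix}0&1\\0&0\end{smallmatrix}\right)$ lies in $\fix_\Gamma(H)\cap\Gamma_1$ (it fixes $(1,0)$ exactly and $(0,\ell)$ up to $\ell^2$) yet $B = \left(\begin{smallmatrix}0&1\\0&0\end{smallmatrix}\right) \neq 0$. The issue: for $w \in W_i$ one has $\ell^{i-1}\mid w$, so $\ell^{i-1}Bw = \ell^{2(i-1)}B\bar w$, which is automatically $0 \bmod \ell^m$ once $2(i-1)\ge m$; the constraint on $B$ at level $i-1$ actually comes from the elements of $H$ of \emph{small} $\ell$-adic valuation, and the subspace that $B$ is forced to kill is $U_i := \langle e_k : m_k \ge i\rangle$, not $W_i$. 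One checks $U_i = W_{m+1-i}$, so the multiset $\{W_1,\ldots,W_m\}$ equals $\{U_1,\ldots,U_m\}$ and the total exponent $\sum_i(\dim\gG - \dim\gG_{W_i})$ you write down coincides with the correct one by this index reversal; but as stated your intermediate step is wrong, and the coincidence is not obvious and is not what you argue (it also relies on $\gG_{i-1}$ being essentially independent of $i$, which is exactly what fails without smoothness).

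The other gap concerns exactly that non-smoothness. You correctly identify that the fixers $\Fix_\Gcal(\Wcal)$ at $\ell$ need not be smooth or flat, and you gesture at ``a specialization/semicontinuity argument'' and at ``bounding the dimensions of the fibers over the Grassmannian.'' But a fiber-dimension bound alone does not control $|\Fix_\Gcal(\Wcal)(\Z/\ell^m\Z)|$ for all $m$ with a constant that is \emph{uniform in $\Wcal$}: this is precisely the difficulty the paper isolates in Proposition~\ref{propkey}, and the mechanism it uses is not semicontinuity of dimension but a uniform annihilation statement (Proposition~\ref{prop:er}, via the coherent sheaf $\bigwedge^{d_r(G)+1}\varepsilon^*\Omega^1_{\Fix_\Gcal/\Grass}$ and Corollary~\ref{cor:SheafUniformlyAnnihilated}), followed by the explicit point-count of Lemma~\ref{lem:MainCountingLemma} and the Kähler-differential bookkeeping of Lemma~\ref{lem:Kaehler}. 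This is the genuinely new input the paper needs and your sketch does not contain a substitute for it. Thus, even after repairing the index swap above, your plan has a real hole at the step where you pass from the slope inequality over $\Q_\ell$ to a $\Wcal$-uniform bound at finite level.
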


\begin{remark}
	\begin{romanenum}
		\item It would be better to  state the result only in terms of $\Z/\ell^m \Z$-points, but this is not always possible when $\Gcal$ is not smooth, as the proof will make clear, hence this somewhat inelegant inequality. In the smooth case, $\pi_m : \Gcal(\Z_\ell) \ra \Gcal (\ell^m)$ is surjective, hence it is enough to prove that 
		\[
		[\Gcal(\ell^m) : \{ M \in \Gcal(\ell^m) \, | \, M \textrm{ is the identity on }H\}]  \geq   \frac{1}{C(\Gcal)} |H|^{1 / \gamma_G}.
		\]
		\item The subgroup \enquote{$\fix$} does not have a scheme-theoretic interpretation in itself, as $H$ is not always a direct factor of $(\Z/\ell^m \Z)^n$. We will see that when $H$ is a direct factor, it does relate to the definition of $\Fix$ as above, and in fact the proof works by reduction to this case.
		
	\end{romanenum}

\end{remark}

We begin by showing that Proposition \ref{propmain} implies Theorem \ref{T: small primes}.

\begin{proof}[Proof of Proposition \ref{propmain} $\Rightarrow$ Theorem \ref{T: small primes}]
Up to replacing $K$ by a finite extension we can and do assume that $\mathcal{G}_{A, \ell}$ is connected for all primes $\ell$. 
	Let $L/K$ be a finite extension and let $\ell$ be a prime number. Define $H := A(L)[\ell^\infty]$. By the Mordell-Weil theorem, there is an integer $m \geq 1$ such that $H \subseteq A(L)[\ell^m]$. Every $\sigma \in \Gal(\overline{K}/L)$ fixes $H$ pointwise, and hence
	\[
	\rho_{A,\ell^\infty}( \Gal(\overline{K}/L) ) \subseteq \fix_{\Gcal_{A,\ell}(\Z_\ell)}(H).
	\]
We apply Proposition \ref{propmain} to $\Gcal_{A, \ell}$ and obtain the inequality
	\[
	[\Gcal_{A,\ell}(\Z_\ell) : \fix_{\Gcal_{A,\ell}(\Z_\ell)}(H)] \gg_{A,\ell} |H|^{1/\gamma_{A,\ell}}.
	\]
Therefore, $[\Gcal_{A,\ell}(\Z_\ell) : 	\rho_{A,\ell^\infty}( \Gal(\overline{K}/L) )] \gg_{A,\ell} |H|^{1/\gamma_{A,\ell}}$.   Since $\rho_{A,\ell^\infty}(\Gal_K)$ is a finite-index subgroup of $\Gcal_{A,\ell}(\Z_\ell)$ with index bounded independent of $\ell$ by Theorem~\ref{T:finite index}, we have 
\[
[L:K] \geq [\rho_{A,\ell^\infty}(\Gal_K): \rho_{A,\ell^\infty}( \Gal(\overline{K}/L) )] \gg_{A,\ell} [\Gcal_{A,\ell}(\Z_\ell) : 	\rho_{A,\ell^\infty}( \Gal(\overline{K}/L) )] \gg_{A,\ell} |H|^{1/\gamma_{A,\ell}}.
\]
Raising both sides to the power $\gamma_{A,\ell}$ gives $|A(L)[\ell^\infty]|=|H| \ll_{A,\ell} [L:K]^{\gamma_{A,\ell}}$.
\end{proof}

Even though Proposition \ref{propmain} is not explicitly formulated in terms of $\Fix_\Gcal(\Wcal)$, we now show that it may be deduced from sufficiently strong estimates on the number of $\Z/\ell^n\Z$-points of groups of the form $\Fix_\Gcal(\Wcal)$. The precise result we need is Proposition \ref{propkey} below. To state it, we introduce a notion of partial slope. Let $V$ be an $n$-dimensional vector space over a field $k$ and let $G$ be an algebraic subgroup of $\GL_V$. For every $r=1, \ldots, n$ we define 
\begin{equation}\label{eq: dr}
 d_r(G) := \max_{\substack{W \subseteq V \\ \dim W = r}} \dim \Fix_G(W).
\end{equation}
If $G$ is of finite slope, we have
\begin{equation}
 \label{eqpartslope}
  \dim G - d_r(G) \geq r/\gamma_G.
\end{equation}
The key auxiliary result is the following.
\begin{prop}[Key estimate]
\label{propkey}
 Let $\ell$ be a prime number, $M$ a free $\Z_\ell$-module of rank $n$ and $\Gcal \subseteq \GL_{M}$ a linear group scheme whose generic fiber $G$ has finite slope. There exists a constant $C(\Gcal)$ such that for every $r =1, \ldots, n$, every saturated $\Z_\ell$-submodule $\Wcal \subseteq M$ of rank $r$ and all integers $m \geq m' \geq 0$ we have 
 \begin{equation}
 \label{eqkey}
  \left|  \Ker \left( \pi_{m,m'}: \Fix_\Gcal(\Wcal)(\ell^m) \ra \Fix_\Gcal(\Wcal)(\ell^{m'} )\right) \right| \leq C(\Gcal) \ell^{d_r(G)(m-m')}.  
 \end{equation}

\end{prop}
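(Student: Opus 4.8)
The plan is to prove the bound (\ref{eqkey}) by reducing, via a limit argument, to a statement about the special fiber $\Fix_\Gcal(\Wcal)_{\FF_\ell}$, and then bounding the size of a kernel of reduction maps on $\Z/\ell^m\Z$-points of a (not necessarily smooth) affine group scheme over $\Z_\ell$ in terms of the dimension of its special fiber. First I would fix $r$, and observe that $d_r(G)$ depends only on the generic fiber $G$ and is achieved by a \emph{fixed} finite list of subspaces (the paper already notes there are only finitely many possibilities for these dimensions). The key point is that, for any saturated $\Wcal$ of rank $r$, the generic fiber $\Fix_\Gcal(\Wcal)_{\Q_\ell} = G_{W}$ with $W = \Wcal\otimes\Q_\ell$, so $\dim \Fix_\Gcal(\Wcal)_{\Q_\ell} \le d_r(G)$; since $\Fix_\Gcal(\Wcal)$ is a closed subscheme of $\GL_{M}$, its special fiber has dimension at most its generic fiber's dimension by upper semicontinuity of fiber dimension, so $\dim \Fix_\Gcal(\Wcal)_{\FF_\ell} \le d_r(G)$ as well.

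Next I would make the point-counting estimate uniform. The kernel in (\ref{eqkey}) is the set of $M\in\Fix_\Gcal(\Wcal)(\Z/\ell^m\Z)$ congruent to the identity modulo $\ell^{m'}$; I would estimate its size by the standard filtration argument, writing any such $M$ as $I + \ell^{m'} B$ with $B\in M_n(\Z/\ell^{m-m'}\Z)$ and tracking which $B \bmod \ell$ can occur at each level. The number of such $B\bmod\ell$ that actually lift to a point of the scheme is at most the number of $\FF_\ell$-points of the tangent space (equivalently, of the special fiber's Lie algebra), which has dimension at most $\dim \Fix_\Gcal(\Wcal)_{\FF_\ell}\le d_r(G)$ — here one must be slightly careful because without smoothness the reduction maps $\Fix_\Gcal(\Wcal)(\ell^{j+1})\to\Fix_\Gcal(\Wcal)(\ell^j)$ need not be surjective, but that can only \emph{decrease} the kernel, so the bound $\ell^{d_r(G)}$ per level still holds. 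Iterating over the $m-m'$ levels gives $\ell^{d_r(G)(m-m')}$ up to the contribution of the bottom level $j=0$ to $j=1$ and the absolute error terms, which I would absorb into $C(\Gcal)$.

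The main obstacle is obtaining a \emph{single} constant $C(\Gcal)$ that works uniformly over all saturated $\Wcal$ of rank $r$ (and over all $m\ge m'\ge 0$), since there are infinitely many such submodules. This is exactly what the Grassmannian formalism of \S\ref{sect:NotationSchemesStabilisers} is for: I would view $\Fix_\Gcal \subseteq \Gcal_{\Grass_{r,n,\Z_\ell}}$ as a single finite-type affine group scheme over the (quasi-compact, finite type) base $\Grass_{r,n,\Z_\ell}$, and the individual groups $\Fix_\Gcal(\Wcal)$ as its fibers. A theory of complexity — for instance, covering $\Grass_{r,n,\Z_\ell}$ by finitely many affine charts $\AA^{r(n-r)}$ as in Remark~\ref{rmk:OpenCoverGrassmannian}, on each of which $\Fix_\Gcal$ is cut out by finitely many polynomial equations of bounded degree in bounded many variables with $\Z_\ell$-coefficients — gives uniform bounds (independent of the chosen point of the Grassmannian, hence of $\Wcal$) for both the fiber dimensions and the Bézout-type degree estimates needed to control how many $\FF_\ell$-points the tangent spaces can have. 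Concretely: the generic-fiber bound $\dim G_W \le d_r(G)$ together with upper semicontinuity gives the dimension bound $d_r(G)$ on every fiber over $\Z_\ell$ uniformly, and the bounded complexity of the defining equations gives a uniform bound on the number of $\FF_\ell$-points of each tangent space of dimension $\le d_r(G)$; combining these through the filtration argument above yields (\ref{eqkey}) with a constant depending only on $\Gcal$ (through $n$, the degrees of its defining equations, and $d_r(G)$). The inequality (\ref{eqpartslope}) relating $d_r(G)$ to $\gamma_G$ is immediate from the definition of $\gamma_G$ and will be used in the subsequent deduction of Proposition~\ref{propmain}, not in the proof of (\ref{eqkey}) itself.
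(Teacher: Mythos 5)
The central step of your argument is unsound, for two reasons that compound. First, you claim that ``its special fiber has dimension at most its generic fiber's dimension by upper semicontinuity of fiber dimension.'' This is backwards: upper semicontinuity means the fiber dimension can only \emph{jump up} under specialization. For instance, $\Spec\Z_\ell[x]/(\ell x)$ has a $0$-dimensional generic fiber but a $1$-dimensional special fiber. Since the schemes $\Fix_\Gcal(\Wcal)$ are in general not flat over $\Z_\ell$ (as the paper stresses), the special fiber $\Fix_\Gcal(\Wcal)_{\FF_\ell}$ can have dimension strictly larger than $d_r(G)$, and there is no a priori bound on this dimension uniform in $\Wcal$. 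Second, you claim the tangent space of the special fiber at the identity ``has dimension at most $\dim \Fix_\Gcal(\Wcal)_{\FF_\ell}$.'' The Zariski tangent space of a variety at a point always has dimension \emph{at least} the local dimension, with equality only at smooth points, and there is no reason for $\Fix_\Gcal(\Wcal)_{\FF_\ell}$ to be smooth. With both inequalities reversed, your per-level bound $\ell^{d_r(G)}$ on the kernel of $\Fix_\Gcal(\Wcal)(\ell^{j+1})\to\Fix_\Gcal(\Wcal)(\ell^j)$ does not follow, and the filtration argument gives nothing uniform.

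What the paper does instead is control the $\Z_\ell$-module $N = \varepsilon^*\Omega^1_{\Fix_\Gcal(\Wcal)/\Z_\ell}$, which may have $\ell$-power torsion and more than $d_r(G)$ generators (this is exactly the failure of smoothness that you try to argue away). Proposition~\ref{prop:er} shows, using the universal stabilizer over the Grassmannian and a compactness argument on $\Z_\ell$-points (Lemma~\ref{lemma:SheafOnAffineSpaceZell} / Corollary~\ref{cor:SheafUniformlyAnnihilated}), that there is a \emph{single} integer $e_r$ for which $\ell^{e_r}N$ is generated by at most $d_r(G)$ elements for every saturated $\Wcal$ of rank $r$. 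The implicit-function-type Lemma~\ref{lem:BijectionOnl^mPoints} and the Kähler-differential computation in Lemma~\ref{lem:Kaehler} then let one absorb the first $e_r$ levels of the filtration into the constant $C(\Gcal)$, which is the whole content of Lemma~\ref{lem:MainCountingLemma} and Proposition~\ref{prop:count}. You correctly identify the Grassmannian as the vehicle for uniformity, and your Bézout/complexity suggestion is in the right spirit, but the uniform statement one must extract is about the cotangent module after scaling by $\ell^{e_r}$, not about fiber dimensions, and the two dimension bounds you rely on are false.
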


\begin{remark}
 Such a result is easy to establish if we replace $C(\Gcal)$ by something possibly depending on $\Gcal$ and $\Wcal$, as $d_r(G)$ is found to be an upper bound for the dimension of the generic fiber of $\Fix_\Gcal(\Wcal)$. The main difficulty is to give bounds that are uniform in $\Wcal$.  Furthermore, if $\Fix_\Gcal(\Wcal)$ is known to be smooth of relative dimension $d$, the left-hand side of \eqref{eqkey} becomes exactly $\ell^{d(m-m')}$ (for $m' \geq 1$), so we see that all the complications come from not being able to make this assumption.
\end{remark}

Before diving into the technical lemmas required to prove this Proposition, let us first see how it implies Proposition \ref{propmain} (in short: by dévissage).
\begin{proof}[Proof of Proposition \ref{propkey} $\Rightarrow$ Proposition \ref{propmain}]
We take the notation of Proposition \ref{propmain}. Consider a subgroup $H$ of $(\Z/\ell^m \Z)^n$. There exists a basis $(e_1, \ldots, e_n)$ of $\Z_\ell^ n$ and integers $m \geq m_1 \geq \ldots \geq m_r$ with $1 \leq r \leq n$ such that 
\[
 H = \bigoplus_{i=1}^r \langle \ell^{m-m_i} \pi_m (e_i) \rangle \subseteq (\Z/\ell^m \Z)^n.
\]
In particular, notice that 
\begin{equation}
 \label{eqcardH}
 |H| = \prod_{i=1}^r \ell^{m_i}.
\end{equation}
We define, for every $j=1, \ldots, r$, 
\[
 \Wcal_j := \bigoplus_{i=1}^j \Z_\ell e_i \quad \text{ and } \quad \Gcal_j := \Fix_{\Gcal_{\Z_\ell}}(\Wcal_j).
\]
For every $M \in \Gcal(\Z_\ell)$, 
\begin{eqnarray*}
 \pi_m(M) \textrm{ fixes }H & \iff & \pi_m(M) \textrm{ fixes } \ell^{m-m_i} \pi_m(e_i) \text{ for all } 1 \leq i \leq r \\
 & \iff &  \pi_{m_i}(M) \in \Gcal_i (\ell^{m_i})  \text{ for all } 1 \leq i \leq r
\end{eqnarray*}
as the $\Wcal_j$ form a strictly increasing sequence of saturated submodules of $\Z_\ell^n$. Consequently, 
\[
 \fix_{\Gcal(\Z_\ell)}(H) = \bigcap_{i=1}^r \pi_{m_i}^{-1} (\Gcal_i (\ell^{m_i})).
\]
As $m \geq m_1 \geq m_2 \geq \ldots \geq m_r$, the index we want to bound from below is
\begin{equation}
 \label{eqmtom1}
 [\Gcal(\Z_\ell) : \fix_{\Gcal(\Z_\ell)}(H)] \geq [\pi_{m_1}(\Gcal(\Z_\ell)) : \pi_{m_1} (\fix_{\Gcal(\Z_\ell)}(H))]  = \frac{|\pi_{m_1}(\Gcal(\Z_\ell))|}{|\pi_{m_1}( \fix_{\Gcal(\Z_\ell)}(H'))| },
\end{equation}
where we have identified $H \subseteq (\ell^{m-m_1} \Z/\ell^{m} \Z)^n$ to a subgroup $H' \subseteq (\Z/\ell^{m_1} \Z)^n \cong (\ell^{m-m_1} \Z/\ell^{m} \Z)^n$. We can now assume $m=m_1$ and $H=H'$, since the right-hand side does not depend on $m$ anymore. We now bound the ratio ${|\pi_{m_1}(\Gcal(\Z_\ell))|}/{|\pi_{m_1}( \fix_{\Gcal(\Z_\ell)}(H))| }$.

For the numerator, we have a (non-effective) lower bound of the form $C_1(\Gcal) \ell^{m \dim G}$ with $C_1(\Gcal)>0$, see Lemma \ref{lemlowbound} and the comments following it.  For the denominator, we have 
\[
\left| \pi_m( \fix_{\Gcal(\Z_\ell)}(H)) \right| \leq  \left| \{ M \in \Gcal(\ell^m) \, : M \textrm{ fixes } H\} \right| \leq \left| \bigcap_{i=1}^r \pi_{m,m_i}^{-1} (\Gcal_i (\ell^{m_i})) \right|.
\]

To bound this cardinality we proceed as follows. To obtain an element of this intersection, first we choose a matrix $M_r \in \Gcal_r(\ell^{m_r})$, then we choose a lift $M_{r-1}$ of $M_r$ in $\Gcal_{r-1} (\ell^{m_{r-1}})$, and notice that two different lifts are multiplicatively related by a matrix of $\Gcal_{r-1}(\ell^{m_{r-1}})$ whose reduction modulo $m_r$ is the identity (in particular, there are at most $|\Ker \Gcal_{r-1} (\ell^{m_{r-1}}) \ra \Gcal_{r-1} (\ell^{m_{r}})|$ such lifts). The same holds until we have lifted back to $\Gcal(\ell^m)$. Setting by convention $m_{r+1} = 0$, we thus have 
\[
\left| \bigcap_{i=1}^r \pi_{m,m_i}^{-1} (\Gcal_i (\ell^{m_i})) \right| \leq \prod_{i=1}^r \left| \Ker (\Gcal_i(\ell^{m_i}) \ra \Gcal_i (\ell^{m_{i+1}}) ) \right|.
\]
Using Proposition \ref{propkey} for every $\Gcal_i$ we then get 
\begin{eqnarray*}
 \frac{|\pi_{m_1}(\Gcal(\Z_\ell))|}{|\pi_{m_1}( \fix_{\Gcal(\Z_\ell)}(H))|} & \geq & \frac{C_1(\Gcal)\ell^{m \dim G} }{C(\Gcal)^r \prod_{i=1}^r  \ell^{(m_i - m_{i+1}) d_i(G)}} \\
 & \geq & \frac{C_1(\Gcal)}{C(\Gcal)^r} \prod_{i=1}^r \ell^{(m_i - m_{i+1}) (\dim G - d_i(G))} \\
 & \geq & \frac{C_1(\Gcal)}{C(\Gcal)^r} \prod_{i=1}^r \ell^{(m_i - m_{i+1})i/\gamma_G} \\
 & = & \frac{C_1(\Gcal)}{C(\Gcal)^r} \left(\ell^{\sum_{i=1}^r m_i} \right)^{1/\gamma_G} \\
 & = & \frac{C_1(\Gcal)}{C(\Gcal)^r} |H|^{1/\gamma_G}.
\end{eqnarray*}
Here we used the equality $m = m_1 = \sum_{i=1}^r (m_i - m_{i+1})$ in the second line, Inequality \eqref{eqpartslope} in the third line, and Equation \eqref{eqcardH} in the last line. Combined with \eqref{eqmtom1}, this implies the proposition.
\end{proof}

\begin{remark}
 The last sequence of inequalities is the counterpart in the present setting of Lemma \ref{L:lie sum}.
\end{remark}

\subsection{Uniformizing the behaviour of the pointwise stabilizers}

Although the group schemes $\Fix_\Gcal(\Wcal)$ that appear in Proposition \ref{propkey} are not a priori smooth, the use of the pointwise stabilizer scheme on the Grassmannian allows us to obtain the following \enquote{uniform} statement, which suffices to prove Proposition \ref{propkey} when combined with the counting lemmas in the next section.

\begin{prop}\label{prop:er}
	Let $\ell$ be a prime number, let $M$ be a free $\Z_\ell$-module of rank $n$, and let $\Gcal \subseteq \GL_{M}$ be a linear group scheme whose generic fiber $G$ has finite slope. Denote by $\varepsilon : \Spec \Z_\ell \ra \Gcal$ the unit section.
	For every $r =1, \ldots, n$ there is an integer $e_r \geq 0$ such that for all saturated submodules $\Wcal \subseteq M$ of rank $r$, the $\Z_\ell$-module $\ell^{e_r} \cdot \varepsilon^* \Omega^1_{\Fix_\Gcal(\Wcal)/\mathbb{Z}_\ell}$ can be generated by a set of cardinality at most $d_r(G)$. Equivalently, for $N = \ell^{e_r} \cdot \varepsilon^* \Omega^1_{\Fix_\Gcal(\Wcal)/\mathbb{Z}_\ell}$, we have $\dim_{\FF_\ell} N/\ell N \leq d_r(G)$.
\end{prop}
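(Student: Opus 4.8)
\emph{Plan of proof.} The idea is to realise the family of stabilizer schemes $\Fix_\Gcal(\Wcal)$, as $\Wcal$ varies, as a single coherent sheaf on the Grassmannian and then to reduce the statement to a uniform estimate for one of its Fitting ideals. After fixing a basis of $M$ we may assume $\Gcal\subseteq \GL_{n,\ZZ_\ell}$, and by Remark~\ref{rmk:OpenCoverGrassmannian} the saturated rank‑$r$ submodules $\Wcal\subseteq M$ are exactly the $\ZZ_\ell$‑points of $\Grass_{r,n,\ZZ_\ell}$. Let $\Fix_\Gcal\subseteq \Gcal_{\Grass_{r,n,\ZZ_\ell}}$ be the universal stabilizer; it is a group scheme of finite type over $\Grass_{r,n,\ZZ_\ell}$, and I write $\mathbf e\colon \Grass_{r,n,\ZZ_\ell}\to\Fix_\Gcal$ for its unit section. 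The object to study is the coherent sheaf $\mathcal E:=\mathbf e^{*}\Omega^{1}_{\Fix_\Gcal/\Grass_{r,n,\ZZ_\ell}}$. Since $\Fix_\Gcal(\Wcal)$ is the base change of $\Fix_\Gcal$ along $\Wcal\colon\Spec\ZZ_\ell\to\Grass_{r,n,\ZZ_\ell}$, and since both differentials and unit sections commute with base change, there is a canonical identification $\Wcal^{*}\mathcal E\cong\varepsilon^{*}\Omega^{1}_{\Fix_\Gcal(\Wcal)/\ZZ_\ell}$. So I must find an integer $e_r$, independent of $\Wcal$, with $\ell^{e_r}\cdot\Wcal^{*}\mathcal E$ generated by at most $d_r(G)$ elements.

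The first step I would carry out is to rephrase this in terms of a Fitting ideal of $\mathcal E$. Fix $\Wcal$ and put $N:=\Wcal^{*}\mathcal E\cong\ZZ_\ell^{\rho}\oplus\bigoplus_{i=1}^{t}\ZZ_\ell/\ell^{c_i}$ with $c_1\ge\cdots\ge c_t\ge1$. The generic fibre of $\Wcal$ is a $\QQ_\ell$‑rational point $W$ of $\Grass_{r,n,\QQ_\ell}$, and in characteristic zero $\Fix_G(W)$ is smooth by Cartier's theorem, so $\rho=\dim_{\QQ_\ell}(N\otimes\QQ_\ell)=\dim\Fix_G(W)\le d_r(G)$ directly from the definition of $d_r(G)$. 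Because Fitting ideals commute with base change, $\operatorname{Fitt}_{d_r(G)}(N)=\Wcal^{*}\operatorname{Fitt}_{d_r(G)}(\mathcal E)$; and an elementary computation with the Fitting ideals of modules over the DVR $\ZZ_\ell$ shows that if $\ell^{e}\ZZ_\ell\subseteq\operatorname{Fitt}_{d_r(G)}(N)$ then at most $d_r(G)-\rho$ of the $c_i$ exceed $e$, hence $\ell^{e}N$ is generated by $\rho+\#\{i:c_i>e\}\le d_r(G)$ elements. (Indeed, either $t\le d_r(G)-\rho$ and $N$ itself already has $\le d_r(G)$ generators, or $\operatorname{Fitt}_{d_r(G)}(N)=\ell^{c_{d_r(G)-\rho+1}+\cdots+c_t}\ZZ_\ell$, forcing $c_{d_r(G)-\rho+1}\le e$.) Thus it is enough to bound $v_\ell\!\big(\Wcal^{*}\operatorname{Fitt}_{d_r(G)}(\mathcal E)\big)$ uniformly in $\Wcal$.

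To obtain such a bound I would combine generic flatness with Noetherian induction. Generic flatness produces a dense open $U\subseteq\Grass_{r,n,\ZZ_\ell}$ over which $\mathcal E$ is locally free; and by Noetherian induction on the closed subsets of $\Grass_{r,n,\QQ_\ell}$ — passing each time to the scheme‑theoretic closure in $\Grass_{r,n,\ZZ_\ell}$ of the part of the generic fibre not yet treated, over which $\Fix_\Gcal$ is proper, so that the sections $\Wcal$ concerned still factor through it — one reduces to sections $\Wcal$ whose generic fibre lands in the flat locus $U$. If the closed point of $\Wcal$ also lands in $U$, then $\Wcal$ factors through $U$, $\Wcal^{*}\mathcal E$ is free of rank $\dim\Fix_G(W)\le d_r(G)$, and there is nothing to prove. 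The remaining and decisive case is when $\Wcal$ meets $U$ only at its generic point: here one must bound how deeply the arc $\Wcal\colon\Spec\ZZ_\ell\to\Grass_{r,n,\ZZ_\ell}$ can probe the locus where $\mathcal E$ fails to be locally free. I would treat this by a flattening argument à la Raynaud–Gruson — blow up $\Grass_{r,n,\ZZ_\ell}$ along the non‑flat locus so that the strict transform of $\mathcal E$ becomes locally free, lift $\Wcal$ through the proper blow‑up by the valuative criterion, and control the torsion of $\Wcal^{*}\mathcal E$ via the order of contact of the lifted arc with the exceptional divisor — using that $\Grass_{r,n,\ZZ_\ell}$ is of finite type so that only finitely many such configurations occur.

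The main obstacle is exactly this last point, and it is the one flagged already in the remark following Proposition~\ref{propkey}: making the torsion estimate uniform in $\Wcal$. Everything else is routine: the base‑change compatibilities of $\Fix_\Gcal$, of $\Omega^{1}$ and of Fitting ideals, Cartier's smoothness theorem over $\QQ_\ell$, and the module bookkeeping over $\ZZ_\ell$. The point of passing to the universal stabilizer over the Grassmannian is precisely that it turns the a priori unwieldy family $\{\Fix_\Gcal(\Wcal)\}_\Wcal$ into one coherent sheaf, to which generic flatness, Noetherian induction and flattening can be applied so as to transfer the ``$\le d_r(G)$ generators'' property — which holds in the characteristic‑zero fibre by the very definition of $d_r(G)$ — to the integral model with a bound that no longer depends on $\Wcal$.
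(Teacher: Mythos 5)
Your framing coincides with the paper's: you pass to the universal stabilizer $\Fix_\Gcal$ over $\Grass_{r,n,\Z_\ell}$, form the coherent sheaf $\mathcal E = \varepsilon^{*}\Omega^1_{\Fix_\Gcal/\Grass_{r,n,\Z_\ell}}$, invoke Cartier's theorem to get $\dim_{\Q_\ell} x^*\mathcal E \leq d_r(G)$ at $\Q_\ell$-points, and reduce by base change to a uniformity statement over $\Z_\ell$-points of the Grassmannian. The paper encodes this via annihilation of $\calF:=\bigwedge^{d_r(G)+1}\mathcal E$, you via the $d_r(G)$-th Fitting ideal of $\mathcal E$; these are equivalent, and your $\Z_\ell$-module bookkeeping (that $\ell^e \in \operatorname{Fitt}_{d_r(G)}(N)$ forces $\ell^e N$ to have at most $d_r(G)$ generators) is correct.

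The gap is in how you establish the uniformity, which you yourself flag as the main obstacle. You propose generic flatness, a Noetherian induction on strata, and a Raynaud--Gruson flattening, concluding with the claim that ``$\Grass_{r,n,\Z_\ell}$ is of finite type so that only finitely many such configurations occur.'' Finite type is not the right hypothesis: it does not bound orders of contact of $\Z_\ell$-arcs with a divisor, since arcs can approach the exceptional divisor of your blow-up with arbitrarily large $\ell$-adic order. What actually produces the bound is compactness of the $\Z_\ell$-points of the Grassmannian (it is projective, hence proper, over $\Z_\ell$), combined with the continuity and everywhere-finiteness of the relevant valuation. This is exactly what the paper's Lemma~\ref{lemma:SheafOnAffineSpaceZell} and Corollary~\ref{cor:SheafUniformlyAnnihilated} carry out, and much more directly than any flattening: over each affine chart $\A^{r(n-r)}_{\Z_\ell}$ one presents $\calF$ by a matrix $R$ of polynomials; the hypothesis at $\Q_\ell$-points says that for every $x\in\Q_\ell^{r(n-r)}$ at least one maximal minor $r_i(x)$ is nonzero, so $x\mapsto\min_i v_\ell\bigl(r_i(x)\bigr)$ is finite and continuous on the compact set $\Z_\ell^{r(n-r)}$, hence bounded. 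That is the entire content of the uniform bound; no blow-ups, no flattening, no stratification are needed. Even if your Raynaud--Gruson route were written out in full, it would still have to invoke this same compactness to bound the contact order, so it does not avoid the key input, only buries it behind heavier machinery. Replace the flattening sketch by this direct minor-and-compactness argument and your proof closes.
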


\begin{remark}\label{rmk:FxVanishesAtKPoints}
For any $W \subseteq M \otimes_{\Z_\ell} \Q_\ell$ of dimension $r$, by definition of $d_r$ the group scheme $\Fix_G(W)$ has dimension at most $d_r(G)$, and being an affine algebraic group in characteristic 0 it is smooth by Cartier's theorem. It follows that $\varepsilon^* \Omega^1_{\Fix_G(W)/\Q_\ell}$ is a vector space of dimension at most $d_r(G)$. The difficulty lies in extending this statement to $\Z_\ell$, up to allowing multiplication by a nonzero element $\ell^{e_r}$.
\end{remark}

\begin{proof}
As explained in Section \ref{sect:NotationSchemesStabilisers}, for every $r=1, \ldots, n$, we can view $\Fix_\Gcal$ as a subgroup scheme of $\Gcal_{\Grass_{r, n, \Z_\ell}}$, with unit section again denoted by $\varepsilon$. We now define the sheaf 
	\[
	\Fcal := \bigwedge^{d_r(G)+1} \varepsilon^* \Omega^1_{\Fix_\Gcal /  \Grass_{r, n, \Z_\ell}}.
	\]
It is a coherent sheaf over $\Grass_{r, n, \Z_\ell}$, and for every $x \in \Grass_{r, n}(\Z_\ell)$, corresponding to a saturated submodule $\Wcal$ of $M$ of rank $r$, compatibility of pullbacks and exterior powers with base change gives
	\[
	 \bigwedge^{d_r(G)+1} \varepsilon^* \Omega^1_{\Fix_\Gcal(\Wcal)/\Z_\ell} = x^* \Fcal.
	\]
	In particular, for the $\Q_\ell$-points $x \in \Grass_{r, n}(\Q_\ell)$, by definition of $d_r(G)$ we have $x^* \Fcal \cong \Fcal_x = 0$ (see Remark \ref{rmk:FxVanishesAtKPoints}). 
{By Corollary \ref{cor:SheafUniformlyAnnihilated} below}, there exists an integer $e_r \geq 0$ such that $\ell^{e_r} \cdot x^*\calF=0$ for all $x \in \Grass_{r, n, \Z_\ell}(\Z_\ell)$.
	This finally gives 
	\[
	\bigwedge^{d_r(G)+1} (\ell^{e_r} \varepsilon^* \Omega^1_{\Fix_\Gcal(\Wcal)/\Z_\ell}) = \ell^{e_r(d_r(G)+1)} x^*\Fcal = 0,
	\]
	which proves that $\ell^{e_r} \cdot \varepsilon^* \Omega^1_{\Fix_\Gcal(\Wcal)/\Z_\ell}$ is generated by a set of cardinality at most $d_r(G)$.
\end{proof}
 {
 It remains to show that, as claimed in the previous proof, we can find an integer $e_r$ such that $\ell^{e_r} \cdot x^* \calF=0$ for all $\Z_\ell$-sections $x$. This is achieved in the next lemma and corollary.
 \begin{lemma}\label{lemma:SheafOnAffineSpaceZell}
 Let $\mathcal{F}$ be a coherent sheaf over $\A^d_{\Z_\ell}$. If $x^*\mathcal{F} \cong \mathcal{F}_x$ vanishes at every $\Q_\ell$-point $x$ of $\A^d_{\Z_\ell}$, there exists an integer $e \geq 0$ such that $\ell^e \cdot y^*\mathcal{F}=0$ for every $\Z_\ell$-point $y$ of $\A^d_{\Z_\ell}$.
 \end{lemma}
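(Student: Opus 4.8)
The plan is to reduce everything to commutative algebra over $R := \Z_\ell[t_1,\dots,t_d]$, so that $\A^d_{\Z_\ell} = \Spec R$ and $\mathcal{F} = \widetilde{M}$ for a finitely generated $R$-module $M$ (the ring $R$ being Noetherian). A $\Z_\ell$-point $y = (y_1,\dots,y_d) \in \Z_\ell^d$ corresponds to the (surjective) evaluation map $\operatorname{ev}_y \colon R \to \Z_\ell$, $t_i \mapsto y_i$, with kernel a prime ideal $\mathfrak{a}_y$ satisfying $R/\mathfrak{a}_y \cong \Z_\ell$; then $y^*\mathcal{F} = M/\mathfrak{a}_y M$. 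Composing $\operatorname{ev}_y$ with $\Z_\ell \hookrightarrow \Q_\ell$ gives a $\Q_\ell$-point with the same underlying prime $\mathfrak{a}_y$ and residue field $\kappa(\mathfrak{a}_y) = \operatorname{Frac}(R/\mathfrak{a}_y) = \Q_\ell$, whose pullback is $(M/\mathfrak{a}_y M)\otimes_{\Z_\ell}\Q_\ell = M_{\mathfrak{a}_y}\otimes_{R_{\mathfrak{a}_y}}\kappa(\mathfrak{a}_y)$.

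First I would extract from the hypothesis, via Nakayama's lemma, that $\operatorname{Ann}_R(M) \not\subseteq \mathfrak{a}_y$ for every $y \in \Z_\ell^d$: indeed the vanishing of $\mathcal{F}$ at the $\Q_\ell$-point attached to $y$ says $M_{\mathfrak{a}_y}\otimes_{R_{\mathfrak{a}_y}}\kappa(\mathfrak{a}_y) = 0$, hence $M_{\mathfrak{a}_y} = 0$ because $R_{\mathfrak{a}_y}$ is Noetherian local and $M_{\mathfrak{a}_y}$ is finitely generated; since $M$ is finitely generated this means $\mathfrak{a}_y \notin \Supp_R(M) = V(\operatorname{Ann}_R(M))$.

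Next, fixing generators $g_1,\dots,g_r$ of the ideal $\operatorname{Ann}_R(M)$, I would establish the pointwise estimate $\ell^{k(y)}\cdot y^*\mathcal{F} = 0$ where $k(y) := \min_i v_\ell(g_i(y))$. This is because each $g_i$ acts by $0$ on $M/\mathfrak{a}_y M$ (it annihilates $M$) and simultaneously by multiplication by its class $g_i(y) \in R/\mathfrak{a}_y \cong \Z_\ell$, so $g_i(y)$ kills $M/\mathfrak{a}_y M$; by the previous step some $g_i(y)$ is nonzero, so $k(y) < \infty$, and taking $i$ realizing the minimum, $g_i(y) = \ell^{k(y)}\cdot(\text{unit})$ gives the claim.

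The heart of the matter is then to make this uniform, namely to show $e := \sup_{y\in\Z_\ell^d} k(y)$ is finite — after which $\ell^e$ annihilates $y^*\mathcal{F}$ for all $\Z_\ell$-points $y$, proving the lemma. Here I would use compactness: $\Z_\ell^d$ is compact and each $g_i$ defines a continuous map $\Z_\ell^d \to \Z_\ell$, so if $k$ were unbounded along some sequence $y^{(n)}$, a convergent subsequence $y^{(n)} \to y^\ast$ would give $g_i(y^{(n)}) \to g_i(y^\ast)$, while $v_\ell(g_i(y^{(n)})) \geq k(y^{(n)}) \to \infty$ forces $g_i(y^{(n)}) \to 0$; hence $g_i(y^\ast) = 0$ for all $i$, i.e. $\operatorname{Ann}_R(M) \subseteq \mathfrak{a}_{y^\ast}$, contradicting the Nakayama step. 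The expected main obstacle is precisely this uniformity across the infinitely many integral points; the rest is formal, and note in particular that no preliminary dévissage of $M$ (e.g. splitting off $\ell$-power torsion) is needed, as the $\operatorname{Ann}_R(M)$ argument applies to an arbitrary finitely generated $M$.
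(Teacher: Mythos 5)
Your proof is correct, and it follows the same overall strategy as the paper's: reduce to commutative algebra over $\Z_\ell[t_1,\ldots,t_d]$, extract finitely many polynomials whose $\ell$-adic valuations control the order of the annihilator of the fibre $y^*\mathcal{F}$, and then use that $\Z_\ell^d$ is compact and the valuation-minimum is a locally constant finite-valued function to conclude boundedness. The difference is in which polynomials you use. The paper writes $M = A^k/RA^m$ for a presentation matrix $R$, takes the $k\times k$ minors $r_i$ of $R$, observes that vanishing of the $\Q_\ell$-fibre means some $r_i(x) \neq 0$, and then argues that a unit times $\ell^e$ occurring as a minor forces $\ell^e$ to kill $\Z_\ell^k/R(y)\Z_\ell^m$. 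You instead work with generators $g_1,\ldots,g_r$ of $\operatorname{Ann}_R(M)$, use Nakayama to translate fibre-vanishing into $\mathfrak a_y \notin \Supp M = V(\operatorname{Ann} M)$, and then directly observe that $g_i(y)$ annihilates $M/\mathfrak a_y M$. Your route is a bit more direct at the annihilation step (no Laplace-expansion/column-span argument), at the small cost of the preliminary Nakayama reduction; the decisive uniformity-over-$\Z_\ell^d$ argument, and the reason it works, is the same in both. One remark: where you phrase the boundedness via a convergent subsequence, it is equivalent (and perhaps slightly cleaner) to note that $y \mapsto \min_i v_\ell(g_i(y))$ is finite-valued and locally constant (as in the paper), hence bounded on the compact space $\Z_\ell^d$ — this sidesteps any worry about continuity of $v_\ell$ at $0$.
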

 \begin{proof}
 Write $\A^d_{\Z_\ell} = \operatorname{Spec} A$, where $A=\Z_\ell[t_1, \ldots, t_d]$, and let $M$ be the $A$-module corresponding to $\mathcal{F}$. As $\mathcal{F}$ is coherent, $M$ is the cokernel of a certain $A$-linear map $A^m \to A^k$, so that we can write $M = A^k / RA^m$ for a suitable matrix $R \in \operatorname{M}_{k \times m}(A)$. The condition $\mathcal{F}_x=0$ for $x \in \A^d_{\Z_\ell}(\Q_\ell)=\Q_\ell^d$ translates to the following. A $\Q_\ell$-point $x$ can be identified with a $d$-tuple $(x_1,\ldots,x_d) \in \Q_\ell^d$. For such a point, denote by $R(x)$ the evaluation of $R$ (which is a matrix of polynomials in the variables $t_1, \ldots, t_d$) at $(t_1, \ldots, t_d)=(x_1, \ldots, x_d)$. Note that $R(x)$ is a $k \times m$ matrix with coefficients in $\Q_\ell$. The stalk of $M$ at $x$ is then $\Q_\ell^k / R(x)\Q_\ell^m$, so it vanishes if and only if $\operatorname{rk} R(x)=k$. This is equivalent to the fact that at least one $k \times k$ minor of $R(x)$ is nonzero, and by assumption this holds for all $x \in \A^d_\ell(\Q_\ell)$.
 
Let $I$ be the set of all $k \times k$ submatrices of $R$. For each $i \in I$, let $r_i(t_1, \ldots, t_d) \in A$ be the minor of $R$ indexed by $i$. As we already argued, for every $x=(x_1, \ldots, x_d) \in \Q_\ell^d$, at least one of the $r_i(t_1, \ldots, t_d)$ is nonvanishing at $x$, so that the function $f(x) := \min_{i \in I} v_\ell(r_i(x))$ is everywhere finite on $\Z_\ell^d$. The function $f(x)$ is also trivially continuous, so $f(\Z_\ell^d) \subseteq \Z$ is compact, and hence bounded. In particular, there exists an integer $e \geq 0$ such that, for each $y \in \Z_\ell^d$, there exists $i \in I$ for which $v_\ell(r_i(y)) \leq e$. This implies that the $\Z_\ell$-span of the columns of the sub-matrix of $R(y)$ indexed by $i$ contains $(\ell^e\Z_\ell)^k$, hence, a fortiori, the same holds for the $\Z_\ell$-span of all the columns of $R(y)$, and we have shown that this holds for all $y$.
Thus, $\Z_\ell^k / R(y)\Z_\ell^m$ is killed by $\ell^e$ for all $y \in \Z_\ell^d = \A^d_{\Z_\ell}(\Z_\ell)$. As $\Z_\ell^k / R(y)\Z_\ell^m \cong y^*\mathcal{F}$, the claim follows.
 \end{proof}
 
 \begin{cor}\label{cor:SheafUniformlyAnnihilated}
 Let $\mathcal{F}$ be a coherent sheaf over $\Grass_{r, n, \Z_\ell}$. Suppose that, for each $\Q_\ell$-point $x \in \Grass_{r, n, \Z_\ell}(\Q_\ell)$, we have $x^* \calF \cong \calF_x =0$. Then, there exists an integer $e \geq 0$ such that $\ell^e \cdot x^* \calF=0$ for every $x \in \Grass_{r, n, \Z_\ell}(\Z_\ell)$.
 \end{cor}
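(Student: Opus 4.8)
The plan is to globalize Lemma~\ref{lemma:SheafOnAffineSpaceZell} by means of the standard affine chart cover of the Grassmannian. By Remark~\ref{rmk:OpenCoverGrassmannian}, I would write $\Grass_{r,n,\Z_\ell} = \bigcup_{i=1}^{N} U_i$ with each $U_i$ isomorphic to $\A^{r(n-r)}_{\Z_\ell}$; since $N$ is finite, it suffices to produce, for each $i$, an integer $e_i \geq 0$ that annihilates $y^*\calF$ for every $\Z_\ell$-point $y$ of $U_i$, and then set $e := \max_i e_i$.

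For a fixed $i$, I would apply Lemma~\ref{lemma:SheafOnAffineSpaceZell} to the coherent sheaf $\calF|_{U_i}$, transported to $\A^{r(n-r)}_{\Z_\ell}$ through the chosen isomorphism. Its hypothesis holds automatically: a $\Q_\ell$-point of $U_i$, composed with the open immersion $U_i \hookrightarrow \Grass_{r,n,\Z_\ell}$, is a $\Q_\ell$-point of the Grassmannian, at which $\calF$ vanishes by assumption, and pullback along an open immersion does not change stalks, so $x^*(\calF|_{U_i}) = x^*\calF = 0$ for every $x \in U_i(\Q_\ell)$. The lemma then yields the desired $e_i$.

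The one point requiring a little care is the passage from $\Z_\ell$-points of the individual charts back to $\Z_\ell$-points of $\Grass_{r,n,\Z_\ell}$: I must check that every morphism $x \colon \Spec\Z_\ell \to \Grass_{r,n,\Z_\ell}$ factors through some $U_i$, and this is where the locality of $\Z_\ell$ enters. The image of the closed point of $\Spec\Z_\ell$ lies in some $U_i$ (the $U_i$ cover), so $x^{-1}(U_i)$ is an open subset of $\Spec\Z_\ell$ containing the closed point; the only such open subset is $\Spec\Z_\ell$ itself, hence $x$ factors through $U_i$. Concretely, a $\Z_\ell$-point of the Grassmannian is a rank-$r$ direct summand $\Wcal \subseteq \Z_\ell^n$, and reducing a matrix for $\Wcal$ modulo $\ell$ shows that one of its maximal minors is a unit, which is precisely the condition to lie in the corresponding coordinate chart. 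Granting this, for an arbitrary $x \in \Grass_{r,n,\Z_\ell}(\Z_\ell)$ I pick such a $U_i$, let $y$ be the induced $\Z_\ell$-point of $U_i$, and conclude $\ell^{e}\cdot x^*\calF = \ell^{e}\cdot y^*(\calF|_{U_i}) = 0$. I do not expect a genuine obstacle here: all the substance sits in Lemma~\ref{lemma:SheafOnAffineSpaceZell}, and the corollary is a routine reduction to the affine case using finiteness of the cover together with the trivial topology of $\Spec\Z_\ell$.
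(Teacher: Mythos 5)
Your proof is correct and follows essentially the same route as the paper: cover the Grassmannian by the finitely many standard affine charts $\A^{r(n-r)}_{\Z_\ell}$, apply Lemma~\ref{lemma:SheafOnAffineSpaceZell} on each chart, and take the maximum of the resulting exponents. The one detail you take care to justify — that every $\Z_\ell$-point of $\Grass_{r,n,\Z_\ell}$ factors through one of the charts, because $\Spec\Z_\ell$ is local so the preimage of a chart containing the closed point must be all of $\Spec\Z_\ell$ — is left implicit in the paper's proof, and your spelling it out is a welcome clarification rather than a deviation.
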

 \begin{proof}
 The scheme $\Grass_{r, n, \Z_\ell}$ is a finite union of affine spaces over $\Z_\ell$, see Remark \ref{rmk:OpenCoverGrassmannian}. For each such affine space $S$, Lemma \ref{lemma:SheafOnAffineSpaceZell} gives an integer $e_S$ for which $\ell^{e_S} \cdot x^* \calF = 0$ for all $\Z_\ell$-points $x$ of $S$. The corollary follows by taking $e=  \max\{e_S\}$.
 \end{proof}
 }
 \subsection{Counting lemmas}
 
 First, for the lower bound on the numerator of \eqref{eqmtom1}, we used the following result.
 
 \begin{lemma}
 	\label{lemlowbound}
 	For any affine subvariety $\Xcal \subseteq \A^N_{\Z_\ell}$ such that $\Xcal (\Z_\ell)$ is a nonempty open subset of $\Xcal(\Q_\ell)$ and $\Xcal_{\Q_\ell}$ is equidimensional of dimension $d$, there is a positive constant $C(\Xcal)$ such that for all $m \geq 1$, 
 	\[
 	\left| \pi_m( \Xcal(\Z_\ell)) \right| \geq C(\Xcal) \ell^{d m}. 
 	\]
 \end{lemma}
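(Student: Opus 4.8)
The plan is to produce, inside $\Xcal(\Z_\ell)$, an $\ell$-adic polydisc parametrized by $d$ of the ambient coordinates, and then simply to count its residues modulo $\ell^m$; the tool for producing the polydisc is the $\ell$-adic implicit function theorem. We may assume $\Xcal$ is reduced, as this changes neither $\Xcal(\Z_\ell)$ nor $\Xcal(\Q_\ell)$ nor the dimensions of the components of $\Xcal_{\Q_\ell}$. Since $\Xcal(\Z_\ell)$ is a nonempty open subset of $\Xcal(\Q_\ell)$ it is Zariski dense in $\Xcal_{\Q_\ell}$, hence it meets the dense Zariski-open smooth locus $\Xcal^{\sm}_{\Q_\ell}$; I would fix a point $P \in \Xcal(\Z_\ell) \cap \Xcal^{\sm}_{\Q_\ell}$. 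By equidimensionality $P$ lies on a $d$-dimensional component, so, writing $\Xcal = V(f_1, \dots, f_s) \subseteq \A^N_{\Z_\ell}$, the Jacobian matrix $\bigl(\partial f_j / \partial x_i\bigr)(P)$ has rank $N - d$ over $\Q_\ell$. After permuting the coordinates $x_1, \dots, x_N$ of $\A^N$ we may assume that the projection $p \colon \Xcal_{\Q_\ell} \to \A^d_{\Q_\ell}$ onto $(x_1, \dots, x_d)$ is étale at $P$.

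Next I would apply the $\ell$-adic implicit function theorem at $P$, in the setting of $\ell$-adic analytic manifolds: it yields an integer $k \geq 1$ and $\Q_\ell$-analytic functions $\phi_{d+1}, \dots, \phi_N$ on the polydisc $D := \prod_{i=1}^d \bigl(P_i + \ell^k \Z_\ell\bigr)$ such that the map
\[
\xi \colon D \to \Xcal(\Q_\ell), \qquad t = (t_1, \dots, t_d) \mapsto \bigl(t_1, \dots, t_d, \phi_{d+1}(t), \dots, \phi_N(t)\bigr)
\]
is an analytic bijection onto an $\ell$-adic neighborhood of $P$ in $\Xcal(\Q_\ell)$. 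Because $P$ has integral coordinates, after enlarging $k$ we may assume every $\phi_j$ takes values in $\Z_\ell$ on $D$; and because $\Xcal(\Z_\ell)$ is open in $\Xcal(\Q_\ell)$ and contains $P$, after a further enlargement of $k$ the image $\xi(D)$ is contained in $\Xcal(\Z_\ell)$. Thus $\xi$ is an injection $D \hookrightarrow \Xcal(\Z_\ell)$ whose first $d$ coordinate functions are the parameters $t_1, \dots, t_d$ themselves.

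Finally I would count. For $1 \leq m \leq k$ the asserted inequality holds trivially, since $|\pi_m(\Xcal(\Z_\ell))| \geq 1 \geq \ell^{-dk} \ell^{dm}$. For $m > k$, as $t$ runs over $D$ the tuple $(t_1 \bmod \ell^m, \dots, t_d \bmod \ell^m)$ takes exactly $(\ell^{m-k})^d$ distinct values, and distinct values force distinct points $\pi_m(\xi(t))$ because the first $d$ coordinates of $\xi(t)$ are $t_1, \dots, t_d$; hence $|\pi_m(\Xcal(\Z_\ell))| \geq \ell^{d(m-k)} = \ell^{-dk} \ell^{dm}$. Taking $C(\Xcal) := \ell^{-dk}$ then works for all $m \geq 1$. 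The only steps needing genuine care are the choice of a smooth $\Q_\ell$-point of $\Xcal_{\Q_\ell}$ that actually lies in $\Xcal(\Z_\ell)$ — this is exactly where both hypotheses enter, through Zariski density and equidimensionality — and the verification that the implicit-function parametrization can be taken both integral and with image inside the open set $\Xcal(\Z_\ell)$; the remaining combinatorics is immediate.
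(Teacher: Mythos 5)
Your proof is correct in its essential mechanism and takes a genuinely different, more elementary route than the paper's. The paper proves the lemma in one line by invoking Oesterl\'e's Th\'eor\`eme~2, which gives asymptotics for $|\pi_m(\Xcal(\Z_\ell))|/\ell^{dm}$ in terms of a $d$-dimensional $\ell$-adic volume $\mu_d(\Xcal(\Z_\ell))$ and then appeals to that volume being positive. You instead use the $\ell$-adic implicit function theorem to graph a $d$-dimensional polydisc inside $\Xcal(\Z_\ell)$ over $d$ of the ambient coordinates and count residues directly. This is self-contained, avoids any measure-theoretic input, produces the explicit constant $C(\Xcal)=\ell^{-dk}$, and yields only the one-sided bound the lemma actually asks for, so it is arguably the cleaner argument.

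However, the step ``$\Xcal(\Z_\ell)$ open and nonempty $\Rightarrow$ Zariski dense $\Rightarrow$ meets the smooth locus'' does not follow from the stated hypotheses --- and the same tacit gap appears in the paper's own proof, which assumes without comment that $\Xcal(\Z_\ell)$ has analytic dimension $d$. For a counterexample, take $\ell=3$ and $\Xcal=\Spec\Z_3[x,y]/(x^2+y^2)\subseteq\A^2_{\Z_3}$: since $-1$ is not a square in $\Q_3$, the fiber $\Xcal_{\Q_3}$ is integral of dimension $1$, yet $\Xcal(\Q_3)=\Xcal(\Z_3)=\{(0,0)\}$ is a singleton (open in itself) consisting of the unique singular point, so $|\pi_m(\Xcal(\Z_3))|=1$ for all $m$ and the conclusion of the lemma fails. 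The resolution is that the lemma is applied in the paper only to subgroup schemes $\Gcal\subseteq\GL_{n,\Z_\ell}$, where the identity section supplies a smooth $\Z_\ell$-point of the generic fiber (affine algebraic groups in characteristic zero are smooth by Cartier's theorem); so in the cases that matter you can simply take $P$ to be the identity. To make your proof --- and the lemma --- airtight you should add the hypothesis that some point of $\Xcal(\Z_\ell)$ is a smooth point of $\Xcal_{\Q_\ell}$, which is what both arguments really require; the remainder of your argument is then correct as written.
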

 
 \begin{proof}
 	Considering $\Xcal(\Z_\ell)$ as a closed analytic subvariety of dimension $d$ of $\Z_\ell^N$, we can apply \cite[Théorème 2]{Oesterle82}. Note that the set $X_m$ of \cite{Oesterle82} is exactly our $\pi_m( \Xcal(\Z_\ell))$, and that we use the fact that $\pi_m( \Xcal(\Z_\ell))$ contains at least one element. The measure $\mu_d(\Xcal(\Z_\ell))$ is nonzero because $\Xcal(\Z_\ell)$ is of dimension $d$.
 \end{proof}

Any group scheme $\Gcal \subseteq \GL_{n,\Z_\ell}$ satisfies the hypotheses of Lemma \ref{lemlowbound} (embedding $\GL_{n}$ in the affine space $\A^{n^2+1}$) because $\Gcal(\Z_\ell) \neq \emptyset$ is open in $\Gcal(\Q_\ell)$ and $\Gcal_{\Q_\ell}$ is equidimensional.

Our next lemma is an ad hoc version of the implicit function theorem for schemes over $\Z_\ell$:

\begin{lemma}\label{lem:BijectionOnl^mPoints}
	Let $\ell$ be a prime number, $0 \leq d \leq n$ and let $A_{d+1}, \ldots, A_n$ be elements of the ring $\Z_\ell[X_1, \ldots, X_n]$ such that for every $d+1\leq i \leq n$, we have $A_i - X_i \in \ell \Z_\ell[X_1, \ldots, X_n]$. Denote by $S'$ the scheme $\Spec \Z_\ell[X_1, \ldots, X_n] / \langle A_{d+1}, \ldots, A_{n} \rangle$ and by $p\colon S' \ra \A^{d}_{\Z_\ell}$ the projection given by the first $d$ coordinates.
	
For all integers $m \geq 1$, the base-change of $p$ to $\Z/\ell^m\Z$ is an isomorphism, hence induces a bijection $p:S'(\Z/\ell^m\Z) \to \A^d(\Z/\ell^m\Z)$ on $\Z/\ell^m \Z$-points.	
	Consequently, for all integers $m \geq m' \geq 1$, the cardinality of any fiber of the natural map
	\[
	\pi_{m,m'} : S'(\ell^m) \ra S' (\ell^{m'})
	\]
	is $\ell^{d (m-m')}$, and $\pi_{m'} : S'(\Z_\ell) \ra S'(\ell^{m'})$ is surjective for all $m' \geq 1$.
\end{lemma}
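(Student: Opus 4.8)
The plan is to reduce everything to the first assertion --- that for every $m\geq 1$ the base change $p_{\Z/\ell^m\Z}\colon S'_{\Z/\ell^m\Z}\to \A^d_{\Z/\ell^m\Z}$ is an isomorphism --- and then read off the counting consequences formally. On coordinate rings $p$ corresponds to the inclusion $\Z_\ell[X_1,\dots,X_d]\hookrightarrow \Z_\ell[X_1,\dots,X_n]/\langle A_{d+1},\dots,A_n\rangle$, so after base change it suffices to show that the $\Z/\ell^m\Z$-algebra map
\[
\psi_m\colon R_m:=(\Z/\ell^m\Z)[X_1,\dots,X_d]\longrightarrow Q_m:=(\Z/\ell^m\Z)[X_1,\dots,X_n]/\langle A_{d+1},\dots,A_n\rangle
\]
induced by the first $d$ coordinates is an isomorphism. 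I would fix the notation $A_i=X_i+\ell B_i$ with $B_i\in \Z_\ell[X_1,\dots,X_n]$, which is exactly the hypothesis $A_i-X_i\in\ell\,\Z_\ell[X_1,\dots,X_n]$.

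The first step is a successive-approximation (Hensel-type) argument producing elements $\xi_{d+1},\dots,\xi_n\in R_m$ with $A_i(X_1,\dots,X_d,\xi_{d+1},\dots,\xi_n)=0$ in $R_m$ for all $i$, and unique as such. Concretely, I would iterate the self-map $T$ of $R_m^{\,n-d}$ defined by $T(y)_i=-\ell\, B_i(X_1,\dots,X_d,y_{d+1},\dots,y_n)$, starting from $y^{(0)}=0$. Since two arguments congruent modulo $\ell^k$ are sent by $T$ to values congruent modulo $\ell^{k+1}$, the successive differences $y^{(k+1)}-y^{(k)}$ lie in $\ell^{k+1}R_m^{\,n-d}$; because $\ell^m=0$ in $R_m$ the sequence stabilizes after finitely many steps, and its stable value $\xi$ is a fixed point of $T$, i.e. a solution of $A_i(X_1,\dots,X_d,\xi)=0$. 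The same estimate applied to the difference of two fixed points gives uniqueness. This $\xi$ defines a $\Z/\ell^m\Z$-algebra homomorphism $\phi_m\colon Q_m\to R_m$ by $X_i\mapsto X_i$ for $i\leq d$ and $X_i\mapsto \xi_i$ for $i>d$, which is well defined precisely because it kills each $A_i$.

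The second step is to check that $\psi_m$ and $\phi_m$ are mutually inverse. One composite is immediate: $\phi_m\circ\psi_m=\mathrm{id}_{R_m}$. For the other, I would look at the ideal $J\subseteq Q_m$ generated by the images of $X_i-\xi_i$ for $d<i\leq n$: then $Q_m/J$ is generated over $R_m$ (via $\psi_m$) by the $\xi_i\in R_m$, so $\psi_m$ induces an isomorphism $R_m\xrightarrow{\sim}Q_m/J$ (injectivity coming from $\phi_m$), and it only remains to show $J=0$. In $Q_m$ one has $X_i=-\ell\,B_i(X_1,\dots,X_n)$ while in $R_m$ one has $\xi_i=-\ell\,B_i(X_1,\dots,X_d,\xi_{d+1},\dots,\xi_n)$; subtracting,
\[
X_i-\xi_i=-\ell\bigl(B_i(X_1,\dots,X_n)-B_i(X_1,\dots,X_d,\xi_{d+1},\dots,\xi_n)\bigr)\quad\text{in }Q_m,
\]
and the parenthesized difference vanishes when $X_j=\xi_j$ for all $j>d$, hence lies in $J$. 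Thus every generator of $J$ lies in $\ell J$, so $J=\ell J=\dots=\ell^m J=0$, giving $Q_m=R_m$ and proving that $p_{\Z/\ell^m\Z}$ is an isomorphism of schemes.

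Finally I would deduce the consequences. Taking $\Z/\ell^m\Z$-points yields the bijection $p\colon S'(\ell^m)\xrightarrow{\sim}\A^d(\ell^m)=(\Z/\ell^m\Z)^d$; since each $\psi_m$ is the obvious inclusion-induced map it reduces to $\psi_{m'}$ for $m\geq m'$, hence so do the $\phi_m$, and the bijections above are compatible with the reduction maps. Therefore $\pi_{m,m'}\colon S'(\ell^m)\to S'(\ell^{m'})$ is identified with reduction $(\Z/\ell^m\Z)^d\to(\Z/\ell^{m'}\Z)^d$, which is surjective with every fiber of cardinality $\ell^{d(m-m')}$; and passing to the inverse limit over $m$ (using $S'(\Z_\ell)=\varprojlim_m S'(\ell^m)$ for the affine finite-type scheme $S'$, and likewise for $\A^d$) identifies $S'(\Z_\ell)$ with $\Z_\ell^d$ compatibly with all reductions, so $\pi_{m'}\colon S'(\Z_\ell)\to S'(\ell^{m'})$ is surjective. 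The one genuine obstacle is that the successive-approximation argument cannot be run over $\Z_\ell[X_1,\dots,X_d]$ directly, since that ring is not $\ell$-adically complete; working modulo $\ell^m$, where $\ell$ is nilpotent and the iteration terminates, is what makes it go through, and the same nilpotence underlies the ``$J=\ell J\Rightarrow J=0$'' step.
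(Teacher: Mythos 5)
Your proof is correct, but it takes a genuinely different route from the paper's. The paper establishes that the base change $p_{\Z/\ell^m\Z}$ is an isomorphism via the structural theory of \'etale morphisms: the Jacobian $\bigl(\partial A_i/\partial X_j\bigr)_{i,j>d}$ is $\equiv I \pmod{\ell}$, hence has unit determinant (since $\ell$ is nilpotent in $\Z/\ell^m\Z$), so the map is \'etale by the Jacobian criterion; reducing mod $\ell$ shows it has degree $1$, and an \'etale map of degree $1$ is an isomorphism. You instead run an explicit Newton--Hensel iteration $T(y)_i = -\ell B_i(X_{\le d},y)$ over the nilpotent ring $R_m$ to construct the inverse homomorphism $\phi_m$ by hand, then verify $\psi_m\circ\phi_m = \mathrm{id}$ via the ``$J\subseteq\ell J \Rightarrow J=0$'' trick, which is just Nakayama with $\ell$ nilpotent. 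Your version is more elementary and self-contained (no \'etale machinery, no Stacks citation), and it makes the underlying Hensel-lemma mechanism explicit; the paper's version is shorter once the general theory is taken for granted and scales more readily to non-affine or non-global-coordinate situations. Both derive the consequences about $\pi_{m,m'}$ and $\pi_{m'}$ by identifying everything with $\A^d$; you spell out the $\varprojlim$ step while the paper leaves it as ``immediate from the properties of $\A^d$.''
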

\begin{proof}
Define $B=\Z/\ell^m\Z[X_1,\ldots,X_d]$ and $C=B[X_{d+1}, \ldots, X_n] / \langle A_{d+1}, \ldots, A_n \rangle$. So $\A^n_{\Z/\ell^m\Z} = \Spec B$ and $S'_{\Z/\ell^m\Z} = \Spec C$, and $p_{\Z/\ell^m \Z}$ is induced by the natural homomorphism $B \to C$. The hypothesis $A_i-X_i \in \ell\Z_\ell[X_1,\ldots,X_n]$ implies that the determinant of the Jacobian matrix $\left( \frac{\partial A_i}{\partial X_i} \right)_{i,j=d+1,\ldots,n}$ reduces to 1 modulo $\ell$. Since every element of $C$ congruent to $1$ modulo $\ell$ is a unit in $C$, \cite[Corollary 3.16]{Milne}, or equivalently \cite[\href{https://stacks.math.columbia.edu/tag/02GU}{Lemma 02GU}]{stacks-project}, yields that the map $\Spec(C) \to \Spec(B)$ is étale. Furthermore, it is of degree 1, because this can be tested after tensoring with $\F_\ell$, and the given extension $B \hookrightarrow C$ induces an isomorphism $B \otimes \F_\ell \cong C \otimes \F_\ell$ since $A_i \bmod \ell = X_i$ for all $i=d+1,\ldots,n$. As an étale map of degree 1 is an isomorphism, this concludes the proof of the first statement in the lemma. The other statements follow immediately from the properties of $\A^d$.
\end{proof}

Before stating and proving our main counting lemma we need one more fact that links the cardinality of the fibers of certain reduction maps with suitable derivations:
\begin{lemma}\label{lem:Kaehler}
Let $R$ be a ring, $X = \Spec A$ be an $R$-scheme and $\varepsilon : A \ra R$ be a section.
Let $I$ be an ideal of $R$ and $m, m'$ be positive integers with $m'<m \leq 2m'$. Let $\pi_{m'} : R \ra R/I^{m'}$ be the canonical projection.  
The set of points of $X(R/I^m)$ above $\pi_{m'} \circ \varepsilon$ is in bijection with 
$\Hom_{R}(\Omega^{1}_{A/R} \otimes_\varepsilon R,I^{m'}/I^m)$.
\end{lemma}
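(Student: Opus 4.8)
The plan is to recognize this as the standard deformation-theoretic fact that, along a square-zero thickening, the set of liftings of a fixed point is a pseudo-torsor under a Hom-group of Kähler differentials, which becomes an honest torsor — and hence is in bijection with that group — as soon as one lifting is exhibited. First I would observe that the hypothesis $m \leq 2m'$ is exactly what makes $R/I^m \to R/I^{m'}$ a square-zero extension: its kernel is $J := I^{m'}/I^m$, which satisfies $J^2 = 0$ because $I^{2m'} \subseteq I^m$. Consequently $J$ is naturally a module over $R/I^{m'}$, and, via $\varepsilon$ composed with $R \to R/I^m$, also an $A$-module; since $J$ is killed by $I^{m'}/I^m$, this $A$-module structure factors through $\pi_{m'}\circ\varepsilon$, and it is $R$-linear by construction.

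Next I would note that the set in question is nonempty: reducing $\varepsilon\colon A \to R$ modulo $I^m$ produces a point $\varepsilon_m \in X(R/I^m)$ lying above $\pi_{m'}\circ\varepsilon$. For any other point $\varphi \in X(R/I^m)$ above $\pi_{m'}\circ\varepsilon$, I would consider the map $D_\varphi := \varphi - \varepsilon_m\colon A \to R/I^m$; since $\varphi$ and $\varepsilon_m$ agree modulo $I^{m'}$, it takes values in $J$, and a short routine computation — in which the only use of $J^2 = 0$ is to kill the cross term $D_\varphi(a)D_\varphi(b)$ — shows $D_\varphi$ is an $R$-derivation from $A$ to $J$. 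Conversely, $\varepsilon_m + D$ is an $R$-algebra homomorphism $A \to R/I^m$ lifting $\pi_{m'}\circ\varepsilon$ for every $R$-derivation $D\colon A \to J$, and the two assignments are visibly mutually inverse. This identifies the set of points of $X(R/I^m)$ above $\pi_{m'}\circ\varepsilon$ with $\operatorname{Der}_R(A,J)$.

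Finally I would translate $\operatorname{Der}_R(A,J)$ into the stated Hom-group: by the universal property of $\Omega^1_{A/R}$ one has $\operatorname{Der}_R(A,J) = \Hom_A(\Omega^1_{A/R}, J)$ with $J$ regarded as an $A$-module via $\varepsilon$, and then the adjunction between extension and restriction of scalars along $\varepsilon\colon A \to R$ rewrites this as $\Hom_R(\Omega^1_{A/R}\otimes_\varepsilon R, J) = \Hom_R(\Omega^1_{A/R}\otimes_\varepsilon R, I^{m'}/I^m)$, which is the asserted bijection. I do not expect a genuine obstacle here: the whole content of the lemma is the remark that $m \leq 2m'$ forces the thickening $R/I^m \to R/I^{m'}$ to be square-zero, after which the argument is entirely formal. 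One could alternatively phrase the middle step through the standard $\operatorname{Exal}$/cotangent-complex formalism, but the hands-on version above is shorter and self-contained.
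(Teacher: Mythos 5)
Your proposal is correct and follows essentially the same argument as the paper: identify the difference $\varphi - \varepsilon_m$ of a lift with an $R$-derivation $A \to I^{m'}/I^m$ (using $I^{2m'} \subseteq I^m$ to kill the cross term), and then pass through the universal property of $\Omega^1_{A/R}$ and extension of scalars along $\varepsilon$. The framing in terms of square-zero extensions and torsors is a cleaner packaging, but the underlying computation is the one the paper carries out.
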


\begin{proof}
We write $\varepsilon_m$ for $\pi_{m} \circ \varepsilon$ and similarly for $\varepsilon_{m'}$, and denote by $\pi_{m,m'}$ the canonical map $R/I^m \to R/I^{m'}$.
A point of $X(R/I^m)$ above $\varepsilon_{m'}$ is a homomorphism of $R$-algebras $\varphi: A \ra R/I^m$ such that $\pi_{m,m'} \circ \varphi = \varepsilon_{m'}$. Consider the morphism of $R$-modules
\[
 \theta := \varphi - \varepsilon_{m} : A \ra I^{m'} / I^m.
\]
As $\varphi$ is a ring morphism, for all $a,b \in A$ we have 
\[
\begin{aligned}
 \theta(ab) & = \varphi(ab) - \varepsilon_m(ab) \\
 & = \varphi(a)\varphi(b) - \varepsilon_m(a)\varepsilon_m(b) \\
 & = (\theta(a)+\varepsilon_m(a))(\theta(b)+\varepsilon_m(b)) - \varepsilon_m(a)\varepsilon_m(b) \\
 & =
 \varepsilon_m(a) \theta(b) + \varepsilon_m(b) \theta(a), 
 \end{aligned}
\]
because $\theta(a) \theta(b)$ belongs to $I^{2m'} / I^m$, so it is 0 in $R/I^m$ by the assumption $2m' \geq m$.
In other words, $\theta$ is an $A$-linear derivation (with the $A$-module structure on $I^{m'}/I^m$ given by $\varepsilon_{m}$). Conversely, the same computation shows that every $A$-linear derivation $\theta : A \to I^{m'}/I^m$ provides a point $\varphi \in X(R/I^m)$ above $\varepsilon_m$. By the defining property of the Kähler differentials we have an isomorphism
\[
 \operatorname{Der}_A(A,I^{m'}/I^m) \cong \Hom_A(\Omega^1_{A/R},I^{m'}/I^m),
\]
but this latter space is isomorphic to
\[
 \Hom_R(\Omega^1_{A/R} \otimes_\varepsilon R,I^{m'}/I^m),
\]
as can be checked directly because $I^{m'}/I^m$ inherits its $A$-module structure from $\varepsilon$. 
\end{proof} 
 
 \begin{lemma}[Main counting lemma]\label{lem:MainCountingLemma}
 	Let $\ell$ be a prime number, $S$ be a closed subscheme of $\A^n_{\Z_\ell}$, and $\varepsilon : \Spec \Z_\ell \ra S$ be a section of the structure morphism. For positive integers $m \geq m'$, denote by $\pi_{m'} : S(\Z_\ell) \to S(\Z/\ell^{m'}\Z)$ and $\pi_{m, m'} : S(\Z/\ell^m\Z) \to S(\Z/\ell^{m'}\Z)$ the canonical reduction maps modulo $\ell^{m'}$, as in the beginning of §\ref{ss: reduction to point counting}. 	
 	Assume that, for some non-negative integers $d$ and $e$, the $\Z_\ell$-module $\ell^e \varepsilon^* \Omega^1_{S/\Z_\ell}$ is generated by a set of cardinality at most $d$. 	
\begin{romanenum}
\item \label{lem:MainCountingLemma i}
For all integers $m \geq m'> e$,
\[
	\left| \pi_{m,m'}^{-1} (\pi_{m'}(\varepsilon)) \right| \leq  \ell^{n(e+1)} \ell^{d(m-m')}.
\]
\item \label{lem:MainCountingLemma ii}
If $e=0$ and $0 <m' \leq m \leq 2m'$, then
\[
\left| \pi_{m,m'}^{-1}  (\pi_{m'}(\varepsilon)) \right| \leq \ell^{d(m-m')}.
\]
\end{romanenum} 
 \end{lemma}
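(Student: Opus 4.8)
The plan is to bound the fibre directly by an elimination over $\Z_\ell$; the only external ingredients are Lemma~\ref{lem:BijectionOnl^mPoints} and Smith normal form over the discrete valuation ring $\Z_\ell$ (in particular Lemma~\ref{lem:Kaehler} is not needed for this argument). Write $S=\Spec\bigl(\Z_\ell[X_1,\dots,X_n]/I\bigr)$ and, translating by the $\Z_\ell$-point $\varepsilon$, assume that $\varepsilon$ is the origin, so $I\subseteq(X_1,\dots,X_n)$. For any finite generating set $f_1,\dots,f_r$ of $I$ one then has $\varepsilon^*\Omega^1_{S/\Z_\ell}\cong\Z_\ell^n/\Lambda$, where $\Lambda$ is the $\Z_\ell$-span of the coefficient vectors of the linear parts of the $f_k$. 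Using Smith normal form — row operations on the $f_k$ (multiplication by an element of $\GL_r(\Z_\ell)$) together with a $\Z_\ell$-linear change of the $X_j$ fixing the origin — I would arrange that $f_i=\ell^{a_i}X_i+q_i(X)$ with $q_i\in(X_1,\dots,X_n)^2$ for $1\le i\le\rho:=\rank\Lambda$, while $f_{\rho+1},\dots,f_r$ have vanishing linear part. Then $\varepsilon^*\Omega^1_{S/\Z_\ell}\cong\Z_\ell^{\,n-\rho}\oplus\bigoplus_{i\le\rho}\Z/\ell^{a_i}\Z$, so the hypothesis that $\ell^e\varepsilon^*\Omega^1_{S/\Z_\ell}$ is generated by at most $d$ elements becomes $(n-\rho)+\#\{i:a_i>e\}\le d$; in particular $\rho\ge n-d$ and at least $n-d$ of the indices $i\le\rho$ satisfy $a_i\le e$. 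If $d\ge n$, or if $m-m'<e$, both inequalities are immediate since the fibre has at most $\ell^{n(m-m')}$ elements, so I would dispose of these cases at once and assume $d<n$ and $m-m'\ge e$. Relabelling the variables, take $f_{d+1},\dots,f_n$ to be $n-d$ of the chosen equations, with linear parts $\ell^{a_{d+1}}X_{d+1},\dots,\ell^{a_n}X_n$ and all $a_i\le e$.

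\textbf{The elimination.} A point of $\pi_{m,m'}^{-1}(\pi_{m'}(\varepsilon))$ is a tuple $x\in(\ell^{m'}\Z/\ell^m\Z)^n$ with $f(x)\equiv0\pmod{\ell^m}$ for every $f\in I$. I would substitute $x=\ell^{m'}u$ with $u\in(\Z/\ell^{m-m'}\Z)^n$. Since $q_i(\ell^{m'}u)\equiv0\pmod{\ell^{2m'}}$ and $a_i\le e<m'$, the equation $f_i(x)\equiv0$ (for $d+1\le i\le n$) can be divided by $\ell^{m'+a_i}$ and takes the form
\[
\Phi_i(u):=u_i+\ell^{m'-a_i}\,\widetilde q_i(u)\equiv0\pmod{\ell^{M_i}},\qquad M_i:=m-m'-a_i\ge m-m'-e=:M_-,
\]
where $\widetilde q_i:=\ell^{-2m'}q_i(\ell^{m'}X)\in(X_1,\dots,X_n)^2$; because $m'-a_i\ge1$ this gives $\Phi_i\equiv X_i\pmod{\ell}$. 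Discarding the equations with $a_i>e$ and those with $i>\rho$ only enlarges the solution set, so it suffices to bound the number of $u\in(\Z/\ell^{m-m'}\Z)^n$ satisfying $\Phi_i(u)\equiv0\pmod{\ell^{M_i}}$ for $d+1\le i\le n$.

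\textbf{Counting.} Set $S':=\Spec\bigl(\Z_\ell[X_1,\dots,X_n]/(\Phi_{d+1},\dots,\Phi_n)\bigr)$. Since $\Phi_i\equiv X_i\pmod{\ell}$ for exactly the last $n-d$ variables, Lemma~\ref{lem:BijectionOnl^mPoints} applies and yields $S'_{\Z/\ell^M\Z}\xrightarrow{\sim}\A^d_{\Z/\ell^M\Z}$, hence $|S'(\Z/\ell^M\Z)|=\ell^{dM}$ for every $M$. Now any $u$ as above satisfies a fortiori $\Phi_i(u)\equiv0\pmod{\ell^{M_-}}$, i.e.\ its reduction modulo $\ell^{M_-}$ lies in $S'(\Z/\ell^{M_-}\Z)$; as the reduction map $(\Z/\ell^{m-m'}\Z)^n\to(\Z/\ell^{M_-}\Z)^n$ has fibres of size $\ell^{n((m-m')-M_-)}=\ell^{ne}$, one obtains
\[
\bigl|\pi_{m,m'}^{-1}(\pi_{m'}(\varepsilon))\bigr|\le\ell^{dM_-}\cdot\ell^{ne}=\ell^{d(m-m')}\cdot\ell^{(n-d)e}\le\ell^{n(e+1)}\,\ell^{d(m-m')},
\]
which is (i). For (ii) we have $e=0$, hence every $a_i=0$ and $M_i=M_-=m-m'$; the solution set is then exactly $S'(\Z/\ell^{m-m'}\Z)$, of cardinality $\ell^{d(m-m')}$ — the sharper bound (and the restriction $m\le2m'$ plays no role in this approach).

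\textbf{Main obstacle.} The step I expect to be most delicate is the treatment of the equations $f_i$ with $1\le a_i\le e$: such an $f_i$ pins $u_i$ down only modulo $\ell^{m-m'-a_i}$ rather than modulo $\ell^{m-m'}$, and it is precisely this loss — together with the lift from level $M_-$ to level $m-m'$ in all $n$ coordinates — that produces the factor $\ell^{n(e+1)}$ in (i) and explains why one cannot in general take $e=0$. Legitimacy of the divisions by $\ell^{m'+a_i}$ uses $m'>e$ and $m-m'\ge e$, which are exactly the hypotheses of (i); the boundary cases where they fail are the ones already absorbed into the crude bound $\ell^{n(m-m')}$. Unlike the Lie-algebra arguments of \S\ref{S:prime power version}, this reasoning is uniform in $\ell$ and works equally well for $\ell=2$.
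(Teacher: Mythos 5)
Your proof is correct and follows the same blueprint as the paper's for part (i): translate $\varepsilon$ to the origin, use Smith normal form to diagonalize the linear parts of the defining equations, isolate $n-d$ of them whose linear terms are $\ell^{a_i}X_i$ with $a_i\leq e$, and feed the rescaled system into Lemma~\ref{lem:BijectionOnl^mPoints}. The difference is the scaling: you substitute $x=\ell^{m'}u$, while the paper substitutes $x=\ell^{e+1}y$ and then reduces modulo $\ell^{m-2e-1}$ and $\ell^{m'-e}$; your variant is a bit cleaner and gives the marginally sharper exponent $(n-d)e$ in place of the paper's $(n-d)(e+1)$, though this is immaterial once everything is absorbed into the stated factor $\ell^{n(e+1)}$. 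The more substantial departure is part (ii): the paper treats $e=0$ by a separate Kähler-derivation argument (Lemma~\ref{lem:Kaehler}), identifying the fibre with $\Hom_{\Z_\ell}(\varepsilon^*\Omega^1_{S/\Z_\ell},\ell^{m'}\Z/\ell^m\Z)$, and it is precisely the step $\theta(a)\theta(b)=0$ there that forces the hypothesis $m\leq 2m'$. By reusing the substitution $x=\ell^{m'}u$ you bypass Lemma~\ref{lem:Kaehler} altogether and, as you observe, the restriction $m\leq 2m'$ becomes unnecessary: you obtain $\left|\pi_{m,m'}^{-1}(\pi_{m'}(\varepsilon))\right|\leq\ell^{d(m-m')}$ for all $0<m'\leq m$ when $e=0$. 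The only quibble is a phrasing slip: in (ii) the fibre is \emph{contained in}, not ``exactly'', $S'(\Z/\ell^{m-m'}\Z)$, since you dropped the remaining generators of $I$; the inequality is unaffected.
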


\begin{proof} Let $I \subseteq \Z_\ell[X_1,\ldots,X_n]$ be the ideal defining the subscheme $S$ of $\A^n_{\Z_\ell}$.

(\ref{lem:MainCountingLemma i}) Assume first that $m>m'+e+1$. 
	To begin with, $\varepsilon$ is a point of $S(\Z_\ell) \subseteq \A^n_{\Z_\ell}(\Z_\ell)$, hence corresponds to a ring morphism $\Z_\ell [X_1, \ldots, X_n] \ra \Z_\ell$. Up to translating the subscheme $S$, one can assume that $\varepsilon$ corresponds to the evaluation of all the $X_i$ at $0$.
	In the following, for a polynomial $P \in \Z_\ell[X_1, \ldots, X_n]$, we write $\dd P$ for its differential at 0, i.e.
	\[
	\dd P = \sum_{i=1}^n \frac{\partial P}{\partial X_i}(0, \ldots, 0) \, \dd X_i.
	\]
If $I$ is generated by polynomials $P_1, \ldots, P_r$, by the fundamental exact sequences of the Kähler differentials we have
	\[
	M := \varepsilon^* \Omega^1_{S/\Z_\ell} \cong \left( \bigoplus_{i=1}^n \Z_\ell \dd X_i \right) / \langle \dd P_j, \, 1 \leq j \leq r \rangle.
	\]
By the Smith normal form over the DVR $\Z_\ell$, up to a $\Z_\ell$-linear change of variables we can choose $X_1, \ldots, X_n$ in such a way that 
	\[
	\langle \dd P_1, \ldots, \dd P_r \rangle = \bigoplus_{i=1}^{n} \ell^{e_i} \Z_\ell \, \dd X_i
	\]
for suitable $e_1 \geq \ldots \geq e_{n}$ (with $e_i \in \mathbb{N} \cup \{+\infty\}$, where we set $\ell^{+ \infty} = 0$ by convention). With this choice of coordinates we have 
\[
M = \bigoplus_{i=1}^n \frac{\Z_\ell}{\ell^{e_i}\Z_\ell} \, \dd X_i.
\]
As the $e_i$ are decreasing and $\dim _{\F_\ell} \ell^e M/\ell^{e+1}M \leq d$, we have $e \geq e_i$ for $i=d+1, \ldots, n$.
	Choose polynomials $Q_1, \ldots, Q_{n}$ in $I$ such that $\dd Q_i= \ell^{\max(e_i,e)} \, \dd X_i$ for all $i \in \{1, \ldots, n\}$.  Recall that $\varepsilon$ corresponds to the ring morphism $\Z_\ell[X_1,\ldots,X_n]\to \Z_\ell$ evaluating a polynomial at $(0, \ldots, 0)$. As $\varepsilon$ is a $\Z_\ell$-point of $S=\Spec \Z_\ell[X_1,\ldots,X_n]/I$, the kernel of $\varepsilon$ must contain $I$, so every element of $I$ vanishes at $0 = (0, \ldots, 0)$. Thus $Q_i(0)=0$ and we have $Q_i= \ell^{e} X_i + R_i$ with $R_i$ a sum of homogeneous polynomials of degree $\geq 2$. For every $i \in \{d+1, \ldots, n\}$ we now consider the polynomial
\[
 \widetilde{Q}_i := \frac{1}{\ell^{2e+1}} Q_i(\ell^{e+1} X_1, \ldots, \ell^{e+1} X_n) \in \Z_\ell[X_1, \ldots, X_n].
\]
Writing $Q_i= \ell^{e} X_i + R_i$ as above, one obtains that $\widetilde{Q}_i$ is of the form
\[
 \widetilde{Q}_i = X_i + \ell \widetilde{R}_i(X_1, \ldots, X_n), \quad \widetilde{R}_i \in \Z_\ell[X_1, \ldots, X_n].
\]

We apply Lemma \ref{lem:BijectionOnl^mPoints} with $A_i = \widetilde{Q}_i$ for every $i \in \{d+1, \ldots, n \}$. In particular, we let $S'$ denote the $\Z_\ell$-scheme defined in $\A^n_{\Z_\ell}$ by the ideal $\langle \widetilde{Q}_{d+1}, \ldots, \widetilde{Q}_n \rangle$.

Let $x = (x_1, \ldots, x_n) \in S(\ell^m) \subseteq \A^n_{\Z_\ell}(\ell^m) = (\Z/\ell^m\Z)^n$ be a point which is zero modulo $\ell^{m'}$. Since $m'>e$ by assumption, we can write $x = \ell^{e+1} y$ for some $y \in (\ell^{m'-e-1}\Z/\ell^{m} \Z)^n$, so that for every $i >d$ we have
\[
 0 \equiv Q_i(x) \equiv Q_i(\ell^{e+1}y) \equiv \ell^{2e+1} \widetilde{Q}_i(y) \pmod{\ell^m}.
\]
This implies that $\widetilde{Q}_i(y) \in \ell^{m-2e-1} \Z / \ell^m \Z$ for all $i=d+1, \ldots, n$.
Thus, $y' := \pi_{m,m-2e-1}(y) \in (\Z/\ell^{m-2e-1}\Z)^n$ gives a point in $S'(\Z/\ell^{m-2e-1} \Z)$ which is zero modulo $\ell^{m'-e-1}$. By Lemma \ref{lem:BijectionOnl^mPoints}, the cardinality of any fibre of the map
\[
\pi'_{m-2e-1, m'-e} : S'(\ell^{m-2e-1}) \to S'(\ell^{m'-e})
\]
is $\ell^{d(m-m'-e-1)}$; note that the lemma applies, since $m'-e$ is strictly positive. The image $y'' = \pi'_{m-2e-1, m'-e}(y')$ of $y'$ in $S'(\ell^{m'-e})$ is in particular a point of $\A^n(\ell^{m'-e})$ which is zero modulo $\ell^{m'-e-1}$, so there are at most $\ell^n$ possibilities for $y''$. The point $y'$ lies in the fibre of $\pi'_{m-2e-1, m'-e}$ over $y''$, so for each $y''$ there are at most $\ell^{d(m-m'-e-1)}$ such $y'$, hence at most $\ell^n \ell^{d(m-m'-e-1)}$ possibilities for $y'$ in total. Finally, given any such $y' \in (\Z/\ell^{m-2e-1}\Z)^n$, there are $\ell^{n(2e+1)}$ possible lifts $y \in (\Z/\ell^{m} \Z)^n$ of $y'$. Since $x=\ell^{e+1}y$, the value of $x$ is determined by $y \bmod \ell^{m-e-1}$, so for each $y'$ there are at most $\ell^{en}$ possibilities for $x$. This gives
\[
 \left| \pi_{m,m'}^{-1} (\varepsilon \! \! \mod m') \right| \leq \ell^{n(e+1)} \ell^{d(m-m'-e-1)}, 
\]
a slightly better bound than claimed in the statement. 
Finally, for the case $m \leq m' +e+1$, we simply consider the embedding of $S$ into $\A^n_{\Z_\ell}$. Via this embedding, a point in $S(\ell^m)$ that reduces to $\varepsilon \bmod m'$ in $S(\ell^{m'})$ is in particular a point of $\A^n(\ell^m)$ that reduces to $(0, \ldots, 0)$ in $\A^n(\ell^{m'})$. It is clear that there are at most $\ell^{n(m-m')}$ such points, and $\ell^{n(m-m')} \leq \ell^{(e+1)n}$, which proves the bound in this case.

(\ref{lem:MainCountingLemma ii}) This is a consequence of Lemma \ref{lem:Kaehler}. Indeed, we have $e=0$ by assumption, so we know that $\varepsilon^* \Omega^1_{S/\Z_\ell}$ is generated by at most $d$ elements over $\Z_\ell$, and therefore the cardinality of $\Hom_{\Z_\ell}(\varepsilon^* \Omega^1_{S/\Z_\ell},\ell^{m'} \Z / \ell^{m} \Z)$ is at most $\ell^{d(m-m')}$.
\end{proof}

We can now establish the following for group schemes over $\Z_\ell$:

\begin{prop}\label{prop:count}
Let $\ell$ be a prime number, $M$ be a free $\Z_\ell$-module of finite rank $n$, and $\Gcal$ be a linear subgroup scheme of $\GL_{M}$ with unit section $\varepsilon \in \Gcal(\Z_\ell)$. Suppose that $e, d \in \N$ are such that $\ell^e \varepsilon^* \Omega^1_{\Gcal/\Z_\ell}$ is generated by at most $d$ elements as a $\Z_\ell$-module, with $d \leq n^2$. There is a constant $C(n,d,e,\ell)$ such that for all integers $m \geq m' >0$ the following holds:

 \[
  \left| \Ker \left(\pi_{m,m'}: \Gcal(\ell^m ) \ra \Gcal (\ell^{m'}) \right) \right| \leq C(n,d,e,\ell) \ell^{d(m-m')}.
 \]
Moreover, if $e=0$ (e.g.,~if $\Gcal$ is smooth), the result holds with $C(n,d,0,\ell)=1$.
\end{prop}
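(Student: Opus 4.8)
The plan is to deduce Proposition~\ref{prop:count} from the Main Counting Lemma (Lemma~\ref{lem:MainCountingLemma}) by a short dévissage, with no new geometric input. First I would view $\Gcal$ as a closed subscheme of $\A^{n^2+1}_{\Z_\ell}$ through $\Gcal \subseteq \GL_M \subseteq \A^{n^2+1}_{\Z_\ell}$, taking for the section in Lemma~\ref{lem:MainCountingLemma} the unit section $\varepsilon \in \Gcal(\Z_\ell)$. Since $\pi_{m,m'}\colon \Gcal(\ell^m)\to\Gcal(\ell^{m'})$ is a group homomorphism carrying the identity to the identity, its kernel is precisely the fibre $\pi_{m,m'}^{-1}(\pi_{m'}(\varepsilon))$ estimated in Lemma~\ref{lem:MainCountingLemma}, so that lemma applies verbatim with ambient dimension $n^2+1$ in place of $n$. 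I would also record the elementary group-theoretic fact that for homomorphisms $f\colon A\to B$ and $g\colon B\to C$ one has $|\Ker(g\circ f)| = |\Ker g \cap \Im f|\cdot|\Ker f| \leq |\Ker g|\cdot|\Ker f|$; this inequality, which needs no surjectivity, is what will let us chain estimates through intermediate reductions even though $\pi_{m,m'}$ need not be surjective when $\Gcal$ is not smooth. Finally, the hypothesis $d \leq n^2$ costs nothing: for $e$ large enough, $\ell^e\varepsilon^*\Omega^1_{\Gcal/\Z_\ell}$ is free of rank $\dim G \leq n^2$, so one may always take $d = \dim G$, and only $(n,d,e,\ell)$ enter the constant.

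For the case $e=0$ (which includes the smooth case, where $\varepsilon^*\Omega^1_{\Gcal/\Z_\ell}$ is free of rank at most $n^2$) I would obtain the sharp constant $C=1$. Here $m'\geq 1 > 0 = e$, so set $m_i := \min(2^i m', m)$, giving $m' = m_0 \leq m_1 \leq \cdots \leq m_k = m$ with every $m_i \geq 1$ and $m_{i+1}\leq 2m_i$. Lemma~\ref{lem:MainCountingLemma}(\ref{lem:MainCountingLemma ii}) applies to each consecutive pair and gives $|\Ker(\pi_{m_{i+1},m_i})| \leq \ell^{d(m_{i+1}-m_i)}$; multiplying via the composite-kernel inequality and telescoping the exponents yields $|\Ker(\pi_{m,m'})| \leq \ell^{d(m-m')}$.

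For $e \geq 1$ I would distinguish two cases according to the size of $m$. If $m \leq e$ the bound is trivial: $|\Ker(\pi_{m,m'})| \leq |\Gcal(\ell^m)| \leq \ell^{m(n^2+1)} \leq \ell^{e(n^2+1)}$. If $m \geq e+1$, put $m'' := \max(m', e+1)$ and factor $\pi_{m,m'} = \pi_{m'',m'}\circ \pi_{m,m''}$. Since $m \geq m'' \geq e+1 > e$, Lemma~\ref{lem:MainCountingLemma}(\ref{lem:MainCountingLemma i}) bounds $|\Ker(\pi_{m,m''})| \leq \ell^{(n^2+1)(e+1)}\,\ell^{d(m-m'')} \leq \ell^{(n^2+1)(e+1)}\,\ell^{d(m-m')}$ (using $m' \leq e+1$), while $|\Ker(\pi_{m'',m'})| \leq |\Gcal(\ell^{m''})| \leq \ell^{(e+1)(n^2+1)}$ (this factor is $1$ when $m'' = m'$). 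Multiplying, $|\Ker(\pi_{m,m'})| \leq \ell^{2(n^2+1)(e+1)}\,\ell^{d(m-m')}$ for all $m \geq m' > 0$, so $C(n,d,e,\ell) := \ell^{2(n^2+1)(e+1)}$ works (and $C=1$ in the case $e=0$).

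The only serious obstacle is Lemma~\ref{lem:MainCountingLemma} itself, which is already established; everything above is bookkeeping. The two small points requiring care are: (1) the chaining over intermediate moduli must be done through the inequality $|\Ker(g\circ f)| \leq |\Ker g|\cdot|\Ker f|$ rather than an exact-sequence count, because the reduction maps of $\Gcal(\Z_\ell)$-points need not be surjective when $\Gcal$ fails to be smooth; and (2) in the $e=0$ case one must invoke part~(\ref{lem:MainCountingLemma ii}) rather than part~(\ref{lem:MainCountingLemma i}) of the lemma, so as to land on the optimal constant $C=1$ (which is what is actually needed downstream, since $\gamma_{A,\ell}$ is otherwise uniform only for large $\ell$).
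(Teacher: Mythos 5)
Your argument is correct and takes essentially the same route as the paper's: both handle $m \geq m' > e$ directly from Lemma~\ref{lem:MainCountingLemma}(\ref{lem:MainCountingLemma i}) applied to $\Gcal \subseteq \A^{n^2+1}_{\Z_\ell}$, obtain $C=1$ for $e=0$ by a dyadic induction on part~(\ref{lem:MainCountingLemma ii}), and cover small $m'$ by factoring $\pi_{m,m'}$ through level $e+1$ and combining the composite-kernel inequality with a trivial count. The only cosmetic difference is your cruder trivial bound $|\Gcal(\ell^{e+1})| \leq \ell^{(e+1)(n^2+1)}$ versus the paper's $\ell^{n^2(e+1-m')}$, which yields a marginally larger explicit constant (and your parenthetical \enquote{using $m'\leq e+1$} should really read \enquote{using $m''\geq m'$}), but neither affects correctness.
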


\begin{proof}
Lemma \ref{lem:MainCountingLemma} directly implies the result for $m \geq m' >e$. In the general case, we use that given two group homomorphisms $f:G_1 \ra G_2$ and $g:G_2 \ra G_3$ between finite groups we have $\left| \Ker(g \circ f) \right| \leq \left| \Ker(g)\right|  \cdot \left| \Ker(f) \right|$. Hence, if $m > e \geq m'$ we have
 \begin{eqnarray*}
  \left| \Ker \pi_{m,m'} \right| & \leq & \left| \Ker \pi_{m,e+1} \right| \cdot  \left| \Ker \pi_{e+1,m'} \right|  \leq C(n,d,e,\ell) \ell^{d(m-e-1)} \ell^{(e+1-m')n^2}  \\
  & \leq & \left(C(n,d,e,\ell)\ell^{(e+1)n^2} \right) \ell^{d(m-m')},
 \end{eqnarray*}
 where we have used the trivial bound 
 \[
 \left| \Ker \pi_{e+1,m'} \right|  \leq \left| \{ M \in \operatorname{M}_{n}(\Z/\ell^{e+1}\Z) : M \equiv I_n \pmod{\ell^{m'}} \} \right| = \ell^{n^2(e+1-m')}
 \]
 and the fact that $d \leq n^2$.
 Finally, if $m \leq e$, we similarly bound $\left| \Gcal(\ell^m) \right|$ by $\ell^{e n^2}$, which is enough for our purposes.
  
In the case $e=0$, the result follows by induction from part (\ref{lem:MainCountingLemma ii}) of Lemma \ref{lem:MainCountingLemma}.

\end{proof}

\subsection{Conclusion of the proof of Proposition \ref{propkey}}
Let $\varepsilon$ be the unit section of $\Gcal$. 
By Proposition \ref{prop:er}, there exists an integer $e_r \geq 0$ such that, for every saturated submodule $\mathcal{W}$ of $M$ of rank $r$, the $\Z_\ell$-module $\ell^{e_r} \varepsilon^* \Omega^1_{\Gcal_{\Wcal}/\Z_\ell}$ is generated by at most $d_r(G) \leq \dim G \leq \dim \GL_n = n^2$ elements. For $m' \geq 1$, Proposition \ref{prop:count} immediately implies the desired inequality with constant given by $C(n, d_r(G), e_r, \ell)$. For $m'=0$ we then have
\[
\begin{aligned}
\left| \Fix_\Gcal(\Wcal)(\ell^m) \right| & \leq  \left| \Ker \left( \pi_{m,1}: \Fix_\Gcal(\Wcal)(\ell^m) \ra \Fix_\Gcal(\Wcal)(\ell)\right) \right| \cdot \left| \Fix_\Gcal(\Wcal)(\ell) \right|
  \\ & \leq C(n, d_r(G), e_r, \ell) \ell^{d_r(G) (m-1)} \cdot \left| \Gcal(\ell) \right|
    \\ & \leq C(n, d_r(G), e_r, \ell) \ell^{n^2-d_r(G)} \cdot \ell^{d_r(G) m}.
  \end{aligned}
  \]

\section{An expression for $\beta_A$} \label{S:beta bounds}

Fix a nonzero abelian variety $A$ over a number field $K$.    Recall that $\beta_A$ is the infimum of all real numbers $\beta$ for which there is a constant $C$, depending only on $A$ and $\beta$, such that the inequality $|A(L)_{\tors}|\leq C\cdot [L:K]^\beta$ holds for all finite extensions $L/K$.  

\begin{lemma} \label{L:same beta}
For any finite extension $K'/K$, we have $\beta_{A_{K'}}=\beta_A$.
\end{lemma}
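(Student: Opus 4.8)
The plan is to prove the two inequalities $\beta_{A_{K'}} \le \beta_A$ and $\beta_A \le \beta_{A_{K'}}$ separately; both follow from elementary manipulations with field degrees, exactly the type already used in \S\ref{SS:proof of large ell}. Throughout I fix an algebraic closure $\Kbar$ of $K$ containing $K'$, and all finite extensions are taken inside $\Kbar$.

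For $\beta_{A_{K'}} \le \beta_A$: let $\beta > \beta_A$, so there is a constant $C$ with $|A(L)_{\tors}| \le C\,[L:K]^\beta$ for all finite $L/K$. Any finite extension $L'/K'$ is in particular a finite extension of $K$, and $[L':K] = [L':K']\,[K':K]$. Hence $|A_{K'}(L')_{\tors}| = |A(L')_{\tors}| \le C\,[L':K]^\beta = \big(C\,[K':K]^\beta\big)[L':K']^\beta$. Since $C\,[K':K]^\beta$ depends only on $A$, $K'$ and $\beta$, this shows $\beta \ge \beta_{A_{K'}}$, and letting $\beta \downarrow \beta_A$ gives the claim.

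For $\beta_A \le \beta_{A_{K'}}$: let $\beta > \beta_{A_{K'}}$, so there is a constant $C$ with $|A_{K'}(L')_{\tors}| \le C\,[L':K']^\beta$ for all finite $L'/K'$. Given a finite extension $L/K$, set $L' := L\cdot K'$, the compositum inside $\Kbar$; it is a finite extension of $K'$. Since $K \subseteq L \cap K'$ we have the standard degree bound $[L':K'] = [L:L\cap K'] \le [L:K]$. Moreover $A(L)_{\tors} \subseteq A(L')_{\tors} = A_{K'}(L')_{\tors}$, so
\[
|A(L)_{\tors}| \le |A_{K'}(L')_{\tors}| \le C\,[L':K']^\beta \le C\,[L:K]^\beta.
\]
As $C$ depends only on $A$, $K'$ and $\beta$, this shows $\beta \ge \beta_A$, and letting $\beta \downarrow \beta_{A_{K'}}$ finishes the proof.

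There is no real obstacle here: the only point deserving a word of care is the inequality $[L\cdot K':K'] \le [L:K]$, which holds because $L\cdot K'$ is generated over $K'$ by $L$ and $K \subseteq L\cap K'$; one could also phrase it via linear disjointness, but the cruder inequality is all that is needed. Everything else is bookkeeping with the definition of $\beta_\bullet$ as an infimum and the fact that passing to a compositum only enlarges the torsion subgroup while shrinking the relative degree.
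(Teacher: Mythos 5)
Your proof is correct and follows the same strategy as the paper: the inequality $\beta_{A_{K'}} \le \beta_A$ by viewing extensions of $K'$ as extensions of $K$, and the reverse inequality by passing to the compositum $L' = L\cdot K'$ and comparing degrees. Your degree estimate $[L':K'] \le [L:K]$ is slightly sharper than the $[L':K'] \le [K':K][L:K]$ the paper uses, but this is immaterial since both are absorbed into the constant.
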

\begin{proof}
The inequality $\beta_{A_{K'}} \leq \beta_{A}$ is trivial by considering extensions $L$ of $K'$ as extensions of $K$.  We now prove the opposite inequality.   For a finite extension $L/K$, set $L'=L\cdot K'$.  For any $\varepsilon>0$, we have 
\[
|A(L)_{\tors}|\leq |A(L')_{\tors}| \ll_{A,K'} [L':K']^{\beta_{A_{K'}}+\varepsilon} \leq [K':K]^{\beta_{A_{K'}}+\varepsilon} [L:K]^{\beta_{A_{K'}}+\varepsilon} \ll_{A,\varepsilon, K'} [L:K]^{\beta_{A_{K'}}+\varepsilon}.
\]  
Therefore, $\beta_A \leq \beta_{A_{K'}}$ by the minimality in the definition of $\beta_A$.  
\end{proof}

The following describes $\beta_A$ in terms of the $\ell$-adic monodromy groups of $G_{A,\ell}$.

\begin{thm} \label{T:main unconditional}
We have
\[
\beta_A= \max_\ell \gamma_{A,\ell},
\]
where the maximum is over all primes $\ell$.
\end{thm}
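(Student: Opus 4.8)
The plan is to prove the two inequalities $\beta_A \le \max_\ell \gamma_{A,\ell}$ and $\beta_A \ge \gamma_{A,\ell}$ for each prime $\ell$ separately, then combine. Before doing either, I would observe (using Lemma~\ref{L:same beta} and the fact that $\gamma_{A,\ell}$ is unchanged by a finite base extension, as noted in \S\ref{SS:proof of large ell}) that we may freely replace $K$ by a finite extension; so by Proposition~\ref{P:connected} we may assume all the $\ell$-adic monodromy groups $G_{A,\ell}$ are connected. I would also want to record first that $\max_\ell \gamma_{A,\ell}$ is actually attained and finite: each $\gamma_{A,\ell} \le 2g$ (it is a slope, hence bounded by $\dim V_\ell(A) = 2g$), and by Theorem~\ref{T:ell-adic version, large ell} combined with Proposition~\ref{L:uniformness of alpha} one has $\gamma_{A,\ell} = \gamma_{A,\ell'}$ or at least a uniform description for all $\ell \gg_A 1$, so only finitely many distinct values occur and the maximum makes sense.

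\textbf{Upper bound.} The plan is to decompose the torsion into $\ell$-primary parts. For a finite extension $L/K$ write $A(L)_{\tors} = \bigoplus_\ell A(L)[\ell^\infty]$. By Theorem~\ref{T: small primes} we have $|A(L)[\ell^\infty]| \ll_{A,\ell} [L:K]^{\gamma_{A,\ell}} \le [L:K]^{\max_\ell \gamma_{A,\ell}}$ for every $\ell$, but the implicit constant depends on $\ell$ and there are infinitely many primes, so I cannot naively multiply. The fix is: for $\ell \gg_A 1$ (i.e.\ for all but finitely many $\ell$) Theorem~\ref{T:ell-adic version, large ell} gives a \emph{uniform} constant, so $|A(L)[\ell^\infty]| \ll_A [L:K]^{\gamma_{A,\ell}}$ for those $\ell$; moreover for $\ell$ exceeding some bound depending only on $A$ and $[L:K]$, a point of $A(L)$ of order a power of such $\ell$ would force too much torsion (or: one uses that $A(L)[\ell] \ne 0$ forces $\ell$ to divide something bounded in terms of ramification / a fixed prime of good reduction, giving $\ell \ll_{A,[L:K]} 1$). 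More cleanly: combining the uniform bound of Theorem~\ref{T:ell-adic version, large ell} with the elementary fact that $A(L)[\ell] \neq 0$ implies $[L:K] \gg \ell$ (since $[K(A[\ell]):K]$ grows at least linearly in $\ell$ by Proposition~\ref{P:HW count} applied with $W=0$ together with Lemma~\ref{L:elli image size} and $\gamma_{A,\ell}$ bounded), one sees that $A(L)[\ell^\infty] = 0$ for $\ell \gg_{A,[L:K]} 1$, so only $O_{A,[L:K]}(1)$ primes contribute, and for each the contribution is $\ll_A [L:K]^{\gamma_{A,\ell}}$. Multiplying, $|A(L)_{\tors}| \ll_{A,\varepsilon} [L:K]^{\max_\ell \gamma_{A,\ell} + \varepsilon}$ for every $\varepsilon>0$; hence $\beta_A \le \max_\ell \gamma_{A,\ell}$.

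\textbf{Lower bound.} Fix a prime $\ell$. By Lemma~\ref{L:key lemma new}(\ref{L:key lemma new i}) applied to the reductive group $G_{A,\ell}^\circ \subseteq \GL_{V_\ell(A)}$ there is a nonzero subspace $W \subseteq V_\ell(A)$ with $(\dim G_{A,\ell}^\circ - \dim (G_{A,\ell}^\circ)_W)/\dim W = \alpha(G_{A,\ell}^\circ) = 1/\gamma_{A,\ell}$; after a further finite base change I may assume $W$ is actually defined over $\QQ_\ell$ inside $V_\ell(A)$ and, scaling, that $W \cap T_\ell(A)$ is a saturated $\ZZ_\ell$-submodule $\Wcal$ with $\Wcal \otimes \QQ_\ell = W$. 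Now I want to build field extensions $L_i/K$ of controlled degree containing many torsion points. The idea: let $L_i$ be the fixed field of $\rho_{A,\ell^\infty}^{-1}(\fix_{\calG_{A,\ell}(\ZZ_\ell)}(\Wcal/\ell^i\Wcal))$ — i.e.\ the field cut out by requiring the Galois action to fix $\Wcal$ modulo $\ell^i$ pointwise. Then $A(L_i) \supseteq (\ell^i\text{-torsion contained in } \Wcal)$, which has cardinality $\ell^{i\dim_{\QQ_\ell} W}$ (up to bounded factors), while $[L_i:K] = [\rho_{A,\ell^\infty}(\Gal_K) : \fix \cap \rho_{A,\ell^\infty}(\Gal_K)]$. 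Using Lemma~\ref{L:elli image size} for the numerator ($\asymp \ell^{i\dim G_{A,\ell}}$ against the relevant quotient) and the fact that fixing $\Wcal$ mod $\ell^i$ cuts the group down by a factor $\asymp \ell^{i(\dim G_{A,\ell}^\circ - \dim(G_{A,\ell}^\circ)_W)}$ — this is the content of \S\ref{SS:lie algebras and filtrations}, specifically iterating the $\hH_j \subseteq \gG_{W}$ analysis in reverse to get a matching lower bound (the subtlety of small $\ell$ is handled by the constructions of \S\ref{S: small primes}, but for the lower bound one only needs $G_W$ to have the right dimension, which holds over $\QQ_\ell$ for free by Cartier) — one gets $[L_i:K] \asymp_{A,\ell} \ell^{i/\gamma_{A,\ell} \cdot \dim W}$. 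Therefore $|A(L_i)_{\tors}| \gg_{A,\ell} \ell^{i\dim W} \asymp [L_i:K]^{\gamma_{A,\ell}}$, and letting $i \to \infty$ shows $\beta_A \ge \gamma_{A,\ell}$. Taking the supremum over $\ell$ gives $\beta_A \ge \max_\ell \gamma_{A,\ell}$, completing the proof.

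\textbf{Main obstacle.} The delicate point is the lower bound: producing extensions $L_i$ whose degree is \emph{not much larger} than $\ell^{i/\gamma_{A,\ell}\dim W}$ while still containing $\asymp \ell^{i\dim W}$ torsion points. This requires a lower bound on $[\calG_{A,\ell}(\ZZ_\ell):\fix]$-type indices (really an \emph{upper} bound on $|\fix|$), i.e.\ the reverse direction of the counting in \S\ref{SS:lie algebras and filtrations}, which is where one must know $(G_{A,\ell}^\circ)_W$ has the expected dimension and that the filtration quotients $\hH_j$ genuinely fill out $\gG_W$ rather than being strictly smaller — Lemma~\ref{L:new h=g}(\ref{L:new h=g ii}) gives exactly this for $H = \calG(\ZZ_\ell)$, and Theorem~\ref{T:finite index} transfers it to $H = \rho_{A,\ell^\infty}(\Gal_K)$ with bounded loss. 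A secondary obstacle is ensuring in the upper bound that only finitely many primes $\ell$ contribute to $A(L)_{\tors}$ for a fixed $L$, so that the $\ell$-by-$\ell$ bounds can be multiplied; this follows from the uniform Theorem~\ref{T:ell-adic version, large ell} together with a crude lower bound on $[K(A[\ell]):K]$, but it must be stated carefully.
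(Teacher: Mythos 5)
Your lower bound is essentially the paper's argument: fix a prime realizing the maximum, pick a nonzero subspace $W\subseteq V_\ell(A)$ achieving the slope $\alpha(G_{A,\ell}^\circ)=1/\gamma_{A,\ell}$, build extensions $L_i$ by cutting down the $\ell$-adic image to the stabilizer of $W\bmod\ell^i$, and use Theorem~\ref{T:finite index} plus a counting estimate on $\{g\in\calG_{A,\ell}(\ZZ/\ell^i):g\equiv\mathrm{id}\text{ on }\Wcal/\ell^i\}$ to get $[L_i:K]\asymp_{A,\ell}\ell^{i\dim W/\gamma_{A,\ell}}$ while $A(L_i)$ contains $\ell^{i\dim W}$ torsion. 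The paper invokes Serre's Th\'eor\`eme~9 of \cite{MR644559} for the counting rather than ``iterating the $\hH_j\subseteq\gG_W$ analysis in reverse'', but the content is the same and your sketch is recoverable.

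The upper bound, however, has a genuine gap. You bound each $\ell$-primary piece by $|A(L)[\ell^\infty]|\ll_A [L:K]^{\gamma_{A,\ell}}$ and then multiply over the set $J$ of contributing primes. But this yields
\[
|A(L)_{\tors}| \ll_A^{|J|} [L:K]^{\sum_{\ell\in J}\gamma_{A,\ell}},
\]
and the exponent $\sum_{\ell\in J}\gamma_{A,\ell}$ is \emph{not} $\leq \max_\ell\gamma_{A,\ell}+\varepsilon$; it is of order $|J|\cdot\max_\ell\gamma_{A,\ell}$, and $|J|$ grows without bound as $[L:K]\to\infty$ (even the crude bound $|J|\leq\log_2|A(L)_{\tors}|\ll_A\log[L:K]$ already ruins the estimate). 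Knowing that $|J|\ll_{A,[L:K]}1$ does not help, because a bound on the number of factors does nothing to control the exponent accumulating across factors. The argument as written proves at best $|A(L)_{\tors}|\ll [L:K]^{c\log[L:K]}$, which is superpolynomial and in particular does not show $\beta_A\leq\max_\ell\gamma_{A,\ell}$.

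What is missing is the independence of the $\ell$-adic representations. The paper first replaces $K$ by a finite extension so that $\{\rho_{A,\ell^\infty}\}_\ell$ are independent (Proposition~\ref{P:independence}, in addition to the connectedness from Proposition~\ref{P:connected} that you do invoke), and then bounds $|A(L)[\ell^\infty]|$ not in terms of $[L:K]$ but in terms of $[L_\ell:K]$ where $L_\ell:=K(A(L)[\ell^\infty])\subseteq L$. Independence gives $\prod_{\ell\in J}[L_\ell:K]\leq[L:K]$, so the product estimate becomes
\[
|A(L)_{\tors}| \leq C^{|J|}\Big(\prod_{\ell\in J}[L_\ell:K]\Big)^{\max_\ell\gamma_{A,\ell}} \leq C^{|J|}\,[L:K]^{\max_\ell\gamma_{A,\ell}},
\]
with the correct exponent. (The residual factor $C^{|J|}$ is then absorbed by a short bootstrapping step, trading it for an extra $\varepsilon$ in the exponent.) Without independence your multiplicative step does not close, and this is not a presentational detail but the crux of why the theorem holds with exponent $\max_\ell\gamma_{A,\ell}$ rather than something unbounded.
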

\begin{proof}
Define $\xi_A:=\max_\ell \gamma_{A,\ell}$; the maximum exists since the numerators and denominators of the $\gamma_{A,\ell}$ are bounded.  From Lemma~\ref{L:same beta}, $\beta_A$ is unchanged if we replace $A$ by a base extension by a finite extension of $K$.   The value $\xi_A$ is also unchanged if we replace $A$ by such a base extension.   So by Proposition~\ref{P:independence}, we may assume that the representations $\{\rho_{A,\ell^\infty}\}_\ell$ are independent.  We may also assume that the groups $G_{A,\ell}$ are connected by Proposition~\ref{P:connected}.

 We first prove $\beta_A \leq \xi_A$.  Take any finite extension $L/K$ and let $J$ be the set of primes that divide $|A(L)_{\tors}|$.   For each $\ell\in J$, define the field $L_\ell:=K(A(L)[\ell^\infty])$. By the independence of the representations $\rho_{A,\ell^\infty}$, we have $\prod_{\ell\in J} [L_\ell: K] \leq [L:K]$.   By Theorems ~\ref{T:ell-adic version, large ell} and \ref{T: small primes}, there is a constant $C>0$ depending only on $A$ such that  
  \[
  |A(L)[\ell^\infty]| = |A(L_\ell)[\ell^\infty]| \leq C\cdot [L_\ell:K]^{\gamma_{A,\ell}} \leq C\cdot [L_\ell:K]^{\xi_{A}}
  \] 
  for all $\ell \in J$.    Taking the product over all $\ell\in J$, we find that
\begin{align} \label{E:no epsilon}
|A(L)_{\tors}|=\prod_{\ell\in J} |A(L)[\ell^\infty]| \leq C^{|J|} \big(\prod_{\ell\in J} [L_\ell:K] \big)^{\xi_A} \leq C^{|J|} [L:K]^{\xi_A}.
\end{align}
Take any $\varepsilon>0$ and set $\delta:=1-1/(1+\varepsilon/\xi_A)$; we have $0<\delta<1$.   We have 
\[
C^{|J|}  = \prod_{\ell\in J} C \ll_{A,\varepsilon} \prod_{\ell \in J} \ell^\delta \leq |A(L)_{\tors}|^{\delta}, 
\]
where the first inequality uses that $C\leq \ell^\delta$ for all primes $\ell\gg_{A,\varepsilon} 1$. Using (\ref{E:no epsilon}), this implies that $|A(L)_{\tors}| \ll_{A,\varepsilon} |A(L)_{\tors}|^{\delta} \cdot [L:K]^{\xi_A}$.  Therefore,
\[
|A(L)_{\tors}| \ll_{A,\varepsilon} [L:K]^{\xi_A/(1-\delta)} = [L:K]^{\xi_A+\varepsilon},
\]
where the equality uses our choice of $\delta$.  Since $L/K$ and $\varepsilon>0$ were arbitrary, this implies that $\beta_A\leq\xi_A$.\\

We now prove $\xi_A\leq \beta_A$.   Fix a prime $\ell$ with $\gamma_{A,\ell}=\xi_A$ and set $G:=G_{A,\ell}$.  There is a nonzero subspace $W\subseteq V_\ell(A)$ such that $\gamma_{A,\ell} \cdot (\dim G - \dim G_W) = \dim W$.   By choosing a $\ZZ_\ell$-basis of $T_\ell(A)$, we can view $\calG_{A,\ell}$ as a subgroup of $\GL_{2g,\ZZ_\ell}$ and $W$ as a subspace of $\QQ_\ell^{2g}$.      Define the groups $\calH_0:=G(\QQ_\ell) \cap \GL_{2g}(\ZZ_\ell) =\calG_{A,\ell}(\ZZ_\ell)$ and $\calH:=G_W(\QQ_\ell) \cap \GL_{2g}(\ZZ_\ell) \subseteq \calG_{A,\ell}(\ZZ_\ell)$.

Take any integer $i\geq 1$.  By Th\'eor\`eme~9 of \cite{MR644559}, we have $|\calH_0(\ell^i)|  \asymp_{A,\ell} \ell^{i \dim G}$ and $|\calH(\ell^i)| \asymp_{A,W,\ell} \ell^{i \dim G_W}$ with notation as in \S\ref{SS:lie algebras and filtrations}.  Let $L_i$ be the subfield of $\Kbar$ fixed by the $\sigma \in \Gal_K$ for which $\rho_{A,\ell^i}(\sigma)$ lies in $\calH(\ell^i)$.    Using Theorem~\ref{T:finite index}, we find that $[L_i:K] \asymp_{A,W,\ell} \ell^{i (\dim G - \dim G_W)}= \ell^{i \dim W \cdot \gamma_{A,\ell}^{-1}}$.

Define $\calW:=W \cap \ZZ_\ell^{2g}$.  The $\ZZ_\ell$-module $\calW$ has rank equal to the dimension of $W$.  The group $\calW/\ell^i \calW \subseteq (\ZZ/\ell^i\ZZ)^{2g}$ is fixed by $\calH(\ell^i)$.  Therefore, $A(L_i)$ has a subgroup of order $|\calW/\ell^i \calW|=\ell^{i \dim W}$.   

Take any $\varepsilon>0$.  By the definition of $\beta_A$, we have $|A(L_i)_{\tors}| \ll_{A,\varepsilon} [L_i:K]^{\beta_A+\varepsilon}$ and hence
\[
\ell^{i \dim W} \ll_{A,\varepsilon} [L_i:K]^{\beta_A+\varepsilon} \ll_{A,W,\ell} \ell^{i \dim W \cdot \gamma_{A,\ell}^{-1}\cdot  (\beta_A+\varepsilon)}.
\]
Since this holds for all $i\geq 1$, we must have $\gamma_{A,\ell}^{-1} \cdot (\beta_A+\varepsilon) \geq 1$.  Since $\varepsilon>0$ was arbitrary, we have $\beta_A \geq \gamma_{A,\ell} = \xi_A$.
\end{proof}

\subsection{Proof of Theorem~\ref{T:main}}\label{SS:main proof i}

Since we are assuming the Mumford--Tate conjecture for $A$, the theorem follows from Theorem~\ref{T:main unconditional} and Lemma~\ref{L:gamma connection}(\ref{L:gamma connection ii}).

\subsection{Proof of Theorem~\ref{T:equivalent version}}\label{SS:main proof ii}

We have $\End(V)^{G_A} \cong \End(A_{\Kbar})\otimes_\ZZ \QQ$  by Lemma~\ref{L:MT basics}(\ref{L:MT basics ii}).  Since $A$ is geometrically simple, the ring $\End(V)^{G_A}$ is a division algebra.   Therefore, $V$ is an irreducible representation of $G_A$ and hence $\gamma_A = 2\dim A / \dim G_A$.    By Theorem~\ref{T:main unconditional} and  Lemma~\ref{L:gamma connection}(\ref{L:gamma connection i}), we have  inequalities
\[
\beta_A \geq \max_\ell   \frac{ 2 \dim A}{\dim G_{A,\ell}^\circ } \geq \gamma_A = \frac{ 2 \dim A}{\dim G_{A} }.
\] 

Suppose that $\beta_A=\gamma_A$.  By the above inequalities, we must have $\dim G_{A,\ell}^\circ = \dim G_A$ for some prime $\ell$.  By Proposition~\ref{P:partial MTC}(\ref{P:partial MTC i}) and the equality of dimensions, we deduce that $G_{A,\ell}^\circ= (G_A)_{\QQ_\ell}$.    The Mumford--Tate conjecture for $A$ then follows from Proposition~\ref{P:partial MTC}(\ref{P:partial MTC ii}).

The other implication follows directly from Theorem~\ref{T:main}.

\subsection{Proof of Theorem~\ref{T:equivalent version 2}}{\label{SS:main proof iii}

Theorem \ref{T:main} shows that (\ref{T:equivalent version 2 a}) implies (\ref{T:equivalent version 2 b}).  We trivially have that (\ref{T:equivalent version 2 b}) implies (\ref{T:equivalent version 2 c}).  So it remains to show that (\ref{T:equivalent version 2 c}) implies (\ref{T:equivalent version 2 a}).  

Assume that (\ref{T:equivalent version 2 c}) holds and let $A$ be a nonzero abelian variety defined over a number field $K$. Due to the invariance of the Mumford--Tate conjecture under finite extensions of the ground field and under isogeny, enlarging $K$ if needed, we may assume that $A$ is isomorphic over $K$ to a product $B_1^{n_1} \times \cdots \times B_r^{n_r}$, where each $B_i$ is defined over $K$ and is geometrically simple. By assumption, the equality $\beta_{B_i} = \gamma_{B_i}$ holds for each $i=1, \ldots, r$, hence by Theorem \ref{T:equivalent version} the Mumford--Tate conjecture holds for each $B_i$. By \cite{JC}, this implies that the Mumford--Tate conjecture holds for $A = B_1^{n_1} \times \ldots \times B_r^{n_r}$. 

\section{Some remarks on a version without Mumford--Tate groups} \label{S:some remarks}

Let $A$ be a nonzero abelian variety defined over a number field $K$.  In this section, we will formulate a conjectural expression for $\beta_A$ that does not involve the Mumford--Tate group.  By Lemma~\ref{L:same beta}, we may assume (after extending the number field and replacing by an isogenous abelian variety) that $A$ is of the form $\prod_{i=1}^n A_i^{m_i}$ such that the abelian varieties $A_i/K$ are geometrically simple, pairwise geometrically nonisogenous, and have all their endomorphisms defined over $K$.   For each subset $I \subseteq \{1,\ldots, n\}$, define the abelian variety $A_I:= \prod_{i\in I} A_i^{m_i}$ over $K$. 
 For each prime $\ell$, let $\gamma_{A,\ell}$ be the constant defined in \S\ref{S:prime power version}.  

\begin{conj} \label{C:main}
For each prime $\ell$, we have 
\[
\gamma_{A,\ell} = \max_{\emptyset \neq I \subseteq \{1,\ldots, n\}}  \frac{ 2 \dim A_I}{\dim G_{A_I,\ell} }.
\]
\end{conj}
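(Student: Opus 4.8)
One of the two inequalities is already in hand: the second inequality in Lemma~\ref{L:gamma connection}(\ref{L:gamma connection i}), together with $\dim G_{A_I,\ell}^\circ=\dim G_{A_I,\ell}$, gives
\[
\gamma_{A,\ell}\ \ge\ \max_{\emptyset\ne I\subseteq\{1,\dots,n\}}\frac{2\dim A_I}{\dim G_{A_I,\ell}},
\]
so the real content is the reverse inequality. After replacing $K$ by a finite extension, which changes neither side, we may assume $G_{A,\ell}$ is connected. Since $(G_{A,\ell}^\circ)_{V_\ell(A_I)}$ is the kernel of the projection $G_{A,\ell}^\circ\to G_{A_I,\ell}$ and $\dim V_\ell(A_I)=2\dim A_I$, the reverse inequality is equivalent to
\[
\alpha(G_{A,\ell}^\circ)\ \ge\ \min_{\emptyset\ne I\subseteq\{1,\dots,n\}}\ \frac{\dim G_{A,\ell}^\circ-\dim (G_{A,\ell}^\circ)_{V_\ell(A_I)}}{\dim V_\ell(A_I)}.
\]

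The plan would be to run the slope machinery of \S\ref{S:codimension bounds new} on the reductive group $G:=G_{A,\ell}^\circ\subseteq\GL_{V_\ell(A)}$. Write $V_\ell(A)=\bigoplus_j V^{(j)}$ for the decomposition into $G$-isotypic components and set $\gG:=\Lie(G)\subseteq\gl(V_\ell(A))$. By Proposition~\ref{P:newer approach}(\ref{P:newer approach ii}) one has $\alpha(G)=\alpha(\gG)=\min_J (\dim G-\dim G_{V^{(J)}})/\dim V^{(J)}$, the minimum being over nonempty unions $V^{(J)}=\bigoplus_{j\in J}V^{(j)}$. By Faltings (Proposition~\ref{P:Faltings}(\ref{P:Faltings iii})) each block $V_\ell(A_i^{m_i})$ is $G$-stable, hence a union of isotypic components, so every $V_\ell(A_I)$ occurs among the $V^{(J)}$; thus the inequality to be proved is precisely the assertion that the minimum defining $\alpha(G)$ is attained on the sub-collection of those $J$ that are unions of full blocks $V_\ell(A_i^{m_i})$. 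By the argument proving Proposition~\ref{P:newer approach}, it is enough to show that the maximal $G$-subrepresentation $U\subseteq V_\ell(A)$ with $\alpha(\gG,U)=\alpha(\gG)$ furnished by Lemma~\ref{L:key lemma new}(\ref{L:key lemma new i}) has the property that $U\cap V_\ell(A_i^{m_i})\ne 0$ implies $U\supseteq V_\ell(A_i^{m_i})$ for every $i$; one then takes $I:=\{i:U\cap V_\ell(A_i^{m_i})\ne0\}$ and finishes as in that proof.

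The hard part is this last implication. When $V_\ell(A_i^{m_i})$ is $G$-isotypic it is automatic --- which already settles the conjecture for, say, geometrically simple $A$ with $V_\ell(A)$ isotypic, and more generally whenever the centre $L_i$ of $\End(A_i)\otimes_\ZZ\QQ$ has a unique place above $\ell$ for every $i$ (then $\End(A_i)\otimes_\ZZ\QQ_\ell$ is a simple algebra, so $V_\ell(A_i)$ is $G$-isotypic and the conjecture follows directly from Proposition~\ref{P:newer approach}(\ref{P:newer approach ii})). In general, though, $V_\ell(A_i)=\bigoplus_{v\mid\ell}V_\ell(A_i)_v$ breaks up according to the places $v\mid\ell$ of $L_i$, with each summand $G$-isotypic but the summands pairwise non-isomorphic as $G$-representations, and one would need to exclude a $U$ meeting a block $V_\ell(A_i^{m_i})$ without containing it. Morally this should be forced by a symmetry: the polarization pairs $V_\ell(A_i)_v$ with the summand over the complex-conjugate place, which ought to imply that all summands within a block have the same slope and that no proper sub-union of them lowers the slope --- and then Lemma~\ref{L: lemma slope} would give the implication. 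Turning this into a complete argument requires genuine control of the $G_{A,\ell}^\circ$-module structure of $V_\ell(A_i)$, in particular of how the $v$-adic splitting of the endomorphism algebra refines the isotypic decomposition, and this is exactly the information that is unavailable without a Mumford--Tate type input. I expect this to be the main obstacle, and it is why the statement is only a conjecture.

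Finally, the conjecture holds unconditionally in all known cases of the Mumford--Tate conjecture: if that conjecture holds for $A$ then it holds for every $A_I$ (the homomorphism $G_{A,\ell}^\circ\to G_{A_I,\ell}^\circ$ is the $\QQ_\ell$-fibre of $G_A\to G_{A_I}$), so $\dim G_{A_I,\ell}=\dim G_{A_I}$; combined with Lemma~\ref{L:gamma connection}(\ref{L:gamma connection ii}) this yields $\gamma_{A,\ell}=\gamma_A=\max_{\emptyset\ne I}2\dim A_I/\dim G_{A_I}=\max_{\emptyset\ne I}2\dim A_I/\dim G_{A_I,\ell}$.
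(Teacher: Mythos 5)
The statement you are attempting is labelled a \emph{conjecture} in the paper: no proof is given, only (a) the observation that both sides equal $\gamma_A$ under Mumford--Tate, (b) a sufficient condition involving a subgroup $H\subseteq G_A$ conjugate to $G_{A,\ell}^\circ$ over $\QQ_\ell$, (c) Proposition~\ref{P:unconditional prop} for geometrically simple $A$ with centre $\ZZ$, and (d) a remark explaining the obstruction in general. You correctly recognize that no full proof is available, and your analysis tracks the paper's very closely. The easy inequality from Lemma~\ref{L:gamma connection}(\ref{L:gamma connection i}), the reduction via Proposition~\ref{P:newer approach}(\ref{P:newer approach ii}) to showing that the minimum defining $\alpha(G_{A,\ell}^\circ)$ is achieved on unions of full blocks $V_\ell(A_i^{m_i})$, and the identification of the $v\mid\ell$ splitting of $\End(A_i)\otimes\QQ_\ell$ as the obstacle are precisely what the paper records in the remark after Proposition~\ref{P:unconditional prop} (there written in terms of the centre $E$ in the geometrically simple case). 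Your per-prime sufficient condition --- unique place above $\ell$ in the centre of each $\End(A_i)\otimes\QQ$ --- is a mild and correct sharpening of the paper's hypothesis ``centre $=\ZZ$'', and it rests on the same observation that under it each $V_\ell(A_i^{m_i})$ is a single $G_{A,\ell}^\circ$-isotypic component, so the block decomposition and the isotypic decomposition coincide.

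The only genuinely new ingredient is your polarization heuristic, which the paper does not attempt, and I would caution you that it does not close the gap. The Weil pairing yields a $G_{A,\ell}^\circ$-equivariant isomorphism $V_\ell(A_i)_{\bar v}\cong V_\ell(A_i)_v^*(1)$, not an automorphism of $V_\ell(A)$ permuting the pieces; because of the cyclotomic twist, an element of $\gG$ killing $V_v$ does not in general kill $V_{\bar v}$, so equality of the slopes $\alpha(\gG,V_v)=\alpha(\gG,V_{\bar v})$ is not automatic. Even if it held, Lemma~\ref{L: lemma slope} only lets you absorb $V_{\bar v}$ into the maximal $U$ when $\alpha(\gG,V_{\bar v})$ \emph{equals} $\alpha(\gG)$, which you have not established; and the self-conjugate case $v=\bar v$ contributes nothing to the argument. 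So this is a reasonable heuristic but not a proof strategy one could expect to complete without Mumford--Tate-type input --- which is exactly why the paper leaves this as a conjecture.
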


Note that  both sides in Conjecture~\ref{C:main} equal $\gamma_A$ when the Mumford--Tate conjecture for $A$ holds; this uses Lemma~\ref{L:gamma connection}.

\begin{thm}
If Conjecture~\ref{C:main} holds for $A$, then 
\[
\beta_A = \max_{\substack{\ell \text{ prime}\\ \emptyset \neq I \subseteq \{1,\ldots, n\}}}  \frac{ 2 \dim A_I}{\dim G_{A_I,\ell} }
\]
\end{thm}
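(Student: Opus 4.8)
The plan is to combine Theorem~\ref{T:main unconditional} with the assumed Conjecture~\ref{C:main}; essentially all the work has already been done. First I would invoke Theorem~\ref{T:main unconditional}, which gives the unconditional identity $\beta_A = \max_\ell \gamma_{A,\ell}$, the maximum being over all primes $\ell$. This maximum exists because the numerators and denominators of the rational numbers $\gamma_{A,\ell}$ are bounded independently of $\ell$ (this is exactly the quantity denoted $\xi_A$ in the proof of Theorem~\ref{T:main unconditional}).

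Next, since Conjecture~\ref{C:main} is assumed to hold for $A$, for each prime $\ell$ we may replace $\gamma_{A,\ell}$ by $\max_{\emptyset \neq I \subseteq \{1,\ldots,n\}} \tfrac{2\dim A_I}{\dim G_{A_I,\ell}}$. Substituting this into the identity of the previous paragraph and flattening the nested maxima yields
\[
\beta_A \;=\; \max_\ell\; \max_{\emptyset \neq I \subseteq \{1,\ldots,n\}} \frac{2\dim A_I}{\dim G_{A_I,\ell}} \;=\; \max_{\substack{\ell \text{ prime}\\ \emptyset \neq I \subseteq \{1,\ldots,n\}}} \frac{2\dim A_I}{\dim G_{A_I,\ell}},
\]
which is the asserted formula. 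The only point meriting a remark is that this double maximum is genuinely attained: there are only finitely many subsets $I$, and for each fixed $I$ the integers $\dim G_{A_I,\ell}$ range over a finite set as $\ell$ varies, since $G_{A_I,\ell}^\circ$ is reductive of bounded rank inside $\GL_{V_\ell(A_I)}$ with $\dim V_\ell(A_I)$ independent of $\ell$ — this is the same finiteness that underlies the existence of $\xi_A$.

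I do not expect any genuine obstacle in this argument: it is a purely formal deduction once Conjecture~\ref{C:main} is granted, resting entirely on the unconditional expression $\beta_A = \max_\ell \gamma_{A,\ell}$ of Theorem~\ref{T:main unconditional} together with the standing reductions at the start of this section (which, via Lemma~\ref{L:same beta} and isogeny invariance, legitimize assuming $A = \prod_{i=1}^n A_i^{m_i}$ in the stated normal form).
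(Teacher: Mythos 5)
Your proposal is correct and is essentially identical to the paper's proof, which simply states that the theorem is an immediate consequence of Theorem~\ref{T:main unconditional} together with Conjecture~\ref{C:main} for $A$. You have merely spelled out the substitution and the flattening of the nested maxima, which the paper leaves to the reader.
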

\begin{proof}
This is an immediate consequence of Theorem~\ref{T:main unconditional} and Conjecture~\ref{C:main} for $A$.
\end{proof}

\begin{prop}
Fix a prime $\ell$.   Suppose there is an algebraic subgroup $H \subseteq G_A$ such that $H_{\QQ_\ell}$ and $G_{A,\ell}^\circ$ are conjugate in $(G_A)_{\QQ_\ell}$.    Then Conjecture \ref{C:main} holds for the prime $\ell$.
\end{prop}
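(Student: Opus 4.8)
The plan is to transfer the problem to the Mumford--Tate group $G_A$ and its subgroup $H$, and to use Faltings' theorem to show that the block decomposition $V=\bigoplus_i V_i$ is \emph{already} the isotypical decomposition for $H$. First I would replace $K$ by a finite extension so that $G_{A,\ell}$ is connected; this is harmless because $K_{A,\ell}/K$ is finite by Proposition~\ref{P:connected}, because $\gamma_{A,\ell}$, the dimensions $\dim G_{A_I,\ell}$ and the hypothesis on $H$ (note $G_A=G_{A_{K'}}$ and $H\subseteq G_A$) are all unchanged, and because $A$ stays literally of the form $\prod_i A_i^{m_i}$ with the $A_i$ geometrically simple, pairwise nonisogenous, and with all endomorphisms rational. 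With $G_{A,\ell}$ connected we have $G_{A,\ell}=G_{A,\ell}^\circ$. Since $H_{\QQ_\ell}$ is conjugate in $(G_A)_{\QQ_\ell}$ to the reductive group $G_{A,\ell}$, the group $H$ is reductive, and by Proposition~\ref{P:newer approach}(\ref{P:newer approach iii}), conjugation-invariance of $\alpha$, and \eqref{eq:defgammaAell} we get $\alpha(H)=\alpha(H_{\QQ_\ell})=\alpha(G_{A,\ell})=\gamma_{A,\ell}^{-1}$.

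The key step is to prove that $V=\bigoplus_{i=1}^n V_i$, with $V_i=H_1(A_i^{m_i}(\CC),\QQ)$, is the decomposition of $V$ into isotypical components for the action of $H$. Each $V_i$ is $H$-stable since it is $G_A$-stable and $H\subseteq G_A$, and $V$ is a semisimple $H$-module because $H$ is reductive. Let $g$ be an element of $G_A$, defined over $\QQ_\ell$ or (after extending scalars, which changes nothing in what follows) over a finite extension of $\QQ_\ell$, conjugating $H_{\QQ_\ell}$ to $G_{A,\ell}$. As $g$ preserves the $G_A$-stable subspace $V_i\otimes_\QQ\QQ_\ell$, conjugation by $g|_{V_i\otimes\QQ_\ell}$ identifies $V_i\otimes_\QQ\QQ_\ell$ as an $H_{\QQ_\ell}$-representation with $V_\ell(A_i^{m_i})$ as a $G_{A,\ell}$-representation (via the comparison isomorphism). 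Hence $\End_H(V)\otimes_\QQ\QQ_\ell\cong\End_{G_{A,\ell}}(V_\ell(A))$, which by Zariski density of $\rho_{A,\ell^\infty}(\Gal_K)$ in $G_{A,\ell}$ and Proposition~\ref{P:Faltings}(\ref{P:Faltings ii}) equals $\End(A)\otimes_\ZZ\QQ_\ell\cong(\End(A)\otimes_\ZZ\QQ)\otimes_\QQ\QQ_\ell$. Comparing $\QQ$-dimensions and using $\End(A)\otimes_\ZZ\QQ\cong\End_\QQ(V)^{G_A}=\End_{G_A}(V)$ from Lemma~\ref{L:MT basics}(\ref{L:MT basics ii}), together with the inclusion $\End_{G_A}(V)\subseteq\End_H(V)$, I conclude $\End_H(V)=\End_{G_A}(V)=\prod_{i=1}^n\End_{G_A}(V_i)$, where each $\End_{G_A}(V_i)\cong\End(A_i^{m_i})\otimes_\ZZ\QQ\cong M_{m_i}(\End(A_i)\otimes_\ZZ\QQ)$ is a simple $\QQ$-algebra because $A_i$ is geometrically simple. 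Since $V$ is a semisimple $H$-module whose endomorphism algebra is a product of $n$ simple algebras with the $i$-th factor acting on $V_i$, this exhibits $\bigoplus_i V_i$ as the isotypical decomposition of $V$ for $H$.

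Given this, I would apply Proposition~\ref{P:newer approach}(\ref{P:newer approach ii}) to $H\subseteq\GL_V$ over the characteristic-zero field $\QQ$ to obtain
\[
\alpha(H)=\min_{\emptyset\neq I\subseteq\{1,\dots,n\}}\frac{\dim H-\dim H_{V_I}}{\dim V_I}.
\]
For nonempty $I$, the kernel of the homomorphism $H\to G_{A_I}$ induced by $G_A\to G_{A_I}$ is $H_{V_I}$, so $\dim H-\dim H_{V_I}$ is the dimension of the image of $H$ in $G_{A_I}$; base changing to $\QQ_\ell$ and conjugating by $g$, this image becomes the image of $G_{A,\ell}$ under $(G_A)_{\QQ_\ell}\to(G_{A_I})_{\QQ_\ell}$, which is $G_{A_I,\ell}$ by a standard density argument (the image of the Zariski closure of $\rho_{A,\ell^\infty}(\Gal_K)$ is the Zariski closure of $\rho_{A_I,\ell^\infty}(\Gal_K)$). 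Thus $\dim H-\dim H_{V_I}=\dim G_{A_I,\ell}$, and since $\dim V_I=2\dim A_I$ we get $\alpha(H)=\min_I \dim G_{A_I,\ell}/(2\dim A_I)$, i.e.\ $\gamma_{A,\ell}=\max_I 2\dim A_I/\dim G_{A_I,\ell}$, which is Conjecture~\ref{C:main} for $\ell$. The only genuine obstacle is the middle paragraph: one must rule out that the blocks $V_i$ become reducible or non-isotypical after restriction to $H_{\QQ_\ell}\cong G_{A,\ell}^\circ$ --- precisely the phenomenon responsible for the possibly strict inequality in Lemma~\ref{L:gamma connection}(\ref{L:gamma connection i}) --- and this is exactly what the conjugacy hypothesis buys us, through the dimension count for the endomorphism algebras.
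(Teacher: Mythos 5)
Your proof is correct and follows essentially the same route as the paper's: reduce via Proposition~\ref{P:newer approach}(\ref{P:newer approach iii}) and conjugation-invariance to $\alpha(H)=\gamma_{A,\ell}^{-1}$; show that $\bigoplus_i V_i$ is the isotypic decomposition of $V$ under $H$ by comparing $\End_H(V)$ with $\End_{G_A}(V)=\End(A)\otimes\QQ$ after base change to $\QQ_\ell$ and invoking Faltings together with the conjugacy; then apply Proposition~\ref{P:newer approach}(\ref{P:newer approach ii}) and identify $\dim H - \dim H_{V_I}$ with $\dim G_{A_I,\ell}$ via the projection $G_A\to G_{A_I}$. The paper's only presentational difference is that it keeps $G_{A,\ell}^\circ$ throughout rather than performing your preliminary base-field extension, but the substance of the argument is identical.
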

\begin{proof}
 By Proposition~\ref{P:partial MTC}(\ref{P:partial MTC i}), we have $G_{A,\ell}^\circ\subseteq (G_A)_{\QQ_\ell}$ and hence our assumption that $H_{\QQ_\ell}$ and $G_{A,\ell}^\circ$ are conjugate in $(G_A)_{\QQ_\ell}$ makes sense.  Therefore, $\alpha(G_{A,\ell}^\circ)=\alpha(H_{\QQ_\ell})=\alpha(H)$, where the last equality uses Proposition~\ref{P:newer approach}(\ref{P:newer approach iii}).

With notation as in \S\ref{SS:MT def}, the isotypic decomposition of $V$ as a representation of $G_A$ is given by $V=\bigoplus_{i=1}^n V_i$, see Lemma~\ref{L:GA isotypical}.
 For a nonempty subset $I\subseteq \{1,\ldots,n\}$, define $V_I:=\bigoplus_{i\in I} V_i$.  For any nonempty subset $I\subseteq \{1,\ldots,n\}$, we find that $(H_{V_I})_{\QQ_\ell}=(H_{\QQ_\ell})_{V_I\otimes_\QQ \QQ_\ell}$ is conjugate in $(G_A)_{\QQ_\ell}$ to $(G_{A,\ell}^\circ)_{V_I\otimes_\QQ \QQ_\ell}= (G_{A,\ell}^\circ)_{V_\ell(A_I)}$.  So
 \[
 \frac{\dim H - \dim H_{V_I}}{\dim V_I} = \frac{\dim G_{A,\ell}^\circ - \dim (G_{A,\ell}^\circ )_{V_\ell(A_I)}}{\dim V_\ell(A_I)} = \frac{\dim G_{A_I,\ell}^\circ }{2\dim A_I},
 \]
 where the last equality uses that the kernel of the projection $G_{A,\ell}\to G_{A_I,\ell}$ is $(G_{A,\ell})_{V_\ell(A_I)}$.  In particular, 
 \begin{align}\label{E:H comp}
\min_{\emptyset \neq I \subseteq \{1,\ldots, n\}}  \frac{\dim H - \dim H_{V_I}}{\dim V_I} = \min_{\emptyset \neq I \subseteq \{1,\ldots, n\}}  \frac{\dim G_{A_I,\ell} }{ 2 \dim A_I}.
\end{align}

We claim that $V=\bigoplus_{i=1}^n V_i$ is also the isotypic decomposition of $V$ as a representation of $H$.   If the claim holds, then $\alpha(H)=\min_{I}  {\dim G_{A_I,\ell} }/({ 2 \dim A_I})$, where $I$ varies over the nonempty subsets of $\{1,\ldots, n\}$, by Proposition~\ref{P:newer approach}(\ref{P:newer approach ii}) and (\ref{E:H comp}).   Thus the proposition will follow from the claim since $\alpha(H)=\alpha(G_{A,\ell}^\circ)=\gamma_{A,\ell}^{-1}$.

The group $H$ is connected and reductive since $G_{A,\ell}^\circ$ has these properties and $H_{\QQ_\ell}\cong G_{A,\ell}^\circ$.  Since $V=\bigoplus_{i=1}^n V_i$ is the isotypic decomposition as a representation of $G_A$ and $H\subseteq G_A$, to prove the claim it suffices to show that $\End_\QQ(V)^{G_A}=\End_\QQ(V)^H$.  We have $\End_\QQ(V)^{G_A}\subseteq \End_\QQ(V)^H$ since $H\subseteq G_A$, so it suffices to show that they have the same dimension as $\QQ$-vector spaces.  

By Lemma~\ref{L:MT basics}(\ref{L:MT basics ii}), we have $\End_{\QQ}(V)^{G_A}= \End(A)\otimes_\ZZ \QQ$, where we are using that all the endomorphisms of $A_{\Kbar}$ are defined over $K$.  By Proposition~\ref{P:Faltings}(\ref{P:Faltings ii}) and our assumption that all the endomorphisms of $A_{\Kbar}$ are defined over $K$, we have $\End_{\QQ_\ell}(V_\ell(A))^{G_{A,\ell}^\circ} =\End(A) \otimes_\ZZ \Q_\ell$.  Therefore, 
\[
\End_\QQ(V)^H \otimes_\QQ \QQ_\ell = \End_{\QQ_\ell}(V\otimes_\QQ \QQ_\ell)^{H_{\QQ_\ell}} \cong \End_{\QQ_\ell}(V_\ell(A))^{G_{A,\ell}^\circ}  =\End(A) \otimes_\ZZ \Q_\ell.
\] 
So $\End_{\QQ}(V)^{G_A}$ and $\End_{\QQ}(V)^{H}$ both have the same dimension as $\End(A) \otimes_\ZZ \QQ$ as a $\QQ$-vector space and thus are equal.  This completes the proof of the claim and the proposition.
\end{proof}

We now prove Conjecture~\ref{C:main} for several abelian varieties.  In particular,  Conjecture~\ref{C:main} will hold whenever $\End(A_{\Kbar})=\ZZ$; this includes many cases for which the Mumford--Tate conjecture is unknown.

\begin{prop} \label{P:unconditional prop}
Suppose that $A$ is geometrically simple and that the center of the ring $\End(A_{\Kbar})$ is isomorphic to $\ZZ$.   Then Conjecture~\ref{C:main} holds, i.e.,  $\gamma_{A,\ell}={ 2 \dim A}/{\dim G_{A,\ell} }$.
\end{prop}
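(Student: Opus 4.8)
The plan is to prove that $\gamma_{A,\ell} = 2\dim A/\dim G_{A,\ell}$ under the hypothesis that $A$ is geometrically simple with $\End(A_{\Kbar})$ having center $\ZZ$. First I would reduce to the connected case: by Proposition~\ref{P:connected} and the invariance of $\gamma_{A,\ell}$ under finite base extension (together with Lemma~\ref{L:same beta} for compatibility), we may replace $K$ by a finite extension so that $G_{A,\ell}$ is connected; note that enlarging $K$ does not change $\End(A_{\Kbar})$ and keeps $A$ geometrically simple, so the hypotheses persist. Since $\gamma_{A,\ell}^{-1} = \alpha(G_{A,\ell}^\circ) = \alpha(G_{A,\ell})$, by Proposition~\ref{P:newer approach}(\ref{P:newer approach ii}) it suffices to show that $V_\ell(A)$ is an \emph{irreducible} representation of $G_{A,\ell}$, for then there is a single isotypic component, namely $V_\ell(A)$ itself, whence $\alpha(G_{A,\ell}) = (\dim G_{A,\ell} - \dim (G_{A,\ell})_{V_\ell(A)})/\dim V_\ell(A) = \dim G_{A,\ell}/(2\dim A)$.

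So the crux is irreducibility of $V_\ell(A)$ as a $G_{A,\ell}$-representation. The natural tool is Proposition~\ref{P:Faltings}(\ref{P:Faltings ii}): the commutant $\End_{\QQ_\ell[\Gal_K]}(V_\ell(A))$ equals $\End(A)\otimes_\ZZ\QQ_\ell = D\otimes_\QQ\QQ_\ell$, where $D = \End(A_{\Kbar})\otimes_\ZZ\QQ$ — here I use that all geometric endomorphisms are defined over $K$ after our base extension, or more carefully that after enlarging $K$ this holds. Since $\rho_{A,\ell^\infty}(\Gal_K)$ is Zariski-dense in $G_{A,\ell}$, the commutant of $G_{A,\ell}$ in $\End_{\QQ_\ell}(V_\ell(A))$ agrees with the commutant of $\Gal_K$, so it is also $D\otimes_\QQ\QQ_\ell$. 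Now $D$ is a division algebra whose center $L$ is, by hypothesis, $\QQ$. Since $G_{A,\ell}$ is reductive (Proposition~\ref{P:reductive}(\ref{P:reductive i})), $V_\ell(A)$ is semisimple, and the decomposition into isotypic components corresponds to the decomposition of $D\otimes_\QQ\QQ_\ell$ into simple factors; but the number of isotypic components is the number of simple factors of $D\otimes_\QQ\QQ_\ell$, which equals the number of factors in $L\otimes_\QQ\QQ_\ell$ — and since $L=\QQ$, this is exactly one. Hence $V_\ell(A)$ is isotypic, say $M^m$ for an irreducible $M$.

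This gives that $V_\ell(A)$ has a single isotypic component, which is already enough to apply Proposition~\ref{P:newer approach}(\ref{P:newer approach ii}) with the trivial partition $\{1,\dots,n\}$ having $n=1$: the only nonempty subset $I$ is $I=\{1\}$ with $V_I = V_\ell(A)$, and $(G_{A,\ell})_{V_\ell(A)}$ is trivial since $G_{A,\ell}$ acts faithfully on $V_\ell(A)$. Therefore
\[
\alpha(G_{A,\ell}) = \frac{\dim G_{A,\ell} - 0}{\dim V_\ell(A)} = \frac{\dim G_{A,\ell}}{2\dim A},
\]
and taking reciprocals yields $\gamma_{A,\ell} = 2\dim A/\dim G_{A,\ell}$. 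Finally, since $A$ is geometrically simple, $n=1$ in the notation of the conjecture, so the right-hand side of Conjecture~\ref{C:main} is precisely $2\dim A/\dim G_{A,\ell}$, matching the value just computed. The main obstacle, such as it is, lies in the bookkeeping around base extension — ensuring that after enlarging $K$ one simultaneously has $G_{A,\ell}$ connected, all geometric endomorphisms defined over $K$, and the hypothesis on the center preserved — rather than in any deep input; the substantive content is entirely carried by Faltings' theorem and the reductivity of the monodromy group. One subtlety worth checking carefully: we only need that $V_\ell(A)$ is isotypic (not irreducible) as a $G_{A,\ell}$-module, and isotypicity is exactly what the center condition $L=\QQ$ delivers via the commutant description, so there is no need to control the division algebra $D$ itself beyond its center.
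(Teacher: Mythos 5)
Your proof is correct and follows essentially the same route as the paper's: after base-extending so that $G_{A,\ell}$ is connected and all geometric endomorphisms are defined over $K$, you invoke Faltings to identify the commutant of $G_{A,\ell}$ with $D\otimes_\QQ\QQ_\ell$, use the hypothesis that $D$ has center $\QQ$ to conclude this commutant is simple (equivalently, you count its simple factors via places of the center, which reduces to one because $L=\QQ$), deduce that $V_\ell(A)$ has a single isotypic component, and then read off $\alpha(G_{A,\ell})$ from Proposition~\ref{P:newer approach}(\ref{P:newer approach ii}) together with faithfulness of the action. The paper phrases the key step more directly — $D$ central simple over $\QQ$ implies $D\otimes_\QQ\QQ_\ell$ is central simple over $\QQ_\ell$, hence $\End_{\QQ_\ell[\Gal_K]}(V_\ell(A))$ is a simple algebra and cannot decompose as a nontrivial product — but this is the same observation expressed without the detour through the decomposition $L\otimes_\QQ\QQ_\ell = \prod_\lambda L_\lambda$. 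Your closing remark that it is isotypicity (not irreducibility) that matters, and that isotypicity is exactly what the hypothesis on the center delivers, is the right way to think about it and is precisely what the remark following the proposition in the paper emphasizes as the obstruction to generalizing to $L\neq\QQ$.
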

\begin{proof}
Take any prime $\ell$.  After suitably increasing the field $K$, we may assume that $\End(A_{\Kbar})=\End(A)$ and that the group $G_{A,\ell}$ is connected.  The ring $D:=\End(A)\otimes_\ZZ \QQ$ is a division algebra since $A$ is geometrically simple.   From our assumption on $\End(A_{\Kbar})$, we find that the division algebra $D$ has center $\QQ$.   Therefore,  $D\otimes_\QQ \QQ_\ell$ is a central simple algebra over $\QQ_\ell$. 
 
By Proposition~\ref{P:Faltings}(\ref{P:Faltings ii}), the natural map $D\otimes_\QQ \QQ_\ell  \to \End_{\QQ_\ell[\Gal_K]}(V_\ell(A))$
is an isomorphism of $\QQ_\ell$-algebras.  Therefore, $\End_{\QQ_\ell[\Gal_K]}(V_\ell(A))$ is a central simple algebra over $\QQ_\ell$.   

Denote by $V_\ell(A)=\bigoplus_{i=1}^n V_i$ the decomposition of the representation $V_\ell(A)$ of $G_{A,\ell}$ into isotypical components.     We have $\End_{\QQ_\ell[\Gal_K]}(V_\ell(A)) = \prod_{i=1}^n \End_{\QQ_\ell[\Gal_K]}(V_i)$.    Since $\End_{\QQ_\ell[\Gal_K]}(V_\ell(A))$ is a simple $\QQ_\ell$-algebra, we deduce that $n=1$.  
Since there is only one isotypic component, by Proposition \ref{P:newer approach}(\ref{P:newer approach ii}) we find that 
\[
\gamma_{A,\ell} = \frac{\dim V_\ell(A)}{\dim G_{A,\ell} - \dim (G_{A,\ell})_{V_\ell(A)}} = \frac{2\dim A}{\dim G_{A,\ell}}.  \qedhere
\]
\end{proof}

\begin{remark}
Let us briefly sketch why we are currently unable to extend the proof of Proposition~\ref{P:unconditional prop} to arbitrary $A$.   For simplicity, assume that $A$ is geometrically simple, that $\End(A_{\Kbar})=\End(A)$ and that $G_{A,\ell}$ is connected for all $\ell$.

Denote the center of $\End(A)\otimes_\ZZ \QQ$ by $E$; it is a number field.   We have $E_\ell:=E\otimes_\QQ \QQ_\ell = \prod_{\lambda|\ell} E_\lambda$, where $\lambda$ runs over the places of $E$ that divide $\ell$.   The natural actions of $\Gal_K$ and $E_\ell$ on $V_\ell(A)$ commute.   Therefore,   $V_\lambda:=V_\ell(A)\otimes_{E_\ell} E_\lambda$ is a $\QQ_\ell[\Gal_K]$-module that we can identify with a submodule of $V_\ell(A)$.   We have $V_\ell(A)=\bigoplus_{\lambda|\ell} V_\lambda$ and using the work of Faltings, one can show that this is the isotypic decomposition of $V_\ell(A)$ as a representation of $G_{A,\ell}$ and that $G_{A,\ell}$ is reductive.    Using Proposition \ref{P:newer approach}(\ref{P:newer approach ii}), we have
\begin{align} \label{E:calL max}
\gamma_{A,\ell} = \max_{ \mathcal{L} \neq \emptyset} \frac{\dim V_{\mathcal{L}}}{\dim G_{A,\ell} - \dim (G_{A,\ell})_{V_{\mathcal{L}}}},
\end{align}
where $V_{\mathcal{L}}:=\bigoplus_{\lambda\in \mathcal{L}} V_\lambda$ and $\mathcal{L}$ runs over the nonempty sets of places $\lambda$ of $E$ that divide $\ell$.    Conjecture~\ref{C:main} is equivalent to showing that the maximum in (\ref{E:calL max}) is obtained with $\mathcal{L}=\{\lambda:\lambda|\ell\}$; this is obvious in the case of Proposition~\ref{P:unconditional prop} where $E=\QQ$.  
\end{remark}

\section{Improvements on Masser's bound} \label{S:Masser improvement}
\label{sec: Masser bound}
Consider a nonzero abelian variety $A$ defined over a number field $K$.   From Masser \cite{masser-lettre}, we always have the bound $\beta_A \leq \dim A$.   The goal of this section is to prove the following, which describes when Masser's bound can be improved upon.

\begin{thm} \label{T:Masser improvement}
We have $\beta_A \leq \dim A$, with equality holding if and only if $A$ is isogenous over $\Kbar$ to a power of a CM elliptic curve.
\end{thm}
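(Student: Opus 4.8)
The plan is to use the identity $\beta_A = \max_\ell \gamma_{A,\ell}$ from Theorem~\ref{T:main unconditional}, together with the description $\gamma_{A,\ell} = 1/\alpha(G_{A,\ell}^\circ)$ and the codimension/slope formalism of \S\ref{S:codimension bounds new}. By Lemma~\ref{L:same beta} we may replace $K$ by a finite extension, so we assume all $G_{A,\ell}$ are connected and $A$ is isogenous (over $K$) to $\prod_{i=1}^n A_i^{m_i}$ with the $A_i$ geometrically simple, pairwise nonisogenous, and all endomorphisms defined over $K$. First I would reduce to the geometrically simple case: since $\gamma_{A,\ell} = \max_{\emptyset\neq I}\, 2\dim A_I/\dim G_{A_I,\ell}^\circ$ by Lemma~\ref{L:gamma connection}(i) (combined with Proposition~\ref{P:newer approach}(ii) applied to $G_{A,\ell}^\circ$, as in the proof of that lemma), and $\dim A_I = \sum_{i\in I} m_i \dim A_i$, one checks that the maximum over all $I$ of $2\dim A_I/\dim G_{A_I,\ell}^\circ$ is attained by a singleton $I=\{i\}$: indeed for a product the monodromy group surjects onto each factor, so $\dim G_{A_I,\ell}^\circ \geq \max_{i\in I}\dim G_{A_i,\ell}^\circ$, whereas $2\dim A_I = \sum_{i\in I} 2m_i\dim A_i$; a short averaging/convexity argument (or just noting $2\dim A_i^{m_i}/\dim G_{A_i^{m_i},\ell}^\circ = 2\dim A_i/\dim G_{A_i,\ell}^\circ$ since $G_{A_i^{m_i},\ell}^\circ \cong G_{A_i,\ell}^\circ$) shows the singleton wins. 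Hence $\beta_A = \max_i \beta_{A_i}$, and $\beta_A \leq \dim A$ with equality forces $n=1$, $m_1=1$, and $\dim A = \dim A_1$; thus $A$ is $\Kbar$-isogenous to a power of a geometrically simple abelian variety $B$ with $\beta_B = \dim B$.

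Next, for geometrically simple $B$, I would show $\beta_B = \dim B$ forces $\dim B = 1$ and $B$ to have CM. Since $B$ is geometrically simple, $V:=V_\ell(B)$ is an irreducible representation of $G_{B,\ell}^\circ$, so $\gamma_{B,\ell} = 2\dim B/\dim G_{B,\ell}^\circ$ (take $W=V$ in the slope formula, which is optimal for an irreducible representation by Proposition~\ref{P:newer approach}(ii)). Thus $\beta_B = \dim B$ is equivalent to $\dim G_{B,\ell}^\circ = 2$ for some prime $\ell$. Now $G_{B,\ell}^\circ$ is reductive and acts irreducibly on the $2\dim B$-dimensional space $V$, and it contains homotheties (the central $\GG_m$), so its derived subgroup $\mathcal{S}$ acts on $V$ with $\dim \mathcal{S} \leq 1$; a connected semisimple group has dimension $0$ or $\geq 3$, so $\mathcal{S}$ is trivial and $G_{B,\ell}^\circ$ is a torus of dimension $2$. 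An irreducible faithful representation of a torus is $1$-dimensional, so $2\dim B = 1$... — wait, that is the wrong count. The resolution: over $\Qbar_\ell$ a $2$-dimensional torus splits and $V\otimes\Qbar_\ell$ decomposes into characters; irreducibility of $V$ over $\QQ_\ell$ together with the Galois action on characters forces $\dim_{\QQ_\ell} V \leq [\,\text{splitting field}:\QQ_\ell\,]\cdot 1$, and since $G_{B,\ell}^\circ$ contains the cocharacter $\mu$ whose weights on $V\otimes\CC$ are $0$ and $1$ (each with some multiplicity), a rank-$2$ torus with these properties and acting irreducibly forces $2\dim B = 2$, i.e. $\dim B = 1$. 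An elliptic curve with $\dim G_{B,\ell}^\circ = 2$ (rather than $4$... no, $\GL_2$ has dimension $4$, but for an elliptic curve $G_{B,\ell}\subseteq \GL_2$ has dimension $4$ in the non-CM case and $2$ in the CM case) is exactly one with complex multiplication, by the classical theory of Serre–Tate / the Mumford--Tate conjecture for CM elliptic curves (which is known).

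Finally, for the converse, if $A$ is $\Kbar$-isogenous to $E^g$ with $E$ a CM elliptic curve, then (after a finite extension) $G_{A,\ell}^\circ \cong G_{E,\ell}^\circ$ is a $2$-dimensional torus and $V_\ell(A) \cong V_\ell(E)^g$ is isotypic with $V_\ell(E)$ irreducible $2$-dimensional; by Proposition~\ref{P:newer approach}(ii) the optimal $W$ is the whole space $V_\ell(A)$, giving $\gamma_{A,\ell} = 2g/2 = g = \dim A$, hence $\beta_A \geq \dim A$, and combined with Masser's bound $\beta_A = \dim A$. \textbf{The main obstacle} I anticipate is the structural argument that a connected reductive group acting irreducibly on a $2\dim B$-dimensional $\QQ_\ell$-vector space, containing homotheties and the Hodge cocharacter, and having dimension $2$, must force $\dim B = 1$ and CM: one must rule out exotic possibilities using that $G_{B,\ell}^\circ$ is the $\ell$-adic monodromy group (e.g. via Proposition~\ref{P:partial MTC} comparing ranks, Bogomolov's theorem that homotheties are present, and the known CM case), rather than an abstract linear-algebraic group. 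I would handle this by passing to $\Qbar_\ell$, analyzing the torus' weights on $V_\ell(B)\otimes\Qbar_\ell$, and invoking that the Hodge cocharacter's weights are $\{0,1\}$ and that the Frobenius eigenvalues are Weil numbers, which pins down $B$ as a CM abelian variety of dimension $1$.
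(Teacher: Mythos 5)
Your high-level strategy (apply Theorem~\ref{T:main unconditional}, reduce to geometrically simple factors, analyze the $\ell$-adic monodromy group) is natural, and the reductive-structure observation that a connected semisimple group has dimension $0$ or $\geq 3$ is indeed the heart of the paper's Lemma~\ref{L:group dimension lower bound}. However, there are two genuine gaps that break the argument.

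First, your reduction ``to the geometrically simple case'' is incorrect. You assert that $\gamma_{A,\ell} = \max_{\emptyset\neq I} 2\dim A_I/\dim G_{A_I,\ell}^\circ$ and that this maximum is attained by a singleton. Neither claim holds: Lemma~\ref{L:gamma connection}(\ref{L:gamma connection i}) only gives $\gamma_{A,\ell}\geq\max_I 2\dim A_I/\dim G_{A_I,\ell}^\circ$ (and whether equality holds is exactly the content of the open Conjecture~\ref{C:main}), and the singleton claim is explicitly false. Take $A = E_1\times E_2$ with $E_1,E_2$ nonisogenous CM elliptic curves: then $\dim G_{E_i,\ell}^\circ = 2$, so each singleton gives ratio $1$, but $\dim G_{E_1\times E_2,\ell}^\circ = 3$ (this is exactly the Moonen--Zarhin input the paper invokes in Lemma~\ref{L:group dimension lower bound}), so $I=\{1,2\}$ gives $4/3>1$. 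In general one has $\dim G_{A_I,\ell}^\circ\leq\sum_{i\in I}\dim G_{A_i^{m_i},\ell}^\circ$, so the numerator $2\dim A_I = \sum_i 2\dim A_i^{m_i}$ and denominator can both grow, and the convexity goes the wrong way for your purposes. The paper instead works with a single subspace $W$ realizing $\gamma_{A,\ell}$ (via Proposition~\ref{P:newer approach}(\ref{P:newer approach ii})) and projects it onto the factors, proving only $\gamma_{A,\ell}\leq\sum_{i\in I}\gamma_{A_i,\ell}$, which is what is actually needed.

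Second, your claim that ``since $B$ is geometrically simple, $V_\ell(B)$ is an irreducible representation of $G_{B,\ell}^\circ$'' is false. By Faltings, $\End_{G_{B,\ell}^\circ}(V_\ell(B))\cong D\otimes_\QQ\QQ_\ell$ where $D=\End(B_{\Kbar})\otimes\QQ$; although $D$ is a division algebra, $D\otimes\QQ_\ell$ generally is not (e.g.\ if $B$ has CM by a field $F$ and $\ell$ splits in $F$, or if $D$ is a quaternion algebra splitting at $\ell$). Consequently $\gamma_{B,\ell}\neq 2\dim B/\dim G_{B,\ell}^\circ$ in general; this equality is exactly what Proposition~\ref{P:unconditional prop} establishes only under the stronger hypothesis that $\End(B_{\Kbar})$ has center $\ZZ$. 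The paper's Lemma~\ref{L:Masser bound, simple powers} sidesteps this by taking the optimal $W$ from Proposition~\ref{P:newer approach}(\ref{P:newer approach ii}) (a direct sum of isotypic components, not necessarily the full space) and doing a case analysis on $\delta:=\dim G_{A,\ell}-\dim(G_{A,\ell})_W\in\{1,2,\geq 3\}$; the case $\delta=1$ forces $G_{A,\ell}$ to act by homotheties on $W$, whence CM via Lemma~\ref{L:CM criteria}. That detour through a possibly proper subspace $W$ and the $\delta=1$ case is precisely what your irreducibility shortcut misses. Finally, the structural step you flag as ``the main obstacle'' --- that a $2$-dimensional torus as monodromy group forces $\dim B =1$ and CM --- is indeed the crux, and the paper handles it via the Mumford--Tate conjecture for CM abelian varieties together with Ribet's lower bound $\dim G_{B_i}\geq 2+\log_2\dim B_i$ and the Moonen--Zarhin bound for products; your sketch stops short of this.
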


Before beginning the proof,  we will need to prove a few lemmas.   Fix any prime $\ell$.   We first give some equivalent conditions for an abelian variety to have complex multiplication.

\begin{lemma} \label{L:CM criteria}
Let $A$ be a nonzero abelian variety over $K$ that is isogenous over $\Kbar$ to a power of a geometrically simple abelian variety.  Then the following are equivalent:
\begin{alphenum}
\item \label{I:CM cond a}
$A$ has complex multiplication,
\item \label{I:CM cond b}
$G_A$ is a torus,
\item \label{I:CM cond c}
$G_{A,\ell}^\circ$ is a torus {for some, equivalently for every, prime $\ell$},
\item \label{I:CM cond d}
there is a representation $W \subseteq V_\ell(A) \otimes_{\QQ_\ell} \Qbar_\ell$ of $(G_{A,\ell}^\circ)_{\Qbar_\ell}$ of degree $1$ { for some, equivalently for every, prime $\ell$}. 
\end{alphenum}
\end{lemma}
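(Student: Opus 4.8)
The plan is to prove the cycle (a)$\Rightarrow$(b)$\Rightarrow$(c)$\Rightarrow$(d)$\Rightarrow$(a), the ``for some $\iff$ for every'' clauses being a byproduct. A first reduction: after replacing $K$ by a finite extension we may assume that $A$ is $K$-isogenous to $B^m$ with $B$ geometrically simple, all endomorphisms of $B$ defined over $K$, and $G_{A,\ell}$ connected for every $\ell$ (Proposition~\ref{P:connected}). Then $G_A=G_B$ and $G_{A,\ell}=G_{B,\ell}$, while $V_\ell(A)\cong V_\ell(B)^{\oplus m}$ as representations of these groups; $A$ has CM iff $B$ does; and, projecting onto one factor, $V_\ell(A)\otimes_{\QQ_\ell}\Qbar_\ell$ contains a $1$-dimensional subrepresentation of $(G_{A,\ell}^\circ)_{\Qbar_\ell}$ iff $V_\ell(B)\otimes_{\QQ_\ell}\Qbar_\ell$ does. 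So none of conditions (a)--(d) changes and we may assume $A=B$ is geometrically simple.

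The equivalence (a)$\iff$(b) is the classical description of CM abelian varieties via the Mumford--Tate group (see~\cite{MR2062673}): $B$ has CM iff $G_B$ is commutative iff --- as $G_B$ is connected reductive by Lemma~\ref{L:MT basics}(\ref{L:MT basics i}) --- $G_B$ is a torus. (Easy direction: if $B$ has CM with field $F$, then $F=\End(B)\otimes_\ZZ\QQ=\End_\QQ(V)^{G_B}$ by Lemma~\ref{L:MT basics}(\ref{L:MT basics ii}), $V$ is a one-dimensional $F$-vector space, and $G_B\subseteq\Aut_F(V)=\Res_{F/\QQ}\GG_m$ is a torus.) For (b)$\Rightarrow$(c) for every $\ell$: $G_{A,\ell}^\circ\subseteq(G_A)_{\QQ_\ell}$ by Proposition~\ref{P:partial MTC}(\ref{P:partial MTC i}), and a connected algebraic subgroup of a torus is a torus. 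Trivially (c) for every $\ell$ implies (c) for some $\ell$ and implies (d) for every $\ell$ --- hence (d) for some $\ell$ --- since a split torus decomposes any nonzero representation into $1$-dimensional weight spaces. So it remains to prove (c) for some $\ell$ $\Rightarrow$ (a) and (d) for some $\ell$ $\Rightarrow$ (a).

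Assume first that $T:=G_{B,\ell}^\circ$ is a torus for some $\ell$. By Faltings (Proposition~\ref{P:Faltings}(\ref{P:Faltings ii})) and the openness of $\rho_{B,\ell^\infty}(\Gal_K)$ in $G_{B,\ell}(\QQ_\ell)$, the algebra $\End(B)\otimes_\ZZ\QQ_\ell$ equals the commutant of $T$ in $\End_{\QQ_\ell}(V_\ell(B))$. Writing $V_\ell(B)\otimes_{\QQ_\ell}\Qbar_\ell=\bigoplus_\chi W_\chi$ as the sum of weight spaces of $T_{\Qbar_\ell}$, with $k$ distinct weights and $m_\chi:=\dim W_\chi$ (so $\sum_\chi m_\chi=2\dim B$), the commutant of $T_{\Qbar_\ell}$ is $\prod_\chi M_{m_\chi}(\Qbar_\ell)$, whose center has $\Qbar_\ell$-dimension $k$. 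Hence the center $F$ of $\End(B)\otimes_\ZZ\QQ$ satisfies $[F:\QQ]=k$, whereas
\[
\dim_\QQ\bigl(\End(B)\otimes_\ZZ\QQ\bigr)=\sum_\chi m_\chi^2\ \geq\ \frac{(2\dim B)^2}{k}
\]
by Cauchy--Schwarz. Writing this dimension as $e^2k$ with $e^2=[\End(B)\otimes_\ZZ\QQ:F]$, the inequality becomes $ek\geq 2\dim B$, while Albert's classification of the endomorphism algebras of simple abelian varieties gives $e^2k\leq 2\dim B$. Combining, $e^2k\leq ek$, so $e=1$ and then $k=2\dim B$; thus $\End(B)\otimes_\ZZ\QQ$ is a field of degree $2\dim B$, i.e.\ $B$ has CM. This proves (c)$\Rightarrow$(a).

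The implication (d) for some $\ell$ $\Rightarrow$ (a) is the main obstacle. Assume that $V_\ell(B)\otimes_{\QQ_\ell}\Qbar_\ell$ contains a $1$-dimensional $(G_{B,\ell}^\circ)_{\Qbar_\ell}$-subrepresentation. Since $G_{B,\ell}^\circ$ is reductive (Proposition~\ref{P:reductive}(\ref{P:reductive i})), the same use of Faltings as above identifies the commutant of $(G_{B,\ell}^\circ)_{\Qbar_\ell}$ with $\prod_{\sigma\colon F\hookrightarrow\Qbar_\ell}M_e(\Qbar_\ell)$, so the isotypic decomposition reads $V_\ell(B)\otimes_{\QQ_\ell}\Qbar_\ell=\bigoplus_\sigma U_\sigma^{\oplus e}$ with the $U_\sigma$ pairwise non-isomorphic and $\sum_\sigma\dim U_\sigma=2\dim B/e$; the hypothesis is that $\dim U_{\sigma_0}=1$ for some $\sigma_0$. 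A single $1$-dimensional constituent is not by itself sufficient (distinct places of $F$ above $\ell$ may a priori carry constituents of different dimensions), and the extra ingredient I would use is the polarization: the Weil pairing yields a non-degenerate $(G_{B,\ell}^\circ)_{\Qbar_\ell}$-equivariant alternating form identifying $V_\ell(B)\otimes\Qbar_\ell$ with its twisted dual, so that the multiset $\{U_\sigma\}$ is stable under $U\mapsto U^\vee\otimes\psi$ for a character $\psi$ of the central torus. Combining this self-duality with the Albert bound $e^2[F:\QQ]\leq 2\dim B$ and a convexity estimate in the spirit of the Cauchy--Schwarz step above should force every $U_\sigma$ to be $1$-dimensional and $e=1$, i.e.\ $G_{B,\ell}^\circ$ to be a torus, whence (a) by the previous paragraph. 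Carrying out this ``dimension-one propagation'' uniformly is the delicate part; alternatively, one can quote the classical fact that an abelian variety over a number field whose neutral $\ell$-adic monodromy group is a torus for some $\ell$ has complex multiplication, which reduces the remaining task to showing only that one $1$-dimensional $\Qbar_\ell$-constituent forces $G_{B,\ell}^\circ$ to be a torus.
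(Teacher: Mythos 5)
Your reductions, the equivalence (a)$\iff$(b), the implication (b)$\Rightarrow$(c) via $G_{A,\ell}^\circ\subseteq(G_A)_{\QQ_\ell}$, and (c)$\Rightarrow$(d) are all correct, and your self-contained proof of (c)$\Rightarrow$(a) via Faltings, the weight-space count, Cauchy--Schwarz, and the constraint $e^2[F:\QQ]\leq 2\dim B$ is a valid alternative to what the paper does. But you stop short of a complete argument at exactly the implication you flag as delicate, namely (d)$\Rightarrow$(a) (equivalently, that a single $1$-dimensional $\Qbar_\ell$-constituent forces $G_{B,\ell}^\circ$ to be a torus). The polarization/self-duality idea you sketch is not carried out, and it is unclear it would suffice: self-duality by itself does not rule out a constituent of dimension $1$ coexisting with constituents of larger dimension. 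So as written the cycle is not closed and the proof is incomplete.

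The missing ingredient is a comparison between the two decompositions of $V_\ell(B)\otimes_{\QQ_\ell}\Qbar_\ell$: one as a representation of $(G_{B,\ell}^\circ)_{\Qbar_\ell}$, and one as a representation of $(G_B)_{\Qbar_\ell}$ via the comparison isomorphism $V\otimes_\QQ\QQ_\ell\cong V_\ell(B)$. By Lemma~\ref{L:MT basics}(\ref{L:MT basics ii}) and Proposition~\ref{P:Faltings}(\ref{P:Faltings ii}), both commutants are $D\otimes_\QQ\Qbar_\ell\cong M_e(\Qbar_\ell)^{[F:\QQ]}$, and since $G_{B,\ell}^\circ\subseteq(G_B)_{\QQ_\ell}$ by Proposition~\ref{P:partial MTC}(\ref{P:partial MTC i}) the two commutants are literally equal inside $\End_{\Qbar_\ell}(V_\ell(B)\otimes\Qbar_\ell)$; hence the two irreducible decompositions coincide. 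Now the crucial point, which your writeup lacks: $V$ is irreducible as a $\QQ$-representation of $G_B$ (because its commutant $D$ is a division algebra), so its irreducible $\Qbar_\ell$-constituents are all Galois-conjugate and therefore all have the same dimension $2g/(e[F:\QQ])$. It follows that the $(G_{B,\ell}^\circ)_{\Qbar_\ell}$-irreducible pieces $U_\sigma$ all have the same dimension, so if one of them is $1$-dimensional they all are, whence $(G_B)_{\Qbar_\ell}$ acts faithfully on a sum of lines and $G_B$ is a torus, i.e.~(d)$\Rightarrow$(b). This is the ``propagation'' step you were missing, and it comes from the Betti side, not from polarizations.
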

\begin{proof}
There is no harm in replacing $A$ by its base change by a finite extension of $K$.   So we may assume that $A$ is isogenous to a power of a geometrically simple abelian variety $A_1/K$.  Observe that if one of the conditions (\ref{I:CM cond a})--(\ref{I:CM cond d}) holds for $A$, then the corresponding condition also holds for $A_1$.   So without loss of generality, we may assume that $A$ is geometrically simple.   After again replacing the field $K$ by a finite extension, we may further assume that $\End(A)=\End(A_{\Kbar})$ and that $G_{A,\ell}$ is connected.    Define $g:=\dim A$.

Define $D:=\End(A)\otimes_\ZZ \QQ$.    Since $A$ is geometrically simple, $D$ is a division algebra.  Denote the center of $D$ by $F$ and define the natural numbers $d:=[F:\QQ]$ and $e:=[D:F]^{1/2}$.   With notation as in \S\ref{SS:MT def}, we have $G_A \subseteq \GL_V$.  By Lemma~\ref{L:MT basics}(\ref{L:MT basics ii}), $D$ agrees with the subalgebra of $\End_\QQ(V)$ that commutes with $G_A$.   Since $D$ is a division algebra, $V$ is an irreducible representation of the reductive group $G_A$.    

Set $L:=\Qbar_\ell$.  We have an isomorphism
\[
D \otimes_\QQ L = D \otimes_F (F\otimes_\QQ L) \cong D \otimes_F (\prod_{\sigma} L)\cong  \prod_{\sigma} (D \otimes_{F,\sigma} L ) \cong M_e(L)^d
\]
of $L$-algebras, where the products are over the $d$ embeddings $F\hookrightarrow L$.  In particular, the $L$-subalgebra of $\End_L(V\otimes_\QQ L)$ that commutes with $(G_A)_L$ is isomorphic to $M_e(L)^d$.  Since $(G_A)_L$ is reductive, we must have an isomorphism 
\begin{align} \label{E:decomp irred 1}
V\otimes_\QQ L \cong U_1^e \oplus \cdots \oplus U_d^e 
\end{align}
of representations of $(G_A)_L$, where the $U_i$ are irreducible and pairwise nonisomorphic.  Since $V$ is an irreducible representation of $G_A$, the $U_i$ must all have the same dimension over $L$.  In particular, $\dim_L U_i = (\dim V)/(de)= 2g/(de)$.

From Proposition~\ref{P:Faltings}(\ref{P:Faltings ii}), we find that the $L$-subalgebra of $\End_L(V_\ell(A)\otimes_{\QQ_\ell} L)$ that commutes with $(G_{A,\ell})_L$ is isomorphic to $D\otimes_\QQ L \cong M_e(L)^d$.   So we have an isomorphism 
\begin{align} \label{E:decomp irred 2}
V_\ell(A)\otimes_{\QQ_\ell} L \cong W_1^e \oplus \cdots \oplus W_d^e 
\end{align}
of representations of the reductive group $(G_{A,\ell})_L$, where the $W_i$ are irreducible and pairwise nonisomorphic.    Using the comparison isomorphism $V\otimes_\QQ \QQ_\ell = V_\ell(A)$ and the inclusion $G_{A,\ell} \subseteq (G_{A,\ell})_{\QQ_\ell}$ from Proposition~\ref{P:partial MTC}(\ref{P:partial MTC i}), we conclude that (\ref{E:decomp irred 1}) and (\ref{E:decomp irred 2}) give the same decomposition of $V_\ell(A)\otimes_{\QQ_\ell} L$ into irreducible representations of $(G_{A,\ell})_L$. 

We first show that (\ref{I:CM cond a}) and (\ref{I:CM cond b}) are equivalent.   
First suppose that $A$ has complex multiplication.  Since $A$ is geometrically simple, $D=F$ is a number field of degree $2g$.    So $e=1$ and $d=2g$, and hence each $U_i$ has dimension $(2g)/de=1$.  Since the representation of $(G_A)_L$ on $U_1\oplus \cdots \oplus  U_d$ is faithful and all the $U_i$ have dimension $1$, we deduce that $(G_A)_L$, and hence also $G_A$, is commutative.  Therefore, the reductive group $G_A$ must be a torus which completes the proof that (\ref{I:CM cond a}) implies (\ref{I:CM cond b}).  Now suppose instead that $G_A$ is a torus.  Since $V$ is an irreducible representation of the torus $G_A$ and $L$ is algebraically closed, $V\otimes_\QQ L$ will be a direct sum of irreducible representations of $(G_A)_L$ each having dimension $1$ and multiplicity $1$.   So in this case, we will have $e=1$ and $\dim U_i=1$ for all $1\leq i \leq d$.  So $2g/d=2g/(de)=\dim U_i =1$ and hence $d=2g$.   Therefore, $D=F$ is a number field of degree $2g$ and hence $A$ has complex multiplication.   This completes the proof that (\ref{I:CM cond a}) and (\ref{I:CM cond b}) are equivalent.

That (\ref{I:CM cond b}) implies (\ref{I:CM cond c}) {  for every prime $\ell$} is a direct consequence of the inclusion $G_{A,\ell} \subseteq (G_{A})_{\QQ_\ell}$ {  from Proposition \ref{P:partial MTC}(\ref{P:partial MTC i})}.  That (\ref{I:CM cond c}) {  for a given (resp.~every) prime $\ell$} implies (\ref{I:CM cond d}) {  for the same (resp.~every) prime $\ell$} is immediate since the irreducible representations of a torus over an algebraically closed field all have degree $1$.

Finally assume that (\ref{I:CM cond d}) holds {  for some prime $\ell$}.  It suffices to show that (\ref{I:CM cond b}) holds.  From our decomposition (\ref{E:decomp irred 1}) into irreducibles, one of the irreducible summands $U_i$ must be isomorphic to $W$ as a representation of $(G_{A,\ell})_L$.  Since all the $U_i$ have the same dimension, we deduce that $\dim U_i= \dim W=1$ for all $1\leq i \leq d$.    The group $(G_A)_L$ must be commutative since it acts faithfully on a direct sum of degree $1$ representations.  Therefore, the reductive group $(G_A)_L$, and hence also $G_A$, must be a torus.  
\end{proof}

We now describe when the $\ell$-adic monodromy group of a nonzero abelian variety has smallest possible dimension.

\begin{lemma} \label{L:group dimension lower bound}
Let $B$ be a nonzero abelian variety over $K$.   Then $\dim G_{B,\ell} \geq 2$, with equality holding if and only if $B$ is isogenous over $\Kbar$ to a power of a CM elliptic curve.
\end{lemma}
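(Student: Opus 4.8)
The plan is to read both halves of the statement off the $\ell$-adic theory assembled above, Faltings's theorems, and the structure theory of complex multiplication; the inequality will be soft, the equality case will invoke the CM case of the Mumford--Tate conjecture.

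\textbf{Lower bound.} First I would prove $\dim G_{B,\ell}\ge 2$. That $\dim G_{B,\ell}\ge 1$ is clear: otherwise, exactly as in the discussion preceding \eqref{eq:defgammaAell}, $V_\ell(B)$ would acquire a nonzero $\Gal_{K'}$-fixed vector over a finite extension $K'/K$, contradicting the Mordell--Weil theorem. Suppose $\dim G_{B,\ell}=1$. By a theorem of Bogomolov the image $\rho_{B,\ell^\infty}(\Gal_K)$ contains an open subgroup of the homotheties, so $G_{B,\ell}$ contains the one-dimensional torus of scalar matrices; being connected and one-dimensional, $G_{B,\ell}^\circ$ then \emph{equals} that torus of scalars. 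Hence, after replacing $K$ by the finite extension $K_{B,\ell}$, $\Gal_K$ acts on $V_\ell(B)$ through scalars, so $\End_{\QQ_\ell[\Gal_K]}(V_\ell(B))=\End_{\QQ_\ell}(V_\ell(B))$ has dimension $(2g)^2$. By Proposition~\ref{P:Faltings}(\ref{P:Faltings ii}) this forces $\dim_\QQ(\End(B_{\Kbar})\otimes_\ZZ\QQ)\ge(2g)^2$, which is impossible since $\dim_\QQ(\End(B_{\Kbar})\otimes_\ZZ\QQ)\le 2g^2<(2g)^2$ for $g\ge 1$ (apply the Albert bound $\dim_\QQ\End^0(B_i)\le 2\dim B_i$ to the simple isogeny factors $B_i$ of $B_{\Kbar}$). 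Thus $\dim G_{B,\ell}\ge 2$. (One can also avoid Bogomolov: $\det\rho_{B,\ell^\infty}=\chi_\ell^{g}$ shows $\det\colon G_{B,\ell}^\circ\to\GG_m$ is dominant, so if $\dim G_{B,\ell}=1$ then $G_{B,\ell}^\circ\cong\GG_m$ is split and $V_\ell(B)$ becomes a sum of powers $\eta^{a_j}$ of a single character after a finite extension; since the Hodge--Tate weights of $V_\ell(B)$ are $0$ and $-1$, each with multiplicity $g$, some $a_j$ must vanish, again producing $\Gal$-invariant vectors and contradicting Mordell--Weil.)

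\textbf{Powers of CM elliptic curves.} If $B\sim_{\Kbar}E^n$ for a CM elliptic curve $E$, I would enlarge $K$ so that the isogeny and the complex multiplication are defined over $K$ and $G_{B,\ell}$ is connected. Then $\rho_{B,\ell^\infty}|_{\Gal_K}$ is conjugate to $\sigma\mapsto\operatorname{diag}(\rho_{E,\ell^\infty}(\sigma),\ldots,\rho_{E,\ell^\infty}(\sigma))$, so $G_{B,\ell}^\circ\cong G_{E,\ell}^\circ$. Since the $\Gal_K$-action on $V_\ell(E)$ commutes with $\End^0(E)\otimes_\QQ\QQ_\ell\cong F\otimes_\QQ\QQ_\ell$ (with $F$ the imaginary quadratic CM field) and $V_\ell(E)$ is free of rank one over this ring, $\rho_{E,\ell^\infty}(\Gal_K)\subseteq(F\otimes_\QQ\QQ_\ell)^\times$ and hence $G_{E,\ell}^\circ\subseteq(\Res_{F/\QQ}\GG_m)_{\QQ_\ell}$, a torus of dimension $2$. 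So $\dim G_{B,\ell}\le 2$, and together with the lower bound $\dim G_{B,\ell}=2$.

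\textbf{Converse.} Now assume $\dim G_{B,\ell}=2$. Enlarging $K$, I may assume $G_{B,\ell}$ is connected, all endomorphisms of $B_{\Kbar}$ are defined over $K$, and $B\sim\prod_{i=1}^n B_i^{n_i}$ with the $B_i/K$ geometrically simple and pairwise nonisogenous. Each quotient $G_{B,\ell}^\circ\twoheadrightarrow G_{B_i,\ell}^\circ$ forces $2\ge\dim G_{B_i,\ell}\ge 2$, so $\dim G_{B_i,\ell}=2$; being connected reductive of dimension $2$ (Proposition~\ref{P:reductive}(\ref{P:reductive i})), $G_{B_i,\ell}^\circ$ is a torus, whence $B_i$ has complex multiplication by Lemma~\ref{L:CM criteria}, say $\End^0(B_i)=E_i$, a CM field of degree $2\dim B_i$. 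I claim $\dim B_i=1$. By the Mumford--Tate conjecture for CM abelian varieties $\dim G_{B_i}=2$; decompose $V_{B_i}\otimes_\QQ\Qbar=\bigoplus_\sigma L_\sigma$ over the $2\dim B_i$ embeddings $\sigma\colon E_i\hookrightarrow\Qbar$, with $G_{B_i,\Qbar}$ acting on $L_\sigma$ through a character $\chi_\sigma\in X^*(G_{B_i,\Qbar})\cong\ZZ^2$. By Lemma~\ref{L:MT basics}(\ref{L:MT basics ii}) the $\chi_\sigma$ are pairwise distinct (otherwise $\dim_\QQ\End^0(B_i)$ would exceed $2\dim B_i$). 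Complex conjugation gives $\chi_\sigma+\chi_{c\sigma}=w$, where $w\ne 0$ is the character by which $G_{B_i}$ acts on the polarization target $\QQ(1)$; hence $\sum_\sigma\chi_\sigma=(\dim B_i)\,w$ is a \emph{nonzero} vector of $\ZZ^2$ fixed by $\overline\Gamma:=\im(\Gal(\Qbar/\QQ)\to\GL_2(\ZZ))$. A finite subgroup of $\GL_2(\ZZ)$ fixing a nonzero vector has order at most $2$ (it becomes unipotent, hence trivial, modulo a $\{\pm1\}$-quotient); since $\Gal(\Qbar/\QQ)$, and so $\overline\Gamma$, permutes the distinct $\chi_\sigma$ transitively, $2\dim B_i\le|\overline\Gamma|\le 2$, i.e.\ $\dim B_i=1$. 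Finally, two CM elliptic curves over $\Kbar$ with the same CM field are isogenous over $\Kbar$, so the pairwise nonisogenous $B_i$ have pairwise distinct CM fields $F_i$. If $n\ge 2$, then $G_B\twoheadrightarrow G_{B_1\times B_2}\twoheadrightarrow G_{B_i}\cong\Res_{F_i/\QQ}\GG_m$ for $i=1,2$; were $\dim G_{B_1\times B_2}=2$ the last two surjections would be isogenies, forcing $\Res_{F_1/\QQ}\GG_m$ and $\Res_{F_2/\QQ}\GG_m$ to be isogenous over $\QQ$ and hence $\QQ\oplus\varepsilon_{F_1}\cong\QQ\oplus\varepsilon_{F_2}$ as $\Gal(\Qbar/\QQ)$-modules, i.e.\ $F_1=F_2$, a contradiction. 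So $\dim G_{B_1\times B_2}\ge 3$, and then (Mumford--Tate for the CM variety $B$) $\dim G_{B,\ell}=\dim G_B\ge 3$, contradicting $\dim G_{B,\ell}=2$. Therefore $n=1$ and $B\sim_{\Kbar}B_1^{n_1}$ with $B_1$ a CM elliptic curve. The one genuinely delicate ingredient is this step's reliance on the CM case of the Mumford--Tate conjecture: it is what lets one replace $G_{B_i,\ell}^\circ$ by the \emph{rational} Mumford--Tate torus, on whose character lattice $\Gal(\Qbar/\QQ)$ acts and separates CM fields; everything else is soft.
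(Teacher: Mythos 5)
Your proof is correct but takes a genuinely different route from the paper's. For the lower bound $\dim G_{B,\ell}\geq 2$, the paper first notes that if the semisimple part of the connected reductive group $G_{B,\ell}$ is nontrivial then $\dim G_{B,\ell}\geq 3$, reduces the remaining torus case to the Mumford--Tate group via the CM criterion (Lemma~\ref{L:CM criteria}) and Ullmo--Yafaev, and then reads off $\dim G_{B_i}\geq 2+\log_2\dim B_i$ from Ribet \cite{MR608640}. You instead argue on $G_{B,\ell}$ directly: Bogomolov forces a one-dimensional $G_{B,\ell}^\circ$ to be the scalar torus, and Faltings' isomorphism $\End(B)\otimes_\ZZ\QQ_\ell\cong\End_{\QQ_\ell[\Gal_K]}(V_\ell(B))$ combined with the Albert bound $\dim_\QQ(\End(B_{\Kbar})\otimes_\ZZ\QQ)\leq 2g^2<(2g)^2$ rules this out. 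In the equality case both arguments reduce, via the CM criterion and the CM case of Mumford--Tate, to showing that each simple CM factor $B_i$ is an elliptic curve and that only one factor occurs; the paper simply cites Ribet's inequality with its equality clause for the first, and Moonen--Zarhin \cite{MR1731466} for $\dim G_{E_1\times E_2}=3$, whereas you reprove both from scratch: a finite subgroup of $\GL_2(\ZZ)$ fixing the nonzero vector $\sum_\sigma\chi_\sigma$ has order at most $2$ yet must permute the $2\dim B_i$ distinct weights of the two-dimensional CM torus transitively, and an isogeny between $\Res_{F_1/\QQ}\GG_m$ and $\Res_{F_2/\QQ}\GG_m$ would force $F_1=F_2$ by comparing character lattices as Galois modules. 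Your version is more self-contained and avoids the two external CM citations, at the cost of redoing some torus-lattice computations that the literature already gives in exactly the required form; both versions ultimately rest on the CM case of the Mumford--Tate conjecture (equivalently, Ullmo--Yafaev's identification of the central tori).
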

\begin{proof}
After replacing $B$ by its base extension by some finite extension of $K$, we may assume that the group $G_{B,\ell}$ is connected.    

The group $G_{B,\ell}$ is reductive by Proposition~\ref{P:reductive}(\ref{P:reductive i}), so the quotient $S:=G_{B,\ell}/Z(G_{B,\ell})$ is semisimple, where $Z(G_{B,\ell})$ is the center of $G_{B,\ell}$.   If $S\neq 1$, then $\dim S\geq 3$ since the smallest dimension of a non-trivial semisimple group is 3 (realised by forms of $\SL_2$ and $\SL_2/\{\pm I\}$).  In particular, we have $\dim G_{B,\ell} \geq 3$ whenever $S\neq 1$.   So for the rest of the proof, we may assume that $S=1$; equivalently, $G_{B,\ell}$ is a torus.    By Lemma~\ref{L:CM criteria}, $G_{B}$ is also a torus and $B$ has complex multiplication.  We have $(G_B)_{\QQ_\ell} \cong G_{B,\ell}$ by \cite[Corollary 2.11]{UllmoYafaev}.   So it suffices to prove that $\dim G_B \geq 2$, with equality holding if and only if $B$ is isogenous over $\Kbar$ to a power of a CM elliptic curve.

After replacing $B$ by its base change by some finite extension of $K$, we may assume that $B$ is isogenous to a product $\prod_{i=1}^n B_i^{m_i}$, where the $B_i$ are abelian varieties over $K$ that are geometrically simple and pairwise nonisogenous.   

Take any $1\leq i \leq n$.   Since $B$ has complex multiplication, $B_i$ has complex multiplication and $G_{B_i}$ is a torus.   Since $G_{B_i}$ is a quotient of $G_B$, we have 
\[
\dim G_B \geq \dim G_{B_i} \geq 2+\log_2 \dim(B_i) \geq 2
\] 
by \cite[Equation (3.5)]{MR608640}, where the last inequality is an equality only if $B_i$ is an elliptic curve.   So $\dim G_B \geq 3$ if at least one of the $B_i$ is not an elliptic curve.    We can thus assume that each $B_i$ is a CM elliptic curve.

Suppose that $n\geq 2$.   Since $G_{B_1 \times B_2}$ is a quotient of $G_B$ and $B_1$ and $B_2$ are nonisogenous CM elliptic curves, we have 
\[
\dim G_B \geq \dim G_{B_1\times B_2} = 3
\]
by \cite[Corollary 3.9]{MR1731466}.  Finally, in the case $n=1$, i.e., $B$ is isogenous (over $\Kbar$) to a power of a CM elliptic curve $B_1$, we have $\dim G_B = \dim G_{B_1}=2$.
\end{proof}

We now consider bounds for $\gamma_{A,\ell}$ when $A$ has complex multiplication.  This case is easy to study because we know that the Mumford--Tate conjecture holds.

\begin{lemma}  \label{L:CM special case of Masser}
Let $A$ be a nonzero abelian variety over $K$ that has complex multiplication.   Then $\gamma_{A,\ell} \leq \dim A$, with equality holding if and only if $A$ is isogenous over $\Kbar$ to a power of a CM elliptic curve.
\end{lemma}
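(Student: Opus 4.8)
The plan is to exploit the fact that the Mumford--Tate conjecture is classically known for abelian varieties with complex multiplication; this reduces the statement to a computation with the Mumford--Tate groups of the simple isogeny factors of $A_\CC$, controlled via Lemma~\ref{L:group dimension lower bound}. First I would reduce $\gamma_{A,\ell}$ to $\gamma_A$. Since $A$ has complex multiplication, so does every abelian variety isogenous to a product of simple factors of $A_\CC$, and for all such varieties the Mumford--Tate conjecture holds; in particular it holds for $A$, so $\gamma_{A,\ell}=\gamma_A$ for every prime $\ell$ by Lemma~\ref{L:gamma connection}(\ref{L:gamma connection ii}). Writing $A_\CC$ as isogenous to $\prod_{i=1}^n A_i^{m_i}$ with the $A_i$ simple and pairwise nonisogenous, and setting $A_I=\prod_{i\in I}A_i^{m_i}$, recall from the definition of $\gamma_A$ (see also \S\ref{SS:gamma info}) that
\[
\gamma_A=\max_{\emptyset\neq I\subseteq\{1,\dots,n\}}\frac{2\dim A_I}{\dim G_{A_I}}.
\]
After replacing $K$ by a finite extension I may assume each $A_i$, and hence each $A_I$, is defined over $K$; this leaves $\gamma_{A,\ell}$, $\dim A$, the validity of the Mumford--Tate conjecture for $A$, and the property of $A$ being isogenous over $\Kbar$ to a power of a CM elliptic curve all unchanged.

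Next I would bound each term in the maximum. Each $A_I$ has complex multiplication, so the Mumford--Tate conjecture holds for $A_I$ and therefore $\dim G_{A_I}=\dim G_{A_I,\ell}$; by Lemma~\ref{L:group dimension lower bound} this is at least $2$. Hence for every nonempty $I$,
\[
\frac{2\dim A_I}{\dim G_{A_I}}\le \dim A_I\le \dim A,
\]
where the second inequality is an equality if and only if $I=\{1,\dots,n\}$. Taking the maximum over $I$ gives the desired bound $\gamma_{A,\ell}=\gamma_A\le\dim A$.

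Finally I would analyze the equality case. If $\gamma_A=\dim A$, then some term in the maximum equals $\dim A$; by the displayed inequalities this forces $\dim A_I=\dim A$, hence $I=\{1,\dots,n\}$, hence $\dim G_A=\dim G_{A_I}=2$ (the group $G_{A_{\{1,\dots,n\}}}$ equals $G_A$ since $A_\CC$ is isogenous to $\prod_i A_i^{m_i}$). Conversely, if $\dim G_A=2$, then taking $I=\{1,\dots,n\}$ gives $\gamma_A\ge 2\dim A/\dim G_A=\dim A$, so equality holds. Thus $\gamma_{A,\ell}=\dim A$ exactly when $\dim G_A=2$; and since the Mumford--Tate conjecture holds for $A$ we have $\dim G_A=\dim G_{A,\ell}$, which by Lemma~\ref{L:group dimension lower bound} equals $2$ precisely when $A$ is isogenous over $\Kbar$ to a power of a CM elliptic curve. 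The only points requiring care are the invariance of all the relevant quantities under the finite base extension and the classical input that complex multiplication implies the Mumford--Tate conjecture (used both to get $\gamma_{A,\ell}=\gamma_A$ and to pass between $\dim G_{A_I}$ and $\dim G_{A_I,\ell}$); granting these and Lemma~\ref{L:group dimension lower bound}, the argument is otherwise purely formal, so there is no serious obstacle.
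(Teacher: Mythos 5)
Your proof is correct and follows essentially the same route as the paper's: reduce to a nice isogeny decomposition over a finite extension, invoke the Mumford--Tate conjecture for CM abelian varieties (via Lemma~\ref{L:gamma connection}(\ref{L:gamma connection ii})) to convert $\gamma_{A,\ell}$ into the Mumford--Tate expression, bound each ratio $2\dim A_I/\dim G_{A_I}$ using Lemma~\ref{L:group dimension lower bound}, and then trace the equality case back to $\dim G_A=2$. The only cosmetic difference is that you pass fully to the Hodge-theoretic side ($\gamma_A$ and $\dim G_{A_I}$) before invoking Lemma~\ref{L:group dimension lower bound}, whereas the paper keeps the maximum in terms of $\dim G_{A_I,\ell}^\circ$; since MT is being used at every step, the two presentations are interchangeable.
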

\begin{proof}
After replacing $A$ by its base change by a finite extension of $K$, we may assume $A$ is isogenous to a product $\prod_{i=1}^n A_i^{m_i}$, where the $A_i$ are abelian varieties over $K$ that are geometrically simple and are pairwise nonisomorphic. By definition we have 
\[
\gamma_A = \max_{\emptyset \neq I \subseteq \{1,\ldots, n\}}  \frac{ 2 \dim A_I}{\dim G_{A_I} },
\]
where $A_I:=\prod_{i\in I} A_i^{m_i}$.  Since $A$ has complex multiplication, so do all the $A_I$.    Since the Mumford--Tate conjecture holds for all abelian varieties with complex multiplication \cite{PohlmannMumfordTateforCM}, Lemma~\ref{L:gamma connection}(\ref{L:gamma connection ii}) implies that 
\[
\gamma_{A,\ell} = \max_{\emptyset \neq I \subseteq \{1,\ldots, n\}}  \frac{ 2 \dim A_I}{\dim G_{A_I,\ell}^\circ }.
\]
Take any nonempty $I \subseteq \{1,\ldots, n\}$.  By Lemma~\ref{L:group dimension lower bound}, we have 
\[
2 \dim A_I / \dim G_{A_I,\ell}^\circ  \leq (2 \dim A_I)/2 = \dim A_I \leq \dim A,
\]
with $2 \dim A_I / \dim G_{A_I,\ell}^\circ = \dim A$ if and only if $I=\{1,\ldots,n\}$ and $A_I$ is isogenous over $\Kbar$ to a power of a CM elliptic curve.   The lemma is now immediate after taking the maximum over all $I$.
\end{proof}

We now consider powers of geometrically simple abelian varieties.

\begin{lemma} \label{L:Masser bound, simple powers}
Let $A$ be a nonzero abelian variety over $K$ that over $\Kbar$ is isomorphic to a power of a simple abelian variety.  We have $\gamma_{A,\ell} \leq \dim A$, with equality holding if and only if $A$ is isogenous over $\Kbar$ to a power of a CM elliptic curve.
\end{lemma}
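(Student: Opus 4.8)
The plan is to reduce to the geometrically simple case and then to combine Lemma~\ref{L:CM special case of Masser} (the CM case) with dimension estimates coming from Lemma~\ref{L:CM criteria} and Lemma~\ref{L:group dimension lower bound} (the non-CM case).

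First I would observe that $\gamma_{A,\ell}=1/\alpha(G_{A,\ell}^\circ)$ depends only on the neutral component $G_{A,\ell}^\circ$, and that $G_{A,\ell}^\circ$ is unchanged if we replace $K$ by a finite extension or replace $A$ by an abelian variety isogenous over $\Kbar$; the condition ``$A$ is isogenous over $\Kbar$ to a power of a CM elliptic curve'' is obviously preserved as well. So after enlarging $K$ I may assume $A=B^m$ for a geometrically simple $B/K$ with $\End(B)=\End(B_{\Kbar})$ and with $G_{B,\ell}$ connected; then $G_{A,\ell}$ is the diagonal copy of $G_{B,\ell}$ acting on $V_\ell(A)=V_\ell(B)^{\oplus m}$, so it is connected too. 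The isotypic components of $V_\ell(A)$ as a $G_{A,\ell}$-module are exactly the $V_i^{\oplus m}$, where the $V_i$ are the isotypic components of $V_\ell(B)$, and an element of $G_{A,\ell}\cong G_{B,\ell}$ fixes $V_i^{\oplus m}$ pointwise if and only if it fixes $V_i$ pointwise; hence Proposition~\ref{P:newer approach}(ii), applied to both $A$ and $B$, gives $\alpha(G_{A,\ell})=\tfrac1m\alpha(G_{B,\ell})$, i.e.\ $\gamma_{A,\ell}=m\,\gamma_{B,\ell}$. Thus the statement reduces to: for $B$ geometrically simple, $\gamma_{B,\ell}\le\dim B$, with equality if and only if $B$ is a CM elliptic curve.

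To prove the reduced statement, I would rewrite Proposition~\ref{P:newer approach}(ii) (using $G_{B,\ell}=G_{B,\ell}^\circ$ and $\dim G_{B,\ell}-\dim (G_{B,\ell})_{V_I}=\dim\bar G_I$, where $\bar G_I$ is the image of $G_{B,\ell}$ in $\GL_{V_I}$) in the form
$\gamma_{B,\ell}=\max_I \dim V_I/\dim\bar G_I$,
the maximum being over nonempty sums $V_I$ of isotypic components of $V_\ell(B)$; each $\bar G_I$ is connected and reductive. If $B$ has complex multiplication, I would invoke Lemma~\ref{L:CM special case of Masser} directly: $\gamma_{B,\ell}\le\dim B$, with equality if and only if $B$ is isogenous over $\Kbar$ to a power of a CM elliptic curve, i.e.\ (since $B$ is geometrically simple) if and only if $B$ is a CM elliptic curve. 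If $B$ has no complex multiplication, I claim $\dim\bar G_I\ge 2$ for every $I$: otherwise $\bar G_I$ is either trivial or a one-dimensional torus, so $V_I\otimes_{\QQ_\ell}\Qbar_\ell$, hence $V_\ell(B)\otimes_{\QQ_\ell}\Qbar_\ell$, contains a degree-one subrepresentation of $(G_{B,\ell}^\circ)_{\Qbar_\ell}$, and Lemma~\ref{L:CM criteria} (implication (d)$\Rightarrow$(a)) forces $B$ to have CM, a contradiction. Since $\dim V_I\le\dim V_\ell(B)=2\dim B$, this gives $\dim V_I/\dim\bar G_I\le\dim B$ for every $I$, hence $\gamma_{B,\ell}\le\dim B$. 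Moreover equality would require some $I$ with $\dim V_I=2\dim B$ (so $V_I=V_\ell(B)$ and $\bar G_I=G_{B,\ell}$, the action being faithful) and $\dim\bar G_I=2$, i.e.\ $\dim G_{B,\ell}=2$; by Lemma~\ref{L:group dimension lower bound} this forces $B$ to be isogenous over $\Kbar$ to a power of a CM elliptic curve, contradicting the non-CM hypothesis. So in the non-CM case the inequality is strict.

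Combining the two cases yields $\gamma_{B,\ell}\le\dim B$ with equality exactly when $B$ is a CM elliptic curve, and therefore $\gamma_{A,\ell}=m\gamma_{B,\ell}\le m\dim B=\dim A$ with equality exactly when $B$ is a CM elliptic curve, equivalently when $A$ is isogenous over $\Kbar$ to a power of a CM elliptic curve. The step that requires care is the non-CM case: one cannot simply use $\gamma_{B,\ell}=2\dim B/\dim G_{B,\ell}$ (which would make the conclusion immediate from Lemma~\ref{L:group dimension lower bound}), because $V_\ell(B)$ need not be isotypic as a $G_{B,\ell}$-module, so the maximum defining $\gamma_{B,\ell}$ can a priori be attained on a proper sum of isotypic components; the uniform bound $\dim\bar G_I\ge 2$ obtained from Lemma~\ref{L:CM criteria} is what controls these terms.
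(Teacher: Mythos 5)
Your proof is correct and follows essentially the same strategy as the paper's: reduce (after base extension) to an abelian variety of the form $B^m$ with $B$ geometrically simple, compute $\gamma$ via Proposition~\ref{P:newer approach}(\ref{P:newer approach ii}), handle the CM case by Lemma~\ref{L:CM special case of Masser}, and in the non-CM case rule out small-dimensional quotients via Lemmas~\ref{L:CM criteria} and~\ref{L:group dimension lower bound}. There is one small but real variation worth flagging. The paper works with $A$ directly, fixes the $W$ realizing $\alpha(G_{A,\ell})$, sets $\delta=\dim G_{A,\ell}-\dim(G_{A,\ell})_W$, and splits on $\delta\in\{1,2,\ge 3\}$; in the case $\delta=1$ it uses the fact that $G_{A,\ell}$ contains the homothety torus $\GG_m$ to conclude $G_{A,\ell}=\GG_m\cdot(G_{A,\ell})_W$ acts on $W$ by scalars, which produces a one-dimensional subrepresentation. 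You instead observe that each $\bar G_I$ is a connected reductive quotient, hence a torus if $\dim\bar G_I\le 1$, and that over $\Qbar_\ell$ the torus representation on $V_I$ splits into degree-one characters; this feeds Lemma~\ref{L:CM criteria}(\ref{I:CM cond d})$\Rightarrow$(\ref{I:CM cond a}) just as the paper's argument does, but without invoking $\GG_m\subseteq G_{A,\ell}$. Your uniform bound $\dim\bar G_I\ge 2$ in the non-CM case also streamlines the case analysis by bounding every term of the $\max$ at once rather than only the extremal one. Finally, the explicit formula $\gamma_{A,\ell}=m\,\gamma_{B,\ell}$ is correct and makes the reduction to the simple factor $B$ transparent; the paper achieves the same effect implicitly by decomposing $V_\ell(A)$ into the isotypic pieces $V_i^{\oplus m}$.
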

\begin{proof}
After replacing $A$ by its base change by a finite extension of $K$, we may assume that the group $G_{A,\ell}$ is connected and that $A$ is isogenous to a power of a geometrically simple abelian variety $A_1/K$.  By Proposition~\ref{P:newer approach}(\ref{P:newer approach ii}), we have 
\[
\gamma_{A,\ell} = \frac{\dim W}{\dim G_{A,\ell} - \dim (G_{A,\ell})_W}
\]
for some nonzero $\QQ_\ell$-subspace $W\subseteq V_\ell(A)$ that is the direct sum of isotypic components of $V_\ell(A)$ as a representation of $G_{A,\ell}$.  Define $\delta:=\dim G_{A,\ell} - \dim (G_{A,\ell})_W$.   If $\delta \geq 3$, then $\gamma_{A,\ell} = (\dim W)/\delta \leq (2\dim A)/3 < \dim A$.  So it remains to consider the cases where $\delta$ is $1$ or $2$.

Suppose that $\delta=2$.  We have $\gamma_{A,\ell} = (\dim W)/\delta = (\dim W) / 2 \leq (2\dim A)/2 = \dim A$.  Now suppose we have an equality $\gamma_{A,\ell}=\dim A$ and hence $\dim W =2\dim A$.  Therefore, $W=V_\ell(A)$ and we then have $(G_{A,\ell})_W=1$.   So $\dim A=\gamma_{A,\ell} = (2\dim A)/ (\dim G_{A,\ell}-0)$ and hence $\dim G_{A,\ell}=2$.  By Lemma~\ref{L:group dimension lower bound}, we deduce that $A$ is isogenous over $\Kbar$ to a power of a CM elliptic curve. 

Suppose that $\delta=1$.   We know that $G_{A,\ell}$ contains the group $\GG_m$ of homotheties of $V_\ell(A)$.   Since $G_{A,\ell}$ is connected and $\delta=1$, we have $G_{A,\ell} =\GG_m\cdot (G_{A,\ell})_W$.    In particular, $G_{A,\ell}$ acts on $W$ via homotheties.   Therefore, any $1$-dimensional subspace of $W$ is a representation of $G_{A,\ell}$.  By the equivalence of (\ref{I:CM cond a}) and (\ref{I:CM cond d}) in Lemma~\ref{L:CM criteria}, we deduce that $A$ has complex multiplication.  The lemma now follows from Lemma~\ref{L:CM special case of Masser}.
\end{proof}

We can now consider upper bounds of $\gamma_{A,\ell}$ for an arbitrary abelian variety.

\begin{thm} \label{T:Masser improvement gamma}
Let $A$ be a nonzero abelian variety over $K$.  We have $\gamma_{A,\ell} \leq \dim A$, with equality holding if and only if $A$ is isogenous over $\Kbar$ to a power of a CM elliptic curve.
\end{thm}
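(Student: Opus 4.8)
The plan is to run the argument of Lemma~\ref{L:Masser bound, simple powers} through the partial slopes of $G_{A,\ell}$ itself rather than through those of a single simple factor, isolating the place where the extra generality forces a new idea. As in the earlier lemmas, I would first reduce---replacing $A$ by its base change to a suitable finite extension of $K$ (Proposition~\ref{P:connected}) and by an isogenous abelian variety---to the case where $G:=G_{A,\ell}$ is connected and $A=\prod_{i=1}^n A_i^{m_i}$ with the $A_i$ geometrically simple and pairwise nonisogenous over $\Kbar$; none of these operations changes $\gamma_{A,\ell}$, $\dim A$, $\dim G_{A,\ell}$, or the property of being isogenous over $\Kbar$ to a power of a CM elliptic curve. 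Write $g=\dim A$. The group $G$ is connected reductive by Proposition~\ref{P:reductive}, so by Proposition~\ref{P:newer approach}(\ref{P:newer approach ii}) there is a nonzero $G$-subrepresentation $U\subseteq V_\ell(A)$ (a sum of isotypic components) with $\gamma_{A,\ell}=\dim U/\delta$, where $\delta:=\dim G-\dim G_U\geq 1$ (the inequality because a nonzero subspace cannot be fixed by all of $G$, cf.~\S\ref{SS:gamma info}); moreover $U$ is stable under $\Gal_K$ since $\Gal_K$ maps into $G(\QQ_\ell)$. As $\dim U\leq 2g$, the theorem reduces to understanding $\delta$.

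If $\delta\geq 3$ then $\gamma_{A,\ell}\leq 2g/3<g$, so there is strict inequality. If $\delta=2$ then $\gamma_{A,\ell}=\dim U/2\leq g$, with equality exactly when $U=V_\ell(A)$, in which case $G_U$ is trivial and hence $\dim G_{A,\ell}=\delta=2$. The crucial case is $\delta=1$, and here I would show it simply cannot occur. Arguing as in the proof of Lemma~\ref{L:Masser bound, simple powers}: $G_U$ is the kernel of $G\to\GL_U$, hence a normal subgroup of the connected group $G$ of codimension $1$; a homothety fixing $U\neq 0$ pointwise is trivial, so the subgroup $\GG_m\subseteq G$ of homotheties meets $G_U$ trivially, forcing $G=\GG_m\cdot G_U$ and hence $G$ acting on $U$ through homotheties. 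The new ingredient is then the following: because $U$ is a $\Gal_K$-stable subspace of $V_\ell(A)$, Faltings' theorems (semisimplicity of $V_\ell(A)$ and the Tate conjecture for homomorphisms, Proposition~\ref{P:Faltings}) provide an abelian subvariety $B\subseteq A$ together with a $\Gal_K$-equivariant isomorphism $U\cong V_\ell(B)$, and $B\neq 0$ since $U\neq 0$. But then $G_{B,\ell}$---the Zariski closure of the image of $\Gal_K$ acting on $V_\ell(B)\cong U$, on which it acts through scalars---is contained in $\GG_m$, so $\dim G_{B,\ell}\leq 1$, contradicting Lemma~\ref{L:group dimension lower bound}.

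Putting the three cases together gives $\delta\geq 2$ always, hence $\gamma_{A,\ell}\leq g=\dim A$, with equality if and only if $\delta=2$ and $U=V_\ell(A)$, i.e.\ if and only if $\dim G_{A,\ell}=2$; by Lemma~\ref{L:group dimension lower bound} this last condition is equivalent to $A$ being isogenous over $\Kbar$ to a power of a CM elliptic curve, and the converse implication (that such $A$ have $\gamma_{A,\ell}=\dim A$) is contained in the equality statement of Lemma~\ref{L:Masser bound, simple powers}. The main obstacle is exactly the $\delta=1$ case: in contrast with Lemma~\ref{L:Masser bound, simple powers}, one cannot deduce from $\delta=1$ that $A$ itself has complex multiplication---only that some subvariety would---so the proof has to proceed by contradiction via the identification $U\cong V_\ell(B)$, and making that identification precise requires descending the $\Gal_K$-equivariant projector onto $U$ from an idempotent of $\End(A)\otimes_\ZZ\QQ_\ell$ to an idempotent of $\End(A)\otimes_\ZZ\QQ$, which one can do by observing that idempotents of a semisimple $\QQ$-algebra are classified up to conjugacy by a type that is preserved under extension of scalars to $\QQ_\ell$.
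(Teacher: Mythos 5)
There is a genuine gap in your treatment of the $\delta=1$ case, and it is fatal: the claim that $\delta=1$ cannot occur is false, and the idempotent-descent assertion you rely on is also false.

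Take $A=E$ a CM elliptic curve with CM by an imaginary quadratic field $F$, and let $\ell$ be a prime that splits in $F$. Then $\End(E)\otimes\QQ_\ell\cong F\otimes\QQ_\ell\cong\QQ_\ell\times\QQ_\ell$, and correspondingly $V_\ell(E)=V_{\lambda_1}\oplus V_{\lambda_2}$ with each $V_{\lambda_i}$ a one-dimensional $\QQ_\ell[\Gal_K]$-module on which $\Gal_K$ acts by a character $\psi_i$; generically $\psi_1\neq\psi_2$, so this is the isotypic decomposition. Taking $U=V_{\lambda_1}$, the group $G:=G_{E,\ell}$ is a $2$-dimensional torus acting on $U$ through a single nontrivial character, so $G_U$ has dimension $1$ and $\delta=\dim G-\dim G_U=1$, with $\gamma_{E,\ell}=\dim U/\delta=1=\dim E$. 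So $\delta=1$ does occur, precisely at the boundary case $\gamma_{A,\ell}=\dim A$. Correspondingly, there is no abelian subvariety $B\subseteq E$ with $V_\ell(B)\cong U$: the only nonzero abelian subvariety of the simple curve $E$ is $E$ itself, and $V_\ell(E)\neq U$. The projector onto $U$ is the idempotent $(1,0)\in F\otimes\QQ_\ell$, which does \emph{not} descend to an idempotent of $F$ (whose only idempotents are $0$ and $1$). In general, idempotents of a semisimple $\QQ$-algebra are \emph{not} classified by a ``type'' preserved under base change to $\QQ_\ell$, because simple factors over $\QQ$ split into several simple factors over $\QQ_\ell$; this is precisely the obstruction discussed in the remark at the end of \S\ref{S:some remarks}, and it is why the paper's proof of Theorem~\ref{T:Masser improvement gamma} proceeds differently. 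The paper first writes $A\sim\prod_i A_i$ with each $A_i$ a power of a geometrically simple $B_i$, decomposes the extremal subspace $W=\oplus_i W_i$ with $W_i\subseteq V_\ell(A_i)$ (using Faltings to kill cross-terms), bounds $\gamma_{A,\ell}\leq\sum_{i\in I}\gamma_{A_i,\ell}$, and then invokes Lemma~\ref{L:Masser bound, simple powers}. Crucially, that lemma does \emph{not} claim $\delta\neq 1$; when $\delta=1$ it uses Lemma~\ref{L:CM criteria}(\ref{I:CM cond a})$\Leftrightarrow$(\ref{I:CM cond d}) --- which legitimately applies because $A_i$ is a power of a simple variety --- to conclude that $A_i$ has CM, and then defers to Lemma~\ref{L:CM special case of Masser}, where the CM case (and its equality condition) is settled directly using the known Mumford--Tate conjecture for CM abelian varieties together with Lemma~\ref{L:group dimension lower bound}. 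You will need to route the $\delta=1$ case through this CM detection rather than through a nonexistent subvariety $B$ with $V_\ell(B)=U$.
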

\begin{proof}
After replacing $A$ by its base change by a finite extension of $K$, we may assume that there is an isogeny $\varphi\colon A \to \prod_{i=1}^n A_i$, where each $A_i$ is a power of a geometrically simple abelian variety $B_i$ over $K$ and the $B_i$ are pairwise nonisogenous over $\overline{K}$.     We further assume that the group $G_{A,\ell}$, and hence all the groups $G_{A_i,\ell}$, are connected.  By Proposition~\ref{P:newer approach}(\ref{P:newer approach ii}), we have 
\[
\gamma_{A,\ell} = \frac{\dim W}{\dim G_{A,\ell} - \dim (G_{A,\ell})_W}
\]
for some nonzero $\QQ_\ell$-subspace $W\subseteq V_\ell(A)$ that is the direct sum of isotypic components of $V_\ell(A)$ as a representation of $G_{A,\ell}$.
 Our fixed isogeny $\varphi$ induces an isomorphism $V_\ell(A)=\oplus_{i=1}^n V_\ell(A_i)$ of representations of $G_{A,\ell}$.    We have $\Hom_{G_{A,\ell}}(V_\ell(A_i),V_\ell(A_j))=\Hom(A_i,A_j)\otimes_\ZZ \QQ_\ell = 0$ for all distinct $1\leq i,j \leq n$ by Proposition~\ref{P:Faltings}(\ref{P:Faltings iii}).  Therefore, $W=\oplus_{i=1}^n W_i$, where $W_i$ is a direct sum of isotypic components of $V_\ell(A_i)$ as a representation of $G_{A_i,\ell}$. 
 
 Let $I$ be the set of $i\in \{1,\ldots, n\}$ with $W_i\neq 0$.  For each $i\in I$, we have a natural homomorphism of groups $G_{A,\ell}/(G_{A,\ell})_W \twoheadrightarrow G_{A_i,\ell}/(G_{A_i,\ell})_{W_i}$ (we have used that $W_i$ and $W$ are direct sum of isotypic components to guarantee that we are taking a quotient by a normal subgroup).   In particular, 
 \[
 \dim G_{A,\ell} - \dim (G_{A,\ell})_W \geq \dim G_{A_i,\ell} - \dim (G_{A_i,\ell})_{W_i}.
 \]
Therefore,
\begin{align*}
\gamma_{A,\ell} 
&=  \sum_{i\in I} \frac{\dim W_i}{\dim G_{A,\ell} - \dim (G_{A,\ell})_W} 
\leq  \sum_{i\in I} \frac{\dim W_i}{\dim G_{A_i,\ell} - \dim (G_{A_i,\ell})_W} 
\leq \sum_{i \in I} \gamma_{A_i,\ell}.
\end{align*}
By Lemma~\ref{L:Masser bound, simple powers}, we have 
\[
\sum_{i\in I} \gamma_{A_i,\ell} \leq \sum_{i\in I } \dim A_i  \leq \dim A,
\]
with equalities holding if and only if $I=\{1,\ldots, n\}$ and $A_i$ is isogenous over $\Kbar$ to a power of a CM elliptic curve for all $1\leq i \leq n$.  In particular, we have show that $\gamma_{A,\ell} < \dim A$ whenever $A$ does not have complex multiplication.  The case where $A$ has complex multiplication has already been dealt with in Lemma~\ref{L:CM special case of Masser}.
\end{proof}

\begin{proof}[Proof of Theorem~\ref{T:Masser improvement}]
By Theorem~\ref{T:main unconditional}, we have $\beta_A=\gamma_{A,\ell}$ for some prime $\ell$.  The theorem is then an immediate consequence of Theorem~\ref{T:Masser improvement gamma}.
\end{proof}

\section{The extension generated by a single torsion point}\label{S:one torsion point}

In this section we prove the following result concerning the degrees of the extensions generated by a single torsion point.  In \S\ref{SS:proof in main d1}, we will use it to prove Theorem~\ref{T:main d1}.  We make use of the notation $d_1$ from \S\ref{ss: reduction to point counting}.

\begin{thm} \label{T:new main d1}
Let $A$ be a nonzero abelian variety defined over a number field $K$.  Define $\delta$ to be the minimum value of $\dim G_{A,\ell}^\circ - d_1(G_{A,\ell}^\circ)$ as we vary over all primes $\ell$.  Take any real number $\varepsilon>0$.  Then for any integer $n\geq 1$ and any point $P \in A(\Kbar)$ of order $n$, we have 
\[
[K(P):K] \gg_{A,\varepsilon}  n^{\delta-\varepsilon}.
\]
Moreover, $\delta$ is the largest real number with this property. 
\end{thm}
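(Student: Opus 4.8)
The plan is to prove separately the lower bound $[K(P):K]\gg_{A,\varepsilon}n^{\delta-\varepsilon}$ and the optimality of $\delta$, reducing both to the prime-power case, invoking a refinement of Proposition~\ref{propmain} for the first and a dual Oesterlé-type upper bound for the second. Throughout, write $\delta_\ell:=\dim G_{A,\ell}^\circ-d_1(G_{A,\ell}^\circ)$, so that $\delta=\min_\ell\delta_\ell$; each $\delta_\ell$ is a positive integer bounded in terms of $\dim A$ (the homotheties in $G_{A,\ell}$ already force the pointwise stabilizer of any line to have codimension $\geq 1$), so the minimum is attained.

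\emph{Reductions and the multiprime combination.} Replacing $K$ by a finite extension changes $[K(P):K]$ and $[K'(P):K']$ by a bounded factor, leaves every $\delta_\ell$ (hence $\delta$) unchanged, and does not affect the form of the conclusion; so by Propositions~\ref{P:connected} and~\ref{P:independence} I may assume every $G_{A,\ell}$ is connected (thus $G_{A,\ell}=G_{A,\ell}^\circ$) and the family $\{\rho_{A,\ell^\infty}\}_\ell$ is independent. If $P$ has order $n=\prod_{\ell\mid n}\ell^{a_\ell}$, its primary components $P_\ell$ (each of order $\ell^{a_\ell}$) generate $K(P)$ as a compositum, and independence gives $[K(P):K]=\prod_{\ell\mid n}[K(P_\ell):K]$. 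Hence it suffices to establish a prime-power estimate $[K(Q):K]\geq C_\ell^{-1}\,\ell^{a\delta_\ell}$ for every $Q\in A[\ell^a]$ of order $\ell^a$, with $C_\ell$ depending only on $A$ for all $\ell\gg_A 1$. Granting this, $[K(P):K]\geq\prod_{\ell\mid n}C_\ell^{-1}\ell^{a_\ell\delta}\geq C_0^{-\omega(n)}n^\delta$, where $C_0:=\max(1,\sup_\ell C_\ell)<\infty$ and $\omega(n)$ is the number of prime divisors of $n$; since $\omega(n)=O(\log n/\log\log n)$ one has $C_0^{\omega(n)}\ll_\varepsilon n^\varepsilon$ for every $\varepsilon>0$, which gives the asserted bound.

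\emph{The prime-power lower bound.} Lift $Q$ to a primitive vector $v\in T_\ell(A)$ and set $\bar v:=v\bmod\ell^a$ and $H:=\langle\bar v\rangle\subseteq(\ZZ/\ell^a\ZZ)^{2g}$, a cyclic group of order $\ell^a$. By construction $\Gal_{K(Q)}$ is the stabilizer of $\bar v$ inside $\rho_{A,\ell^\infty}(\Gal_K)$, so $[K(Q):K]=[\rho_{A,\ell^\infty}(\Gal_K):\rho_{A,\ell^\infty}(\Gal_K)\cap\fix_{\Gcal_{A,\ell}(\ZZ_\ell)}(H)]$, and since $[\Gcal_{A,\ell}(\ZZ_\ell):\rho_{A,\ell^\infty}(\Gal_K)]\ll_A 1$ by Theorem~\ref{T:finite index}, an elementary coset argument yields $[K(Q):K]\gg_A[\Gcal_{A,\ell}(\ZZ_\ell):\fix_{\Gcal_{A,\ell}(\ZZ_\ell)}(H)]$. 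Now I would run the dévissage in the proof of Proposition~\ref{propmain} in the case $r=1$ (i.e. $H$ cyclic), but \emph{stopping before} the inequality \eqref{eqpartslope} is used to replace $\dim G-d_1(G)$ by $1/\gamma_G$: this produces exactly $[\Gcal_{A,\ell}(\ZZ_\ell):\fix_{\Gcal_{A,\ell}(\ZZ_\ell)}(H)]\geq C(\Gcal_{A,\ell})^{-1}\,\ell^{a(\dim G_{A,\ell}-d_1(G_{A,\ell}))}=C(\Gcal_{A,\ell})^{-1}\ell^{a\delta_\ell}$. For the uniformity of the constant when $\ell\gg_A 1$ I would use the large-prime input of \S\ref{S:prime power version}: for such $\ell$ the scheme $\Gcal_{A,\ell}$ is smooth, whence the constant in Proposition~\ref{prop:count} is $1$ and the integer $e_r$ of Proposition~\ref{prop:er} vanishes (the relevant torsion sheaf on $\Grass_{1,2g}$ is defined over $\ZZ$ and annihilated by a fixed integer, by Proposition~\ref{P:redux finiteness} and Corollary~\ref{cor:SheafUniformlyAnnihilated}); alternatively, one re-runs the Lie-algebra filtration argument of \S\ref{SS:proof of large ell} directly for cyclic $Q$, whose graded pieces $W_j$ are lines in $\FF_\ell^{2g}$, to get $[K(Q):K]\gg_A\ell^{\,a\,(\dim\gG_{A,\ell}-\max_{\dim W=1}\dim(\gG_{A,\ell})_W)}$, together with the identity $\dim\gG_{A,\ell}-\max_{\dim W=1}\dim(\gG_{A,\ell})_W=\delta_\ell$ for $\ell\gg_A 1$. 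I expect this last point — upgrading Proposition~\ref{L:uniformness of alpha} from the full slope $\alpha$ to the finer invariant $d_1$, i.e. controlling uniformly in $\ell$ the pointwise stabilizers of \emph{all} lines rather than just of the finitely many isotypic-union subspaces — to be the main technical obstacle.

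\emph{Optimality of $\delta$.} Fix a prime $\ell_0$ with $\delta_{\ell_0}=\delta$ and (again enlarging $K$ at bounded cost) assume $G_{A,\ell_0}$ is connected. Since $d_1(G_{A,\ell_0})$ is a maximum over $\QQ_{\ell_0}$-lines, choose a $\QQ_{\ell_0}$-line $W_0\subseteq V_{\ell_0}(A)$ with $\dim(G_{A,\ell_0})_{W_0}=d_1(G_{A,\ell_0})$, let $v_0$ generate the saturated module $W_0\cap T_{\ell_0}(A)$, and set $P_a:=v_0\bmod\ell_0^a$, a point of order $\ell_0^a$. By orbit--stabilizer, $[K(P_a):K]$ equals the cardinality of the reduction mod $\ell_0^a$ of the orbit $\rho_{A,\ell_0^\infty}(\Gal_K)\cdot v_0$, which is contained in $\Gcal_{A,\ell_0}(\ZZ_{\ell_0})\cdot v_0$ and hence in the compact $\QQ_{\ell_0}$-analytic set $Y:=(G_{A,\ell_0}\cdot v_0)(\QQ_{\ell_0})\cap\ZZ_{\ell_0}^{2g}$, where $G_{A,\ell_0}\cdot v_0$ is the orbit variety. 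Its dimension is $\dim G_{A,\ell_0}-\dim\operatorname{Stab}_{G_{A,\ell_0}}(v_0)=\dim G_{A,\ell_0}-\dim(G_{A,\ell_0})_{W_0}=\delta$, using $\operatorname{Stab}(v_0)=(G_{A,\ell_0})_{W_0}$. The upper bound in Oesterlé's theorem \cite{Oesterle82} (the same result invoked for Lemma~\ref{lemlowbound}) then gives $[K(P_a):K]\leq\bigl|\pi_a(Y)\bigr|\ll_A\ell_0^{a\delta}$. Consequently, given any $\eta>0$, taking $\varepsilon=\eta/2$ the points $P_a$ contradict $[K(P):K]\gg_{A,\varepsilon}n^{(\delta+\eta)-\varepsilon}$ for $a$ large, so no real number exceeding $\delta$ has the property of the theorem; together with the first two parts, this shows $\delta$ is the largest such number.
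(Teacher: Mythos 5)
Your overall architecture is the same as the paper's: reduce to the prime‑power case via Propositions~\ref{P:connected} and~\ref{P:independence}, prove a per‑prime inequality $[K(Q):K]\gg \ell^{a\delta_\ell}$ by running the small‑prime machinery of \S\ref{S: small primes} in the cyclic case $r=1$ (stopping before the passage from $\dim G-d_1(G)$ to $1/\gamma_G$), absorb the product of per‑prime constants into $n^\varepsilon$, and exhibit optimality at a single well‑chosen prime. The optimality argument you give is genuinely different from the paper's and, as far as I can see, correct and a little slicker: you bound $|\pi_a(\rho_{A,\ell_0^\infty}(\Gal_K)\cdot v_0)|$ directly by the Oesterl\'e upper bound applied to the analytic set $(G_{A,\ell_0}\cdot v_0)(\QQ_{\ell_0})\cap\ZZ_{\ell_0}^{2g}$, whose dimension is $\delta$ because the pointwise stabilizer of the line $W_0$ is exactly $\operatorname{Stab}(v_0)$; the paper instead computes $[K(P_e):K]$ as a ratio $[K(A[\ell^e]):K]/[K(A[\ell^e]):K(P_e)]$ and quotes Serre's Th\'eor\`eme~9 for both numerator and denominator, but the two routes give the same conclusion.

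The genuine gap is exactly the point you flag at the end of the prime‑power paragraph, and it is not a minor one. To make the per‑prime constant independent of $\ell$ for $\ell\gg_A 1$ you need the identity $\dim\gG_{A,\ell}-\max_{\dim W=1}\dim(\gG_{A,\ell})_W=\dim G_{A,\ell}-d_1(G_{A,\ell})$ for large $\ell$. Unlike $\alpha(G)$, the invariant $d_1$ is a maximum over \emph{all} lines, not over the finitely many isotypic unions, so the argument of Proposition~\ref{L:uniformness of alpha} does not carry over: smoothness of the finitely many $\Fix_{\calG}(\calV_J)$ gives you nothing about stabilizers of arbitrary lines. The paper fills this with two independent inputs: Lemma~\ref{L:generic smoothness of GW}, which shows that for $\ell\gg_A 1$ \emph{every} one‑dimensional stabilizer $(G_{\FFbar_\ell})_W$ is smooth (so Lie‑algebra and group dimensions agree), and Lemma~\ref{L:d_1 agreement for large ell}, which shows $d_1((\calG_{A,\ell})_{\FF_\ell})=d_1(G_{A,\ell})$ by a semicontinuity/singular‑locus argument on the incidence scheme $Z_m\subseteq\AA^{2g}$ of Lemma~\ref{L:new d1} together with Hensel lifting. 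Your first sketch toward this gap — arguing $e_1=0$ from smoothness of $\Gcal_{A,\ell}$ plus Proposition~\ref{P:redux finiteness} and Corollary~\ref{cor:SheafUniformlyAnnihilated} — does not work as stated: smoothness of $\Gcal_{A,\ell}$ says nothing about smoothness of the stabilizer subschemes $\Fix_{\Gcal_{A,\ell}}(\Wcal)$ (the whole point of \S\ref{S: small primes} is that these can be nonsmooth), the sheaf $\Fcal$ in Proposition~\ref{prop:er} is only controlled up to conjugation over $\ZZ_\ell^{\mathrm{un}}$ rather than $\ZZ_\ell$, and even with $e_1=0$ the numerator constant from Lemma~\ref{lemlowbound} is ineffective and $\ell$‑dependent, so one is forced back to the Lie‑filtration count of \S\ref{SS:proof of large ell} (via Lemma~\ref{L:elli image size} and Proposition~\ref{P:HW count}) rather than Oesterl\'e for the numerator. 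So the plan is sound and correctly identifies the crux, but the crux — uniform control of $d_1$ across the special and generic fibers of $\calG_{A,\ell}$ — is left unresolved.
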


The structure of the proof is similar to that of Theorem \ref{T:main}: we reduce to the case when the order of $P$ is the power of a single prime $\ell$ and we establish the required inequality separately for each prime, with a constant which is independent of $\ell$ for $\ell \gg_{A} 1$.

\subsection{Some lemmas}

\begin{lemma} \label{L:new d1}
Fix a Noetherian integral domain $R$ and let $F$ be its fraction field.  Let $G$ be a reductive subgroup of $\GL_{n,R}$.  Take any integer $m\geq 0$.  Then there  is a unique reduced closed subscheme $Z_m$
of $\AA^n_R$ such that for any field $k$ that is an $R$-algebra  and any $w\in k^n$, we have $w\in Z_m(k)$ if and only if 
$\dim (G_k)_W \geq m$ where $W:=k\cdot w \subseteq k^n$.
\end{lemma}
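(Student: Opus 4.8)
The statement is essentially a definability/constructibility assertion: the locus of lines $W \subseteq k^n$ whose pointwise stabilizer in $G$ has dimension at least $m$ is cut out by polynomial conditions on $w$, and these conditions can be chosen uniformly in the field $k$ over $R$. The plan is to work on the total space $\AA^n_R$ (or rather on a natural family of stabilizer subschemes over it) and apply upper semicontinuity of fiber dimension for morphisms of finite type, together with constructibility of the image, to obtain a constructible subset of $\AA^n_R$; then take its reduced closure, and finally argue that this is actually closed (not merely constructible) because the condition ``$\dim \geq m$'' is closed under specialization for the fibers of an affine group scheme.

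\textbf{Construction of the family.} First I would form the ``universal line stabilizer'' over $\AA^n_R$. Over the open subscheme $U \subseteq \AA^n_R$ where the coordinate functions do not all vanish, there is a canonical line subbundle $\mathcal{L} \subseteq \mathcal{O}_U^n$ (the one spanned by the tautological section), and one can form $\Fix_{G_U}(\mathcal{L})$ exactly as in \S\ref{sect:NotationSchemesStabilisers} — this works because locally on $U$ the section $w$ generates a saturated rank-one submodule, so the Grassmannian construction of \S\ref{sect:NotationSchemesStabilisers} applies after pulling back along the classifying map $U \to \Grass_{1,n,R}$. Call the resulting affine group scheme $\calZ \to U$; its fiber over a point corresponding to $w \in k^n$ is exactly $(G_k)_W$ with $W = k \cdot w$. (Note that for $w = \lambda w'$ with $\lambda \in k^\times$ we get the same $W$, so $\calZ$ really only depends on the line; this is harmless.)

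\textbf{Semicontinuity and descent of the locus.} The structure morphism $\calZ \to U$ is of finite type, so by Chevalley's theorem on upper semicontinuity of fiber dimension (\cite[Tag 0D4H]{stacks-project}, say, or the version for the dimension of fibers of a finite-type morphism), the set $U_{\geq m} := \{u \in U : \dim \calZ_u \geq m\}$ is a closed subset of $U$. I take $Z_m$ to be the reduced induced closed subscheme structure on the closure of $U_{\geq m}$ inside $\AA^n_R$ — but one must check that adding the origin (and any points of $\AA^n_R \setminus U$) does not change the fiber description, which is fine since the statement only quantifies over nonzero $w$; alternatively one simply notes that the locus of interest is contained in $U$ and defines $Z_m$ as the reduced closed subscheme $\overline{U_{\geq m}}$. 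The defining property over an arbitrary field $k/R$ and $w \in k^n \setminus \{0\}$ then follows: the corresponding point $x \in U(k)$ lies in $U_{\geq m}$ iff $\dim(\calZ)_x = \dim (G_k)_W \geq m$, and $x \in Z_m(k) \cap U$ iff $x \in U_{\geq m}$ since $U_{\geq m}$ is already closed in $U$. Uniqueness of $Z_m$ is automatic: a reduced closed subscheme of a Noetherian scheme is determined by its $k$-points as $k$ ranges over all fields, and two reduced closed subschemes with the same such points on the dense open $U$ (and no further constraints) have the same closure.

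\textbf{Main obstacle.} The delicate point is that I want $Z_m$ to be a genuine closed subscheme of $\AA^n_R$ whose field-valued points are described by the dimension condition \emph{exactly}, with no discrepancy coming from non-closed points of $\Spec R$ or from the interaction between the generic and special fibers (recall $R$ need not be a field, so $G$ may have different dimensions on different fibers of $\Spec R$, and stabilizers can jump badly in bad characteristic, cf.\ the discussion around \cite{Cotner22}). The cleanest fix is to note that the lemma only asserts the property fiberwise over fields $k$ that are $R$-algebras, and upper semicontinuity of fiber dimension is exactly a statement about such fibers; so the apparent subtlety dissolves once one is careful that ``$\dim (G_k)_W$'' in the statement means the dimension of the $k$-scheme $(G_k)_W$, computed in the fiber, which is precisely $\dim \calZ_x$. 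The only real work, then, is verifying that $\calZ \to U$ is of finite type and that its formation commutes with base change $\Spec k \to U$, both of which are immediate from the explicit equations defining $\Fix_{G_U}(\mathcal{L})$ in the affine charts of the Grassmannian, as in Proposition~\ref{P: Fix exists}.
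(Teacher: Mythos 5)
Your core strategy — build a universal stabilizer scheme over $\AA^n_R$ and invoke Chevalley's upper semicontinuity of fiber dimension — is exactly the approach the paper takes. But the implementation has two real defects, both caused by the unnecessary detour through the Grassmannian and the restriction to $U = \AA^n_R \setminus \{0\}$.

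First, the lemma does \emph{not} restrict to nonzero $w$, contrary to your claim. For $w = 0$ one has $W = 0$, hence $(G_k)_W = G_k$, so the characterization demands $0 \in Z_m(k)$ if and only if $\dim G_k \geq m$; since $G$ is reductive, hence smooth over $R$, this is equivalent to $m \leq \dim G$. Your definition $Z_m := \overline{U_{\geq m}}$ does not achieve this. Concretely, take $G = \GL_{n,R}$ acting tautologically and $m = n^2$: for any nonzero $w$ one has $\dim (G_k)_W = n^2 - n < n^2$, so $U_{\geq m}$ is empty and so is its closure, yet the origin must lie in $Z_m(k)$ for every $k$.

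Second, the uniqueness argument is broken. You correctly note that a reduced closed subscheme of a Noetherian scheme is determined by its field-valued points, but you then assert that two reduced closed subschemes with the same $k$-points on the dense open $U$ have the same closure. That is false: a reduced closed subscheme can have an irreducible component entirely contained in the zero section $\AA^n_R \setminus U$, which is invisible from $U$. Since your construction only pins down $Z_m(k)$ for $w \neq 0$, uniqueness over all of $\AA^n_R$ simply does not follow.

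Both problems disappear if you drop the Grassmannian and build the family over all of $X := \AA^n_R$ directly. Let $S = \Spec R$, let $f \colon G \times_S X \to X \times_S X$ be $(g,x) \mapsto (g\cdot x, x)$, let $\Delta \colon X \to X\times_S X$ be the diagonal, and set $Y := (G \times_S X) \times_{f,\, X\times_S X,\, \Delta} X$. Then $Y$ is a closed $X$-group subscheme of $G \times_S X$, of finite type over $X$, whose fiber over the point of $X$ corresponding to $w \in k^n$ is $(G_k)_W$ with $W = k\cdot w$ — including $w=0$, where the fiber is all of $G_k$. Chevalley's semicontinuity theorem now gives that $\{x \in X : \dim Y_x \geq m\}$ is closed in $X$, and its reduced structure is a $Z_m$ satisfying the characterization at every $w$. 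Uniqueness then follows directly from the fact you already quoted (reduced closed subschemes are determined by their field-valued points), now applied on all of $X$ rather than only on $U$.
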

\begin{proof}
The group $G$ is a smooth group scheme over $S:=\Spec R$.    There is a natural left action of $G$ on the $S$-scheme $X:=\AA^n_R$.   Define the morphisms $f\colon G\times_S X \to X\times_S X$ and $\Delta\colon X \to X\times_S X$ by $(g,x)\mapsto  (g\cdot x, x)$ and $x\mapsto (x,x)$, respectively.  We define $Y$ to be the $X$-scheme given by the cartesian product 
\[
\xymatrix{
Y \ar[r] \ar[d] &  X \ar[d]^\Delta \\
G\times_S X \ar[r]^f & X\times_S X.
}
\]
This defines an $X$-group scheme $Y$ that we may view as a closed $X$-group subscheme of $G\times_S X$, and $Y$ is of finite type over $X$;  for details see \cite[V.10.2]{SGA3-I}. The function 
\[
X\to \ZZ,\quad x\mapsto \dim Y_x,
\]
where $Y_x$ is the fiber of $Y$ over $x$, 
is upper semicontinuous as a consequence of Chevalley's semicontinuity theorem, cf.~\cite[VI\,B.4.1]{SGA3-I}.  Equivalently, for each integer $m\geq 0$, the set $Z_m$ of points $x\in X$ that satisfy $\dim Y_x \geq m$ can be viewed as a reduced closed subscheme of $X$. 

Now consider any field $k$ that is an $R$-algebra.  Take any nonzero $w \in k^n =X(k^n)$ and define the subspace $W:=k\cdot w \subseteq k^n$.   The fiber $Y_w$ is group scheme over $k$ that is isomorphic to the algebraic subgroup of $G_k$ that fixes $w$.  In particular, $Y_w \cong (G_k)_W$ and hence $\dim Y_w = \dim (G_k)_W$.  Therefore,  $w\in Z_m(k)$ if and only if $\dim (G_k)_W \geq m$.   

Finally, $Z_m$ as given in the statement of the lemma is unique since it is a reduced closed subscheme of $\AA^n_R$ for which we know the set $Z_m(k)$ for all fields $k$ that are $R$-algebras.
\end{proof}

We now give some basic properties of $d_1(G)$ when $G$ is defined over a field.

\begin{lemma} 
\label{L:new d1 field case}
Let $F$ be a field and fix an algebraic closure $\Fbar$. Let $G$ be a reductive subgroup of $\GL_V$ where $V$ is a finite dimensional vector space over $F$.
\begin{romanenum}
\item \label{L:new d1 field case i}
For any field extension $L$ of $F$, we have $d_1(G_{L}) \leq d_1(G_{\Fbar})$.
\item \label{L:new d1 field case ii}
For any algebraically closed field $L$ containing $F$, we have $d_1(G_L)=d_1(G_{\Fbar})$.
\end{romanenum}
\end{lemma}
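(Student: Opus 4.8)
The statement concerns the invariant $d_1(G) = \max_{\dim W = 1} \dim \Fix_G(W)$ for a reductive group $G \subseteq \GL_V$ over a field $F$, and how it behaves under field extension. The key tool is the previous lemma (Lemma~\ref{L:new d1}), which produces, for each $m \geq 0$, a reduced closed subscheme $Z_m \subseteq \AA^n_R$ (here with $R = F$ and $n = \dim V$) whose points over any field $k$ that is an $F$-algebra are exactly the vectors $w$ with $\dim (G_k)_{k\cdot w} \geq m$. Working with $\Fbar$-points: $d_1(G_{\Fbar})$ is the largest $m$ such that $Z_m(\Fbar)$ contains a nonzero vector, i.e.~the largest $m$ with $Z_m \neq \{0\}$ as a subscheme of $\AA^n_F$ after base change to $\Fbar$ (equivalently, $Z_m$ has a point outside the origin over $\Fbar$).

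For part~(\ref{L:new d1 field case i}), I would argue as follows. Take a field extension $L/F$ and let $m := d_1(G_L)$; choose a nonzero $w \in L^n$ with $\dim (G_L)_{L\cdot w} \geq m$. By Lemma~\ref{L:new d1} applied over $R = F$, this says precisely $w \in Z_m(L)$, so $Z_m$ is a closed subscheme of $\AA^n_F$ having an $L$-point distinct from the origin; since $L$ is a field containing $F$ and $Z_m$ is of finite type over $F$, the scheme $Z_m$ has dimension $\geq 1$ (it is not supported at the origin), hence its base change $(Z_m)_{\Fbar}$ also has a nonzero point, giving a nonzero $w' \in \Fbar^n$ with $w' \in Z_m(\Fbar)$, i.e.~$\dim (G_{\Fbar})_{\Fbar\cdot w'} \geq m$. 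Therefore $d_1(G_{\Fbar}) \geq m = d_1(G_L)$. The only subtlety here is justifying that a nonzero $L$-point of $Z_m$ forces a nonzero $\Fbar$-point; this follows because $Z_m$ is a closed $F$-subscheme of affine space and $\dim Z_m \geq 1$ once it has a point not equal to the origin over \emph{some} extension, and affine schemes of positive dimension over $F$ acquire $\Fbar$-points in every nonempty open, in particular away from the closed origin. Alternatively one can phrase this purely via the fact that an irreducible component of $Z_m$ through $w$ of positive dimension has a generic point whose residue field embeds over $F$ into $\Fbar$ composed with a suitable specialization — but the dimension argument is cleaner.

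For part~(\ref{L:new d1 field case ii}), assume $L$ is algebraically closed and contains $F$. The inequality $d_1(G_L) \leq d_1(G_{\Fbar})$ is part~(\ref{L:new d1 field case i}). For the reverse, note that $\Fbar$ embeds into $L$ over $F$ (choose any $F$-embedding $\Fbar \hookrightarrow L$, possible since $L$ is algebraically closed), so applying part~(\ref{L:new d1 field case i}) with the roles reversed — more precisely, $d_1(G_{\Fbar})$ computed over the field $\Fbar$, which is itself algebraically closed, satisfies $d_1(G_{\Fbar}) = d_1((G_{\Fbar})_{\Fbar}) \leq d_1((G_{\Fbar})_L) = d_1(G_L)$ by part~(\ref{L:new d1 field case i}) applied to the reductive group $G_{\Fbar}$ over the base field $\Fbar$ and the extension $L/\Fbar$. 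Hence $d_1(G_{\Fbar}) \leq d_1(G_L)$, and combined with part~(\ref{L:new d1 field case i}) we get equality.

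\textbf{Main obstacle.} The delicate point is entirely in part~(\ref{L:new d1 field case i}): translating the existence of a nonzero point of $Z_m$ over a possibly large or transcendental extension $L$ into the existence of a nonzero point over $\Fbar$. The clean way is to invoke that $Z_m \subseteq \AA^n_F$ is a closed subscheme, that a nonzero $L$-point witnesses $\dim Z_m \geq 1$ (the origin is a single closed point, and $Z_m$ contains a point specializing to it only if $Z_m$ has positive dimension there, or $Z_m$ contains an isolated point defined over a finite extension of $F$ — in either case $(Z_m)_{\Fbar}$ has a nonzero point), and then using that over the algebraically closed field $\Fbar$ every positive-dimensional or zero-dimensional-but-nonempty $F$-scheme has $\Fbar$-points in each component. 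One must be slightly careful if $Z_m$ has a component that is a single point with residue field a nontrivial finite extension of $F$ not contained in $\Fbar$ — but this cannot happen, since $\Fbar$ contains every finite extension of $F$ up to $F$-isomorphism. So the argument goes through; it is really just a matter of stating the scheme-theoretic bookkeeping correctly.
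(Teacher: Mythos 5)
Your proof of part~(\ref{L:new d1 field case i}) follows essentially the same route as the paper (work with the schemes $Z_m$ from Lemma~\ref{L:new d1}, observe that a nonzero $L$-point forces $Z_m - \{0\}$ to be a nonempty $F$-scheme of finite type, hence to have an $\Fbar$-point), even though your dimension remark is a detour — as you yourself note, you do not need $\dim Z_m \geq 1$, only that $Z_m - \{0\}$ is nonempty. That part is fine.

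Part~(\ref{L:new d1 field case ii}) has a genuine gap. You claim
\[
d_1(G_{\Fbar}) = d_1\bigl((G_{\Fbar})_{\Fbar}\bigr) \leq d_1\bigl((G_{\Fbar})_L\bigr)
\]
``by part~(\ref{L:new d1 field case i}) applied to $G_{\Fbar}$ over the base field $\Fbar$ and the extension $L/\Fbar$.'' But part~(\ref{L:new d1 field case i}), applied with base field $\Fbar$ (whose algebraic closure is again $\Fbar$) and extension $L/\Fbar$, says exactly the reverse: $d_1\bigl((G_{\Fbar})_L\bigr) \leq d_1\bigl((G_{\Fbar})_{\overline{\Fbar}}\bigr) = d_1(G_{\Fbar})$. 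Part~(\ref{L:new d1 field case i}) is a one-sided statement — extension to $\Fbar$ gives the \emph{maximum} — and never yields a lower bound of the form $d_1(G_{\Fbar}) \leq d_1(G_L)$. To prove the missing inequality you need the other direction of the $Z_m$-argument, which is what the paper does: let $m_1 := d_1(G_{\Fbar})$; since $Z_{m_1}(\Fbar)$ contains a nonzero vector, $Z_{m_1}-\{0\}$ is a nonempty finite-type $F$-scheme, so its base change to $L$ is nonempty and (as $L$ is algebraically closed) has an $L$-rational point away from the origin, giving $d_1(G_L) \geq m_1$. That argument cannot be bypassed by invoking the statement of part~(\ref{L:new d1 field case i}).
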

\begin{proof}
After choosing a basis for $V$, we may assume that $G$ is a subgroup of $\GL_{n,F}$.  Fix notation as in Lemma~\ref{L:new d1} with $R:=F$.  Let $m_1\geq 0$ be the maximal integer for which the $F$-variety $Z_{m_1}-\{0\}$ is nonempty (note that $Z_m$ is empty whenever $m> \dim G$).    For any field extension $L/F$, the set $Z_m(L)-\{0\}$ is empty for all $m>m_1$.  Therefore, $d_1(G_L) \leq m_1$ by the description of the sets $Z_m(L)$ in Lemma~\ref{L:new d1}.

Now take any algebraic closed field $L$ containing $F$.  Then $m_1\geq 0$ is also the minimal integer for which the set $Z_{m_1}(L)-\{0\}$ is nonempty.  From our description of the sets $Z_m(L)$ in Lemma~\ref{L:new d1}, we deduce that $d_1(G_L)=m_1$.   In particular, when $L=\Fbar$ we have $d_1(G_{\Fbar})=m_1$.  The lemma is now immediate.
\end{proof}

The following shows that for $\ell$ large enough, $d_1$ of the special and generic fibers of the $\ZZ_\ell$-scheme $\calG_{A,\ell}$ will agree.

\begin{lemma} \label{L:d_1 agreement for large ell}
Let $A$ be a nonzero abelian variety defined over a number field $K$.   Assume that the groups $G_{A,\ell}$ are connected for all $\ell$.  Then $d_1((\calG_{A,\ell})_{\FF_\ell}) = d_1(G_{A,\ell})$ for all sufficiently large primes $\ell$.
\end{lemma}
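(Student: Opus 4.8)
The plan is to translate both sides of the asserted equality into the existence of points on a single proper $\ZZ_\ell$-scheme, using Lemma~\ref{L:new d1}. After replacing $K$ by a finite extension we may assume the $G_{A,\ell}$ are connected, and it suffices to treat $\ell\gg_A1$, so that by Proposition~\ref{P:reductive}(\ref{P:reductive ii}) the $\ZZ_\ell$-group scheme $\calG:=\calG_{A,\ell}\subseteq\GL_{2g,\ZZ_\ell}$ is reductive. I would apply Lemma~\ref{L:new d1} with $R=\ZZ_\ell$: for each $m\geq0$ one obtains a reduced closed subscheme $Z_m\subseteq\AA^{2g}_{\ZZ_\ell}$ whose $k$-points, for any field $k$ over $\ZZ_\ell$, are exactly the $w$ with $\dim(\calG_k)_{k\cdot w}\geq m$. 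As this condition depends only on the line $k\cdot w$, each $Z_m$ is stable under scaling, hence is the affine cone over a closed subscheme $\PP(Z_m)\subseteq\PP^{2g-1}_{\ZZ_\ell}$, which is proper over $\ZZ_\ell$; moreover $d_1((\calG)_k)=\max\{m:\PP(Z_m)(k)\neq\emptyset\}$ for every field $k$ over $\ZZ_\ell$. Thus the lemma becomes the assertion: for all $m$ and all $\ell\gg_A1$, one has $\PP(Z_m)(\FF_\ell)\neq\emptyset$ if and only if $\PP(Z_m)(\QQ_\ell)\neq\emptyset$.

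The implication $\PP(Z_m)(\QQ_\ell)\neq\emptyset\Rightarrow\PP(Z_m)(\FF_\ell)\neq\emptyset$, which yields $d_1(G_{A,\ell})\leq d_1((\calG_{A,\ell})_{\FF_\ell})$ (in line with Lemma~\ref{L:new d1 field case}(\ref{L:new d1 field case i})), is immediate from the valuative criterion of properness: a $\QQ_\ell$-point of the proper $\ZZ_\ell$-scheme $\PP(Z_m)$ extends to a $\ZZ_\ell$-point, which reduces to an $\FF_\ell$-point. For the reverse implication I would use Proposition~\ref{P:redux finiteness}(\ref{P:redux finiteness a}): for $\ell\gg_A1$ there is a split reductive $\calG_i\subseteq\GL_{2g,\ZZ}$, drawn from a finite list depending only on $A$, with $(\calG_{A,\ell})_{\ZZ_\ell^\un}$ and $(\calG_i)_{\ZZ_\ell^\un}$ conjugate by a change of $\ZZ_\ell^\un$-basis. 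Applying Lemma~\ref{L:new d1} to each $\calG_i$ over $\ZZ$ gives reduced closed cones $Z_m^{(i)}\subseteq\AA^{2g}_\ZZ$ and proper $\ZZ$-schemes $\PP(Z_m^{(i)})\subseteq\PP^{2g-1}_\ZZ$, and naturality of the construction under conjugation gives $(\PP Z_m)_{\ZZ_\ell^\un}\cong(\PP Z_m^{(i)})_{\ZZ_\ell^\un}$. Since there are only finitely many $\calG_i$, generic flatness and generic geometric reducedness of fibres of each fixed finite-type $\ZZ$-scheme $\PP(Z_m^{(i)})$ show that, for $\ell$ outside a finite set depending only on $A$, $(\PP Z_m^{(i)})_{\ZZ_\ell}$ is flat over $\ZZ_\ell$ with geometrically reduced special fibre whose dimension and degree are bounded in terms of $A$; by fppc descent along $\ZZ_\ell\to\ZZ_\ell^\un$ the same holds for $\PP(Z_m)$.

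Granting this, suppose $x\in\PP(Z_m)(\FF_\ell)$ and let $Y$ be the $\FF_\ell$-irreducible component of $\PP(Z_m)_{\FF_\ell}$ through $x$; it is geometrically reduced, so its smooth locus (and the smooth locus of $\PP(Z_m)_{\FF_\ell}$) is dense in $Y$, with complement of strictly smaller dimension. For $\ell\gg_A1$ a Lang--Weil estimate (legitimate since the relevant dimensions and degrees are bounded independently of $\ell$) then produces an $\FF_\ell$-point of $Y$ lying in the smooth locus of $\PP(Z_m)_{\FF_\ell}$; because $\PP(Z_m)$ is flat over $\ZZ_\ell$, such a point is a smooth point of $\PP(Z_m)\to\Spec\ZZ_\ell$, hence lifts to a $\ZZ_\ell$-point by Hensel's lemma, whose image lies in $\PP(Z_m)(\QQ_\ell)$. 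This gives $d_1((\calG_{A,\ell})_{\FF_\ell})\leq d_1(G_{A,\ell})$ and completes the argument.

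The delicate step is the reverse implication, and precisely the task of making the flatness, reducedness, dimension and degree bounds for $\PP(Z_m)$ uniform in $\ell$: this is exactly what the finiteness statement of Proposition~\ref{P:redux finiteness} is designed to supply, and it is here that reductivity of $\calG_{A,\ell}$ over $\ZZ_\ell$ for $\ell\gg_A1$ (equivalently, that $G_{A,\ell}$ is unramified over $\QQ_\ell$) is essential; at the finitely many excluded primes — for instance primes ramifying in $\End(A_{\Kbar})$ — the special fibre of $\calG_{A,\ell}$ can differ sharply from the generic fibre and the equality need not persist.
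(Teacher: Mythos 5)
Your overall reformulation — passing from $d_1$ to the projectivized schemes $\PP(Z_m)$ of Lemma~\ref{L:new d1}, which are proper over $\ZZ_\ell$, and reducing the statement to ``$\PP(Z_m)(\FF_\ell)\neq\emptyset$ iff $\PP(Z_m)(\QQ_\ell)\neq\emptyset$'' — is sound and matches the starting point of the paper's proof, as does the use of Proposition~\ref{P:redux finiteness} to reduce to finitely many $\ZZ$-models. The easy direction (valuative criterion / scaling to a primitive $\ZZ_\ell$-vector) is fine. The difficulty is exactly where you locate it, in lifting an $\FF_\ell$-point to a $\QQ_\ell$-point, but your Lang--Weil argument has a genuine gap there.

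The problem is the step ``a Lang--Weil estimate then produces an $\FF_\ell$-point of $Y$ lying in the smooth locus.'' Lang--Weil produces $\gg\ell^{\dim Y}$ points only when the $\FF_\ell$-irreducible variety $Y$ is \emph{geometrically} irreducible; geometric reducedness is not enough. If $Y$ is not geometrically irreducible, the Galois group permutes its geometric components transitively, every $\FF_\ell$-rational point lies on all of them simultaneously, hence in the (lower-dimensional) singular locus, and Lang--Weil gives only an $O(\ell^{\dim Y-1/2})$ \emph{upper} bound on $|Y(\FF_\ell)|$ — nothing forces a smooth $\FF_\ell$-point to exist. Worse, the package of hypotheses you establish (proper, $\ZZ_\ell$-flat, geometrically reduced special fibre, degree and dimension bounded uniformly in $\ell$) is simply not enough to deduce ``$\FF_\ell$-point $\Rightarrow\QQ_\ell$-point.'' A counterexample: for $\ell\equiv 3\pmod 4$, the conic $x^2+y^2=\ell z^2$ in $\PP^2_{\ZZ_\ell}$ is proper and flat, of degree $2$ and dimension $1$, its special fibre $\{x^2+y^2=0\}$ is geometrically reduced (two conjugate lines), and $[0:0:1]$ is an $\FF_\ell$-point — but there is no $\QQ_\ell$-point. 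So even granting everything you prove about $\PP(Z_m)$, the lifting you want does not follow from those properties alone. (Note also that since the identification with $\calG_i$ holds only over $\ZZ_\ell^\un$, the irreducible components of $\PP(Z_m)_{\FF_\ell}$ need not match those of a fixed $\ZZ$-scheme's fibre; the Frobenius action can genuinely vary with $\ell$, so one cannot sidestep the geometric-irreducibility issue by ``fixing'' a good component from the finite list.)

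The paper avoids all of this by stratifying rather than counting. Starting from the $\ZZ$-scheme $C_0:=Z_m$ attached to $\calG_i$, it recursively takes singular loci $C_{i+1}:=\operatorname{Sing}(C_i)$, forming regular strata $U_i=C_i\setminus C_{i+1}$ whose generic fibres are smooth (being regular in characteristic $0$); for $\ell$ outside a finite set the $(U_i)_{\ZZ_\ell}$ are smooth $\ZZ_\ell$-schemes. By base-change compatibility of both the construction of $Z_m$ and of singular loci along $\ZZ_\ell\to\ZZ_\ell^\un$, plus fppf descent of smoothness, the corresponding strata $U_{i,\ell}$ of $Z_{m,\ell}$ (built from $\calG_{A,\ell}$ directly) are smooth over $\ZZ_\ell$. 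Then any $\FF_\ell$-point of $Z_{m,\ell}$ lies in exactly one stratum $U_{i,\ell}(\FF_\ell)$, and Hensel's lemma applied to that smooth stratum lifts it to $U_{i,\ell}(\ZZ_\ell)$ — no point count, no geometric irreducibility needed, and in particular no issue when the point in hand is a singular point of $Z_{m,\ell}(\FF_\ell)$. If you want to salvage your approach, you would essentially have to insert this stratification at the point where Lang--Weil fails; it is the natural substitute for the missing lower bound on $|Y(\FF_\ell)|$, and it is precisely what makes the argument work uniformly in $\ell$.
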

\begin{proof}

By taking $\ell$ large enough, we may assume that $\calG_{A,\ell}$ is reductive by Proposition~\ref{P:reductive}(\ref{P:reductive ii}).   After choosing a basis for the $\ell$-adic Tate module, we shall view $\calG_{A,\ell}$ as a closed subgroup of $\GL_{2g,\ZZ_\ell}$.   Also by taking $\ell$ sufficiently large, we may assume that $(\calG_{A,\ell})_{\ZZ_\ell^\un}$ is conjugate in $\GL_{2g,\ZZ_\ell^\un}$ to $\calG_{\ZZ_\ell^\un}$ for one of the finitely many reductive groups $\calG \subseteq \GL_{2g,\ZZ}$ from Proposition~\ref{P:redux finiteness}.  Take any integer $0\leq m\leq (2g)^2$.  

With $R=\ZZ$ and $G=\calG$, let $Z_m$ be the subscheme of $\AA^{2g}_{\ZZ}$ from Lemma~\ref{L:new d1}.   Define $C_0:=Z_m$.  For $i\geq 0$, we recursively define $C_{i+1}$ to be the singular locus of $C_i$.  Note that the $C_i$ are reduced closed subschemes of $\AA^{2g}_\ZZ$.   Let $r\geq 0$ be the minimal integer for which the generic fiber of the $\ZZ$-scheme $C_{r}$ is empty.  There is a positive integer $N$ so that $(C_r)_{\ZZ[1/N]}$ is empty.   By taking $\ell$ large enough, we may assume that $\ell\nmid N$ (this uses that there are only finitely many $m$ and $\calG$ we are considering).  

For each $0\leq i < r$, define the $\ZZ$-scheme $U_i:=C_i-C_{i+1}$; it is clearly a regular scheme.   The generic fiber of $U_i$ is a variety over $\QQ$ that is regular and hence is smooth.   So there is a positive integer $N$ such that $(U_i)_{\ZZ[1/N]}$ is a smooth scheme over $\ZZ[1/N]$.     By taking $\ell$ large enough, we may assume that $(U_i)_{\ZZ_\ell}$ is smooth for all $0\leq i <r$.

With $R=\ZZ_\ell$ and $G=\calG_{A,\ell}$, let $Z_{m,\ell}$ be the reduced closed subscheme $Z_m$ of $\AA^{2g}_{\ZZ_\ell}$ from Lemma~\ref{L:new d1}.   Define $C_{0,\ell}:=Z_{m,\ell}$.  For $i\geq 0$, we recursively define $C_{i+1,\ell}$ to be the singular locus of $C_{i,\ell}$; they are closed subschemes of $\AA^{2g}_{\ZZ_\ell}$.    Using that singular loci and the construction of the scheme from Lemma~\ref{L:new d1} are both stable under base change,  we deduce that $(C_{i,\ell})_{\ZZ_\ell^\un}$ and $(C_i)_{\ZZ_\ell^\un}$ are isomorphic $\ZZ_\ell^\un$-schemes.  Since $(C_{r,\ell})_{\ZZ_\ell^\un}\cong (C_r)_{\ZZ_\ell^\un}$ is empty and the inclusion $\ZZ_\ell \subseteq \ZZ_\ell^\un$ is a faithfully flat ring map, we find that $C_{r,\ell}$ is also empty.   For $0\leq i < r$, define the $\ZZ_\ell$-scheme $U_{i,\ell}:=C_{i,\ell}-C_{i+1,\ell}$.  We have $U_{i,\ell}\cong (U_i)_{\ZZ_\ell}$ and hence $U_{i,\ell}$ is smooth.  By Hensel's lemma and the smoothness of $U_{i,\ell}$, the reduction modulo $\ell$ map $U_{i,\ell}(\ZZ_\ell)\to U_{i,\ell}(\FF_\ell)$ is surjective for all $1\leq i < r$.  Since $C_{r,\ell}$ is empty, we find that $Z_{m,\ell}(\FF_\ell)=\cup_{0\leq i < r} U_{i,\ell}(\FF_\ell)$ and hence $Z_{m,\ell}(\ZZ_\ell)\to Z_{m,\ell}(\FF_\ell)$ is surjective.  

So after taking $\ell$ sufficiently large, we have shown that  for all integers $m\geq 0$, the reduction modulo $\ell$ map $Z_{m,\ell}(\ZZ_\ell) \to Z_{m,\ell}(\FF_\ell)$ is surjective;  this is trivial for the excluded integers $m>(2g)^2$ since $Z_{m,\ell}$ will be empty.  In particular, if $Z_{m,\ell}(\FF_\ell)-\{0\}$ is nonempty, then $Z_{m,\ell}(\QQ_\ell)-\{0\}$ is nonempty as well.  Now suppose that  $Z_{m,\ell}(\QQ_\ell)$ contains a nonzero element $w$. We can scale $w$ by any nonzero element of $\QQ_\ell$ and it will still lie in $Z_{m,\ell}(\QQ_\ell)$.    So without loss of generality, we may assume that $w\in \ZZ_\ell^{2g}-\ell \ZZ_\ell^{2g}$.   We have $w\in Z_m(\ZZ_\ell)$ and its reduction modulo $\ell$ gives a nonzero element of $Z_m(\FF_\ell)$.   So by taking $\ell$ sufficiently large, we have shown that for all $m\geq 0$, $Z_{m,\ell}(\FF_\ell)-\{0\}$ is nonempty if and only if $Z_{m,\ell}(\QQ_\ell)-\{0\}$ is nonempty.   The lemma is now an immediate consequence of Lemma~\ref{L:new d1}.
\end{proof}

\begin{lemma} \label{L:generic smoothness of GW}
Let $G$ be a reductive subgroup of $\GL_{n,\ZZ[1/N]}$ for some positive integer $N$.   Then for all sufficiently large primes $\ell \nmid N$ and all one-dimensional subspaces $W$ of $\FFbar_\ell^n$, the group $(G_{\FFbar_\ell})_W$ is smooth.
\end{lemma}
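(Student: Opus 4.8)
The plan is to study the pointwise stabilizers of all lines at once, realizing them as the fibers of a single group scheme over a Grassmannian, and then to stratify and spread out. Write $\mathcal{B}:=\Grass_{1,n,\ZZ[1/N]}$ and let $\mathcal{H}:=\Fix_G$ be the universal pointwise stabilizer group scheme of \S\ref{sect:NotationSchemesStabilisers}; it is a closed subgroup scheme of $G_{\mathcal{B}}$, hence affine and of finite type over $\mathcal{B}$, and therefore of finite type over the Noetherian ring $\ZZ[1/N]$, while $\mathcal{B}$ itself is integral (it is a projective space over $\ZZ[1/N]$). For any field $k$ which is a $\ZZ[1/N]$-algebra and any line $W\subseteq k^n$, regarded as a point of $\mathcal{B}(k)$, the fiber of $\mathcal{H}\to\mathcal{B}$ over $W$ is $\Fix_G(W)=(G_k)_W$. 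So it suffices to prove that for all primes $\ell\gg_N 1$ the morphism $\mathcal{H}_{\FFbar_\ell}\to\mathcal{B}_{\FFbar_\ell}$ has smooth fibers.

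First I would analyze characteristic $0$. Over the generic point $\eta$ of $\mathcal{B}_\QQ$ the fiber of $\mathcal{H}_\QQ\to\mathcal{B}_\QQ$ is $(G_{\kappa(\eta)})_{W_\eta}$, an affine algebraic group over a field of characteristic $0$, hence smooth by Cartier's theorem \cite[Theorem 3.23]{MilneAlgebraicGroups}. I will then invoke the standard fact that if $f\colon X\to B$ is of finite type with $B$ Noetherian and integral and the fiber of $f$ over the generic point of $B$ is smooth, then $f$ is smooth over a dense open subset of $B$: by generic flatness $f$ is flat over a dense open $B_0\subseteq B$, over $B_0$ the non-smooth locus of $f$ is closed in $X$ and disjoint from the generic fiber (a flat morphism with smooth fiber is smooth along that fiber), so its image in $B$ is a constructible set not containing the generic point, hence nowhere dense. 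Applying this to $\mathcal{H}_\QQ\to\mathcal{B}_\QQ$ and iterating on the reduced complements, Noetherian induction yields a finite decomposition $\mathcal{B}_\QQ=\bigsqcup_{j=1}^{s}T_j$ into locally closed reduced subschemes such that $\mathcal{H}_\QQ\times_{\mathcal{B}_\QQ}T_j\to T_j$ is smooth for every $j$.

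Next I would spread this out. Since there are finitely many strata $T_j$, after replacing $N$ by a suitable multiple $N'$ there are locally closed reduced subschemes $\mathcal{T}_j\subseteq\mathcal{B}_{\ZZ[1/N']}$ with generic fiber $T_j$, and, enlarging $N'$ further, we may assume both that $\mathcal{H}_{\ZZ[1/N']}\times_{\mathcal{B}}\mathcal{T}_j\to\mathcal{T}_j$ is smooth for every $j$ (its non-smooth locus is closed, meets no fiber over $\QQ$, so its image in $\Spec\ZZ[1/N']$ is finite, and we invert those primes) and that the $\mathcal{T}_j$ cover $\mathcal{B}_{\ZZ[1/N']}$ topologically (the complement of their union is constructible with empty fiber over $\QQ$, hence concentrated in finitely many special fibers, whose residue characteristics we also invert). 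Now take a prime $\ell\nmid N'$ and a line $W\subseteq\FFbar_\ell^n$, with corresponding morphism $w\colon\Spec\FFbar_\ell\to\mathcal{B}_{\ZZ[1/N']}$. Its image lies in some $|\mathcal{T}_j|$, and since $\Spec\FFbar_\ell$ is reduced, $w$ factors through $\mathcal{T}_j$; therefore $(G_{\FFbar_\ell})_W=\Fix_G(W)=w^*\mathcal{H}$ is a base change of the smooth $\mathcal{T}_j$-group scheme $\mathcal{H}_{\ZZ[1/N']}\times_{\mathcal{B}}\mathcal{T}_j$, so it is smooth over $\FFbar_\ell$, which is exactly the assertion.

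I expect the main obstacle to be the familiar gap between \emph{having smooth fibers} and \emph{being a smooth morphism}: the smoothness of the stabilizers $(G_k)_W$ in characteristic zero does not by itself spread out to every prime $\ell$ and every line, because flatness of $\mathcal{H}\to\mathcal{B}$ can fail over special points of $\mathcal{B}$. The stratification step is precisely what repairs this — on each stratum $\mathcal{H}_\QQ$ becomes flat, hence (flat with smooth fibers) smooth — so the problem is converted into one about a finite list of honest smooth morphisms, for which the spreading-out argument is routine and by construction discards only finitely many primes.
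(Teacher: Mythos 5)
Your proof is correct, but it takes a genuinely different route from the paper's. The paper's argument is very short: it observes that $(G_{\FFbar_\ell})_W$ is cut out inside $\GL_{n,\FFbar_\ell}$ by the equations of $G$ (whose number and degrees are bounded independently of $\ell$, by spreading out from $\ZZ[1/N]$) together with the $n$ linear equations $g\cdot w=w$, so its \enquote{complexity} is uniformly bounded, and then invokes a cited result of Lombardo \cite[Proposition~1]{MR3649016} asserting that algebraic subgroups of $\GL_n$ over $\FFbar_\ell$ defined by a bounded number of equations of bounded degree are smooth once $\ell$ is large enough. Your argument instead avoids that external input: you work with the universal pointwise stabilizer $\Fix_G\to\Grass_{1,n,\ZZ[1/N]}$ already set up in \S\ref{sect:NotationSchemesStabilisers}, use Cartier's theorem to know every characteristic-zero fiber is smooth, stratify $\mathcal{B}_\QQ$ by Noetherian induction into finitely many locally closed pieces over which $\Fix_G$ is a smooth group scheme (repairing the failure of flatness), and then spread each stratum out over $\ZZ[1/N']$ using Chevalley constructibility to discard finitely many residue characteristics. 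The key point you correctly identify — that smoothness of individual fibers in characteristic zero does not by itself propagate to special fibers, and that one must first enforce flatness stratum by stratum — is exactly the content that \cite[Proposition~1]{MR3649016} packages more quantitatively. Your version is longer but self-contained and arguably more conceptual, in that it keeps the entire argument inside the Grassmannian machinery the paper already built; the paper's version trades that for brevity by outsourcing the hard spreading-out step to a complexity bound. Both are valid proofs of the lemma.
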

\begin{proof}
Fix a generator $w$ of $W$.
The group $(G_{\FFbar_\ell})_W$ is defined inside $\GL_{n, \FFbar_\ell}$ by the equations of $G_{\FFbar_\ell}$ (whose number and degrees are bounded uniformly in $\ell$), together with the $n$ linear equations that encode the condition $g \cdot w = w$. In particular, the number of equations defining $(G_{\FFbar_\ell})_W$ and their degrees are bounded uniformly in $\ell$. The result then follows immediately from \cite[Proposition 1]{MR3649016} (notice that this result is stated for finite fields of characteristic $\ell$, but the proof makes it clear that it also applies to $\FFbar_\ell$).
\end{proof}

\subsection{Prime power case}

Throughout this section, we let $A$ be a nonzero abelian variety defined over a number field $K$ for which the groups $G_{A,\ell}$ are connected for all $\ell$.   We shall prove that Theorem~\ref{T:new main d1} holds when we restrict to the case where $n$ is a power of a prime $\ell$.   We first give an argument that will work for all sufficiently large $\ell$ and then one that will work for any remaining $\ell$.

\begin{prop}\label{P:large prime version, cyclic case}
For any prime $\ell \gg_A 1$, if $P \in A(\Kbar)$ is a point of order $\ell^e$, then
\[
[K(P):K] \gg_A  (\ell^e)^{\dim G_{A,\ell} - d_1(G_{A,\ell})}.
\]
\end{prop}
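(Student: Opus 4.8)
The plan is to adapt the argument of Theorem~\ref{T:ell-adic version, large ell} (as carried out in \S\ref{SS:proof of large ell}), but instead of tracking the full $\ell$-power torsion $A(L)[\ell^\infty]$ via a filtration of subspaces $W_j$, we track the much simpler ``line'' generated by a single torsion point $P$ of order $\ell^e$. First I would replace $K$ by a finite extension so that $\calG_{A,\ell}$ is reductive and smooth over $\ZZ_\ell$ for $\ell \gg_A 1$; this is harmless since enlarging $K$ changes $[K(P):K]$ by at most the bounded factor $[K':K]$, and by Lemma~\ref{L:same beta}-type reasoning it does not affect the exponent. After choosing a $\ZZ_\ell$-basis of $T_\ell(A)$ we view $\calG_{A,\ell}\subseteq\GL_{2g,\ZZ_\ell}$ and $G:=(\calG_{A,\ell})_{\FF_\ell}\subseteq\GL_{2g,\FF_\ell}$ with Lie algebra $\gG$.

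Next I would set $L:=K(P)$ and $U:=\langle P\rangle\subseteq A(L)$, a cyclic group of order $\ell^e$. For each $i\geq 0$ the quotient $U[\ell^{i+1}]/U[\ell^i]$ is cyclic of order $\ell$ (or trivial for $i\geq e$), so the associated line $W_i := \bbar\psi_i(U[\ell^{i+1}]/U[\ell^i])\subseteq\FF_\ell^{2g}$ has dimension $\leq 1$. The key point, exactly as in Lemma~\ref{L:lie sum}, is that for $I+\ell^i B\in \rho_{A,\ell^\infty}(\Gal_L)_i$ (the level-$i$ piece of the filtration of $\rho_{A,\ell^\infty}(\Gal_L)$) we have $B\in\gG_{W_i}$, since $\Gal_L$ fixes all of $U[\ell^{i+1}]$. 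Combining this with the bound $\hH_i\subseteq\gG$ from Lemma~\ref{L:new h=g}(\ref{L:new h=g i}), and with $\rho_{A,\ell}(\Gal_L)\subseteq G_{W_0}(\FF_\ell)$ together with Proposition~\ref{P:HW count} to handle $i=0$, one obtains
\[
|\rho_{A,\ell^i}(\Gal_L)| \ll_A \ell^{\sum_{j=0}^{i-1}\dim\gG_{W_j}}.
\]
On the other hand, Lemma~\ref{L:elli image size} gives $|\rho_{A,\ell^i}(\Gal_K)|\asymp_A \ell^{i\dim G_{A,\ell}}$ (with the implied constant independent of $\ell$ for $\ell\gg_A 1$, using Theorem~\ref{T:finite index}). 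Taking $i=e$ and dividing, we get
\[
[K(P):K]=[L:K] \geq [\rho_{A,\ell^e}(\Gal_K):\rho_{A,\ell^e}(\Gal_L)] \gg_A \ell^{\,\sum_{j=0}^{e-1}(\dim\gG-\dim\gG_{W_j})}.
\]

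To conclude I would bound each exponent $\dim\gG-\dim\gG_{W_j}$ from below by $\dim G_{A,\ell}-d_1(G_{A,\ell})$. This requires two ingredients, and this is the step I expect to be the main obstacle: first, $\dim\gG=\dim G_{A,\ell}$ (true since $\calG_{A,\ell}$ is smooth, by Lemma~\ref{L:Lie algebra gG}); second, and more delicate, $\dim\gG_{W_j}\leq d_1((\calG_{A,\ell})_{\FF_\ell})$. Since $W_j$ has dimension $\leq 1$ and $\gG_{W_j}$ is the Lie algebra of $G_{W_j}$, we have $\dim\gG_{W_j}\geq\dim G_{W_j}$, but for the bound in the right direction I need $\dim\gG_{W_j}=\dim G_{W_j}$, i.e.\ smoothness of $G_{W_j}=(G_{A,\ell})_{W_j}$ for all one-dimensional $W_j$ — this is precisely Lemma~\ref{L:generic smoothness of GW} applied to one of the finitely many reductive models $\calG_i\subseteq\GL_{2g,\ZZ}$ of Proposition~\ref{P:redux finiteness}, valid for $\ell\gg_A 1$. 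Granting this, $\dim\gG_{W_j}=\dim (G_{A,\ell})_{W_j}\leq d_1((\calG_{A,\ell})_{\FF_\ell})$, and then Lemma~\ref{L:d_1 agreement for large ell} identifies $d_1((\calG_{A,\ell})_{\FF_\ell})$ with $d_1(G_{A,\ell})$ for $\ell\gg_A 1$. (When $W_j=0$ the term is just $\dim G_{A,\ell}$, which is even bigger.) Summing over $j$, and using that $\dim G_{A,\ell}-d_1(G_{A,\ell})$ is bounded in terms of $g$ alone so that $\ell^e=n$ and the exponent are comparable up to $\asymp_A$, we obtain $[K(P):K]\gg_A (\ell^e)^{\dim G_{A,\ell}-d_1(G_{A,\ell})}$, as desired. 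The uniformity of all implied constants in $\ell$ (for $\ell\gg_A 1$) comes, as in \S\ref{SS:proof of large ell} and Proposition~\ref{L:uniformness of alpha}, from the finiteness statement of Proposition~\ref{P:redux finiteness}, which reduces everything to a finite list of groups $\calG_i$ and representations depending only on $A$.
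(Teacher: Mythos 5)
Your proposal is correct and follows essentially the same route as the paper's own proof: filter $\rho_{A,\ell^\infty}(\Gal_{K(P)})$, use Lemma~\ref{L:new h=g} and Proposition~\ref{P:HW count} to bound the steps, invoke Lemma~\ref{L:elli image size} for the numerator, reduce to one-dimensional $W_j$, and then apply Lemma~\ref{L:generic smoothness of GW} (via the finite list of Proposition~\ref{P:redux finiteness}) and Lemma~\ref{L:d_1 agreement for large ell} to identify $\dim\gG_{W_j}$ with $\dim G_{W_j}\leq d_1(G_{A,\ell})$. The only cosmetic differences are that you take $U=\langle P\rangle$ directly rather than $U=A(L)[\ell^\infty]$ and then passing to lines $W_j'\subseteq W_j$ as the paper does, and that you re-derive the harmlessness of enlarging $K$ (which the paper absorbs into a running hypothesis that the $G_{A,\ell}$ are all connected); both yield the same estimate.
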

\begin{proof}
By taking $\ell\gg_A 1$, we may assume by Proposition~\ref{P:reductive} that the $\ZZ_\ell$-group scheme $\calG_{A,\ell}\subseteq \GL_{T_\ell(A)}$ is reductive and that $\ell$ is odd. By choosing a basis for $T_\ell(A)$ as a $\ZZ_\ell$-module, we identify $\calG_{A,\ell}$ with an algebraic subgroup of $\GL_{2g,\ZZ_\ell}$.   In particular, we have $(\calG_{A,\ell})_{\FF_\ell} \subseteq \GL_{2g,\FF_\ell}$.     Define $\gG:=\gG_\ell \subseteq M_{2g}(\FF_\ell)$ as in \S\ref{SS:lie algebras and filtrations}; it is the Lie algebra of $(\calG_{A,\ell})_{\FF_\ell}$ by Lemma~\ref{L:Lie algebra gG}.

Define the number field $L:=K(P)$ and the group $U:=A(L)[\ell^\infty]$. Following the proof of Theorem~\ref{T:ell-adic version, large ell} given in \S\ref{SS:proof of large ell}, we find that
\[
[L:K] \gg_A \ell^{\sum_{j=0}^{\infty}(\dim \gG - \dim \gG_{W_j})},
\]
where $W_j$ is a certain subspace of $\FF_\ell^{2g}$ that satisfies $W_j \cong U[\ell^{i+1}]/U[\ell^i]$ .  Note that the sum occuring in the exponent is actually finite since for $j$ large enough we have $W_j=0$ and hence $\gG_{W_j}=\gG$.

Since $U$ contains the point $P$ of order $\ell^e$, we have subspaces $W_j' \subseteq W_j$ satisfying $\dim W_j' = 1$ for $0\leq j <e$ and $W_j'=0$ for $j\geq e$. Since clearly $\dim \gG_{W_j'} \geq \dim \gG_{W_j}$, we obtain
\[
[K(P):K]=[L:K] \gg_A \ell^{\sum_{j=0}^{e-1}(\dim \gG - \dim \gG_{W_j'})}.
\]

Set $G:=(\calG_{A,\ell})_{\FF_\ell}$.   We claim that for $\ell \gg_A 1$, the group variety $G_W$ is smooth for all one-dimensional subspaces $W\subseteq \FF_\ell^{2g}$.  It suffices to show that $(G_{\FFbar_\ell})_W$ is smooth for all one-dimensional subspaces $W\subseteq \FFbar_\ell^{2g}$. Let $\{\calG_i\}_{i\in I}$ be the finite collection of reductive subgroups of $\GL_{2g,\ZZ}$ from Proposition~\ref{P:redux finiteness}.    By taking $\ell \gg_A 1$, Proposition~\ref{P:redux finiteness} implies that $G_{\FFbar_\ell}$ and $\calG_{\FFbar_\ell}$ are conjugate subgroups of $\GL_{2g,\FFbar_\ell}$, where $\calG=\calG_i$ for some $i\in I$.   The claim then follows from Lemma~\ref{L:generic smoothness of GW} and the finiteness of $I$.

By taking $\ell \gg_A 1$, the above claim proves that $G_{W_j'}$ is smooth for all $j$.  We thus have $\dim \gG_{W_j'} = \dim G_{W_j'}$ since $\gG_{W_j'}$ is the Lie algebra of the smooth group variety $G_{W_j'}$.   Therefore, $\dim \gG_{W_j'} \leq d_1(G)$ for all $j$.  We have $\dim \gG= \dim G$ since $G$ is reductive and hence smooth.  So we obtain the inequality
\[
[K(P):K] \gg_A (\ell^e)^{\dim G - d_1(G)}.
\]
We have $\dim G = \dim G_{A,\ell}$ since $\calG_{A,\ell}$ is reductive.   We have $d_1(G)=d_1(G_{A,\ell})$ for $\ell\gg_A 1$ by Lemma~\ref{L:d_1 agreement for large ell}.    The proposition is now immediate.
\end{proof}

\begin{prop}\label{P:small prime version, cyclic case}
For every prime $\ell$ and every point $P \in A(\overline{K})$ of order $\ell^e$, we have
\[
[K(P) : K] \gg_{A, \ell} (\ell^e)^{\dim G_{A,\ell} - d_1(G_{A,\ell})}.
\]
\end{prop}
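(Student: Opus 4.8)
The plan is to mirror the strategy used in \S\ref{S: small primes} for Theorem~\ref{T: small primes}, replacing the general subgroup $H = A(L)[\ell^\infty]$ by the cyclic group generated by a single torsion point $P$ of order $\ell^e$, and replacing the full slope $\gamma_{A,\ell}$ by the ``rank-one'' invariant $\dim G_{A,\ell} - d_1(G_{A,\ell})$. First I would set $L := K(P)$, choose $m\geq e$ so that $P \in A(L)[\ell^m]$, and observe that every $\sigma\in\Gal(\Kbar/L)$ fixes $P$, so that $\rho_{A,\ell^\infty}(\Gal(\Kbar/L))$ is contained in $\fix_{\Gcal_{A,\ell}(\Z_\ell)}(H)$ where $H = \langle \pi_m(\hat P)\rangle$ is the cyclic subgroup of $(\Z/\ell^m\Z)^{2g}$ generated by (a Tate-module lift of) $P$, of cardinality $\ell^e$. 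As in the proof of ``Proposition~\ref{propmain} $\Rightarrow$ Theorem~\ref{T: small primes}'', it then suffices to bound $[\Gcal_{A,\ell}(\Z_\ell) : \fix_{\Gcal_{A,\ell}(\Z_\ell)}(H)]$ from below by $\gg_{A,\ell} |H|^{1/(\dim G_{A,\ell}-d_1(G_{A,\ell}))}$, and to combine this with the bounded-index statement of Theorem~\ref{T:finite index}.

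The core estimate is a cyclic-subgroup refinement of Proposition~\ref{propmain}. Following its proof essentially verbatim, write $H = \langle \ell^{m-e}\pi_m(e_1)\rangle$ for a suitable basis vector $e_1$ of $\Z_\ell^{2g}$ (here $r=1$ and $m_1 = e$), set $\Wcal_1 := \Z_\ell e_1$ and $\Gcal_1 := \Fix_{(\Gcal_{A,\ell})_{\Z_\ell}}(\Wcal_1)$. The dévissage then collapses to a single step: we get
\[
[\Gcal_{A,\ell}(\Z_\ell) : \fix_{\Gcal_{A,\ell}(\Z_\ell)}(H)] \geq \frac{|\pi_e(\Gcal_{A,\ell}(\Z_\ell))|}{|\Ker(\Gcal_1(\ell^e)\to\Gcal_1(\ell^0))|}.
\]
For the numerator, Lemma~\ref{lemlowbound} gives a lower bound $\gg_{A,\ell} \ell^{e\dim G_{A,\ell}}$. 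For the denominator, Proposition~\ref{propkey} applied to $\Gcal = \Gcal_{A,\ell}$ with $r=1$ gives $|\Ker(\Gcal_1(\ell^e)\to\Gcal_1(\ell))| \ll_{A,\ell} \ell^{d_1(G_{A,\ell})\cdot e}$, and the extra factor $|\Gcal_1(\ell)| \leq |\Gcal_{A,\ell}(\ell)| \ll_{A,\ell} 1$ (the implicit constant depends on $\ell$, which is allowed) absorbs the passage from $\ell^1$ to $\ell^0$. Here I use that $d_1(G_{A,\ell})$, as defined in \eqref{eq: dr}, is computed over $\Q_\ell$, and by Proposition~\ref{P:newer approach}(\ref{P:newer approach iii}) (applied fibrewise) and the discussion preceding Remark~\ref{rmk:FxVanishesAtKPoints} this is the quantity $d_r(G)$ that governs the bound in Proposition~\ref{propkey}. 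Putting these together yields $[\Gcal_{A,\ell}(\Z_\ell):\fix_{\Gcal_{A,\ell}(\Z_\ell)}(H)] \gg_{A,\ell} \ell^{e(\dim G_{A,\ell}-d_1(G_{A,\ell}))}$.

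Finally I would translate this back to the field degree: since $\rho_{A,\ell^\infty}(\Gal_K)$ has index in $\Gcal_{A,\ell}(\Z_\ell)$ bounded by Theorem~\ref{T:finite index} (by a constant independent of $\ell$, but an $\ell$-dependent constant is fine here), we get
\[
[K(P):K] \geq [\rho_{A,\ell^\infty}(\Gal_K) : \rho_{A,\ell^\infty}(\Gal_L)] \gg_{A,\ell} [\Gcal_{A,\ell}(\Z_\ell) : \fix_{\Gcal_{A,\ell}(\Z_\ell)}(H)] \gg_{A,\ell} (\ell^e)^{\dim G_{A,\ell}-d_1(G_{A,\ell})},
\]
which is the claim. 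The only genuine subtlety — and the step I expect to require the most care — is making sure the application of Proposition~\ref{propkey} to the specific group scheme $\Gcal_1 = \Fix_{(\Gcal_{A,\ell})_{\Z_\ell}}(\Wcal_1)$ is legitimate: Proposition~\ref{propkey} is stated for saturated submodules $\Wcal$ of a given rank, and one must check that $\Wcal_1 = \Z_\ell e_1$ is indeed saturated (it is, being spanned by a basis vector) and that its generic fibre $W_1 \subseteq V_\ell(A)$ has $\dim\Fix_{G_{A,\ell}}(W_1) \leq d_1(G_{A,\ell})$ by definition of $d_1$, and moreover that $G_{A,\ell}$ has finite slope — which holds by the argument preceding \eqref{eq:defgammaAell}, since $A$ has no nonzero $\Kbar$-rational torsion subspace fixed by an open subgroup. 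Everything else is a direct specialization of machinery already in place.
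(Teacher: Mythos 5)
Your proposal is correct and takes essentially the same approach as the paper: you establish a cyclic-subgroup version of Proposition~\ref{propmain} with the exponent $1/\gamma_G$ replaced by $\dim G - d_1(G)$, by taking $r=1$ and $m_1 = e$ in the dévissage and invoking Lemma~\ref{lemlowbound} for the numerator and Proposition~\ref{propkey} (with $r=1$) for the denominator, then descend to $[K(P):K]$ via Theorem~\ref{T:finite index} just as in the deduction of Theorem~\ref{T: small primes} from Proposition~\ref{propmain}. The paper's proof is nearly identical; the only cosmetic differences are that the paper applies Proposition~\ref{propkey} directly with $m'=0$ rather than splitting off the $|\Gcal_1(\ell)|$ factor as you do, and that the standing connectedness assumption on $G_{A,\ell}$ in \S\ref{S:one torsion point} takes care of the base change you mention implicitly.
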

\begin{proof}
Set $G=G_{A, \ell}$.    We begin by showing the following analogue of Proposition \ref{propmain}. Let $\Gcal \subseteq \GL_{n,\Z_\ell}$ be a linear group scheme such that $G = \Gcal_{\mathbb{Q}_\ell}$ has finite slope. There is a positive constant $C(\Gcal)$ such that for every $m \geq 1$ and every \textit{cyclic} subgroup $H \subseteq (\Z/\ell^m \Z)^n = \A^n(\ell^m)$ we have
 \begin{equation}\label{eq: small primes, cyclic case}
  [\Gcal(\Z_\ell) : \fix_{\Gcal(\Z_\ell)}(H)] \geq \frac{1}{C(\Gcal)} |H|^{\dim G - d_1(G)},
 \end{equation}
where
\[
 \fix_{\Gcal(\Z_\ell)}(H) := \{ M \in \Gcal(\Z_\ell) \, | \, \pi_m(M) \textrm{ is the identity on }H\}.
\]
  The proposition can be deduced from this statement (applied to the subgroup $H$ generated by $P$) exactly as Theorem \ref{T: small primes} is deduced from Proposition \ref{propmain}.

To prove \eqref{eq: small primes, cyclic case} we proceed as in the proof of Proposition \ref{propmain}, taking in particular the same notation. For the convenience of the reader, we repeat here the main steps, which in the present setting are much simpler than in the general case.

Since $H$ is cyclic, generated by a point of order $\ell^e$, we can take $r=1$ and $m_1=m=e$. We also set $\calW_1 := \Z_\ell e_1$ and $\calG_1 := \Fix_{\calG_{\Z_\ell}}(\calW_1)$, where $e_1 \in \Z_\ell^{2g}$ projects to a generator of $H$ modulo $\ell^m$.
We then obtain
\[
[\Gcal(\Z_\ell) : \fix_{\Gcal(\Z_\ell)}(H)] \geq \frac{|\pi_{m_1}(\Gcal(\Z_\ell))|}{|\pi_{m_1}( \fix_{\Gcal(\Z_\ell)}(H))|}.
\]
The numerator is lower-bounded by $C_1(\mathcal{G}) \ell^{m \dim G}$, see Lemma \ref{lemlowbound}. The denominator is at most the cardinality of $\calG_{1}(\ell^{m})$, which by Proposition \ref{propkey} (applied to $\calG_1$, $r=1$ and $m'=0$) is at most $C(\calG) \ell^{d_1(G) m}$. Hence, the quantity above is lower-bounded by $\frac{C_1(\calG)}{C(\calG)} \ell^{m(\dim G-d_1(G))}$, as desired.
\end{proof}

\subsection{Proof of Theorem \ref{T:new main d1}}
After replacing $K$ by a finite extension, 
and $A$ by its base extension to this field, we may assume that all the groups $G_{A,\ell}$ are connected by Proposition~\ref{P:connected}.  This is allowable  since it will only increase the degree of $K(P)$ by a uniformly bounded quantity and will not change the groups $G_{A,\ell}^\circ$. 

Let $P \in A(\overline{K})$ be any torsion point and denote its order by $n$.  We have a factorization $n=\prod_{\ell|n} \ell^{e_\ell}$ and $P=\sum_{\ell | n} P_\ell$ with $P_\ell\in A(\overline{K})$ a torsion point of order $\ell^{e_\ell}$.  By Proposition \ref{P:independence} there exists a finite extension $K'$ of $K$ such that the extensions $K'(A[\ell^\infty])/K'$ are all linearly disjoint as $\ell$ ranges over the prime numbers. Thus, we have
\begin{equation}\label{eq: decomposition, cyclic case}
[K(P):K] \gg_A [K'(P) : K'] = \prod_{\ell|n} [K'(P_\ell) : K'].
\end{equation}
By Propositions \ref{P:large prime version, cyclic case} and \ref{P:small prime version, cyclic case}, we obtain the inequality
\begin{equation}\label{eq: lower bound for one point, prime power version}
[K'(P_\ell) : K'] \geq C\cdot (\ell^{e_\ell})^{\dim G_{A, \ell} - d_1(G_{A, \ell})} \geq C \ell^{\varepsilon} \cdot (\ell^{e_\ell})^{\dim G_{A, \ell} - d_1(G_{A, \ell}) - \varepsilon}
\end{equation}
for all $\ell|n$, where $C$ is a positive constant depending only on $A$.  Combining \eqref{eq: decomposition, cyclic case} and \eqref{eq: lower bound for one point, prime power version}, and using that $C \ell^\varepsilon \geq 1$ for all $\ell\gg_{A,\varepsilon} 1$, we obtain
\begin{align*}
[K(P):K] & \gg_{A,\varepsilon}  \prod_{\ell|n} (\ell^{e_\ell})^{\dim G_{A, \ell} - d_1(G_{A, \ell})-\varepsilon}
\end{align*}
and hence $[K(P):K] \gg_{A,\varepsilon}  n^{\delta-\varepsilon}$.

It remains to show that $\delta$ is the maximal value for which $[K(P):K] \gg_{A,\varepsilon}  n^{\delta-\varepsilon}$ holds for all $\varepsilon>0$, all positive integers $n$ and all torsion points $P\in A(\Kbar)$ of order $n$.    Choose a prime $\ell$ for which $\delta=\dim G_{A, \ell} - d_1(G_{A, \ell})$. There is a one-dimensional subspace $W$ of $V_\ell(A)$ for which $d_1(G_{A, \ell})=\dim (G_{A,\ell})_W$.   Define $\calW := W \cap T_\ell(A)$; it is a rank one $\ZZ_\ell$-module.  Let $w$ be a $\Z_\ell$-basis of $\mathcal{W}$. For every $e\geq 1$, the image of $w$ in $T_\ell(A)/\ell^eT_\ell(A)=A[\ell^e]$ defines a torsion point $P_e$ of exact order $\ell^e$. Arguing as in the proof of Theorem \ref{T:main unconditional}, we obtain
\[
[K(P_e):K] = \frac{[K(A[\ell^e]) : K]}{[K(A[\ell^e]) : K(P_e)]} \asymp_{A, \ell} (\ell^e)^{\dim G_{A,\ell} - \dim((G_{A,\ell})_W)}=(\ell^e)^\delta.
\]
Letting $e\to \infty$, we see that $\delta$ in the statement of the theorem cannot be replaced with a larger value.

\subsection{Proof of Theorem \ref{T:main d1}}  \label{SS:proof in main d1}

After replacing $K$ by a finite extension, 
and $A$ by its base extension to this field, we may assume that all the groups $G_{A,\ell}$ are connected by Proposition~\ref{P:connected}.  Note that this only changes the degree $[K(P):K]$ by a uniformly bounded amount.  Let $\delta$ be the minimal value $\dim G_{A,\ell} - d_1(G_{A,\ell}))$ as we vary over all primes $\ell$.  The theorem will then be a direct consequence of Theorem \ref{T:new main d1} once we show that $\delta=d$.  

We claim that $\delta=\dim G_A - d_1((G_A)_\Qbar)$.   We have $(G_A)_{\QQ_\ell}=G_{A,\ell}$ since we are assuming the Mumford--Tate conjecture for $A$, and hence $\dim G_A = \dim G_{A,\ell}$ and $d_1((G_A)_{\QQ_\ell})=d_1(G_{A,\ell})$.  Therefore, $\delta=\dim G_A - \max_\ell d_1((G_{A})_{\QQ_\ell})$ and we need only prove that $d_1((G_A)_{\Qbar}) = \max_\ell d_1((G_{A})_{\QQ_\ell})$.  For all $\ell$, we have $d_1((G_A)_{\Qbar}) \geq d_1((G_A)_{\QQ_\ell})$ by Lemma~\ref{L:new d1 field case}(\ref{L:new d1 field case i}).  So it suffices to show that $d_1((G_A)_{\Qbar}) \leq d_1((G_A)_{\QQ_\ell})$ for some prime $\ell$.  Set $m:=d_1((G_A)_{\Qbar})$.  By choosing a basis, we may assume $G_A$ is a subgroup of $\GL_{2g,\QQ}$.  With $R=\QQ$ and $G=G_A$, let $Z_m$ be the subscheme of $\AA_\QQ^{2g}$ from Lemma~\ref{L:new d1}.   By our choice of $m$, there is a nonzero $w\in Z_m(\Qbar)$.   Let $L \subseteq \Qbar$ be a number field for which $w\in Z_m(L)$.   Take any prime $\ell$ that splits completely in $L$.   There is an embedding $L\hookrightarrow \QQ_\ell$ and hence $Z_m(\Q_\ell)$ contains a nonzero element.  From our description of the set $Z_m(\QQ_\ell)$ from Lemma~\ref{L:new d1}, we deduce that $d_1((G_A)_{\QQ_\ell}) \geq m$ which completes our proof of the claim.

By the above claim and Lemma~\ref{L:new d1 field case}(\ref{L:new d1 field case ii}), we have $\delta=\dim G_A - d_1((G_A)_\CC)$.  By choosing a basis for $V_A$, we may view $G_A$ as a subgroup of $\GL_{2g,\QQ}$.   Take any nonzero $w\in \CC^{2g}$.  With $W:=\CC w$, define $H:= ((G_A)_\CC)_W$; it is the subgroup of $(G_A)_\CC$ that fixes $w$.   Let $(G_A)_\CC \to \AA_\CC^{2g}$ be the morphism $g\mapsto g \cdot w$.   This morphism factors through an immersion $(G_A)_\CC/H \hookrightarrow \AA_\CC^{2g}$, cf.~\cite[Proposition~7.17]{MilneAlgebraicGroups}.   Therefore, the orbit of $w$ under the action of $(G_A)_\CC$ has dimension $\dim G_A - \dim ((G_A)_\CC)_W$.   By varying over all nonzero $w\in \CC^{2g}$, we deduce that $d= \dim G_A - d_1((G_A)_\CC)$.  Therefore, $\delta=d$.

\bibliographystyle{plain}
\begin{bibdiv}
\begin{biblist}

\bib{MR2062673}{book}{
   author={Birkenhake, Christina},
   author={Lange, Herbert},
   title={Complex abelian varieties},
   series={Grundlehren der Mathematischen Wissenschaften [Fundamental
   Principles of Mathematical Sciences]},
   volume={302},
   edition={2},
   publisher={Springer-Verlag, Berlin},
   date={2004},
   pages={xii+635},
   isbn={3-540-20488-1},
   review={\MR{2062673}},
   doi={10.1007/978-3-662-06307-1},
}

\bib{MR574307}{article}{
   author={Bogomolov, Fedor Alekseivich},
   title={Sur l'alg\'ebricit\'e des repr\'esentations $\ell$-adiques},
   language={French, with English summary},
   journal={C. R. Acad. Sci. Paris S\'er. A-B},
   volume={290},
   date={1980},
   number={15},
   pages={A701--A703},
   issn={0151-0509},
   review={\MR{574307}},
}

\bib{VCF}{article}{
   author={Cantoral Farf\'{a}n, Victoria},
   title={Torsion for abelian varieties of type III},
   journal={J. Number Theory},
   volume={198},
   date={2019},
   pages={346--380},
   issn={0022-314X},
   review={\MR{3912942}},
   doi={10.1016/j.jnt.2018.10.017},
}

\bib{MR1156568}{article}{
   author={Chi, W\^{e}n Ch\^{e}n},
   title={$l$-adic and $\lambda$-adic representations associated to abelian
   varieties defined over number fields},
   journal={Amer. J. Math.},
   volume={114},
   date={1992},
   number={2},
   pages={315--353},
   issn={0002-9327},
   review={\MR{1156568}},
   doi={10.2307/2374706},
}

\bib{JC}{article}{
    author = {Commelin, Johan},
     title = {The {M}umford-{T}ate conjecture for products of abelian
              varieties},
   journal = {Algebr. Geom.},
    volume = {6},
      date = {2019},
    number = {6},
     pages = {650--677},
      issn = {2313-1691},
  review = {\MR{4009176}},
}
\bib{Cotner22}{article}{
		Author = {Cotner, Sean},
		Title = {Centralizers of sections of a reductive group scheme},
		Year = {2022},
		Eprint = {arXiv:2203.15133},	
}

\bib{MR632548}{book}{
   author={Curtis, Charles W.},
   author={Reiner, Irving},
   title={Methods of representation theory. Vol. I},
   series={Pure and Applied Mathematics},
   note={With applications to finite groups and orders},
   publisher={John Wiley \& Sons, Inc., New York},
   date={1981},
   pages={xxi+819},
   isbn={0-471-18994-4},
   review={\MR{632548}},
}

\bib{MR654325}{book}{
   author={Deligne, Pierre},
   author={Milne, James S.},
   author={Ogus, Arthur},
   author={Shih, Kuang-yen},
   title={Hodge cycles, motives, and Shimura varieties},
   series={Lecture Notes in Mathematics},
   volume={900},
   publisher={Springer-Verlag, Berlin-New York},
   date={1982},
   pages={ii+414},
   isbn={3-540-11174-3},
   review={\MR{654325}},
}


\bib{MR861971}{incollection}{
      author={Faltings, Gerd},
       title={Finiteness theorems for abelian varieties over number fields},
        date={1986},
   booktitle={Arithmetic geometry ({S}torrs, {C}onn., 1984)},
   publisher={Springer},
     address={New York},
       pages={9\ndash 27},
        note={Translated from the German original [Invent. Math. {{\bf{7}}3}
  (1983), no. 3, 349--366; ibid. {{\bf{7}}5} (1984), no. 2, 381; MR
  85g:11026ab] by Edward Shipz},
      review={\MR{MR861971}},
}

\bib{Fulton}{book}{
   author={Fulton, William},
   title={Intersection theory},
   series={Ergebnisse der Mathematik und ihrer Grenzgebiete. 3. Folge. A
   Series of Modern Surveys in Mathematics [Results in Mathematics and
   Related Areas. 3rd Series. A Series of Modern Surveys in Mathematics]},
   volume={2},
   edition={2},
   publisher={Springer-Verlag, Berlin},
   date={1998},
   pages={xiv+470},
   isbn={3-540-62046-X},
   isbn={0-387-98549-2},
}

\bib{GaribaldiGuralnickNakano}{article}{
 author = {Garibaldi, Skip},
 author = {Guralnick, Robert M.},
 author = {Nakano, Daniel K.},
 title = {Globally irreducible {Weyl} modules},
 journal = {J. Algebra},
 ISSN = {0021-8693},
 volume = {477},
 pages = {69--87},
 year = {2017},
 language = {English},
 DOI = {10.1016/j.jalgebra.2016.11.038},
 Keywords = {20G05,20C20},
 zbMATH = {6695336},
 Zbl = {1427.20062}
}

\bib{GaudronRemond}{article}{
	author={Gaudron, \'Eric},
	author={R\'emond, Ga\"el},
	title={Nombre de petits points sur une vari\'et\'e ab\'elienne},
	date={2022}, 
	eprint={https://lmbp.uca.fr/~gaudron/art21.pdf},
}

\bib{GortzWedhorn}{book}{
	Author = {{G\"ortz}, Ulrich},
	author = {{Wedhorn}, Torsten},
	Title = {{Algebraic geometry I. Schemes. With examples and exercises}},
	FJournal = {{Springer Studium Mathematik -- Master}},
	Journal = {{Springer Stud. Math. -- Master}},
	ISSN = {2509-9310},
	ISBN = {978-3-658-30732-5; 978-3-658-30733-2},
	Pages = {vii + 626},
	Year = {2020},
	Publisher = {Wiesbaden: Springer Spektrum},
	Language = {English},
	DOI = {10.1007/978-3-658-30733-2},
	MSC2010 = {14-01 14A15 14C20 14F06 14L15 14M12 14L30 14B05},
	Zbl = {1444.14001}
}

\bib{MR2643390}{article}{
   author={Hindry, Marc},
   author={Ratazzi, Nicolas},
   title={Torsion dans un produit de courbes elliptiques},
   language={French, with English and French summaries},
   journal={J. Ramanujan Math. Soc.},
   volume={25},
   date={2010},
   number={1},
   pages={81--111},
   issn={0970-1249},
   review={\MR{2643390}},
}

\bib{MR2862374}{article}{
   author={Hindry, Marc},
   author={Ratazzi, Nicolas},
   title={Points de torsion sur les vari\'et\'es ab\'eliennes de type GSp},
   language={French, with English and French summaries},
   journal={J. Inst. Math. Jussieu},
   volume={11},
   date={2012},
   number={1},
   pages={27--65},
   issn={1474-7480},
   review={\MR{2862374}},
   doi={10.1017/S147474801000023X},
}

\bib{MR3576113}{article}{
   author={Hindry, Marc},
   author={Ratazzi, Nicolas},
   title={Torsion pour les vari\'et\'es ab\'eliennes de type I et II},
   journal={Algebra Number Theory},
   volume={10},
   date={2016},
   number={no.~9},
   pages={1845--1891},
   issn={1937-0652},
   review={\MR{3576113}},
}

\bib{Jantzen}{book}{
	Author = {Jantzen, Jens Carsten},
	Title = {Representations of algebraic groups.},
	Edition = {2nd ed.},
	FSeries = {Mathematical Surveys and Monographs},
	Series = {Math. Surv. Monogr.},
	ISSN = {0076-5376},
	Volume = {107},
	ISBN = {0-8218-3527-0},
	Year = {2003},
	Publisher = {Providence, RI: American Mathematical Society (AMS)},
	Language = {English},
	Keywords = {20G05,20G10,20-01,14L15,17B10,14L17,20G15},
	zbMATH = {2019767},
	Zbl = {1034.20041}
}

\bib{MR1339927}{article}{
      author={Larsen, Michael},
      author={Pink, Richard},
       title={Abelian varieties, {$\ell$}-adic representations, and
  {$\ell$}-independence},
        date={1995},
        ISSN={0025-5831},
     journal={Math. Ann.},
      volume={302},
      number={3},
       pages={561\ndash 579},
      review={\MR{MR1339927 (97e:14057)}},
}

\bib{MR1441234}{article}{
      author={Larsen, Michael},
      author={Pink, Richard},
       title={A connectedness criterion for {$l$}-adic {G}alois
  representations},
        date={1997},
        ISSN={0021-2172},
     journal={Israel J. Math.},
      volume={97},
       pages={1\ndash 10},
      review={\MR{MR1441234 (98k:11066)}},
}

\bib{MR3649016}{article}{
author={Lombardo, Davide},
title={Roots of unity and torsion points of abelian varieties},
journal = {Ramanujan J.},
date={2017},
volume = {43(2)},
pages = {383--403}
}

\bib{masser-lettre}{article}{
   author={Masser, David W.},
   title={Lettre \`a Daniel Bertrand du 10 novembre 1986},
}

\bib{MR1336608}{article}{
   author={Masser, David W.},
   author={W{\"u}stholz, Gisbert},
   title={Refinements of the Tate conjecture for abelian varieties},
   conference={
      title={Abelian varieties},
      address={Egloffstein},
      date={1993},
   },
   book={
      publisher={de Gruyter, Berlin},
   },
   date={1995},
   pages={211--223},
   review={\MR{1336608}},
}

\bib{Milne}{book}{
	AUTHOR = {Milne, James S.},
	TITLE = {\'{E}tale cohomology},
	SERIES = {Princeton Mathematical Series, No. 33},
	PUBLISHER = {Princeton University Press, Princeton, N.J.},
	YEAR = {1980},
	PAGES = {xiii+323},
	ISBN = {0-691-08238-3},
	MRCLASS = {14-02 (14F20 18F99)},
	MRNUMBER = {559531},
	MRREVIEWER = {G. Horrocks},
}

\bib{MilneAlgebraicGroups}{book}{
	Author = {Milne, James S.},
	Title = {Algebraic groups. {The} theory of group schemes of finite type over a field},
	FSeries = {Cambridge Studies in Advanced Mathematics},
	Series = {Camb. Stud. Adv. Math.},
	Volume = {170},
	ISBN = {978-1-107-16748-3; 978-1-00-901858-6; 978-1-316-71173-6},
	Year = {2017},
	Publisher = {Cambridge: Cambridge University Press},
	Language = {English},
	DOI = {10.1017/9781316711736},
	Keywords = {14-02,14L15,14L10,14L17,14L30,16T05,17B45},
	zbMATH = {6713849},
	Zbl = {1390.14004}
}

\bib{MR1731466}{article}{
   author={Moonen, B. J. J.},
   author={Zarhin, Yu. G.},
   title={Hodge classes on abelian varieties of low dimension},
   journal={Math. Ann.},
   volume={315},
   date={1999},
   number={4},
   pages={711--733},
   issn={0025-5831},
   review={\MR{1731466}},
   doi={10.1007/s002080050333},
}

\bib{MR880952}{article}{
   author={Nori, Madhav V.},
   title={On subgroups of ${\rm GL}_n({\bf F}_p)$},
   journal={Invent. Math.},
   volume={88},
   date={1987},
   number={2},
   pages={257--275},
   issn={0020-9910},
   review={\MR{880952}},
   doi={10.1007/BF01388909},
}

\bib{Oesterle82}{article}{
	Author = {{Oesterl\'e}, Joseph},
	Title = {{Reduction modulo \(p^ n\) des sous-ensembles analytiques ferm\'es de \(\mathbb Z_p^N\).}},
	FJournal = {{Inventiones Mathematicae}},
	Journal = {{Invent. Math.}},
	ISSN = {0020-9910},
	Volume = {66},
	Pages = {325--341},
	Year = {1982},
	Publisher = {Springer, Berlin/Heidelberg},
	Language = {French},
	DOI = {10.1007/BF01389398},
	MSC2010 = {11S80 43A70 11R56},
	Zbl = {0473.12015}
}

\bib{PinkMumfordTate}{article}{
   author={Pink, Richard},
   title={$l$-adic algebraic monodromy groups, cocharacters, and the
   Mumford-Tate conjecture},
   journal={J. Reine Angew. Math.},
   volume={495},
   date={1998},
   pages={187--237},
   issn={0075-4102},
   review={\MR{1603865}},
   doi={10.1515/crll.1998.018},
}

\bib{PohlmannMumfordTateforCM}{article}{
   author={Pohlmann, Henry},
   title={Algebraic cycles on abelian varieties of complex multiplication
   type},
   journal={Ann. of Math. (2)},
   volume={88},
   date={1968},
   pages={161--180},
   issn={0003-486X},
   review={\MR{228500}},
   doi={10.2307/1970570},
}

\bib{MR2419854}{article}{
   author={Ratazzi, Nicolas},
   title={Borne sur la torsion dans les vari\'et\'es ab\'eliennes de type
   CM},
   language={French, with English and French summaries},
   journal={Ann. Sci. \'Ecole Norm. Sup. (4)},
   volume={40},
   date={2007},
   number={6},
   pages={951--983},
   issn={0012-9593},
   review={\MR{2419854}},
   doi={10.1016/j.ansens.2007.10.002},
}

\bib{MR608640}{article}{
   author={Ribet, Kenneth A.},
   title={Division fields of abelian varieties with complex multiplication},
   journal={M\'{e}m. Soc. Math. France (N.S.)},
   number={2},
   date={1980/81},
   pages={75--94},
   issn={0037-9484},
   review={\MR{608640}},
}

\bib{MR701568}{article}{
   author={Ribet, Kenneth A.},
   title={Hodge classes on certain types of abelian varieties},
   journal={Amer. J. Math.},
   volume={105},
   date={1983},
   number={2},
   pages={523--538},
   issn={0002-9327},
   review={\MR{701568}},
   doi={10.2307/2374267},
}

\bib{MR0476753}{article}{
   author={Serre, Jean-Pierre},
   title={Repr\'esentations $\ell$-adiques},
   language={French},
   conference={
      title={Algebraic number theory},
      address={Kyoto Internat. Sympos., Res. Inst. Math. Sci., Univ. Kyoto,
      Kyoto},
      date={1976},
   },
   book={
      publisher={Japan Soc. Promotion Sci., Tokyo},
   },
   date={1977},
   pages={177--193},
   review={\MR{0476753}},
}

\bib{MR644559}{article}{
      author={Serre, Jean-Pierre},
       title={Quelques applications du th\'eor\`eme de densit\'e de
  {C}hebotarev},
        date={1981},
        ISSN={0073-8301},
     journal={Inst. Hautes \'Etudes Sci. Publ. Math.},
      number={54},
       pages={323\ndash 401},
      review={\MR{MR644559 (83k:12011)}},
}

\bib{MR1730973}{book}{
      author={Serre, Jean-Pierre},
       title={{\OE}uvres. {C}ollected papers. {IV}},
   publisher={Springer-Verlag},
     address={Berlin},
        date={2000},
        ISBN={3-540-65683-9},
        note={1985--1998},
      review={\MR{MR1730973 (2001e:01037)}},
}

\bib{MR3093502}{article}{
   author={Serre, Jean-Pierre},
   title={Un crit\`ere d'ind\'ependance pour une famille de
   repr\'esentations $\ell$-adiques},
   language={French, with English summary},
   journal={Comment. Math. Helv.},
   volume={88},
   date={2013},
   number={3},
   pages={541--554},
   issn={0010-2571},
   review={\MR{3093502}},
   doi={10.4171/CMH/295},
}

\bib{SGA3-I}{book}{
	Editor = {Demazure, Michel},
	editor ={Grothendieck, Alexandre},
	editor ={Artin, Michael},
	editor = {Bertin, Jean-Étienne},
	editor = {Gabriel, Peter},
	editor = {Raynaud, Michel},
	editor = {Serre, Jean-Pierre},
	Title = {S{\'e}minaire de g{\'e}om{\'e}trie alg{\'e}brique du {Bois} {Marie} 1962-64. {Sch{\'e}mas} en groupes ({SGA} 3). {Tome} {I}: {Propri{\'e}t{\'e}s} g{\'e}n{\'e}rales des sch{\'e}mas en groupes},
	Edition = {New annotated edition of the 1970 original published by {Springer}},
	FSeries = {Documents Math{\'e}matiques},
	Series = {Doc. Math. (SMF)},
	ISSN = {1629-4939},
	Volume = {7},
	ISBN = {978-2-85629-323-2},
	Year = {2011},
	Publisher = {Paris: Soci{\'e}t{\'e} Math{\'e}matique de France},
	Language = {French},
	Keywords = {14-06,00B60,14L15},
	zbMATH = {5948488},
	Zbl = {1241.14002},
	label = {SGA3 I}
}

\bib{SGA3-III}{book}{
	Editor = {Demazure, Michel},
	editor ={Grothendieck, Alexandre},
	editor ={Artin, Michael},
	editor = {Bertin, Jean-Étienne},
	editor = {Gabriel, Peter},
	editor = {Raynaud, Michel},
	editor = {Serre, Jean-Pierre},
	Title = {S{\'e}minaire de g{\'e}om{\'e}trie alg{\'e}brique du {Bois} {Marie} 1962-64. {Sch{\'e}mas} en groupes ({SGA} 3). {Tome} {III}: {Structure} des sch{\'e}mas en groupes r{\'e}ductifs},
	Edition = {New annotated edition of the 1970 original published by {Springer}},
	FSeries = {Documents Math{\'e}matiques},
	Series = {Doc. Math. (SMF)},
	ISSN = {1629-4939},
	Volume = {8},
	ISBN = {978-2-85629-324-9},
	Year = {2011},
	Publisher = {Paris: Soci{\'e}t{\'e} Math{\'e}matique de France},
	Language = {French},
	Keywords = {14-06,00B60,14L15},
	zbMATH = {5948495},
	Zbl = {1241.14003},
	label = {SGA3 III}
}

\bib{stacks-project}{article}{ 
	shorthand    = {Stacks},
	author       = {The {Stacks Project Authors}},
	title        = {\textit{Stacks Project}},
	eprint= {https://stacks.math.columbia.edu},
	year         = {2022},
	label={Stacks}
}

\bib{UllmoYafaev}{article}{
   author={Ullmo, Emmanuel},
   author={Yafaev, Andrei},
   title={Mumford-Tate and generalised Shafarevich conjectures},
   language={English, with English and French summaries},
   journal={Ann. Math. Qu\'{e}.},
   volume={37},
   date={2013},
   number={2},
   pages={255--284},
   issn={2195-4755},
   doi={10.1007/s40316-013-0009-4},
}

\bib{MR2400251}{article}{
   author={Vasiu, Adrian},
   title={Some cases of the Mumford-Tate conjecture and Shimura varieties},
   journal={Indiana Univ. Math. J.},
   volume={57},
   date={2008},
   number={1},
   pages={1--75},
   issn={0022-2518},
   review={\MR{2400251}},
   doi={10.1512/iumj.2008.57.3513},
}

\bib{MR1944805}{article}{
      author={Wintenberger, Jean-Pierre},
       title={D\'emonstration d'une conjecture de {L}ang dans des cas
  particuliers},
        date={2002},
        ISSN={0075-4102},
     journal={J. Reine Angew. Math.},
      volume={553},
       pages={1\ndash 16},
      review={\MR{MR1944805 (2003i:11075)}},
}

\bib{Zywinaopenimage}{article}{
Author = {Zywina, David},
Title = {An effective open image theorem for abelian varieties},
Year = {2019},
eprint = {arXiv:1910.14171}
}

\end{biblist}
\end{bibdiv}

\end{document}